\documentclass[10pt,a4paper]{article}
\usepackage{amsmath}
\usepackage{times}
\usepackage{amsfonts}
\usepackage{amssymb}
\usepackage{latexsym}

\usepackage[dvips]{epsfig}
\graphicspath{{./Fig_eps/}{./Eps/}}
\DeclareGraphicsExtensions{.ps,.eps}
\usepackage{color}
\usepackage{fullpage}

\newcommand\bemph[1]{{\bf\em #1}}
 
\newcommand{\remove}[1]{×}

\addtolength{\textheight}{1cm}

\numberwithin{equation}{section}

\newcommand{\Birk}{\mathcal D} 
\newcommand{\Zkc}{Z_{k-\mathrm{col}}}
\newcommand{\Zkb}{Z_{k,\mathrm{bal}}}

\newcommand{\Zrb}{Z_{\rho,\mathrm{bal}}}
\newcommand{\Zrg}{Z_{\rho,\mathrm{good}}}
\newcommand{\Zkg}{Z_{k,\mathrm{good}}}

\newcommand{\Dg}{\mathcal D_{\mathrm{good}}}
\newcommand{\Dgs}{\mathcal D_{s,\mathrm{good}}}
\newcommand{\Rg}{\mathcal R_{\mathrm{good}}}
\newcommand{\Rgs}{\mathcal R_{s,\mathrm{good}}}

\newcommand\gnp{G(n,p)}
\newcommand\gnm{G_{\mathrm{ER}}(n,m)}
\newcommand\gnd{G(n,d)}
\newcommand\Gnd{\mathcal G(n,d)}

\newcommand{\dk}{d_{k-\mathrm{col}}}

\newcommand\MPCPS{Mathematical Proceedings of the Cambridge Philosophical Society}

\newcommand\COMB{Combinatorica}

\def\vec#1{\mathchoice{\mbox{\boldmath$\displaystyle#1$}}
{\mbox{\boldmath$\textstyle#1$}}
{\mbox{\boldmath$\scriptstyle#1$}}
{\mbox{\boldmath$\scriptscriptstyle#1$}}}

\DeclareMathOperator{\pr}{P}

\newcommand{\qed}{\hfill$\Box$\smallskip}
\newenvironment{proof}{\emph{Proof.}}{}

\newtheorem{definition}{Definition}[section]

\newtheorem{remark}[definition]{Remark}
\newtheorem{theorem}[definition]{Theorem}
\newtheorem{lemma}[definition]{Lemma}
\newtheorem{proposition}[definition]{Proposition}
\newtheorem{corollary}[definition]{Corollary}

\newtheorem{fact}[definition]{Fact}

\newcommand\id{\mathrm{id}}

\newcommand\cA{\mathcal{A}}
\newcommand\cB{\mathcal{B}}
\newcommand\cC{\mathcal{C}}

\newcommand\cF{\mathcal{F}}
\newcommand\cG{\mathcal{G}}
\newcommand\cE{\mathcal{E}}

\newcommand\cN{\mathcal{N}}

\newcommand\cH{\mathcal{H}}
\newcommand\cS{\mathcal{S}}

\newcommand\cI{\mathcal{I}}

\newcommand\cL{\mathcal{L}}
\newcommand\cM{\mathcal{M}}

\newcommand\cX{\mathcal{X}}

\newcommand\cV{\mathcal{V}}

\newcommand\cZ{\mathcal{Z}}

\def\cR{{\mathcal R}}
\def\cC{{\mathcal C}}
\def\cE{{\cal E}}

\newcommand\eul{\mathrm{e}}
\newcommand\eps{\varepsilon}
\newcommand\ZZ{\mathbf{Z}}
\newcommand\ZZpos{\ZZ_{\geq0}} 

\newcommand\Erw{\mathrm{E}}

\newcommand{\vecone}{\vec{1}}

\newcommand{\Po}{{\rm Po}}
\newcommand{\Bin}{{\rm Bin}}

\newcommand{\bink}[2] {{{#1}\choose {#2}}}

\newcommand\ra{\rightarrow}

\newcommand\KL[2]{D_{\mathrm{KL}}\bc{{{#1},{#2}}}}

\newcommand\bc[1]{\left({#1}\right)}
\newcommand\cbc[1]{\left\{{#1}\right\}}
\newcommand\bcfr[2]{\bc{\frac{#1}{#2}}}

\newcommand\brk[1]{\left\lbrack{#1}\right\rbrack}

\newcommand\norm[1]{\left\|{#1}\right\|}
\newcommand\abs[1]{\left|{#1}\right|}

\newcommand\RR{\mathbf{R}}

\newcommand{\whp}{w.h.p.}

\newcommand{\stacksign}[2]{{\stackrel{\mbox{\scriptsize #1}}{#2}}}
\newcommand{\tensor}{\otimes}

\newcommand{\Erdos}{Erd\H{o}s}
\newcommand{\Renyi}{R\'enyi}

\newcommand{\Bollobas}{Bollob\'as}

\newcommand{\Luczak}{\L uczak}

\newcommand\Lem{Lemma}
\newcommand\Def{Definition}
\newcommand\Prop{Proposition}
\newcommand\Thm{Theorem}
\newcommand\Cor{Corollary}
\newcommand\Sec{Section}
\newcommand\Chap{Chapter}
\newcommand\Rem{Remark}

\begin{document}

\title{\bf On the chromatic number of random regular graphs\thanks{The research leading to these results has received funding from the European Research Council under the European Union's Seventh Framework
			Programme (FP/2007-2013) / ERC Grant Agreement n.\ 278857--PTCC.}}

\author{
Amin Coja-Oghlan
$\qquad$
Charilaos Efthymiou
$\qquad$
Samuel Hetterich\\[2mm]
\normalsize Goethe University, Mathematics Institute,\\\normalsize10 Robert Mayer St, Frankfurt 60325, Germany\\
\tt\normalsize$\{$acoghlan,efthymiou,hetterich$\}$@math.uni-frankfurt.de
}
\date{\today}

\maketitle

\begin{abstract}
\noindent
Let $\gnd$ be the random $d$-regular graph on $n$ vertices.
For any integer $k$ exceeding a certain constant $k_0$ we identify a number $\dk$ such that $\gnd$
is $k$-colorable \whp\ if $d<\dk$ and non-$k$-colorable \whp\ if $d>\dk$.\\

\noindent
\emph{Key words:}	random graphs, graph coloring, phase transitions.
\end{abstract}

\section{Introduction}\label{Sec_intro}

Let $\gnd$ be the 
random $d$-regular graph on the vertex set $V=\cbc{1,\ldots,n}$. 
Unless specified otherwise, we let $d$ and $k\geq3$ be $n$-independent integers.
In addition, we let $\gnm$ denote the uniformly random graph on $V$ with precisely $m$ edges
	(the ``\Erdos-\Renyi\ model'').
We say that a property $\cE$ holds {\bf\em with high probability} (`\whp') if $\lim_{n\ra\infty}\pr\brk{\cE}= 1$.

\subsection{Results}

Determining the chromatic number of random graphs is one of the longest-standing challenges in probabilistic combinatorics.
For the \Erdos-\Renyi\ model, the single most intensely studied model in the random graphs literature,
the question dates back to the seminal 1960 that started the theory of random graphs~\cite{ER}.%
	\footnote{The chromatic number problems on $\gnm$ and on the binomial random graph
		(where each pair of vertices is connected with probability $p=m/\bink n2$ independently)
		turn out to be equivalent~\cite[\Chap~1]{JLR}.}
Apart from $\gnm$, the model that has received the most attention certainly is the random regular graph $\gnd$.
In the present paper, we provide an almost complete solution to the chromatic number problem on $\gnd$,
at least in the case that $d$ remains fixed as $n\ra\infty$ (which we regard as the most interesting regime).

The strongest previous result on the chromatic number of $\gnd$ is due to 
Kemkes, P\'erez-Gim\'enez and Wormald~\cite{KPGW}.
They proved that \whp\ for $k\geq3$
	\begin{eqnarray}\label{eqKPGW1}
	\chi(\gnd)=k&\mbox{ if }&
	d\in((2k-3)\ln(k-1),(2k-2)\ln(k-1)),\mbox{ and }\\
	\chi(\gnd)\in\cbc{k,k+1}&\mbox{ if }&d\in[(2k-2)\ln(k-1),(2k-1)\ln k].\label{eqKPGW2}
	\end{eqnarray}
This result yields the chromatic number precisely for the about $\ln k$ integers $d$
in the interval specified in~(\ref{eqKPGW1}),
and up to $\pm1$ for the about $\ln k$ integers in the subsequent interval~(\ref{eqKPGW2}).
Thus, (\ref{eqKPGW1})--(\ref{eqKPGW2}) determine $\chi(\gnd)$ exactly for ``about half'' of all degrees $d$.
The main result of the present paper is

\begin{theorem}\label{Thm_main}
There is a sequence $(\eps_k)_{k\geq 3}$ with $\lim_{k\ra\infty}\eps_k=0$ such that the following is true.
\begin{enumerate}
\item If $d\leq (2k-1)\ln k-2\ln 2-\eps_k$, then $\gnd$ is $k$-colorable \whp
\item If $d\geq (2k-1)\ln k-1+\eps_k$, then $\gnd$ fails to be $k$-colorable \whp
\end{enumerate}
\end{theorem}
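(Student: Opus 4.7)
The plan is to prove the two halves of Theorem~\ref{Thm_main} by the first and second moment methods respectively, throughout working in the pairing (configuration) model $\Gnd$ of random $d$-regular multigraphs. Because $d$ remains bounded as $n\to\infty$, standard contiguity results transfer \whp\ statements from $\Gnd$ to the simple random graph $\gnd$. Since proper $k$-colorings of a $d$-regular graph are automatically nearly balanced \whp, it suffices to study the random variable $\Zkb$ counting the exactly balanced proper $k$-colorings (each class of size $n/k$), together with a suitable refinement $\Zkg$ on the colorable side.

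For the non-colorability statement (part~2), I perform a first moment computation for $\Zkb$. The expectation factors as the multinomial count of balanced partitions times the pairing-model probability that no matched pair of half-edges is monochromatic, and a routine Stirling computation yields
\[
\frac1n\ln\Erw[\Zkb] \;=\; \ln k + \frac d2\ln\bc{1-\tfrac1k} + O\bc{\tfrac{\ln n}{n}}.
\]
Expanding $\ln(1-1/k) = -1/k - 1/(2k^2) + O(k^{-3})$ and bookkeeping the polynomial prefactor pin the threshold at $(2k-1)\ln k - 1 + \eps_k$ for a sequence $\eps_k\to 0$, and Markov's inequality on $\Zkb$ concludes this part.

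For the colorability statement (part~1), I apply the second moment method to $\Zkg$, counting the balanced proper $k$-colorings in which every vertex has at least a $\delta_k$-fraction of neighbors in each color distinct from its own (a $k$-dependent $\delta_k>0$). This refinement is necessary because $\Erw[\Zkb^2]/\Erw[\Zkb]^2$ is exponentially large, the dominant contribution coming from pairs of colorings whose overlap matrix concentrates on a permutation of $[k]$; the goodness condition removes exactly those pairs. After checking that $\Erw[\Zkg] = \Omega(\Erw[\Zkb])$ by a planted-model local analysis, the second moment expands as
\[
\Erw[\Zkg^2] \;=\; \sum_{\rho\in\Dg} N(\rho)\cdot p(\rho),
\]
where $\rho$ ranges over the good part of the Birkhoff polytope $\Birk$ (matrices with row and column sums $1/k$), $N(\rho)$ counts pairs of balanced partitions with joint color distribution $\rho$, and $p(\rho)$ is the pairing-model probability that such a pair is simultaneously proper. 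Stirling gives $N(\rho)\,p(\rho) = \exp(n\,f(\rho)+o(n))$. The key analytic claim is that for $d\leq(2k-1)\ln k-2\ln 2-\eps_k$ the function $f$ is uniquely maximized on $\Dg$ at the uniform overlap $\matone/k^2$, with strictly negative-definite Hessian; a Laplace-type integration then yields $\Erw[\Zkg^2] \leq C(k)\,\Erw[\Zkg]^2$. Paley--Zygmund gives $\Pr[\Zkg>0]\geq 1/C(k)$ in $\Gnd$, and a small subgraph conditioning argument, matching cycle-count Poisson variances against the weighted planted good-coloring distribution, bootstraps this to $\Pr[\Zkg>0]=1-o(1)$, hence $k$-colorability \whp\ in $\gnd$.

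The main obstacle is the global optimization of $f$ on $\Dg$. The function is a sum of a strictly concave entropy term and a pairing-model logarithmic term of indefinite sign, and near the uniform point the two almost cancel; away from it one must verify that the goodness constraint together with the $2\ln 2$ slack in $d$ rules out overlaps close to a permutation matrix, which are the competing local maximizers of $f$ on the full polytope $\Birk$. The standard route is a two-stage reduction: first show that every maximizer lies in an $O(1/k)$-neighborhood of either $\matone/k^2$ or some permutation, then exclude the second alternative using the goodness condition and a direct numerical bound. Once the second moment is controlled, the small subgraph conditioning step is comparatively routine, since short cycles in $\Gnd$ have explicit asymptotic Poisson distributions whose parameters match those in the weighted planted model.
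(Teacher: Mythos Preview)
Your plan for part~2 contains a genuine error. The naive first moment on $\Zkb$ (or on $\Zkc$) does \emph{not} pin the threshold at $(2k-1)\ln k-1$; it only gives $(2k-1)\ln k$. Indeed, at $d=(2k-1)\ln k-1$ one has
\[
\ln k+\frac d2\ln(1-1/k)=\ln k-\frac{(2k-1)\ln k-1}{2}\bc{\frac1k+\frac1{2k^2}+O(k^{-3})}=\frac{1}{2k}+\tilde O_k(k^{-2}),
\]
so $\Erw[\Zkb]$ is exponentially large in $n$, and no polynomial prefactor can rescue Markov's inequality. The paper explicitly notes that the Molloy--Reed first moment bound stops at $d>(2k-1)\ln k$; the additive improvement of $1$ is the entire content of part~2 and requires a substantially different argument. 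The paper obtains it by a first moment over \emph{clusters} rather than individual colorings (\Sec s~\ref{Sec_lower}--\ref{sec:prop:NoOfCompl+ItsDistr}): one bounds $\Erw[\Sigma^p]$, the expected number of clusters of $p$-rainbow colorings, and shows that the typical cluster contains at least $2^{(1-p)n(1-o_k(1))}$ colorings, so that dividing $\Erw[\Zkc]$ by this cluster size yields an exponentially small bound precisely when $d>(2k-1)\ln k-1+\eps_k$. Your proposal lacks any mechanism for this gain.

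For part~1 your outline is in the right spirit (second moment on a restricted variable plus small subgraph conditioning), but your notion of ``good'' differs from the paper's in a way that matters. You define good colorings by a local rainbow-type condition (every vertex sees all other colors among its neighbors). The paper's Definition~\ref{XDef_good} is global: a balanced coloring $\sigma$ is good if it is separable \emph{and} its cluster satisfies $|\cC(\sigma)|\le\frac1n\Erw[\Zkb]$. This cluster-size cap is exactly what handles the near-permutation ($k$-stable) overlaps in the second moment: one bounds $\sum_{\rho\mbox{ $k$-stable}}\Erw[\Zrg]\le k!\,\Erw[\sum_\sigma|\cC(\sigma)|]\le\frac{k!}{n}\Erw[\Zkb]\Erw[\Zkg]=o(\Erw[\Zkb]^2)$ directly, without any delicate optimization near permutation matrices. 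Your local condition does not obviously give this control, and in fact such colorings need not exist in the right abundance (the fraction of vertices that fail to be rainbow is $\Theta(1/k)$, so insisting every vertex be rainbow would make $\Erw[\Zkg]$ exponentially smaller than $\Erw[\Zkb]$). The paper's first-moment calculation showing $\Erw[\Zkg]\sim\Erw[\Zkb]$ (\Prop~\ref{Prop_goodFirstMoment}) is itself a nontrivial planted-model analysis of cluster sizes, not a local-degree computation.
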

(We have not attempted to explicitly extract or even optimize the explicit error term $\eps_k$.)

\Thm~\ref{Thm_main} implies the following ``threshold result''.

\begin{corollary}\label{Cor_main}
There is a constant $k_0>0$ such that for any integer $k\geq k_0$ there exists a number $\dk$ with the following two properties.
\begin{itemize}
\item If $d<\dk$, then $\gnd$ is $k$-colorable \whp
\item If $d>\dk$, then $\gnd$ fails to be $k$-colorable \whp
\end{itemize}
\end{corollary}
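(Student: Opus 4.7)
The plan is to deduce \Cor~\ref{Cor_main} directly from \Thm~\ref{Thm_main} via a short gap analysis. Write $L_k=(2k-1)\ln k-2\ln 2-\eps_k$ and $U_k=(2k-1)\ln k-1+\eps_k$ for the two thresholds supplied by \Thm~\ref{Thm_main}; every integer $d\le L_k$ then makes $\gnd$ $k$-colorable \whp, and every integer $d\ge U_k$ makes $\gnd$ non-$k$-colorable \whp

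The key numerical observation is that $U_k-L_k=2\ln 2-1+2\eps_k$. Since $2\ln 2<2$ we have $2\ln 2-1<1$, and because $\eps_k\to 0$ there exists $k_0$ with $U_k-L_k<1$ for every $k\ge k_0$. For such $k$, the open interval $(L_k,U_k)$ contains at most one integer, and this is the only fact about \Thm~\ref{Thm_main} I need to exploit.

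I would then construct $\dk$ case by case. If $(L_k,U_k)\cap\ZZ=\emptyset$, set $\dk=(L_k+U_k)/2$; every integer $d<\dk$ satisfies $d\le L_k$, every integer $d>\dk$ satisfies $d\ge U_k$, and \Cor~\ref{Cor_main} follows at once from \Thm~\ref{Thm_main}. Otherwise $(L_k,U_k)$ contains a (necessarily unique) integer $d^{*}$, and I set $\dk=d^{*}$. An integer $d<d^{*}$ then satisfies $d\le d^{*}-1<U_k-1<L_k$, the last step using $U_k-L_k<1$, so $\gnd$ is $k$-colorable \whp\ by \Thm~\ref{Thm_main}; symmetrically an integer $d>d^{*}$ obeys $d\ge d^{*}+1>L_k+1>U_k$, so $\gnd$ is non-$k$-colorable \whp

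There is really no obstacle here: \Cor~\ref{Cor_main} is essentially bookkeeping, and its only substantive ingredient is the arithmetic observation that the uncovered window of width $2\ln 2-1\approx 0.386$ between the two thresholds of \Thm~\ref{Thm_main} is strictly narrower than one, so no more than a single integer degree can sit in the unresolved region. All the genuine difficulty of the chromatic-number problem is concentrated in \Thm~\ref{Thm_main} itself.
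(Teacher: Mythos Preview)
Your proof is correct and takes essentially the same approach as the paper: both observe that the gap $U_k-L_k=2\ln 2-1+2\eps_k<1$ for large $k$, so the open interval $(L_k,U_k)$ contains at most one integer, and then define $\dk$ to be that integer if it exists and an arbitrary point of the interval otherwise. Your write-up is in fact slightly more explicit than the paper's, which states the construction but leaves the routine verification to the reader.
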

To obtain \Cor~\ref{Cor_main}, 
let $\eps_k$ as in \Thm~\ref{Thm_main} and consider the interval
	$$I_k=((2k-1)\ln k-2\ln 2-\eps_k,(2k-1)\ln k-1+\eps_k).$$
Then $I_k$ has length $2\ln 2-1+2\eps_k\approx 0.386+2\eps_k$.
Since $\eps_k\ra0$, for sufficiently large $k$ the interval $I_k$ contains at most one integer.
If it does, let $\dk$ be equal to this integer.
Otherwise, pick $\dk$ to be any number in $I_k$.


Phrased differently, \Thm~\ref{Thm_main} allows us to pin down the chromatic number $\chi(\gnd)$ exactly
for ``almost all'' $d$.

\begin{corollary}\label{Cor_chi}
There exist numbers $k_0,d_0>0$, a sequence $(\dk)_{k\geq k_0}$ 
and a function $\cF:\ZZpos\ra\ZZpos$ with the following properties.
\begin{enumerate}
\item[i.] We have $d_{(k+1)-\mathrm{col}}-\dk> 2\ln k$ for all $k\geq k_0$.
\item[ii.] For all $d\geq d_0$ such that $d\not\in\cbc{\dk:k\geq k_0}$ we have
		$\chi(\gnd)=\cF(d)$ \whp
\end{enumerate}
\end{corollary}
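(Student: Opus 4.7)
The plan is to deduce \Cor~\ref{Cor_chi} directly from \Cor~\ref{Cor_main} (and hence from \Thm~\ref{Thm_main}) by nothing more than an elementary calculation with the explicit intervals. Let $\dk$ be the numbers produced by \Cor~\ref{Cor_main}; by construction each $\dk$ lies in
\[
I_k \;=\; \bc{(2k-1)\ln k - 2\ln 2 - \eps_k,\ (2k-1)\ln k - 1 + \eps_k},
\]
and $k_0$ is free to be enlarged below to absorb any lower-order slack.

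For part (i), I would combine the upper endpoint of $I_k$ with the lower endpoint of $I_{k+1}$, yielding
\[
d_{(k+1)-\mathrm{col}} - \dk \;>\; (2k+1)\ln(k+1) - (2k-1)\ln k + 1 - 2\ln 2 - \eps_k - \eps_{k+1}.
\]
Writing $\ln(k+1) = \ln k + \ln(1+1/k)$, the main term expands as
\[
(2k+1)\ln(k+1) - (2k-1)\ln k \;=\; 2\ln k + (2k+1)\ln(1+1/k) \;=\; 2\ln k + 2 + O(1/k),
\]
so $d_{(k+1)-\mathrm{col}} - \dk \geq 2\ln k + (3 - 2\ln 2) + o(1)$. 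Since $3 - 2\ln 2 > 0$, for $k_0$ sufficiently large the right-hand side strictly exceeds $2\ln k$, proving~(i).

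For part (ii), observe first that (i) in particular implies that the sequence $(\dk)_{k \geq k_0}$ is strictly increasing and unbounded. Set $d_0 := d_{k_0-\mathrm{col}}$. Any integer $d \geq d_0$ with $d \notin \cbc{\dk : k \geq k_0}$ is then sandwiched between two consecutive thresholds: there is a unique $k > k_0$ such that $d_{(k-1)-\mathrm{col}} < d < \dk$. Define $\cF(d) := k$ on this set and $\cF := k_0$ elsewhere. \Cor~\ref{Cor_main} then yields $\chi(\gnd) \leq k$ \whp\ (because $d < \dk$) and $\chi(\gnd) > k-1$ \whp\ (because $d > d_{(k-1)-\mathrm{col}}$), so that $\chi(\gnd) = \cF(d)$ \whp.

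There is no real obstacle beyond what has already been proved in \Thm~\ref{Thm_main}: the only analytical ingredient is the Taylor expansion $(2k+1)\ln(1+1/k) = 2 + O(1/k)$, and the rest is book-keeping about a strictly monotone, unbounded sequence of thresholds. The only mild point to take care of is to enlarge $k_0$ so that (i) holds uniformly, since exactly this strict monotonicity is what underpins the well-definedness of $\cF$ and the uniqueness of the sandwiching interval in (ii).
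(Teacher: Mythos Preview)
Your proposal is correct and follows exactly the approach the paper indicates: the paper simply says to take the sequence $(\dk)$ from \Cor~\ref{Cor_main}, define $\cF(d)$ via the unique $k$ with $d_{(k-1)\textrm{-col}}<d<\dk$, and that ``i.\ follows from elementary calculations''. You have carried out precisely those elementary calculations, and your derivation of part~(ii) from the monotonicity of $(\dk)$ and \Cor~\ref{Cor_main} matches the paper's intent.
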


To obtain \Cor~\ref{Cor_chi}, let $(\dk)_{k\geq k_0}$ be the sequence from \Cor~\ref{Cor_main} and define $\cF(d)$ to be the largest integer $k$ such that $d<\dk$.
Then \Cor~\ref{Cor_main} directly implies ii., and i.\ follows from elementary calculations.


%
%

\subsection{Coloring random graphs: techniques and outline}

The best current results on coloring $\gnm$ as well as the best prior result
on $\chi(\gnd)$ are obtained via the {\em second moment method}~\cite{AchNaor,ACOVilenchik,KPGW}.
So are the present results.
Generally, suppose that $Z\geq0$ is a random variable such that $Z(G)>0$ only if $G$ is $k$-colorable.
If there is a number $C=C(k,d)>0$ such that
	\begin{equation}\label{eqSMM1}
	0<\Erw\brk{Z^2}\leq C\cdot\Erw\brk{Z}^2,
	\end{equation}
then the {\em Paley-Zygmund inequality}
	\begin{equation}\label{eqSMM2}
	\pr\brk{Z>0}\geq\frac{\Erw\brk{Z}^2}{\Erw\brk{Z^2}}
	\end{equation}
implies that 
there exists a $k$-coloring  with probability at least $1/C>0$.

What random variable $Z$ might be suitable?
The obvious choice seems to be the total number $\Zkc$ of $k$-colorings.
However, the calculations simplify substantially by working with the number $\Zkb$
of \bemph{balanced} $k$-colorings, in which all of the $k$ color classes are the same size (let us assume for now that $k$ divides $n$).
Indeed, the core of the paper by Achlioptas and Naor~\cite{AchNaor} is to establish the second moment bound~(\ref{eqSMM1})
for $\Zkb(\gnm)$ under the assumption that $d=2m/n\leq (2k-2)\ln k-2+o_k(1)$, with $o_k(1)$ a term that tends to $0$ as $k$ gets large.
Achlioptas and Naor rephrase this problem as a non-convex optimization problem over the \bemph{Birkhoff polytope}, i.e.,
the set of doubly-stochastic $k\times k$ matrices, and establish~(\ref{eqSMM1}) by solving a relaxation of this problem.
Thus, (\ref{eqSMM2}) implies that $\gnm$ is $k$-colorable with a non-vanishing probability if $d\leq (2k-2)\ln k-2+o_k(1)$.
This probability can be boosted to $1-o(1)$  by means of the sharp threshold result of Achlioptas and Friedgut~\cite{AchFried}.
In addition, a simple first moment argument shows that $\gnm$ is non-$k$-colorable \whp\ if $d>2k\ln k-1$.

Achlioptas and Moore~\cite{AMoColor} suggested to use the same random variable $\Zkb$ on $\gnd$.
They realised that the solution to the (relaxed) optimization problem over the Birkhoff polytope from~\cite{AchNaor} can be used as a ``black box''
to show that $\Zkb(\gnd)$ satisfies~(\ref{eqSMM1}) for {\em some} constant $C>0$.
Hence, (\ref{eqSMM2}) implies that $\gnd$ is $k$-colorable with a {\em non-vanishing} probability if $d\leq(2k-2)\ln k-2+o_k(1)$.
But unfortunately, in the case of random regular graphs there is no sharp threshold result to boost this probability to $1-o(1)$.
To get around this issue, Achlioptas and Moore instead adapt concentration arguments from~\cite{Luczak,ShamirSpencer} to the random regular graph $\gnd$.
However, these arguments inevitably one extra ``joker'' color.
Hence, Achlioptas and Moore obtain that $\chi(\gnd)\leq k+1$ \whp\ for $d\leq(2k-2)\ln k-2+o_k(1)$.

The contribution of Kemkes, P\'erez-Gim\'enez and Wormald~\cite{KPGW} is to remove the need for this additional color.
This enables them to establish~(\ref{eqKPGW1})--(\ref{eqKPGW2}), thus matching the result established in~\cite{AchNaor} for the \Erdos-\Renyi\ model $\gnm$.
Instead of employing ``abstract'' concentration arguments, Kemkes, P\'erez-Gim\'enez and Wormald use
the {\em small subgraph conditioning} technique from Robinson and Wormald~\cite{RobinsonWormald}.
Roughly speaking, they observe that the constant $C$ that creeps into the second moment bound~(\ref{eqSMM1})
results from the presence of {\em short cycles} in the random regular graph.
More precisely, in $\gnd$ any bounded-depth neighborhood of a {\em fixed} vertex $v$ is just a $d$-regular tree \whp\
However, in the {\em entire} graph $\gnd$ there will likely be a few cycles of bounded length.
In fact, it is well-known that for any length $j$ the number of short cycles is asymptotically a Poisson variable with mean $(d-1)^j/(2j)$.
As shown in~\cite{KPGW}, accounting carefully for the impact of short cycles allows to 
boost the probability of $k$-colorability to $1-o(1)$ without spending an extra color.

Recently, Coja-Oghlan and Vilenchik~\cite{ACOVilenchik} improved the result from~\cite{AchNaor} on the chromatic number of $\gnm$.
More precisely, they proved that $\gnm$ is $k$-colorable \whp\ if 
	\begin{equation}\label{eqACOVilenchik}
	d=2m/n\leq(2k-1)\ln k-2\ln 2-o_k(1),
	\end{equation}
gaining about an additive $\ln k$.
This improvement is obtained by considering a different random variable, namely the number $\Zkg$ of ``good'' $k$-colorings.
The definition of this random variable draws on intuition from non-rigorous statistical mechanics work on random graph coloring~\cite{pnas,LenkaFlorent}.
Crucially, the concept of good colorings facilitates the computation of the second moment.
The result is that the bound~(\ref{eqSMM1}) holds for $\Zkg(\gnm)$ for $d$ as in~(\ref{eqACOVilenchik}).
Hence, (\ref{eqSMM2}) shows that $\gnm$ is $k$-colorable with a non-vanishing probability for such $d$, and
the sharp threshold result~\cite{AchFried} boosts this probability to $1-o(1)$.

\Thm~\ref{Thm_main} provides a result matching~\cite{ACOVilenchik} for $\gnd$.
Following~\cite{KPGW}, we combine the second moment bound from~\cite{ACOVilenchik} (which we can use largely as a ``black box'')
with small subgraph conditioning.
Indeed, for the small subgraph conditioning argument we can use some of the computations performed in~\cite{KPGW} directly.
In the course of this, we observe a fairly simple, abstract link between partitioning problems on
$\gnd$ and on $\gnm$ that seems to have gone unnoticed in previous work (see \Sec~\ref{Sec_partitions}).
Due to this observation, relatively little new work is required to put the second moment argument together.
In effect, the main work in establishing the first part of \Thm~\ref{Thm_main} consists in computing the {\em first} moment of the number of good $k$-colorings in $\gnd$,
a task that turns out to be technically quite non-trivial.

The previous {\em lower} bounds on the chromatic number of $\gnd$ were based on simple first moment arguments over the number of $k$-colorings.
The bound that can be obtained in this way, attributed to Molloy and Reed~\cite{MolloyReed},
is that $\gnd$ is non-$k$-colorable \whp\ if $d>(2k-1)\ln k$.
By contrast, the second assertion in \Thm~\ref{Thm_main} marks a strict improvement over this naive first moment bound.
The proof is via an adaptation of techniques developed in~\cite{KostaNAE} for the random $k$-NAESAT problem.
Extending this argument to the chromatic number problem on $\gnd$ this requires substantial of technical work.
A matching improved lower bound on the chromatic number of $\gnm$ was recently obtained via a different argument~\cite{ACOcovers}.

After a discussion of further related work and some background and preliminaries in \Sec~\ref{Sec_pre}, 
we adapt the concept of good $k$-colorings from~\cite{ACOVilenchik} to $\gnd$ in \Sec~\ref{Sec_good}.
In \Sec~\ref{Sec_Prop_goodFirstMoment} we compute the first moment of the number of good colorings, thus accomplishing the main technical task
in proving the first part of \Thm~\ref{Thm_main}.
Then, in \Sec~\ref{Sec_second} we compute the second moment.
Finally, in \Sec s~\ref{Sec_lower} and~\ref{sec:prop:NoOfCompl+ItsDistr} we prove the second part of \Thm~\ref{Thm_main}, i.e., the lower bound on $\chi(\gnd)$.

%
%

\subsection{Further related work}

The chromatic number problem on $\gnm$ has attracted a big deal of attention.
A straight first moment argument yields a lower bound on $\chi(\gnm)$ that is within a factor two of
the number of colors that a simple greedy coloring algorithm needs~\cite{AchMolloy,GMcD}.
Closing this gap was a long-standing challenge until
\Bollobas~\cite{BBColor} managed to determine the asymptotic value of the chromatic number in the ``dense'' case $d=2m/n\gg n^{2/3}$.
His work improved Matula's result~\cite{Matula} published only shortly before.
Subsequently, \Luczak~\cite{LuczakColor} built upon Matula's  argument~\cite{Matula}
to determine $\chi(\gnm$) within a factor of $1+o(1)$ in the entire regime $d\gg1$.

In the case that $d$ remains bounded as $n\ra\infty$, \Luczak's result~\cite{LuczakColor} only yields
 $\chi(\gnm)$ up to a multiplicative $1\pm\eps_d$, where $\eps_d\ra0$ slowly in the limit of large $d$.
The aforementioned result of Achlioptas and Naor~\cite{AchNaor} marked a significant improvement
by computing $\chi(\gnm)$ for $d$ fixed as $n\ra\infty$ up to an {\em additive} error of $1$ for all $d$, and precisely for ``about half'' of all $d$.
Coja-Oghlan, Panagiotou and Steger~\cite{Angelika} combined  the techniques from~\cite{AchNaor} with concentration arguments from
Alon and Krivelevich~\cite{AlonKriv}
to obtain improved bounds on $\chi(\gnm)$ in the case $d\ll n^{1/4}$.

With respect to random regular graphs $\gnd$,
Frieze and \Luczak~\cite{FriezeLuczak} proved a result akin to \Luczak's \cite{LuczakColor} for $d\ll n^{1/3}$.
In fact, Cooper, Frieze, Reed and Riordan~\cite{CFRR} extended this result to the regime $d\leq n^{1-\eps}$ for any fixed $\eps>0$,
and Krivelevich, Sudakov, Vu and Wormald~\cite{KSVW} further still to $d\leq0.9n$.
For $d$ fixed as $n\ra\infty$, the bounds from~\cite{FriezeLuczak} were improved by the aforementioned contributions~\cite{AMoColor,KPGW}.

In addition, several papers deal with the $k$-colorability of random regular graphs for $k=3,4$.
This problem is not solved completely by~\cite{KPGW} (nor by the present work).
Achlioptas and Moore~\cite{AMo} and Shi and Wormald~\cite{Shi1} proved that $\chi(G(n,4))=3$ \whp, while
 Shi and Wormald~\cite{Shi2} showed that $\chi(G(n,6))=4$ \whp\
Moreover,  Diaz, Kaporis, Kemkes, Kirousis, P\'erez and Wormald~\cite{DKKKPW} proved that {\em if} a certain four-dimensional
optimization problem (which mirrors a second moment calculation) attains its maximum at a particular point, then $\chi(G(n,5))=3$ \whp\
Thus, determining $\chi(G(n,5))$ remains an open problem.


Precise conjectures as to the chromatic number of both $\gnm$ and $\gnd$ have been put forward on the basis
of sophisticated but non-rigorous physics considerations~\cite{BMPWZ,KPW,MPWZ,vanMourik,LenkaFlorent}.
These conjectures result from the application of generic (non-rigorous) methods, namely
	the {\em replica method} and the {\em cavity method}~\cite{MM}.
\Thm~\ref{Thm_main} largely confirms the physics conjecture on $\chi(\gnd)$  in the case of sufficiently large $d$.
More precisely, in physics terms the upper bound on $\chi(\gnd)$ provided by the first part of \Thm~\ref{Thm_main} corresponds to the ``replica symmetric ansatz'',
while the upper bound (asymptotically) matches the prediction of the ``1-step replica symmetry breaking ansatz''.
Indeed, the concept of ``good'' colorings, which is the basis of~\cite{ACOVilenchik} as well as the current work,
is directly inspired by physics ideas.


\section{Preliminaries}\label{Sec_pre}

In this section we collect a few elementary definitions and facts that will be refered to repeatedly throughout the paper.

\subsection{Basics}


Since \Thm~\ref{Thm_main} is a ``with high probability'' statement, we are generally going to assume that the number $n$ of vertices is sufficiently large.
Furthermore, \Thm~\ref{Thm_main} is an asymptotic statement in terms of $k$ due to the presence of the $\eps_k$ ``error term''.
Therefore, we are going to assume implicitly throughout that $k\geq k_0$ for a sufficiently large constant $k_0>0$.

We are going to use asymptotic notation with respect to both $n$ and $k$.
More precisely, we use $O(\cdot),\Omega(\cdot)$, etc.\ to denote asymptotics with respect to $n$.
For instance, $f(n)=O(g(n))$ means that there exists a number $C>0$ such that for $n>C$ we have $|f(n)|\leq C|g(n)|$.
This number $C$ may or may not depend on  $k$, the number of colors.
By contrast, we denote asymptotics with respect to $k$ by the symbols $O_k(\cdot),\Omega_k(\cdot)$, etc.;
	these asymptotics are understood to hold uniformly in $n$.
Thus, $f(k)=O_k(g(k))$ means that there is a number $C>0$ that is independent of both $n$ and $k$ such that
for $k>C$ we have $|f(k)|\leq C|g(k)|$.
Furthermore, we use the notation $f(k)=\tilde O_k(g(k))$ to indicate that for some $C>0$ independent of $n$ and $k$ and for $k>C$ we have
	$$|f(k)|\leq |g(k)|\cdot\ln^Ck.$$

If $\xi=(\xi_1,\ldots,\xi_l)$ is a vector and $1\leq p\leq\infty$, then $\norm\xi_p$ denotes the $p$-norm of $\xi$.
For a matrix $A=(a_{ij})_{i\in\brk M,j\in\brk N}$ we let $\norm A_p$ signify the
$p$-norm of $A$ viewed as the $N\cdot M$-dimensional vector $(a_{11},\ldots,a_{MN})$.

We also need some basic facts from the theory of large deviations.
Let $\cX$ be a finite set and let $\mu,\nu:\cX\ra\brk{0,1}$ be two maps such that
	$\sum_{x\in\cX}\mu(x),\sum_{x\in\cX}\nu(x)\leq1$ and such that $\mu(x)=0$ if $\nu(x)=0$ for all $x\in\cX$.
Let
	$$H(\mu)=-\sum_{x\in\cX}\mu(x)\ln\mu(x)$$
denote the \bemph{entropy} of $\mu$.
In addition, we denote the \bemph{Kullback-Leibler divergence} of $\mu,\nu$ by
	$$\KL{\mu}{\nu}=\sum_{x\in\cX}\mu(x)\ln\frac{\mu(x)}{\nu(x)}.$$
Throughout the paper, we use the convention that $0\ln 0=0$, $0\ln(0/0)=0$. 
It is easy to compute the first two differentials of the function $\mu\mapsto\KL{\mu}{\nu}$:
	\begin{eqnarray}\label{eqKLdiff1}
	\frac{\partial\KL\mu\nu}{\partial\mu(x)}&=&1+\ln\frac{\mu(x)}{\nu(x)},\\ 
	\frac{\partial^2\KL\mu\nu}{\partial\mu(x)^2}&=&1/\mu(x),
		\quad\frac{\partial^2\KL\mu\nu}{\partial\mu(x)\partial\mu(x')}=0. 
			\label{eqKLdiff2}
	\end{eqnarray}
Furthermore, we need the following well-known 

\begin{fact}\label{Fact_KL}
Assume that $\mu,\nu$ are probability distributions on $\cX$ such that that $\mu(x)=0$ if $\nu(x)=0$.
\begin{enumerate}
\item  We always have $\KL\mu\nu\geq0\mbox{ while }\KL\mu\nu=0\mbox{ iff $\mu=\nu$.}$
\item The function $\mu\mapsto\KL{\mu}{\nu}$ is convex.
\item There is a number $\xi=\xi(\nu)>0$ such that for any $\mu$ we have
		$\KL{\mu}{\nu}\geq\xi\sum_{x\in\cX}(\mu(x)-\nu(x))^2.$
\end{enumerate}
\end{fact}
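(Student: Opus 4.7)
The plan is to establish each of the three items by standard arguments from information theory and convex analysis; none of the steps is deep, so the proposal is mostly a matter of picking clean arguments and being careful at the boundary of the simplex, which the paper's convention $0\ln 0 = 0$ together with the compatibility hypothesis on supports handles routinely.

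For item 1 (Gibbs' inequality), I would apply the elementary bound $\ln t \leq t - 1$, valid for all $t > 0$ with equality if and only if $t = 1$, to the ratio $t = \nu(x)/\mu(x)$ on the support of $\mu$. Summation then gives
$$-\KL\mu\nu \;=\; \sum_{x\in\cX:\,\mu(x)>0}\mu(x)\ln\frac{\nu(x)}{\mu(x)} \;\leq\; \sum_{x\in\cX:\,\mu(x)>0}\bc{\nu(x)-\mu(x)} \;\leq\; 0,$$
the last step using $\sum_x\mu(x)=\sum_x\nu(x)=1$. Hence $\KL\mu\nu\geq 0$. The equality case of $\ln t\leq t-1$ forces $\mu(x)=\nu(x)$ on the support of $\mu$, and the total-mass constraint then forces the two distributions to coincide on all of $\cX$.

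For item 2, I would simply invoke the Hessian computation already recorded in~(\ref{eqKLdiff2}): as a function of $\mu$ on the open simplex, $\mu\mapsto\KL\mu\nu$ has diagonal Hessian $\mathrm{diag}(1/\mu(x))$, which is positive definite; convexity on the closed simplex then follows by continuity, using $0\ln 0 = 0$ to handle boundary points. For item 3, the quickest route is Pinsker's inequality $\KL\mu\nu\geq \frac{1}{2}\norm{\mu-\nu}_1^2$ combined with the trivial vector bound $\norm v_1\geq\norm v_2$, which together give the claim with $\xi=\frac12$ that in fact does not depend on $\nu$ at all. If one prefers to avoid citing Pinsker, an alternative is a second-order Taylor expansion of $\KL\mu\nu$ around $\mu=\nu$: item~1 says the gradient vanishes there, and~(\ref{eqKLdiff2}) shows the Hessian at $\nu$ equals $\mathrm{diag}(1/\nu(x))$, giving a local quadratic lower bound with $\xi$ of order $1/\max_x\nu(x)$; a compactness argument on the simplex, using $\KL\mu\nu>0$ for $\mu\neq\nu$ from item~1 to bound the divergence away from zero outside a neighborhood of $\nu$, then extends the quadratic bound globally at the cost of possibly shrinking $\xi$.

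There is no real obstacle to overcome: all three claims are classical, and the only part one might regard as ``the main step'' is the third, whose cleanest single-line proof (Pinsker plus $\ell^1\!\geq\!\ell^2$) is quoted verbatim from the literature, while the self-contained Taylor-plus-compactness route is entirely mechanical once the Hessian formula (\ref{eqKLdiff2}) is in hand.
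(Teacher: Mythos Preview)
Your proposal is correct. The paper does not actually prove Fact~\ref{Fact_KL}; it is introduced with ``we need the following well-known'' and stated without argument, so there is no approach to compare against. Your arguments are the standard ones, and in fact your Pinsker route for item~3 yields the stronger conclusion that $\xi=\tfrac12$ works uniformly in $\nu$, whereas the paper only asserts existence of some $\xi(\nu)$.
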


In the case that $\cX=\cbc{0,1}$ has only two elements,
a probability distribution $\mu$ on $\cX$ can be encoded by a single number, say, $\mu(1)$.
It is well known that with this convention, we have the following large deviations principle for the
binomial distribution:
for any $p,q\in(0,1)$,
\begin{eqnarray}
	\frac1n\ln\pr\brk{\Bin(n,q)=pn}&=& -\KL pq+\Theta\left(\frac{\ln n}{n}\right)\nonumber\\
		&=&p\ln\frac qp+(1-p)\ln\frac{1-q}{1-p}+\Theta\left(\frac{\ln n}{n}\right).\label{eq:BinomailVsKLDiv}\label{eqBinLDP}
\end{eqnarray}
Additionally, we have the following \emph{Chernoff bound}~\cite[p.~21]{JLR}.

\begin{lemma}\label{Lemma_Chernoff}
Let $\varphi(x)=(1+x)\ln(1+x)-x$.
Let $X$ be a binomial random variable with mean $\mu>0$.
Then for any $t>0$ we have
	\begin{eqnarray*}
	\pr\brk{X>\mu+t}&\leq&\exp(-\mu\cdot\varphi(t/\mu)),\quad
	\pr\brk{X<\mu-t}\leq\exp(-\mu\cdot\varphi(-t/\mu)).
	\end{eqnarray*}
In particular, for any $t>1$ we have
	$\pr\brk{X>t\mu}\leq\exp\brk{-t\mu\ln(t/\eul)}.$
\end{lemma}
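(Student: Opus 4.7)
The plan is to apply the standard exponential moment (Chernoff) method. Write $X = X_1 + \dots + X_n$ as a sum of independent Bernoulli$(p)$ variables with $\mu = np$. For any $\lambda > 0$, Markov's inequality yields
\[
\pr\brk{X > \mu + t} \;\leq\; \eul^{-\lambda(\mu+t)}\,\Erw\brk{\eul^{\lambda X}} \;=\; \eul^{-\lambda(\mu+t)}\bc{1-p+p\eul^{\lambda}}^n.
\]
Using the elementary inequality $1+x \leq \eul^x$ in the form $1-p+p\eul^{\lambda} \leq \exp(p(\eul^{\lambda}-1))$, this simplifies to $\exp(\mu(\eul^{\lambda}-1) - \lambda(\mu+t))$. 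The first inequality to prove is then reduced to an optimization of the exponent over $\lambda > 0$.

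Next I would minimize over $\lambda$ by differentiating: the unique stationary point is $\lambda^* = \ln(1 + t/\mu)$, which is strictly positive since $t > 0$, so it is an admissible choice. Substituting $\lambda^*$ back into the exponent gives
\[
\mu(\eul^{\lambda^*} - 1) - \lambda^*(\mu+t) \;=\; t - (\mu + t)\ln(1+t/\mu) \;=\; -\mu\bc{(1+t/\mu)\ln(1+t/\mu) - t/\mu} \;=\; -\mu\varphi(t/\mu),
\]
which is precisely the upper tail estimate. For the lower tail I would run the identical argument with $\lambda < 0$ (or, equivalently, apply Markov to $\eul^{-\lambda X}$), whose optimization produces $\lambda^* = \ln(1-t/\mu)$; this is meaningful for $0 < t < \mu$, and the inequality is trivial when $t \geq \mu$ because $\{X < \mu - t\} = \emptyset$. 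Carrying out the same algebra yields the bound $\exp(-\mu\varphi(-t/\mu))$.

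For the ``in particular'' assertion I would specialize the first bound by setting $\tau = (t-1)\mu > 0$, so that $\pr\brk{X > t\mu} = \pr\brk{X > \mu + \tau} \leq \exp(-\mu\varphi(t-1))$. Finally, since $\varphi(t-1) = t\ln t - (t-1) = t\ln(t/\eul) + 1 \geq t\ln(t/\eul)$ for $t > 1$, the right-hand side is bounded by $\exp(-t\mu\ln(t/\eul))$, as claimed. The result is entirely textbook material, so there is no genuine obstacle; the only points requiring care are verifying that $\lambda^*$ has the correct sign to justify the use of Markov's inequality in each case, and checking that the small algebraic step $\varphi(t-1) \geq t\ln(t/\eul)$ is non-negative for $t > 1$.
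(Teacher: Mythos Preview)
Your proof is correct and is precisely the standard exponential-moment derivation. The paper does not give its own proof of this lemma; it simply cites the textbook~\cite[p.~21]{JLR}, whose argument is the one you have reproduced.
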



For a real $a$ and an integer $j\geq0$ let us denote by
	$$(a)_j=\prod_{i=1}^j(a-i+1)$$
the \bemph{$j$th falling factorial of $a$}.
We need the following well-known result on convergence to the Poisson distribution
	(e.g., \cite[p.~26]{BB}).

\begin{theorem}\label{Thm_Poisson}
Let $\lambda_1,\ldots,\lambda_l>0$.
Suppose that $X_1(n),\ldots,X_l(n)\geq0$ are sequences of integer-valued random variables such that for any family
$q_1,\ldots,q_l$ of non-negative integers it is true that
	$$\Erw\brk{\prod_{j=1}^l(X_j(n))_{q_j}}\sim \prod_{j=1}^l\lambda_j^{q_j}\qquad\mbox{as }n\ra\infty.$$
Then for any $q_1,\ldots,q_l$ we have
	\begin{equation}\label{eqPoisson}
	\pr\brk{X_1(n)=q_1,\ldots,X_l(n)=q_l}\sim\prod_{j=1}^l\pr\brk{\Po(\lambda_j)=q_j}.
	\end{equation}
\end{theorem}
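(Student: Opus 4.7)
The plan is to reconstruct the joint PMF $\pr\brk{X_1(n)=q_1,\ldots,X_l(n)=q_l}$ from the joint factorial moments via an inclusion-exclusion (M\"obius inversion) identity, and then to use Bonferroni-style truncation to transfer the hypothesized convergence of factorial moments into termwise convergence of a finite sandwich on the joint PMF.

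First I would treat the single-variable case $l=1$. Inverting the relation $\Erw\brk{(X)_s}/s!=\sum_{i\geq s}\binom{i}{s}\pr\brk{X=i}$ by binomial M\"obius inversion yields the formal identity
$$\pr\brk{X=q}=\sum_{s\geq q}\frac{(-1)^{s-q}}{(s-q)!\,q!}\,\Erw\brk{(X)_s},$$
which for $X\sim\Po(\lambda)$ recovers $\eul^{-\lambda}\lambda^q/q!$. To bypass any a priori convergence issue, I would establish the Bonferroni sandwich: a direct computation using the alternating identity $\sum_{t=0}^{T}(-1)^t\binom{N}{t}=(-1)^T\binom{N-1}{T}$ shows that, for every non-negative integer $x$ and every $S\geq q$, the partial sum $\sum_{s=q}^{S}(-1)^{s-q}\binom{s}{q}\binom{x}{s}$ is $\geq\mathbf{1}\{x=q\}$ if $S-q$ is even and $\leq\mathbf{1}\{x=q\}$ if $S-q$ is odd. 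Taking expectations, the truncated sum of factorial moments pins $\pr\brk{X(n)=q}$ between two explicit bounds. For each fixed $S$ the hypothesis supplies termwise convergence as $n\to\infty$ to the analogous truncation with $\lambda^s$ substituted, and as $S\to\infty$ both truncations converge to $\eul^{-\lambda}\lambda^q/q!$, so the squeeze forces $\pr\brk{X(n)=q}\to\eul^{-\lambda}\lambda^q/q!$.

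Second, for the joint case I would apply the same identity coordinate by coordinate, yielding the pointwise equality
$$\prod_{j=1}^{l}\mathbf{1}\{X_j=q_j\}=\sum_{s_1\geq q_1,\ldots,s_l\geq q_l}\prod_{j=1}^{l}\frac{(-1)^{s_j-q_j}}{(s_j-q_j)!\,q_j!}\,(X_j)_{s_j}.$$
Truncating each index $s_j$ at a large $T_j$ and iterating the single-variable sandwich one coordinate at a time (with parities of the $T_j-q_j$ chosen consistently) produces finite upper and lower bounds on $\pr\brk{X_1(n)=q_1,\ldots,X_l(n)=q_l}$, each a finite linear combination of joint factorial moments $\Erw\brk{\prod_{j}(X_j(n))_{s_j}}$. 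By hypothesis these combinations converge as $n\to\infty$ to the same linear combinations with $\prod_j\lambda_j^{s_j}$ in place of the joint factorial moments, and letting each $T_j\to\infty$ both bounds tend to $\prod_{j}\eul^{-\lambda_j}\lambda_j^{q_j}/q_j!$, which gives~\eqref{eqPoisson}.

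The main obstacle is the multi-dimensional Bonferroni step: signs and parities must be tracked simultaneously in all $l$ coordinates, since replacing the indicator in one coordinate by an upper or lower bound can in principle flip the inequality after multiplying by the (sign-varying) factorial-moment expressions in the other coordinates. A cleaner but less self-contained route is to invoke the method of moments: because $\Po(\lambda)$ is determined by its moments (Carleman's condition holds trivially), convergence of joint factorial moments to those of a product of independent Poissons implies joint convergence in distribution to that product, which upgrades to pointwise PMF convergence since the $X_j(n)$ and the limit are integer-valued.
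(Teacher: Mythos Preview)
The paper does not prove this theorem; it is stated as a well-known fact with a citation to Bollob\'as's \emph{Random Graphs} (p.~26). So there is no in-paper proof to compare against, and your outline is essentially the standard textbook argument.

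Your single-variable argument is correct. Your worry about the multi-dimensional Bonferroni is legitimate for \emph{coordinate-wise} truncation (the lower truncations $L_j$ can be negative, so you cannot simply multiply the sandwiches), but it dissolves if you truncate by \emph{total degree} instead. Writing $t_j=s_j-q_j$ and using $\binom{s_j}{q_j}\binom{x_j}{s_j}=\binom{x_j}{q_j}\binom{x_j-q_j}{t_j}$, the partial sum over $t_1+\cdots+t_l\le S$ becomes
\[
\Bigl(\prod_j\tbinom{x_j}{q_j}\Bigr)\sum_{T=0}^{S}(-1)^T\sum_{|\mathbf t|=T}\prod_j\tbinom{x_j-q_j}{t_j}
=\Bigl(\prod_j\tbinom{x_j}{q_j}\Bigr)\sum_{T=0}^{S}(-1)^T\tbinom{M}{T}
=\Bigl(\prod_j\tbinom{x_j}{q_j}\Bigr)(-1)^S\tbinom{M-1}{S},
\]
with $M=\sum_j(x_j-q_j)$, by Vandermonde and the alternating identity you already quoted. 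This is $\ge \prod_j\mathbf 1\{x_j=q_j\}$ for $S$ even and $\le$ for $S$ odd, exactly as in one dimension. Taking expectations and passing to the limit in $n$ and then $S$ completes the squeeze without any sign-tracking across coordinates.

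Your alternative via the method of moments is also fine: the joint limit is a product of independent Poissons, whose multivariate moment generating function is entire, so the distribution is moment-determined; convergence of joint (factorial) moments then gives convergence in distribution, and for integer-valued variables this upgrades to pointwise PMF convergence.
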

If~(\ref{eqPoisson}) holds for any $q_1,\ldots,q_l$, 
then $X_1(n),\ldots,X_l(n)$ are \bemph{asymptotically independent $\Po(\lambda_j)$ variables}.

In many places throughout the paper we are going to encounter the hypergeometric distribution.
The following well-known relationship between the hypergeometric distribution and the binomial distribution will
simplify many estimates.

\begin{lemma}\label{lem_balls_bins_cap}
For any integer $d>1$ there exists a number $C=C(d)>0$ such that the following is true.
Let $U$ be a set of size $u>1$.
Choose a set $S\subset U\times\brk d$ size $\abs S=s\geq1$ uniformly at random and let $e_v=S\cap(\cbc v\times\brk d)$.
Furthermore, let $(b_v)_{v\in U}$ be a family of independent $\Bin(d,\frac{s}{d u})$ variables.
Then for any sequence $(t_v)_{v\in U}$ of non-negative integers such that $\sum_{v \in U} t_v = \mu$ we have 
	$$\pr\brk{\forall v \in S: e_{v}=t_{v}}=\pr\brk{\forall v \in S: b_v=t_{v}\bigg|\sum_{v\in U}b_v=s}\leq C\sqrt u\cdot\pr\brk{\forall v \in M: b_v=t_{v}}.$$
\end{lemma}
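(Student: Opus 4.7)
The plan is to reduce everything to a comparison between uniform subsets and independent Bernoulli trials, together with a standard local estimate for a binomial at its mean.

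For the first equality, I would introduce the coupling in which each of the $du$ elements of $U\times[d]$ is included in a random set $T$ independently with probability $p=s/(du)$. Then for every $v$, the random variable $|T\cap(\{v\}\times[d])|$ is $\Bin(d,p)$, jointly independent across $v$, so the multi-index vector $(|T\cap(\{v\}\times[d])|)_{v\in U}$ has the same joint distribution as $(b_v)_{v\in U}$. Conditioning on $|T|=s$ makes $T$ a uniformly random size-$s$ subset of $U\times[d]$, i.e.\ a copy of $S$, and under this conditioning $|T\cap(\{v\}\times[d])|$ becomes $e_v$. This gives the identity claimed in the first equality, with the event on the right in fact being indexed by $v\in U$ (the appearance of $S$ or $M$ in the statement is a typographical inconsistency).

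For the inequality, I would use Bayes' formula on the identity above: since $\sum_{v\in U}t_v=s$, the event $\{\forall v:b_v=t_v\}$ is a subset of $\{\sum_vb_v=s\}$, hence
\[
\pr\brk{\forall v\in U:e_v=t_v}=\frac{\pr\brk{\forall v\in U:b_v=t_v}}{\pr\brk{\sum_{v\in U}b_v=s}}.
\]
Thus the whole lemma reduces to the lower bound $\pr\brk{\sum_{v\in U}b_v=s}\geq 1/(C\sqrt{u})$ for an appropriate $C=C(d)$.

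The random variable $\sum_{v\in U}b_v$ is $\Bin(du,p)$ with $p=s/(du)$, so its mean equals the target value $s$. I would now invoke the standard local estimate (Stirling applied to $\binom{du}{s}p^s(1-p)^{du-s}$), which gives
\[
\pr\brk{\Bin(du,p)=s}=\Theta\bc{\frac{1}{\sqrt{2\pi s(1-p)}}}=\Theta\bc{\frac{1}{\sqrt{s(1-s/(du))}}}.
\]
Because $s(1-s/(du))\leq du$, the right-hand side is $\Omega(1/\sqrt{du})=\Omega(1/\sqrt{u})$ with a constant depending only on $d$. The edge cases ($s$ very small, or $s$ very close to $du$) only improve the estimate: in these regimes $s(1-p)=O(1)$, so the probability at the mean is in fact $\Omega(1)$. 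This yields the required constant $C=C(d)$.

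There is no real obstacle here; the only mild care is in ensuring the Stirling estimate is applied uniformly across the full range $1\leq s\leq du$, which is handled by separating the bulk regime from the two tail regimes as indicated.
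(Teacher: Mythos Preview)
Your argument is correct and is exactly the standard derivation of this comparison between the multivariate hypergeometric and the product-binomial distributions. The paper itself does not supply a proof of this lemma; it is stated as a known fact in the preliminaries and used as a black box throughout. So there is nothing to compare against beyond noting that your write-up is the natural proof: the Bernoulli coupling gives the identity, Bayes' rule reduces the inequality to a pointwise lower bound on $\pr\brk{\Bin(du,s/(du))=s}$, and Stirling at the mean yields $\Theta(1/\sqrt{s(1-s/(du))})\geq\Omega_d(1/\sqrt u)$, with the boundary values of $s$ handled trivially. You are also right that the index sets in the displayed formula should all be $U$ and that the constraint $\sum_v t_v=\mu$ is meant to be $\sum_v t_v=s$; these are typographical slips in the paper, not in your proof.
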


%

Finally, the following version of the chain rule will come in handy.

\begin{lemma}\label{Lemma_chainrule}
Suppose that $g:\RR^a\ra\RR^b$ and $f:\RR^b\ra\RR$ are functions with two continuous second derivatives.
Then for any $x_0\in\RR^a$ and with $y_0=g(x_0)$ we have for any $i,j\in\brk a$
\begin{eqnarray*}
\frac{\partial^2f\circ g}{\partial x_i\partial x_j}\bigg|_{x_0}&=&
	\sum_{k=1}^b\frac{\partial f}{\partial y_k}\bigg|_{y_0}\frac{\partial^2 g_k}{\partial x_i\partial x_j}\bigg|_{x_0}+
		\sum_{k,l=1}^b\frac{\partial^2f}{\partial y_k\partial y_l}\bigg|_{y_0}\frac{\partial g_k}{\partial x_i}\bigg|_{x_0}\frac{\partial g_l}{\partial x_j}\bigg|_{x_0}.
\end{eqnarray*}
\end{lemma}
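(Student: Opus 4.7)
The plan is to derive the formula by applying the standard (first-order) multivariate chain rule twice, together with the product rule, and invoking the symmetry of mixed partials granted by the $C^2$ hypothesis. Since $f$ and $g$ are both twice continuously differentiable, so is $f\circ g$, and all the partials appearing in the statement exist and are continuous, so the identity need only be verified pointwise.

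First I would write down the first derivative of the composition via the ordinary chain rule: for any $x\in\RR^a$ and any $j\in\brk a$,
\begin{equation*}
\frac{\partial f\circ g}{\partial x_j}(x)=\sum_{k=1}^{b}\frac{\partial f}{\partial y_k}(g(x))\cdot\frac{\partial g_k}{\partial x_j}(x).
\end{equation*}
Next I would differentiate this identity with respect to $x_i$. Each summand is a product of two functions of $x$, so the product rule applies. Differentiating the factor $\partial g_k/\partial x_j$ simply yields $\partial^2 g_k/(\partial x_i\partial x_j)$, whereas differentiating $(\partial f/\partial y_k)\circ g$ requires another application of the first-order chain rule, producing $\sum_{l=1}^{b}(\partial^2 f/\partial y_l\partial y_k)(g(x))\cdot(\partial g_l/\partial x_i)(x)$.

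Collecting these two contributions and evaluating at $x_0$ gives precisely
\begin{equation*}
\frac{\partial^2 f\circ g}{\partial x_i\partial x_j}\bigg|_{x_0}=\sum_{k=1}^{b}\frac{\partial f}{\partial y_k}\bigg|_{y_0}\frac{\partial^2 g_k}{\partial x_i\partial x_j}\bigg|_{x_0}+\sum_{k,l=1}^{b}\frac{\partial^2 f}{\partial y_k\partial y_l}\bigg|_{y_0}\frac{\partial g_l}{\partial x_i}\bigg|_{x_0}\frac{\partial g_k}{\partial x_j}\bigg|_{x_0},
\end{equation*}
which matches the claimed expression after relabelling the summation indices $k\leftrightarrow l$ in the second sum (permissible because $\partial^2 f/\partial y_k\partial y_l$ is symmetric in $k,l$ by Schwarz's theorem, which holds under the $C^2$ assumption).

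There is no genuine obstacle here: the statement is essentially bookkeeping on top of the classical chain rule. The only point worth flagging is that one must have enough smoothness to justify applying the chain rule to the first-order partials $\partial f/\partial y_k$, which is exactly why the hypothesis requires two continuous derivatives rather than merely the existence of second partials; the continuity also ensures the final expression is independent of the order in which we differentiate.
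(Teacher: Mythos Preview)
Your proof is correct and is the standard derivation. The paper does not actually supply a proof of this lemma; it is stated in the preliminaries as a known version of the chain rule, so there is nothing to compare against.
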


\subsection{The configuration model}

As our goal is to study random $d$-regular graphs on $n$ vertices, we will always assume that $dn$ is even.
To get a handle on the random regular graph $\gnd$, we work with the {\em configuration model}~\cite{BBConf}.
More precisely, an \bemph{$(n,d)$-configuration} is a map $\Gamma:V\times\brk d\ra V\times\brk d$ such
that $\Gamma\circ\Gamma=\id$. 
In other words, an $(n,d)$-configuration is a perfect matching of the complete graph on $V\times\brk d$.
Thus, the total number of $(n,d)$-configurations is equal to
	\begin{equation}\label{eqMatchings}
	(dn-1)!!=\frac{(dn)!}{2^{dn/2}(dn/2)!}=\Theta(\sqrt{(dn)!}/(dn)^\frac14).
	\end{equation}
We call the pairs $(v,j)$, $j\in\brk d$ the {\bf\em clones} of $v$.

Any $(n,d)$-configuration $\Gamma$ induces a multi-graph 
 with vertex set $V$ by contracting the $d$ clones of each $v\in V$ into a single vertex.
Throughout, we are going to denote a uniformly random $(n,d)$-configuration by $\vec\Gamma$.
Furthermore, $\Gnd$ denotes the multi-graph obtained
from $\vec\Gamma$.
The relationship between $\Gnd$ and the simple random $d$-regular graph $\gnd$ is as follows.

\begin{lemma}[\cite{BBConf}]\label{Lemma_configurationModel}
Let $\cS(n,d)$ denote the event that $\Gnd$ is a simple graph.
Then for any event event $\cB$ we have
	$\pr\brk{\gnd\in\cB}=\pr\brk{\Gnd\in\cB|\cS(n,d)}.$
Furthermore, there is an $n$-independent number $\eps_d>0$ such that
	$\pr\brk{\cS(n,d)}\geq\eps_d$.
\end{lemma}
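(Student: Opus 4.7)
The plan is to prove the two assertions separately. For the distributional identity, I would observe that every simple $d$-regular graph $G$ on $V$ is the contraction of exactly $(d!)^n$ configurations under the clone-contraction map: at each vertex $v$, the $d$ edges of $G$ incident to $v$ can be assigned to the clones $(v,1),\ldots,(v,d)$ in $d!$ ways, and these assignments are independent across vertices. Since $\vec\Gamma$ is uniform over the $(dn-1)!!$ configurations and $\cS(n,d)$ is precisely the union of the fibres above simple graphs, conditioning on $\cS(n,d)$ yields the uniform distribution on simple $d$-regular graphs on $V$, which by definition is $\gnd$. This immediately gives $\pr\brk{\gnd\in\cB}=\pr\brk{\Gnd\in\cB\mid \cS(n,d)}$ for every event $\cB$.

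For the positive lower bound on $\pr\brk{\cS(n,d)}$, I would use the Poisson convergence machinery of Theorem 2.4. Let $L=L(\vec\Gamma)$ count the self-loops of $\vec\Gamma$ (pairs of clones $\{(v,i),(v,j)\}$ matched at the same vertex) and let $M=M(\vec\Gamma)$ count the pairs of edges realising a double edge between two distinct vertices. Simplicity is precisely the event $\cbc{L=0,M=0}$. My plan is to verify, for every $q_1,q_2\geq 0$,
\[
\Erw\brk{(L)_{q_1}(M)_{q_2}}\longrightarrow \lambda_1^{q_1}\lambda_2^{q_2},\qquad \lambda_1=\tfrac{d-1}{2},\ \lambda_2=\tfrac{(d-1)^2}{4},
\]
by counting ordered $q_1$-tuples of candidate loops and $q_2$-tuples of candidate double edges, then multiplying by $(dn-2q_1-4q_2-1)!!$ to matching of the remaining clones and dividing by $(dn-1)!!$. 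The asymptotic $(dn-2r-1)!!/(dn-1)!!\sim(dn)^{-r}$ matches the combinatorial leading-order count to the claimed product of means. Theorem 2.4 then gives joint Poisson convergence of $(L,M)$, and hence
\[
\pr\brk{\cS(n,d)}=\pr\brk{L=0,M=0}\longrightarrow \exp(-\lambda_1-\lambda_2)>0,
\]
so one may set $\eps_d=\tfrac12\exp(-\lambda_1-\lambda_2)$ for large $n$, adjusting $\eps_d$ downward to handle the finitely many small $n$.

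The main obstacle is the bookkeeping needed to make sure that overlapping placements of loops and double edges contribute only $o(1)$ to $\Erw\brk{(L)_{q_1}(M)_{q_2}}$: if two of the $q_1+q_2$ chosen short subgraphs share a clone, the number of remaining free clones drops by strictly more than the generic count anticipates, producing an extra factor of $(dn)^{-1}$ for each overlap. A uniform union bound over the constantly many overlap patterns therefore suppresses these contributions, and one is left with the clean product form above. Once this is in place, everything else reduces to plugging into Theorem 2.4 and the double-factorial asymptotics recorded in~(\ref{eqMatchings}).
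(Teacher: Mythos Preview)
The paper does not supply its own proof of this lemma; it simply cites Bollob\'as~\cite{BBConf}, so there is nothing to compare against beyond the original source. Your argument is the standard one and is correct: the uniform-fibre count $(d!)^n$ gives the conditional-law identity, and the factorial-moment computation for $(L,M)=(\Xi_1,\Xi_2)$ feeds directly into \Thm~\ref{Thm_Poisson} to yield $\pr\brk{\cS(n,d)}\to\exp\bc{-\frac{d-1}{2}-\frac{(d-1)^2}{4}}>0$, with the overlap terms handled exactly as you describe.
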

Thus, if we want to show that some ``bad'' event $\cB$ does not occur in $\gnd$ \whp, then 
it suffices to prove that this event does not occur in the random multi-graph $\Gnd$ \whp

For two sets $A, B\subset V$ of vertices we let
	$$e_{\Gnd}(A,B) =\abs{\cbc{(v,i)\in A\times\brk d:\vec\Gamma(v,i)\in B\times\brk d}}=
			\abs{\cbc{(w,j)\in B\times\brk d:\vec\Gamma(w,j)\in A\times\brk d}}$$
denote the number of $A$-$B$-edges in $\Gnd$.
If $A=\cbc{v}$, we use the shorthand $e_{\Gnd}(v,B)$, which is nothing but the number $v$-$B$ edges.
(Of course, as $\Gnd$ is a multi-graph, this is not necessarily the same as the number of neighbors of $v$ in $B$.)
If $A=B$, we let
	$$e_{\Gnd}(A) =e_{\Gnd}(A,A).$$

\subsection{Partitions of random regular graphs}\label{Sec_partitions}

The graph coloring problem is basically just a particular kind of graph partitioning problem.
Therefore, the following (as we believe, elegant) estimate of the probability that the random
regular graph admits a particular partition will be quite useful;
	it seems to have gone unnoticed so far.

Let $K\geq2$ be an integer and let $\rho=(\rho_i)_{i\in\brk K}$ be a probability distribution on $\brk K$.
Moreover, let $\mu=(\mu_{ij})_{i,j\in\brk K}$ be a probability distribution on $\brk K\times\brk K$ 
such that $\mu_{ij}=\mu_{ji}$ for all $i,j\in\brk K$.
We say that $(\rho,\mu)$ is \bemph{$(d,n)$-admissible} if $\rho_in$, $\mu_{ij}dn$ are integers for all 
$i,j\in\brk K$ and if 	$$\sum_{j\in\brk K}\mu_{ij}=\sum_{j\in\brk K}\mu_{ji}=\rho_i\quad\mbox{ for all $i\in\brk K$.}$$
In other words, $\rho$ is the marginal distribution of $\mu$ (in both dimensions).
Let $\rho\tensor\rho$ denote the product distribution $(\rho_i\rho_j)_{i,j\in\brk K}$ on $\brk K\times\brk K$.


\begin{lemma}\label{Lemma_partition}
Let $(\rho,\mu)$ be $(d,n)$-admissible.
Moreover, let $V_1,\ldots,V_K$ be a partition of the vertex set $V$ such that $|V_i|=\rho_in$ for all $i\in\brk K$.
Then 
	\begin{equation}\label{eqLemma_partition}
	\frac1n\ln\pr\brk{\forall i,j\in\brk K:e_{\Gnd}(V_i,V_j)=\mu_{ij}dn}=-\frac d2\KL{\mu}{\rho\tensor\rho}+O(\ln n/n).
	\end{equation}
\end{lemma}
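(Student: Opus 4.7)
The plan is to count configurations directly. Let $N$ denote the number of $(n,d)$-configurations $\Gamma$ such that $e_\Gamma(V_i,V_j)=\mu_{ij}dn$ for every $i,j\in\brk K$. Since $\vec\Gamma$ is uniformly distributed over the $(dn-1)!!$ configurations from (\ref{eqMatchings}),
$$\pr\brk{\forall i,j\in\brk K:e_{\Gnd}(V_i,V_j)=\mu_{ij}dn}=\frac{N}{(dn-1)!!},$$
so the entire task reduces to a Stirling estimate of $\ln N$.

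To produce such a configuration I would first decide, for each $i$, how the $d\rho_in$ clones attached to $V_i$ are distributed among the $K$ possible match-targets; $(d,n)$-admissibility forces the split to have sizes $(\mu_{ij}dn)_{j\in\brk K}$, contributing a multinomial factor $\binom{d\rho_in}{\mu_{i1}dn,\ldots,\mu_{iK}dn}$. For each unordered pair $i<j$ I pair the $\mu_{ij}dn$ clones on the $V_i$-side with those on the $V_j$-side, giving an additional $(\mu_{ij}dn)!$ bipartite matchings. Finally, the remaining $\mu_{ii}dn$ clones of each $V_i$ must be paired internally, contributing a perfect-matching count $(\mu_{ii}dn-1)!!$ (implicitly requiring $\mu_{ii}dn$ even; otherwise both sides of (\ref{eqLemma_partition}) equal $-\infty$). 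Multiplying these independent choices delivers $N$ exactly.

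Next I would apply $\ln k!=k\ln k-k+O(\ln k)$ and $\ln(2\ell-1)!!=\ell\ln(2\ell)-\ell+O(\ln \ell)$ to each factor of $N$ and of $(dn-1)!!$. Two clean cancellations occur: the identity $\sum_j\mu_{ij}=\rho_i$ eliminates all $n\ln n$ contributions coming from the prefactors $\ln(dn)$, while the symmetry $\mu_{ij}=\mu_{ji}$ combines $\sum_{i<j}(\mu_{ij}dn)\ln(\mu_{ij}dn)$ and $\sum_i\tfrac12(\mu_{ii}dn)\ln(\mu_{ii}dn)$ into a single symmetric sum $\tfrac12\sum_{i,j}(\mu_{ij}dn)\ln(\mu_{ij}dn)$. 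After routine arithmetic everything collapses to
$$\ln\frac{N}{(dn-1)!!}=dn\sum_{i\in\brk K}\rho_i\ln\rho_i-\frac{dn}2\sum_{i,j\in\brk K}\mu_{ij}\ln\mu_{ij}+O(\ln n).$$
Because $\sum_{i,j}\mu_{ij}\ln(\rho_i\rho_j)=2\sum_i\rho_i\ln\rho_i$ (again using $\sum_j\mu_{ij}=\rho_i$), the right-hand side rewrites as $-\tfrac{dn}2\KL{\mu}{\rho\tensor\rho}+O(\ln n)$. Dividing by $n$ yields (\ref{eqLemma_partition}) with the advertised error.

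No ingredient here is deep; the main obstacle is careful bookkeeping. The two subtle points are (i) recognising that $e_{\Gnd}(V_i,V_i)$ counts each internal edge \emph{twice}, so that the number of actual internal edges in $V_i$ equals $\mu_{ii}dn/2$ (which also explains the implicit parity requirement), and (ii) tracking the factor of two between the bipartite-matching contributions and the perfect-matching contributions, so that the leading asymptotic lands on the symmetric form $\tfrac12\sum_{i,j}\mu_{ij}\ln\mu_{ij}$ rather than on an asymmetric variant that would fail to match $\KL{\mu}{\rho\tensor\rho}$. The $O(\ln n)$ Stirling residual, summed over the $O(K^2)$ factors of $N$ and $(dn-1)!!$, becomes the $O(\ln n/n)$ error after division by $n$.
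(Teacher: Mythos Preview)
Your proof is correct and follows essentially the same approach as the paper. The paper introduces the term ``$\mu$-shading'' for the multinomial decomposition you describe (distributing the $d\rho_in$ clones of each $V_i$ among the $K$ targets), defines $\cN_\mu=\prod_i\binom{d\rho_in}{(\mu_{ij}dn)_j}$ and $\cM_\mu=\prod_{i<j}(\mu_{ij}dn)!\prod_i(\mu_{ii}dn-1)!!$, writes $\pr[\cE]=\cN_\mu\cM_\mu/(dn-1)!!$, and applies Stirling in the same way; your intermediate expression $dn\sum_i\rho_i\ln\rho_i-\frac{dn}{2}\sum_{i,j}\mu_{ij}\ln\mu_{ij}+O(\ln n)$ and final rewriting via $\sum_{i,j}\mu_{ij}\ln(\rho_i\rho_j)=2\sum_i\rho_i\ln\rho_i$ match the paper's computation line for line.
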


Before we prove \Lem~\ref{Lemma_partition}, let us try to elucidate the statement a little.
If we  fix the partition $V_1,\ldots,V_K$ and generate a random multi-graph $\Gnd$, then the expected number of edges between any two classes is just
	$$\Erw\brk{e_{\Gnd}(V_i,V_j)}=\rho_i\rho_j dn.$$
Thus, the ``expected edge density'' of the partition $V_1,\ldots,V_K$ is given by the product distribution $\rho\tensor\rho$.
The point of \Lem~\ref{Lemma_partition} is that it
 provides an estimate of the probability that
the fraction of edges that run between any two partition classes $V_i,V_j$ (or within one class if $i=j)$ follows some other the distribution~$\mu$.
Unless $\mu$ is very close to $\rho\tensor\rho$, the probability of this event is exponentially small, and
\Lem~\ref{Lemma_partition} yields an accurate estimate in terms of the Kullback-Leibler divergence of $\mu$ and the ``expected'' distribution $\rho\tensor\rho$.

Interestingly, a simple calculation shows that~(\ref{eqLemma_partition}) holds true if we replace $\Gnd$ by the \Erdos-\Renyi\ random graph $\gnm$ 
(with $m=dn/2$).
In other words, on a logarithmic scale the probability of observing a particular edge distribution $\mu$ is the same in both models.
This observation will be crucial for us to extend the second moment calculation that was performed in~\cite{ACOVilenchik} for $\gnm$ to the random regular graph $\gnd$.


\medskip\noindent{\em Proof of \Lem~\ref{Lemma_partition}.}
Let $\cE$ be the event that $e_{\Gnd}(V_i,V_j)=\mu_{ij}dn$ for all $i,j\in\brk K$.
Let us call a map $\sigma:V\times\brk d\ra\brk K$ a {\em $\mu$-shading} if for all $i,j\in\brk K$ we have
	$$\abs{\cbc{(v,l)\in V_i\times\brk d:\sigma(v,l)=j}}=\mu_{ij}dn.$$
Clearly, the total number of $\mu$-shadings is just
	$$\cN_{\mu}=\prod_{i=1}^K\bink{\rho_i dn}{\mu_{i1}dn,\ldots,\mu_{iK}dn}.$$

Any configuration $\Gamma$ 
that induces a multi-graph $\cG$ such that $e_{\Gnd}(V_i,V_j)=\mu_{ij}dn$ for all $i,j\in\brk K$
induces a $\mu$-shading $\sigma_\Gamma$.
Indeed, the shade of a clone $(v,l)$ is just the index $j\in\brk K$ such that $\Gamma(v,l)\in V_j\times\brk d$.

Conversely, for a given $\mu$-shading $\sigma$, how many configurations $\Gamma$ are 
there such that $\sigma=\sigma_\Gamma$?
To obtain such a configuration, we need to match the clones $(v,l)\in V_i\times\brk d$ with $\sigma(v,l)=j$
to the clones $(v',l')\in V_j\times\brk d$ such that $\sigma(v',l')=i$ for all $1\leq i\leq j\leq K$.
Clearly, the total number of such matchings is
	$$
	\cM_{\mu}=\prod_{1\leq i<j\leq K}(\mu_{ij}dn)!\cdot \prod_{i=1}^K(\mu_{ii}dn-1)!!.
	$$
Hence, 
	\begin{eqnarray}\label{eqprE}
	\pr\brk{\cE}&=&\frac{\cN_{\mu}\cM_{\mu}}{(dn-1)!!}
	\end{eqnarray}

Using Stirling's formula and~(\ref{eqMatchings}), we find that
	\begin{eqnarray*}
	\ln\cN_{\mu}&=&dn\sum_{i,j=1}^K\mu_{ij}\ln(\rho_i/\mu_{ij})+O(\ln n),\\
	\ln\frac{\cM_{\mu}}{(dn-1)!!}&=&\frac12\ln\frac{\prod_{i,j=1}^K(\mu_{ij}dn)!}{(dn)!}+O(\ln n)=-\frac12\ln\bink{dn}{(\mu_{ij}dn)_{i,j\in\brk K}}+O(\ln n)\\
		&=&\frac{dn}2\sum_{i,j=1}^K\mu_{ij}\ln\mu_{ij}+O(\ln n).
	\end{eqnarray*}
Plugging these estimates into~(\ref{eqprE}), we obtain
	\begin{eqnarray*}
	\ln\pr\brk\cE&=&\frac{dn}2\sum_{i,j=1}^K\mu_{ij}\bc{2\ln\frac{\rho_i}{\mu_{ij}}+\ln\mu_{ij}}+O(\ln n)
		=\frac{dn}2\sum_{i,j=1}^K\mu_{ij}\ln\frac{\rho_i^2}{\mu_{ij}}+O(\ln n)\\
		&=&\frac{dn}2\sum_{i,j=1}^K\mu_{ij}\ln\frac{\rho_i\rho_j}{\mu_{ij}}+O(\ln n)\qquad\mbox{[as $\mu_{ij}=\mu_{ji}$ for all $i,j\in\brk K$]}\\
		&=&-\frac{dn}2\KL{\mu}{\rho\tensor\rho}+O(\ln n),
	\end{eqnarray*}
as claimed.
%
\qed

\begin{corollary}\label{Lemma_skewed}
Let $(\rho,\mu)$ be $(d,n)$-admissible and let $Z_{\mu}$ denote the number
of partitions $V_1,\ldots,V_K$ of $V$ such that
	\begin{eqnarray}\label{eqLemma_skewedA}
	|V_i|&=&\rho_in\qquad\mbox{for all $i\in\brk K$, and }\\
	e_{\Gnd}(V_i,V_j)&=&\mu_{ij}dn\quad\mbox{for all }i,j\in\brk K.\label{eqLemma_skewedB}
	\end{eqnarray}
%
%
Then 
	\begin{equation}\label{eqLemma_skewedKL}
	\frac1n\ln\Erw\brk{Z_{\mu}}=H(\rho)-\frac{d}2\KL{\mu}{\rho\tensor\rho}+O(\ln n/n).
	\end{equation}
\end{corollary}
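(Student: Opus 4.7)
The plan is to compute $\Erw[Z_\mu]$ by linearity of expectation, writing it as a product of (i) the number of ordered partitions $(V_1,\ldots,V_K)$ of $V$ with class sizes $(\rho_i n)_{i\in\brk K}$ and (ii) the probability that, for any one such fixed partition, the edge counts $e_{\Gnd}(V_i,V_j)$ match $\mu_{ij}dn$ for all $i,j\in\brk K$. The second factor is exactly what Lemma~\ref{Lemma_partition} delivers, and by symmetry of the configuration model (the law of $\vec\Gamma$ is invariant under any permutation of $V$) this probability does not depend on which particular partition of the prescribed sizes we pick.

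First I would write
\begin{equation*}
\Erw[Z_\mu] \;=\; \binom{n}{\rho_1 n,\ldots,\rho_K n}\cdot\pr\bigl[\forall i,j\in\brk K:\,e_{\Gnd}(V_i^\star,V_j^\star)=\mu_{ij}dn\bigr],
\end{equation*}
where $(V_1^\star,\ldots,V_K^\star)$ is any fixed partition with $|V_i^\star|=\rho_i n$. Taking logarithms, Stirling's formula gives
\begin{equation*}
\ln\binom{n}{\rho_1 n,\ldots,\rho_K n} \;=\; -n\sum_{i=1}^K \rho_i\ln\rho_i + O(\ln n) \;=\; n\,H(\rho) + O(\ln n),
\end{equation*}
since the $\rho_i$ are $n$-independent constants. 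Meanwhile, Lemma~\ref{Lemma_partition} supplies
\begin{equation*}
\ln\pr\bigl[\forall i,j:\,e_{\Gnd}(V_i^\star,V_j^\star)=\mu_{ij}dn\bigr] \;=\; -\frac{dn}{2}\,\KL{\mu}{\rho\tensor\rho} + O(\ln n).
\end{equation*}
Adding the two estimates and dividing by $n$ yields exactly~(\ref{eqLemma_skewedKL}).

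There is essentially no obstacle here; the entire content of the corollary is packaged into Lemma~\ref{Lemma_partition} together with a one-line Stirling estimate for the multinomial counting the partitions. The only small point worth being careful about is that $Z_\mu$ counts partitions (unordered or ordered, depending on convention), but since the class labels are tied to indices $i\in\brk K$ with distinct prescribed sizes or distinct marginal roles in $\mu$, the multinomial coefficient is the right count, and any discrepancy of a bounded combinatorial factor (say $K!$) is absorbed into the $O(\ln n/n)$ error after dividing by $n$.
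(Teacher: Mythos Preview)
Your proof is correct and follows essentially the same approach as the paper: write $\Erw[Z_\mu]$ via linearity of expectation as the multinomial count of partitions with the prescribed class sizes times the probability from \Lem~\ref{Lemma_partition}, then apply Stirling to the multinomial. One tiny quibble: the $\rho_i$ need not be $n$-independent constants (they are constrained by $\rho_i n\in\ZZ$), but this makes no difference, as the Stirling estimate $\ln\bink{n}{\rho_1 n,\ldots,\rho_K n}=nH(\rho)+O(\ln n)$ holds uniformly.
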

\begin{proof}
\Lem~\ref{Lemma_partition} provides the probability that
for any {\em fixed} partition 
	$V_1,\ldots,V_K$ we have $e_{\Gnd}(V_i,V_j)=\mu_{ij}dn$ for all $i,j\in\brk K$.
Furthermore, by Stirling's formula the total number of partitions $V_1,\ldots,V_K$ with $|V_i|=\rho_in$ for all $i\in\brk K$ is
	\begin{equation}\label{eqLemma_partition2}
	\bink{n}{\rho_1n,\ldots,\rho_kn}=\exp\brk{H(\rho)n+O(\ln n)}.
	\end{equation}
Thus, the assertion follows from~(\ref{eqLemma_partition}), (\ref{eqLemma_partition2}) and the linearity of expectation.
\qed\end{proof}

Finally, the expression~(\ref{eqLemma_skewedKL}) can be restated in a slightly more handy form if we assume that $\mu_{ii}=0$ for all $i\in\brk K$.
More precisely, we have

\begin{corollary}\label{Cor:Skewed}
Let $(\rho,\mu)$ be $(d,n)$-admissible such that $\mu_{ii}=0$ for all $i\in\brk K$.
Let $Z_{\mu}$ denote the number of partitions $V_1,\ldots,V_K$ that
satisfy~(\ref{eqLemma_skewedA}) and~(\ref{eqLemma_skewedB}).
Moreover, let $\hat\rho=(\rho_{ij})_{i,j\in\brk K}$ be the probability distribution  defined by
	$$\hat\rho_{ij}=\frac{\vecone_{i\neq j}\cdot\rho_i\rho_j}{1-\norm\rho_2^2}.$$
Then
	\begin{eqnarray}\label{eqCor:Skewed}
	\frac1n\ln\Erw\brk{Z_\mu}=H(\rho)+\frac{d}{2}\ln (1-\norm\rho_2^2)-\frac{d}2\KL{\mu}{\hat\rho}+O(\ln n/n).
	\end{eqnarray}
\end{corollary}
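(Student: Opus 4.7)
The plan is to derive Corollary~\ref{Cor:Skewed} as a direct algebraic consequence of Corollary~\ref{Lemma_skewed} by rewriting the Kullback--Leibler divergence $\KL{\mu}{\rho\tensor\rho}$ in terms of $\KL{\mu}{\hat\rho}$. The assumption $\mu_{ii}=0$ for all $i\in\brk K$ is precisely what makes this rewriting clean.

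First I would check that $\hat\rho$ is a bona fide probability distribution on $\brk K\times\brk K$. Since $\rho$ is a probability distribution on $\brk K$, we have $\sum_{i,j\in\brk K}\rho_i\rho_j=1$, and hence $\sum_{i\neq j}\rho_i\rho_j=1-\norm\rho_2^2$. This shows $\sum_{i,j\in\brk K}\hat\rho_{ij}=1$, and all entries are non-negative, so $\hat\rho$ is indeed a probability distribution. Moreover, $\hat\rho_{ij}=0$ exactly when $i=j$, which matches the support of $\mu$, so the divergence $\KL\mu{\hat\rho}$ is well-defined under our convention $0\ln 0=0$.

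Next, I would compute the identity relating the two divergences. Because $\mu_{ii}=0$ for each $i$, we have
\begin{align*}
\KL{\mu}{\hat\rho}
&=\sum_{i\neq j}\mu_{ij}\ln\frac{\mu_{ij}}{\hat\rho_{ij}}
=\sum_{i\neq j}\mu_{ij}\ln\frac{\mu_{ij}(1-\norm\rho_2^2)}{\rho_i\rho_j}\\
&=\sum_{i\neq j}\mu_{ij}\ln\frac{\mu_{ij}}{\rho_i\rho_j}+\ln(1-\norm\rho_2^2)\sum_{i\neq j}\mu_{ij}
=\KL{\mu}{\rho\tensor\rho}+\ln(1-\norm\rho_2^2),
\end{align*}
where in the last step I used $\sum_{i\neq j}\mu_{ij}=\sum_{i,j\in\brk K}\mu_{ij}=1$ (again because $\mu_{ii}=0$) together with the fact that $\mu_{ii}\ln(\mu_{ii}/(\rho_i\rho_j))=0$ by the convention $0\ln 0=0$. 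Rearranging yields
$$
-\KL{\mu}{\rho\tensor\rho}=\ln(1-\norm\rho_2^2)-\KL{\mu}{\hat\rho}.
$$

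Finally, I would substitute this identity into the formula~(\ref{eqLemma_skewedKL}) provided by Corollary~\ref{Lemma_skewed}, which gives exactly~(\ref{eqCor:Skewed}). There is no real obstacle here: once the identity for $\hat\rho$ is in place, the proof is a one-line substitution. The only thing to be slightly careful about is handling the $\mu_{ii}=0$ terms correctly under the $0\ln 0=0$ convention so that the split of the sum over $\brk K\times\brk K$ into the diagonal and off-diagonal parts is legitimate.
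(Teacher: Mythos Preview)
Your proof is correct and follows essentially the same approach as the paper: both start from Corollary~\ref{Lemma_skewed} and rewrite $\KL{\mu}{\rho\tensor\rho}$ as $\KL{\mu}{\hat\rho}-\ln(1-\norm\rho_2^2)$ using $\mu_{ii}=0$ and $\sum_{i,j}\mu_{ij}=1$. Your version is slightly more explicit in checking that $\hat\rho$ is a probability distribution and in stating the divergence identity as a standalone equation before substituting.
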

\begin{proof}
\Cor~\ref{Lemma_skewed} yields
\begin{eqnarray*}
	\frac1n\ln\Erw[Z_{\mu}]&=&H(\rho)-\frac d2\sum_{i,j=1}^K\mu_{ij}\ln\frac{ {\mu}_{ij}}{ {\rho}_i {\rho}_j}+O\left(\frac{\log n}{n}\right).
\end{eqnarray*}
Setting $y=\norm\rho_2^2=\sum_{i=1}^k {\rho}_i^2$, we get
\begin{eqnarray*}
	\frac1n\ln\Erw[Z]&=&H(\rho)+\frac d2\ln(1-y)-\frac d2\sum_{i,j=1}^K\mu_{ij}\ln\frac{(1-y)\mu_{ij}}{\rho_i\rho_j}+O\left(\frac{\log n}{n}\right)
		\quad\mbox{[as $\sum_{i,j=1}^K\mu_{ij}=1$]}\\
		&=&\cH+\frac d2\ln(1-y)-
			\frac d2 D_{KL}(\mu,\hat\rho)+O\left(\frac{\log n}{n}\right),\qquad\qquad\mbox{[as $\mu_{ii}=0$ for all $i\in\brk K$]}
\end{eqnarray*}
as claimed.
\qed\end{proof}

For a given collection $\rho$ of class sizes, \Cor~\ref{Cor:Skewed} identifies the edge distribution $\mu$ for which $\Erw\brk{Z_\mu}$ is maximized
subject to the condition that $\mu_{ii}=0$ for all $i$.
Indeed, the maximizer is just $\mu=\hat\rho$.
This is because $\KL{\mu}{\hat\rho}\geq0$ for all $\mu$, and $\KL{\mu}{\hat\rho}=0$ iff $\mu=\hat\rho$ (by Fact~\ref{Fact_KL}).
Furthermore, the term $\KL{\mu}{\hat\rho}$ captures precisely just how ``unlikely'' it is to see some other
edge distribution $\mu\neq\hat\rho$.

\subsection{Small subgraph conditioning}

To show that $\Gnd$ is $k$-colorable \whp\ we are going to use the second moment method.
This is facilitated by the following statement, which is
an immediate consequence of~\cite[\Thm~1]{Janson} (which, in turn, generalizes~\cite{RobinsonWormald}).

\begin{theorem}\label{Thm_smallSubgraphConditioning}
Let $d,k\geq3$ and assume that $k$ divides $n$ and that $dn$ is even.
Let
	\begin{equation}\label{eqsmallSubgraphConditioning}
	\lambda_j=\frac{(d-1)^j}{2j}\quad\mbox{ and }\quad\delta_j=-(1-k)^{1-j} 
	\end{equation}
and let
 $\Xi_l$ be the number of cycles of length $l$ in $\Gnd$ for $l\geq1$ (with 1-cycles being self-loops and 2-cycles being multiple edges).
Suppose that $Y=Y(\Gnd)\geq0$ is a random variable with the following properties.
\begin{enumerate}
\item[i.]  $\Erw\brk Y=\exp(\Omega(n))$.
\item[ii.]  For any sequence $q_1,\ldots,q_l$ of non-negative integers (that remains fixed as $n\ra\infty$) we have
			$$\Erw\brk{Y\cdot\prod_{j=1}^l(\Xi_j)_{q_j}}\sim\Erw\brk Y\cdot\prod_{j=1}^l(\lambda_j(1+\delta_j))^{q_j}.$$
\item[iii.] 	$\Erw\brk{Y^2}\leq(1+o(1))\Erw\brk Y^2\cdot\exp\brk{\sum_{j=1}^\infty\lambda_j\delta_j^2}.$ 
\end{enumerate}
Then $\pr\brk{Y>0|\Xi_1=0}=1-o(1)$.
\end{theorem}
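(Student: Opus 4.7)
The plan is to derive this theorem as an essentially immediate corollary of Janson's small subgraph conditioning theorem \cite{Janson}, together with the classical fact (Bollob\'as) that in the configuration model $\Gnd$ the short cycle counts $(\Xi_j)_{j\geq 1}$ are asymptotically independent $\Po(\lambda_j)$ variables. The strategy is: first apply Janson's theorem to obtain the \emph{unconditional} statement $\pr\brk{Y>0}=1-o(1)$, and then upgrade to the stated conditional statement by noting that $\pr\brk{\Xi_1=0}$ is bounded away from $0$.

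First I would recall Janson's theorem in the form we need: if $(\Xi_j)$ are asymptotically independent Poissons with means $\lambda_j$, if $Y\geq 0$ and $(\delta_j)$ are real numbers satisfying $\sum_{j}\lambda_j\delta_j^2<\infty$ (automatic here since $\sum (d-1)^j(k-1)^{2-2j}/(2j)$ converges), and if conditions (ii) and (iii) of our statement hold, then $Y/\Erw\brk Y$ converges in distribution to a strictly positive random variable
\[
W=\prod_{j\geq 1}(1+\delta_j)^{\Xi_j}\exp\bc{-\lambda_j\delta_j},
\]
where the $\Xi_j$ are the independent Poisson limits. Condition (i) guarantees that the asymptotic regime is non-degenerate so that this limiting statement actually gives information about $Y$ being nonzero. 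In particular, since $W>0$ almost surely, the Portmanteau theorem yields $\pr\brk{Y>0}\to 1$.

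Next I would match our hypotheses to Janson's: condition (ii) is precisely the requirement that the joint factorial moments of $Y$ with the $\Xi_j$ factor in the correct way, and condition (iii) is the variance condition $\Erw\brk{Y^2}/\Erw\brk Y^2\leq (1+o(1))\exp(\sum_j\lambda_j\delta_j^2)$. By \Thm~\ref{Thm_Poisson} applied to $Y\equiv 1$, condition (ii) with $Y\equiv 1$ and $\delta_j\equiv 0$ reproduces the Bollob\'as result that $(\Xi_j)$ are asymptotically independent $\Po(\lambda_j)$; this is a sanity check that our normalization of $\lambda_j=(d-1)^j/(2j)$ is the right one.

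Finally I would pass to the conditional statement. Since $\Xi_1\Rightarrow\Po(\lambda_1)$ with $\lambda_1=(d-1)/2$, we have $\pr\brk{\Xi_1=0}=(1+o(1))\exp(-\lambda_1)=\Omega(1)$. Therefore
\[
\pr\brk{Y=0\mid\Xi_1=0}\leq\frac{\pr\brk{Y=0}}{\pr\brk{\Xi_1=0}}=\frac{o(1)}{\Omega(1)}=o(1),
\]
which gives the claim. The only non-routine step is verifying that the statement of Janson's theorem truly applies as a black box; this is bookkeeping rather than a real obstacle, and is anyway what the paper's own citation claims.
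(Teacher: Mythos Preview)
Your argument has a genuine gap at the step where you claim $W>0$ almost surely. Recall that here $\delta_1=-(1-k)^{0}=-1$, so $1+\delta_1=0$. Hence the limiting variable
\[
W=\prod_{j\geq 1}(1+\delta_j)^{\Xi_j}\exp(-\lambda_j\delta_j)
\]
vanishes on the event $\{\Xi_1\geq1\}$, which has Poisson probability $1-e^{-\lambda_1}>0$. So $W$ is \emph{not} almost surely positive, and the unconditional conclusion $\pr\brk{Y>0}\to 1$ that you try to derive is in fact false. Concretely, for the intended application $Y=\Zkb$ one has $Y=0$ deterministically whenever $\Xi_1\geq1$ (a self-loop is monochromatic), so $\pr\brk{Y=0}\geq\pr\brk{\Xi_1\geq1}\to 1-e^{-(d-1)/2}>0$. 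Your final inequality $\pr\brk{Y=0\mid\Xi_1=0}\leq\pr\brk{Y=0}/\pr\brk{\Xi_1=0}$ is of course valid, but the numerator is not $o(1)$.

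The paper does not supply a proof either; it simply cites Janson~\cite{Janson}, and the point is that Janson's Theorem~1 already handles the degenerate case $\delta_j=-1$ and delivers the \emph{conditional} conclusion directly. Roughly, under (ii) and (iii) the tilted measure $Y\,d\pr/\Erw\brk Y$ is contiguous with a measure under which the $\Xi_j$ are asymptotically independent $\Po(\lambda_j(1+\delta_j))$; for $j=1$ this Poisson has mean $0$, so the mass of the tilted measure concentrates on $\{\Xi_1=0\}$, and on that event the remaining product is strictly positive. Your write-up should either quote the version of the small subgraph conditioning theorem that accommodates $\delta_j=-1$ (as in~\cite{Janson} or the treatment in~\cite{JLR}), or else redo the argument by first conditioning on $\{\Xi_1=0\}$ and applying the $\delta_j>-1$ version to the remaining cycle counts.
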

The very same statement is also the basis of the second moment argument in~\cite{KPGW}.
The use of \Thm~\ref{Thm_smallSubgraphConditioning} is referred to as {\em small subgraph conditioning} because
verifying the assumptions of the theorem amounts to studying the random variable $Y$ {\em given} the number of short
cycles in $\Gnd$.

\section{Upper-bounding the chromatic number: outline}\label{Sec_good}
{\em Throughout this section, we assume that $k$ divides $n$ and that 
		\begin{equation}\label{eqSec_Prop_goodFirstMoment}
		(2k-2)k\ln(k-1)\leq d\leq (2k-1)\ln k-2\ln 2-\eps_k		\end{equation}
		 for a sequence $\eps_k$ 	that tends to $0$ sufficiently slowly in the limit of large $k$.}


\bigskip
\noindent
In this section we introduce the random variable upon which the proof of the first part of \Thm~\ref{Thm_main} is based.
The first random variable that springs to mind certainly is the total number $\Zkc$ of $k$-colorings.
However, the corresponding formulas for the first and the second moment turn out to be somewhat unwieldy.
Therefore, following~\cite{AchNaor,KPGW}, we confine ourselves to colorings that have the following property.

\begin{definition}
A map $\sigma:V\ra\brk k$ is {\bf\em balanced} if $|\sigma^{-1}(i)|=n/k$ for all $i\in\brk k$.
\end{definition}

The number $\Zkb=\Zkb(\Gnd)$ of balanced $k$-colorings is the random variable used in~\cite{KPGW}.
Unfortunately, it is not possible to base the proof of \Thm~\ref{Thm_main} on $\Zkb$.
Indeed, there exist infinitely many $k$ such that
for $d=\lfloor(2k-1)\ln k-2\ln 2\rfloor$ we have
	$$\Erw\brk{\Zkb^2}\geq\exp(\Omega(n))\Erw\brk{\Zkb}^2.$$
Thus, $\Zkb$ does {\em not} satisfy the second moment condition~(\ref{eqSMM1}).

To cope with this issue, we use a different random variable from~\cite{ACOVilenchik}.
Its definition is inspired by statistical mechanics predictions on the geometry of the set of $k$-colorings of the random graph.
According to these, 
for $d>(1+o_k(1))k\ln k$ the set of $k$-colorings, viewed as a subset of $\brk k^V$, decomposes into an exponential number
of well-separated `clusters'.

To formalize this notion, let $\sigma,\tau:V\ra\brk k$ be two balanced maps.
Their {\bf\em overlap matrix} is the $k\times k$ matrix $\rho(\sigma,\tau)$ with entries
	\begin{equation}\label{eqOverlapMatrix}
	\rho_{ij}(\sigma,\tau)=\frac kn\cdot |\sigma^{-1}(i)\cap\tau^{-1}(j)|\qquad\mbox{(cf.~\cite{AchNaor})}.
	\end{equation}
This matrix $\rho(\sigma,\tau)$ is doubly-stochastic.
Following~\cite{ACOVilenchik}, we define the \bemph{cluster} of a $k$-coloring $\sigma$ of a graph $G$ to be the set
	\begin{equation}\label{XeqCluster}
	\cC(\sigma)=\cC_G(\sigma)=\cbc{\tau\in\brk k^n:
		\tau\mbox{ is a balanced $k$-coloring of $G$ and $\rho_{ii}(\sigma,\tau)>0.51$ for all $i\in\brk k$}}.
	\end{equation}
Thus, $\cC(\sigma)$ consists of all balanced $k$-colorings $\tau$ that leave the color of at least
$51\%$ of the vertices in each color class of $\sigma$ unchanged.
In addition, also following~\cite{ACOVilenchik}, we have

\begin{definition}\label{Def_separable}
A balanced $k$-coloring $\sigma$ is \bemph{separable} in $G$
if for any other balanced $k$-coloring $\tau$ of $G$
and any $i,j\in\brk k$ such that $\rho_{ij}(\sigma,\tau)>0.51$ we indeed have $\rho_{ij}(\sigma,\tau)\geq1-\kappa$,
where $\kappa=\ln^{500}k/k=o_k(1)$.
\end{definition}

These definitions ensure that the clusters of two separable $k$-colorings $\sigma,\tau$ are either disjoint or identical.
In addition, we would like to formalize the notion that there are many disjoint clusters.
To this end, we simply put an explicit upper bound on the size of each cluster;
	this is going to entail that many clusters are necessary to exhaust the entire set of $k$-colorings.
We thus arrive at

\begin{definition}[\cite{ACOVilenchik}]\label{XDef_good}
A balanced $k$-coloring $\sigma$ of $\Gnd$ is {\bf\em good} if it is separable and 
	$$\abs{\cC(\sigma)}\leq\frac1n\Erw\brk{\Zkb}.$$
\end{definition}

Let $\Zkg=\Zkg(\Gnd)$ be the number of good $k$-colorings.
We need to estimate $\Erw\brk{\Zkg}$.
The first step is to compute the expected number of balanced $k$-colorings.
Fortunately, we do not need to perform this computation from scratch
since it has already been performed in~\cite{KPGW}.

\begin{proposition}[\cite{KPGW}]\label{Prop_KPGWfirstMoment}
We have
	$$\Erw\brk{\Zkb}=\Theta(n^{-(k-1)/2})\cdot k^n(1-1/k)^{dn/2}.$$
Moreover, $\Zkb$ satisfies condition ii.\ in \Thm~\ref{Thm_smallSubgraphConditioning}.
\end{proposition}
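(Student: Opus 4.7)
The plan is to split the proposition into the first-moment formula and the cycle-moment condition, and treat them independently. For the first moment, linearity of expectation gives
\[
\Erw\brk{\Zkb}=\binom{n}{n/k,\ldots,n/k}\cdot p_\sigma,
\]
where $p_\sigma=\pr\brk{\sigma\text{ is a proper coloring of }\Gnd}$ does not depend on the choice of a fixed balanced $\sigma$. Stirling's formula renders the multinomial coefficient as $\Theta(n^{-(k-1)/2})\cdot k^n$, so the task reduces to showing $p_\sigma=\Theta_n(1)\cdot(1-1/k)^{dn/2}$. Setting $V_i=\sigma^{-1}(i)$ and working in the configuration model, properness amounts to $e_{\Gnd}(V_i,V_i)=0$ for all $i$. \Lem~\ref{Lemma_partition} controls the probability of any admissible edge-distribution matrix $\mu$, and \Cor~\ref{Cor:Skewed} identifies the unique maximizer at $\hat\rho_{ij}=\vecone_{i\neq j}/(k(k-1))$ with exponential weight $(1-1/k)^{dn/2}$. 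A local central-limit expansion of $\KL{\mu}{\hat\rho}$ to quadratic order around $\hat\rho$---or equivalently direct Stirling evaluation of the closed-form count
\[
p_\sigma=\frac{[(dn/k)!]^k}{(dn-1)!!}\sum_{(e_{ij})_{i<j}}\prod_{i<j}\frac{1}{e_{ij}!}
\]
(subject to the row-sum constraints $\sum_{j\neq i}e_{ij}=dn/k$) around its saddle point---extracts the $\Theta_n(1)$ prefactor, completing the first-moment formula.

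For condition~ii of \Thm~\ref{Thm_smallSubgraphConditioning} I would exchange the order of summation and plant cycles:
\[
\Erw\!\brk{\Zkb\prod_{j=1}^l(\Xi_j)_{q_j}}=\sum_{\sigma\text{ balanced}}\ \sum_{(\gamma_{j,t})}\pr\bigl[\sigma\text{ proper and every }\gamma_{j,t}\subseteq\Gnd\bigr],
\]
with the inner sum ranging over ordered tuples of $q_j$ distinct $j$-cycles. Standard arguments show that the planted cycles may be taken pairwise vertex-disjoint at the cost of $o(1)$; each cycle of length $j$ forces $j$ specific clone pair-matches, and the remainder of the configuration carries a uniform matching whose contribution is $(1+o(1))p_\sigma$ by the same analysis as in the first moment. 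The enumeration of potential $j$-cycles in the configuration model yields the usual factor $(d-1)^j/(2j)=\lambda_j$, while requiring each planted cycle to be properly colored gives the chromatic-polynomial count $(k-1)^j+(-1)^j(k-1)$, which compared to the ``null'' count $(k-1)^j$ from treating the $j$ edges as independent produces the factor $1+(-1)^j(k-1)^{1-j}=1+\delta_j$. Combining these factors yields $\Erw\brk{\Zkb}\prod_j(\lambda_j(1+\delta_j))^{q_j}$, as required.

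The main obstacle is the polynomial prefactor $\Theta(n^{-(k-1)/2})$ in the first moment. \Cor~\ref{Cor:Skewed} immediately pins down the exponential order of $\Erw\brk{\Zkb}$, but its $O(\ln n/n)$ slack in the exponent permits polynomial corrections of arbitrary power, so extracting the exact $\Theta_n(1)$ constant requires a sharp Laplace expansion around $\hat\rho$ (or the equivalent direct Stirling attack on the closed-form count). The cycle moment computation, while bookkeeping-intensive---one must verify that overlapping cycle tuples contribute $o(1)$ and that the planted-configuration argument carries through cleanly---is essentially mechanical once the planting framework is in place and the identity $P(C_j,k)=(k-1)^j+(-1)^j(k-1)$ is invoked.
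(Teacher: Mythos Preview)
The paper does not give its own proof of this proposition: it is quoted verbatim from~\cite{KPGW} and used as a black box throughout \Sec s~\ref{Sec_good}--\ref{Sec_second}. So there is no in-paper argument to compare your attempt against.

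That said, your sketch is a faithful outline of the computation actually carried out in~\cite{KPGW}. The decomposition $\Erw[\Zkb]=\binom{n}{n/k,\ldots,n/k}\,p_\sigma$, the closed-form expression
\[
p_\sigma=\frac{[(dn/k)!]^k}{(dn-1)!!}\sum_{(e_{ij})}\prod_{i<j}\frac{1}{e_{ij}!}
\]
over symmetric zero-diagonal matrices with row sums $dn/k$, and the Laplace expansion around $e_{ij}=dn/(k(k-1))$ are exactly how \cite{KPGW} obtain the $\Theta(n^{-(k-1)/2})$ prefactor; you are right that \Cor~\ref{Cor:Skewed} alone cannot deliver this, since its $O(\ln n/n)$ error swallows any polynomial. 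For condition~ii., your planting argument and the identification of $1+\delta_j$ with the ratio $[(k-1)^j+(-1)^j(k-1)]/(k-1)^j$ are correct; the factor $(k-1)^{-j}$ that turns the raw chromatic-polynomial count into this ratio arises from the $(1-1/k)^{-j}$ boost each proper edge receives once one conditions on $\sigma$ being a coloring. The bookkeeping for overlapping cycle tuples is handled in~\cite{KPGW} (and in Janson's general framework~\cite{Janson}) exactly along the lines you indicate.
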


In addition to the size of the color classes, we also need to control the edge densities between them.
%
Let us call a balanced $k$-coloring $\sigma$ of $\Gnd$ \bemph{skewed} if
	$$
		\max_{1\leq i<j\leq k}\abs{e_{\Gnd}(\sigma^{-1}(i),\sigma^{-1}(j))-\frac{dn}{k(k-1)}}>\sqrt n\ln n.$$

\begin{corollary}\label{Cor_skewed}
Let $\Zkb'$ be the number of skewed balanced $k$-colorings of $\Gnd$.
Then
	$$\Erw\brk{\Zkb'}\leq\exp(-\Omega(\ln^2n))\cdot\Erw\brk{\Zkb}.$$
\end{corollary}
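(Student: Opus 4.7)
\medskip
\noindent\emph{Proof plan.}
The plan is to bin the skewed balanced colorings by their joint edge distribution $\mu=(\mu_{ij})_{i,j\in\brk k}$ and apply \Cor~\ref{Cor:Skewed} to each bin. Since every balanced coloring has class distribution $\rho_i=1/k$ for all $i\in\brk k$, we have $\norm\rho_2^2=1/k$, and the ``expected'' edge distribution becomes $\hat\rho_{ij}=\vecone_{i\neq j}/(k(k-1))$. Because proper $k$-colorings force $\mu_{ii}=0$, \Cor~\ref{Cor:Skewed} yields
$$\frac1n\ln\Erw\brk{Z_\mu}=\ln k+\frac d2\ln(1-1/k)-\frac d2\KL\mu{\hat\rho}+O(\ln n/n),$$
which, when combined with the matching expression $\frac1n\ln\Erw\brk{\Zkb}=\ln k+\frac d2\ln(1-1/k)+O(\ln n/n)$ obtained from \Prop~\ref{Prop_KPGWfirstMoment}, produces
$$\Erw\brk{Z_\mu}\leq\Erw\brk{\Zkb}\cdot\exp\brk{-\frac{dn}2\KL\mu{\hat\rho}+O(\ln n)}.$$

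The next step is to lower-bound $\KL\mu{\hat\rho}$ whenever $\mu$ corresponds to a skewed partition. If $|e_{\Gnd}(\sigma^{-1}(i),\sigma^{-1}(j))-dn/(k(k-1))|>\sqrt n\ln n$ for some $i<j$, then in terms of $\mu$ this reads $|\mu_{ij}-\hat\rho_{ij}|>\ln n/(d\sqrt n)$. Invoking part~3 of Fact~\ref{Fact_KL} with a constant $\xi=\xi(\hat\rho)>0$ depending only on $k$ yields
$$\KL\mu{\hat\rho}\geq\xi\norm{\mu-\hat\rho}_2^2\geq 2\xi(\mu_{ij}-\hat\rho_{ij})^2=\Omega(\ln^2 n/n),$$
where the factor $2$ accounts for the symmetric entry $\mu_{ji}=\mu_{ij}$. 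Multiplying by $dn/2$ therefore contributes an extra saving of $\exp(-\Omega(\ln^2 n))$ in the previous bound on $\Erw\brk{Z_\mu}$.

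The proof then concludes with a union bound over $\mu$. The number of $(d,n)$-admissible edge distributions with $\mu_{ii}=0$ is at most $(dn+1)^{k^2}$, a polynomial in $n$, and hence
$$\Erw\brk{\Zkb'}=\sum_{\mu\text{ skewed}}\Erw\brk{Z_\mu}\leq(dn+1)^{k^2}\cdot\Erw\brk{\Zkb}\cdot\exp(-\Omega(\ln^2 n))=\Erw\brk{\Zkb}\cdot\exp(-\Omega(\ln^2 n)),$$
since a polynomial factor is easily absorbed into $\exp(-\Omega(\ln^2 n))$. As $k$ is treated as a fixed constant throughout, no real obstacle arises here: virtually all the heavy lifting has already been done in \Cor~\ref{Cor:Skewed}, and the only delicacy is in correctly translating the deterministic skewed condition on edge counts into a quadratic lower bound on $\KL\mu{\hat\rho}$ via part~3 of Fact~\ref{Fact_KL}.
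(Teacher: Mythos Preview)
Your proposal is correct and follows essentially the same approach as the paper's own proof: both bin by the edge distribution $\mu$, invoke \Cor~\ref{Cor:Skewed} with $\rho=k^{-1}\vecone$ together with \Prop~\ref{Prop_KPGWfirstMoment} to express $\Erw[Z_\mu]/\Erw[\Zkb]$ in terms of $\KL{\mu}{\hat\rho}$, apply part~3 of Fact~\ref{Fact_KL} to show that a skewed $\mu$ forces $\KL{\mu}{\hat\rho}=\Omega(\ln^2 n/n)$, and finish with a polynomial union bound over the admissible $\mu$.
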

\begin{proof}
The proof is based on \Cor~\ref{Cor:Skewed}.
Let $\rho=k^{-1}\vecone$ be the uniform distribution on $\brk k$.
Moreover, let $\mu=(\mu_{ij})_{i,j\in\brk k}$ be a probability distribution
such that $(\rho,\mu)$ is an admissible pair, and such that $\mu_{ii}=0$ for all $i\in\brk k$.
As in \Cor~\ref{Cor:Skewed}, let $Z_\mu$ be the number of balanced $k$-colorings $\sigma$ such that
the edge densities between the color classes are given by $\mu$, i.e.,
	$$e_{\Gnd}(\sigma^{-1}(i),\sigma^{-1}(j))=\mu_{ij}dn\qquad\mbox{for all }i,j\in\brk k.$$
Furthermore, let
	$\hat\rho=(\rho_{ij})_{i,j\in\brk k}$ be the probability distribution on $\brk k\times\brk k$ defined by $\rho_{ij}=\frac{\vecone_{i\neq j}}{k(k-1)}$.
Then \Cor~\ref{Cor:Skewed} and \Prop~\ref{Prop_KPGWfirstMoment} yield
	\begin{eqnarray}\nonumber
	\frac1n\ln\Erw\brk{Z_\mu}&=&\ln k+\frac d2\ln(1-1/k)-\frac d2\KL{\mu}{\hat\rho}+O(\ln n/n)\\
		&=&\frac1n\ln\Erw\brk{\Zkb}-\frac d2\KL{\mu}{\hat\rho}+O(\ln n/n).\label{eqCor_skewedA}
	\end{eqnarray}
Furthermore, by Fact~\ref{Fact_KL} there is an $n$-independent number $\xi=\xi(k)>0$ such that
	$$\KL{\mu}{\hat\rho}\geq \xi\sum_{i,j=1}^k(\mu_{ij}-\hat\rho_{ij})^2.$$
Hence, if $\mu$ is such that $|dn\mu_{ij}-dn\rho_{ij}|>\sqrt n\ln n$ for some pair $(i,j)\in\brk k\times\brk k$,
then $\KL{\mu}{\hat\rho}=\Omega(\ln^2n/n)$.
Therefore, (\ref{eqCor_skewedA}) implies that
	\begin{eqnarray}\label{eqCor_skewedB}
	\Erw\brk{Z_\mu}&\leq&\exp(-\Omega(\ln^2n))\cdot\Erw\brk{\Zkb}.
	\end{eqnarray}
To complete the proof, let $\cM$ be the set of all $\mu$ such that $(\rho,\mu)$ is an admissible pair and such that
	$|dn\mu_{ij}-dn\rho_{ij}|>\sqrt n\ln n$ for some $(i,j)\in\brk k\times\brk k$.
Because $dn\mu_{ij}$ has to be an integer for all $i,j\in\brk k$,
we can estimate $\abs{\cM}\leq (dn)^{k^2}$ (with room to spare), i.e., $\abs\cM$ is bounded by a polynomial in $n$.
Hence, (\ref{eqCor_skewedB}) yields
	$$\Erw\brk{\Zkb'}\leq\sum_{\mu\in\cM}\Erw\brk{Z_\mu}\leq \abs{\cM}\exp(-\Omega(\ln^2n))\cdot\Erw\brk{\Zkb}\leq \exp(-\Omega(\ln^2n))\cdot\Erw\brk{\Zkb},$$
as desired.
\qed\end{proof}

In \Sec~\ref{Sec_Prop_goodFirstMoment} we use \Cor~\ref{Cor_skewed} to compare $\Zkg$ and $\Zkb$;
the result is

\begin{proposition}\label{Prop_goodFirstMoment}
We have $\Erw\brk{\Zkg}\sim\Erw\brk{\Zkb}$.
\end{proposition}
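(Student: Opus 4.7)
The goal is $\Erw[\Zkb-\Zkg]=o(\Erw[\Zkb])$. A balanced coloring fails to be good iff either (i) it is not separable, or (ii) it is separable but $\abs{\cC(\sigma)}>\Erw[\Zkb]/n$. I would bound these two failure modes separately, each via an overlap-matrix computation of the sort captured by \Cor~\ref{Cor:Skewed}.

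For (ii), separability of $\sigma$ restricts $\cC(\sigma)$ to balanced colorings $\tau$ with $\rho_{ii}(\sigma,\tau)\geq1-\kappa$ for all $i$, so
$$\Erw\brk{\sum_\sigma\abs{\cC(\sigma)}\vecone_{\sigma\text{ sep.\ bal.\ col.}}}\leq\sum_{(\sigma,\tau)}\pr\brk{\sigma,\tau\text{ bal.\ cols.},\;\rho_{ii}(\sigma,\tau)\geq1-\kappa\ \forall i}.$$
I would estimate the right-hand side by grouping pairs by their overlap matrix $\rho$ and applying \Lem~\ref{Lemma_partition} to the $k^2$-class partition $V_{ij}=\sigma^{-1}(i)\cap\tau^{-1}(j)$. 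A local second-order expansion of the Kullback-Leibler term around the identity overlap shows that the sum is dominated by the pairs with $\tau=\sigma$ (contributing exactly $\Erw[\Zkb]$), while all other near-identity overlaps together contribute $o(\Erw[\Zkb])$. Markov's inequality then gives
$$\Erw\brk{\text{number of sep.\ bal.\ cols.\ with }\abs{\cC(\sigma)}>\Erw[\Zkb]/n}\leq\frac{n(1+o(1))\Erw[\Zkb]}{\Erw[\Zkb]}=O(n)=o(\Erw[\Zkb]),$$
using $\Erw[\Zkb]=\exp(\Omega(n))$ from \Prop~\ref{Prop_KPGWfirstMoment}.

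For (i), a union bound over witnesses $\tau$ yields
$$\Erw[\text{number of non-separable bal.\ cols.}]\leq\sum_\rho N(\rho)P(\rho),$$
summed over doubly-stochastic overlap matrices $\rho$ having at least one entry in the medium range $(0.51,1-\kappa)$; here $N(\rho)$ counts ordered balanced pairs of maps with overlap $\rho$ and $P(\rho)=\pr[\sigma,\tau\text{ are both cols.\ of }\Gnd]$. For each $\rho$, \Lem~\ref{Lemma_partition} applied to the $k^2$-partition (and maximized over feasible edge distributions $\mu$ that vanish whenever $i=i'$ or $j=j'$) expresses $\frac1n\ln N(\rho)P(\rho)$ as a combinatorial entropy minus $\frac{d}{2}\KL{\mu^\star(\rho)}{\rho\otimes\rho}$, up to $O(\ln n/n)$. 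Crucially, by \Lem~\ref{Lemma_partition} this exponential rate for $\Gnd$ coincides with the analogous rate for $\gnm$, so the convex Birkhoff-polytope optimization carried out for $\gnm$ in \cite{ACOVilenchik} transfers verbatim. For $d$ in the range (\ref{eqSec_Prop_goodFirstMoment}) and every medium $\rho$, that optimization shows the above rate is strictly smaller than $\frac1n\ln\Erw[\Zkb]$ by a uniformly positive margin, whence $N(\rho)P(\rho)\leq\exp(-\Omega_k(n))\Erw[\Zkb]$. Summing over the polynomially many admissible $\rho$, and handling edge-density outliers via \Cor~\ref{Cor_skewed}, we obtain $\Erw[\text{number of non-separable bal.\ cols.}]=o(\Erw[\Zkb])$.

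The chief technical obstacle is the uniform first-moment gap required in (i): the KL-penalty must strictly dominate the combinatorial entropy over the entire high-dimensional polytope of medium-overlap matrices, not merely establish the positivity that would suffice for a second-moment bound. The degree range (\ref{eqSec_Prop_goodFirstMoment})---and in particular the specific constant $2\ln2$---is precisely what was shown in \cite{ACOVilenchik} to furnish this gap on $\gnm$, and \Lem~\ref{Lemma_partition} transfers the optimization to $\Gnd$ at no additional cost.
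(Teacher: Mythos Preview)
Your part (ii) contains a genuine error. The claim that ``all other near-identity overlaps together contribute $o(\Erw[\Zkb])$'' is false. At the identity overlap the entropy $H(\rho/k)$ has a logarithmic singularity: writing $\rho_{ij}=\eps_{ij}$ for $i\neq j$, one finds
\[
f(\rho)-f(\mathrm{id})\;=\;\frac1k\sum_{i\neq j}\eps_{ij}\ln\frac1{\eps_{ij}}\;-\;\frac{d}{k(k-1)}\sum_{i\neq j}\eps_{ij}\;+\;O\Bigl(\sum\eps_{ij}\Bigr),
\]
which is \emph{positive} for $\eps_{ij}$ below roughly $k^{-2}$; there is no second-order (Hessian) bound available because the first derivative diverges. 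Consequently $\Erw\bigl[\sum_\sigma|\cC(\sigma)|\bigr]$ is of order $\Erw[\Zkb]\cdot 2^{(1+o_k(1))n/k}$, i.e.\ exponentially larger than $\Erw[\Zkb]$ and in fact comparable to $\Erw[\Zkb]^2$ in the range~(\ref{eqSec_Prop_goodFirstMoment}). Markov's inequality then yields only $\Erw[\#\{\sigma:|\cC(\sigma)|>\Erw[\Zkb]/n\}]=O(n\,\Erw[\Zkb])$, which is useless. This is precisely why the paper works in the planted model $\cG(\sigma,\mu)$: \Prop~\ref{cor_cluster_size} shows the cluster is small with probability $1-O(1/n)$ for a \emph{fixed} $\sigma$, which is far stronger than any first-moment statement over pairs.

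Your part (i) has a related gap. The optimization you invoke from~\cite{ACOVilenchik} (cf.\ \Prop~\ref{Prop_opt}) establishes $f(\rho)<f(\bar\rho)=\frac2n\ln\Erw[\Zkb]$ for separable $\rho\in\Dg$; this suffices for the \emph{second}-moment bound but is a factor of two short of the \emph{first}-moment bound $f(\rho)<\frac1n\ln\Erw[\Zkb]$ you need for non-separable $\rho$. The paper (and~\cite{ACOVilenchik}) instead proves separability directly in the planted model via expansion (\Lem s~\ref{lem_s_less}--\ref{lem_neigbours_less_15} and \Cor~\ref{lem_edges_5s}), which sidesteps any such global optimization over the Birkhoff polytope.
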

Combining \Prop~\ref{Prop_KPGWfirstMoment} and~\ref{Prop_goodFirstMoment}, we obtain the following.

\begin{corollary}\label{Cor_goodFirstMoment}
The random variable $\Zkg$ satisfies conditions i.\ and ii.\ in \Thm~\ref{Thm_smallSubgraphConditioning}.
\end{corollary}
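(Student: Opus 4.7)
My plan is to derive both conditions from \Prop~\ref{Prop_KPGWfirstMoment} and \Prop~\ref{Prop_goodFirstMoment}, plus a short perturbative argument that handles the gap between $\Zkb$ and $\Zkg$.

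For condition~i, I would combine $\Erw\brk{\Zkg}\sim\Erw\brk{\Zkb}$ from \Prop~\ref{Prop_goodFirstMoment} with the explicit formula $\Erw\brk{\Zkb}=\Theta(n^{-(k-1)/2})\cdot k^n(1-1/k)^{dn/2}$ from \Prop~\ref{Prop_KPGWfirstMoment}, and verify that for $d$ in the range~\eqref{eqSec_Prop_goodFirstMoment} the base $k(1-1/k)^{d/2}$ exceeds $1$. A two-term Taylor expansion of $\ln(1-1/k)=-1/k-1/(2k^2)-O(1/k^3)$, together with $d\leq(2k-1)\ln k-2\ln 2-\eps_k$, should yield a per-vertex gain of order $(\ln 2)/k$, from which $\Erw\brk{\Zkg}=\exp(\Omega(n))$ follows at once.

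For condition~ii, decompose $\Zkg=\Zkb-\Zkbad$, where $\Zkbad$ counts balanced $k$-colourings of $\Gnd$ that are not good. By \Prop~\ref{Prop_goodFirstMoment} we have $\Erw\brk{\Zkbad}=o(\Erw\brk{\Zkb})$. Since \Prop~\ref{Prop_KPGWfirstMoment} already delivers the factorial-moment asymptotic for $\Zkb$, it suffices to show that the ``bad correction'' $\Erw\brk{\Zkbad\prod_j(\Xi_j)_{q_j}}$ is $o\bc{\Erw\brk{\Zkb}\prod_j(\lambda_j(1+\delta_j))^{q_j}}$. I would prove this by establishing, uniformly over balanced $\sigma$ and over ordered tuples $T$ of short cycles with the prescribed length profile, the approximate independence
\[
\pr\brk{\sigma\text{ bad}\mid\sigma\text{ valid},\,T\subseteq\Gnd}=(1+o(1))\,\pr\brk{\sigma\text{ bad}\mid\sigma\text{ valid}}.
\]
Combined with the per-coloring asymptotic $\sum_T\pr\brk{\sigma\text{ valid},\,T\subseteq\Gnd}\sim\pr\brk{\sigma\text{ valid}}\prod_j(\lambda_j(1+\delta_j))^{q_j}$ that is extracted from the proof of \Prop~\ref{Prop_KPGWfirstMoment}, summing over $\sigma$ would yield
$\Erw\brk{\Zkbad\prod_j(\Xi_j)_{q_j}}\sim\Erw\brk{\Zkbad}\prod_j(\lambda_j(1+\delta_j))^{q_j}$, which is indeed $o\bc{\Erw\brk{\Zkb}\prod_j(\lambda_j(1+\delta_j))^{q_j}}$ as required.

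The main obstacle is the asserted asymptotic independence of the local event ``$T\subseteq\Gnd$'' from the global event ``$\sigma$ bad'', conditional on validity. Intuitively, prescribing the cycles in $T$ pins down only $O_k(1)$ clones of $\Gnd$, whereas separability and cluster size are determined by the bulk matching on the remaining clones; so forcing $T$ should perturb the probability of badness only by a $1+o(1)$ factor. Making this rigorous will most likely require opening up the configuration-model counting behind \Prop~\ref{Prop_KPGWfirstMoment} and checking that the perturbed planted model (valid colouring $\sigma$ together with prescribed short cycles $T$) still supports the separability and cluster-size estimates that drive \Prop~\ref{Prop_goodFirstMoment}. Once this uniformity is established, the remainder is a direct linearity-of-expectation summation.
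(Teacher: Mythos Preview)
Your treatment of condition~i matches the paper's proof exactly.

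For condition~ii, your approach is correct in principle but takes a substantially harder route than the paper. You aim for the multiplicative approximate-independence statement
\[
\pr\brk{\sigma\text{ bad}\mid\sigma\text{ valid},\,T\subseteq\Gnd}=(1+o(1))\,\pr\brk{\sigma\text{ bad}\mid\sigma\text{ valid}},
\]
and, as you acknowledge, justifying this would mean re-running the separability and cluster-size analysis of \Sec~\ref{Sec_Prop_goodFirstMoment} in a planted model perturbed by $O_k(1)$ prescribed edges. This is doable but laborious. The paper sidesteps it entirely: since only the additive bound $\Erw\brk{(\Zkb-\Zkg)\prod_j(\Xi_j)_{q_j}}=o(\Erw\brk{\Zkb})$ is needed, it applies Cauchy--Schwarz,
\[
\Erw\brk{(\Zkb-\Zkg)\textstyle\prod_j(\Xi_j)_{q_j}}\leq\Erw\brk{\Zkb-\Zkg}^{1/2}\cdot\Erw\brk{\Zkb\textstyle\prod_j\Xi_j^{2q_j}}^{1/2}.
\]
The first factor is $o(\Erw\brk{\Zkb})^{1/2}$ by \Prop~\ref{Prop_goodFirstMoment}. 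For the second, the paper observes (via symmetry and \Prop~\ref{Prop_KPGWfirstMoment}) that conditional on any fixed balanced $\sigma$ being valid, the cycle counts $\Xi_2,\ldots,\Xi_l$ are asymptotically independent Poisson, hence $\Erw\brk{\prod_j\Xi_j^{2q_j}\mid\cE_\sigma}=O(1)$ and $\Erw\brk{\Zkb\prod_j\Xi_j^{2q_j}}=O(\Erw\brk{\Zkb})$. The case $q_1>0$ is dispatched separately (both sides vanish since $\delta_1=-1$). So the paper's argument is a short black-box manipulation using only \Prop s~\ref{Prop_KPGWfirstMoment} and~\ref{Prop_goodFirstMoment} as stated, with no need to reopen the planted-model estimates; your route would reprove a stronger intermediate statement than is actually required.
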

\begin{proof}
Condition i.\ follows directly from \Prop s~\ref{Prop_KPGWfirstMoment} and~\ref{Prop_goodFirstMoment}.
Indeed, using the expansion $\ln(1-x)=-x-x^2/2+O(x^3)$, we find that
	\begin{eqnarray*}
	\frac1n\ln\Erw[\Zkg]&\sim&\frac1n\ln\Erw[\Zkb]\qquad\mbox{[by \Prop~\ref{Prop_goodFirstMoment}]}\\
		&\sim&\ln k+\frac d2\ln(1-1/k)\qquad\quad\mbox{[by \Prop~\ref{Prop_KPGWfirstMoment}]}\\
		&=&\ln k-\frac d{2k}-\frac d{4k^2}+O(d/k^2).
	\end{eqnarray*}
It is easily verified that the last expression is strictly positive if
 $d\leq(2k-1)\ln k-2\ln 2$. 

To establish condition ii., fix a sequence $q_1,\ldots,q_l$ of non-negative integers.
Recall from \Thm~\ref{Thm_smallSubgraphConditioning} that $\Xi_j$ denotes the number of cycles of length $j$ in
$\Gnd$, with $1$-cycles being self-loops and $2$-cycles being multiple edges.
With $\delta_j,\lambda_j$ as in~(\ref{eqsmallSubgraphConditioning}),
we aim to show that
	\begin{equation}\label{eqCor_goodFirstMoment}
	\Erw\brk{\Zkg\cdot\prod_{j=1}^l(\Xi_j)_{q_j}}\sim\Erw\brk\Zkg\cdot\prod_{j=1}^l(\lambda_j(1+\delta_j))^{q_j},
	\end{equation}
There are two cases to consider.
\begin{description}
\item[Case 1: $q_1>0$.]
	If $\Xi_1=q_1>0$, then $\Zkg=0$ with certainty (because a self-loop is a monochromatic edge under any coloring).
	Moreover, as $\delta_1=-1$ we also have $\prod_{j=1}^l(\lambda_j(1+\delta_j))^{q_j}=0$.	
	Thus, (\ref{eqCor_goodFirstMoment}) is trivially satisfied in this case.
\item[Case 2: $q_1=0$.]
	By \Prop~\ref{Prop_KPGWfirstMoment}, 
		for any non-negative integers $p_2,\ldots,p_l$ we have
		\begin{equation}\label{eqCor_goodFirstMoment3}
		\Erw\brk{\Zkb\cdot\prod_{j=2}^l(\Xi_j)_{p_j}}\sim\Erw\brk\Zkb\cdot\prod_{j=2}^l(\lambda_j(1+\delta_j))^{p_j}.
		\end{equation}
	For a balanced map $\sigma:V\ra\brk k$ and let $\cE_\sigma$ be the event that $\sigma$ is a $k$-coloring of $\Gnd$.
	Summing over all balanced $\sigma$ and using the linearity of expectation, we obtain
		\begin{eqnarray}\label{eqWhatIsSymmetry}
		\Erw\brk{\Zkb\prod_{j=2}^l(\Xi_j)_{p_j}}=\sum_{\sigma}\Erw\brk{\prod_{j=2}^l(\Xi_j)_{p_j}\bigg|\cE_\sigma}\cdot\pr\brk{\cE_\sigma}.
		\end{eqnarray}
	Pick and fix one balanced map $\sigma_0:V\ra\brk k$ and let $\cE=\cE_{\sigma_0}$ for the sake of brevity.
	For symmetry reasons (namely, because $\prod_j(\Xi_j)_{p_j}$ is invariant under permutations of the vertices),
	we have
		$$\Erw\brk{\prod_{j=2}^l(\Xi_j)_{p_j}\bigg|\cE_\sigma}=\Erw\brk{\prod_{j=2}^l(\Xi_j)_{p_j}\bigg|\cE}.$$
	Thus, (\ref{eqWhatIsSymmetry}) gives
		$$\Erw\brk{\Zkb\prod_{j=2}^l(\Xi_j)_{p_j}}=\Erw\brk{\prod_{j=2}^l(\Xi_j)_{p_j}\bigg|\cE}\cdot\Erw[\Zkb].$$
	Hence, (\ref{eqCor_goodFirstMoment3}) yields
		\begin{eqnarray*}
		\Erw\brk{\prod_{j=2}^l(\Xi_j)_{p_j}\bigg|\cE}\sim\prod_{j=2}^l(\lambda_j(1+\delta_j))^{p_j}.
		\end{eqnarray*}
	Therefore, \Thm~\ref{Thm_Poisson} implies that given $\cE$,
		$(\Xi_2,\ldots,\Xi_l)$ are asymptotically independent $\Po(\lambda_j(1+\delta_j))$ variables.
	Consequently,  because we keep $q_2,\ldots,q_l$ fixed as $n\ra\infty$, we get
		$$\Erw\brk{\prod_{j=2}^l\Xi_j^{2q_j}\bigg|\cE}
			\sim
			\prod_{j=2}^l\Erw\brk{\Po(\lambda_j(1+\delta_j))^{2q_j}}=
			O(1).$$
	Thus, again by symmetry and the linearity of expectation,
		\begin{eqnarray}\label{eqCor_goodFirstMoment1}
		\Erw\brk{\Zkb\prod_{j=2}^l\Xi_j^{2q_j}}&=&\Erw\brk{\Zkb}\cdot\Erw\brk{\prod_{j=2}^l\Xi_j^{2q_j}\bigg|\cE}=O(\Erw\brk{\Zkb}).
		\end{eqnarray}
	Now, by  Cauchy-Schwarz
		\begin{eqnarray}
		\Erw\brk{(\Zkb-\Zkg)\prod_{j=2}^l(\Xi_j)_{q_j}}&\leq&
			\Erw\brk{\Zkb-\Zkg}^{\frac12}\cdot\Erw\brk{\Zkb\prod_{j=2}^l(\Xi_j)_{q_j}^2}^{\frac12}\nonumber\\
			&\leq&
			\Erw\brk{\Zkb-\Zkg}^{\frac12}\cdot\Erw\brk{\Zkb\prod_{j=2}^l\Xi_j^{2q_j}}^{\frac12}\nonumber\\
			&\stacksign{(\ref{eqCor_goodFirstMoment1})}{\leq}&\Erw\brk{\Zkb-\Zkg}^{\frac12}\cdot O(\Erw\brk{\Zkb})^{\frac12}\nonumber\\
			&=&o(\Erw\brk{\Zkb})\qquad\qquad\mbox{[by \Prop~\ref{Prop_goodFirstMoment}]}.
				\label{eqCor_goodFirstMoment2}
		\end{eqnarray}
	Finally, combining~(\ref{eqCor_goodFirstMoment3}) and~(\ref{eqCor_goodFirstMoment2}), we find
		\begin{eqnarray*}
		\Erw\brk{\Zkg\prod_{j=2}^l(\Xi_j)_{q_j}}&=&\Erw\brk{\Zkb\prod_{j=2}^l(\Xi_j)_{q_j}}+o(\Erw\brk{\Zkb})
			\sim\Erw\brk\Zkb\cdot\prod_{j=2}^l(\lambda_j(1+\delta_j))^{q_j}\\
			&\sim&\Erw\brk\Zkg\cdot\prod_{j=2}^l(\lambda_j(1+\delta_j))^{q_j}
					\qquad\mbox{[by \Prop~\ref{Prop_goodFirstMoment}].}
		\end{eqnarray*}
\end{description}
Thus, (\ref{eqCor_goodFirstMoment}) holds in either case.
\qed\end{proof}

After proving \Prop~\ref{Prop_goodFirstMoment} in \Sec~\ref{Sec_Prop_goodFirstMoment}, we are going to carry out the second moment argument in \Sec~\ref{Sec_second}.
This implies that the random variable $\Zkg$ also satisfies condition iii.\ in \Thm~\ref{Thm_smallSubgraphConditioning}.
Finally, in \Sec~\ref{Sec_Thm_main}, we are going to
apply \Thm~\ref{Thm_smallSubgraphConditioning} to
 complete the proof of the upper bound on $\chi(\gnd)$ claimed in \Thm~\ref{Thm_main}.

\section{The expected number of good colorings
	}\label{Sec_Prop_goodFirstMoment}

{\em
Throughout this section we assume that $k\geq k_0$ and $n\geq n_0$ are sufficiently big. We also continue to assume that 
$d$ satisfies~(\ref{eqSec_Prop_goodFirstMoment}).}


\subsection{Outline}

The aim in this section is to prove \Prop~\ref{Prop_goodFirstMoment}.
The proof is guided by the corresponding analysis for the $\gnm$ model performed in~\cite{ACOVilenchik}.
Indeed, several of the formulas that we arrive at ultimately are quite similar to the ones in~\cite{ACOVilenchik}.
However, arguing that these ideas/formulas carry over to the random regular graph turns out to be
a technically rather non-trivial task.

The proof is by way of a $d$-regular version of the ``planted coloring'' model.
To define this model, fix a balanced map $\sigma:V\ra\brk k$ and let $V_i=\sigma^{-1}(i)$.
Moreover, let $\mu=(\mu_{ij})_{i,j=1,\ldots,k}$ probability distribution on $\brk k\times\brk k$
such that $\mu_{ij}dn$ is integral for all $i,j$ satisfying
	\begin{equation}\label{eqplantedmu}
	\mu_{ii}=0\mbox{ for all }i\in\brk k
		\mbox{ and }
	\mu_{ij} =\mu_{ji}= \frac{1}{k(k-1)}+O(n^{-1/3})\quad\mbox{for all }1\leq i<j\leq k.
	\end{equation}
We let $\vec\Gamma_{\sigma,\mu}$ denote a configuration chosen uniformly at random subject to the condition that
	\begin{equation}\label{eqMuCondition}
	\abs{\cbc{(v,l)\in V_i\times\brk d:\Gamma(v,l)\in V_j\times\brk d}}=dn\mu_{ij}\qquad\mbox{for all }i,j\in\brk k.
	\end{equation}
In addition, we denote by $\cG(\sigma,\mu)$ the multi-graph obtained from $\vec\Gamma_{\sigma,\mu}$ by contracting the clones.
Then by construction, $\sigma$ is a ``planted'' $k$-coloring of $\cG(\sigma,\mu)$, and $e_{\cG(\sigma,\mu)}(V_i,V_j)=\mu_{ij}dn$ for all $1\leq i<j\leq k$.

We prove Proposition \ref{Prop_goodFirstMoment} in two steps: the first step consist in establishing

\begin{proposition}\label{pro_p_sep}
Let $\sigma:V\ra\brk k$ be balanced and
assume that $\mu$ satisfies~(\ref{eqplantedmu}).
Then $$\pr[\sigma \textrm{ is separable in } \cG(\sigma,\mu)] \geq 1-O(1/n).$$
\end{proposition}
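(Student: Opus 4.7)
The plan is to reformulate non-separability as the existence of a bad ``competitor'' coloring and to bound its expected count by a first-moment calculation in the planted model. By \Def~\ref{Def_separable}, $\sigma$ fails to be separable in $\cG(\sigma,\mu)$ iff there exist a balanced $k$-coloring $\tau$ of $\cG(\sigma,\mu)$ and indices $i,j\in\brk k$ with $\rho_{ij}(\sigma,\tau)\in(0.51,1-\kappa)$. Permuting $\tau$'s color classes (at a cost of $k!$) we may assume the bad pair is $(1,1)$. Let $\cR$ be the set of doubly-stochastic $k\times k$ matrices $\rho$ with entries in $\tfrac{k}{n}\ZZpos$ and $\rho_{11}\in(0.51,1-\kappa)$, and write $N_\rho$ for the number of balanced colorings $\tau$ of $\cG(\sigma,\mu)$ with $\rho(\sigma,\tau)=\rho$. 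A union bound gives $\pr\brk{\sigma\text{ not separable}}\leq k!\cdot\sum_{\rho\in\cR}\Erw\brk{N_\rho}$, and since $|\cR|=n^{O(k^2)}$ it suffices to show $\Erw\brk{N_\rho}\leq\exp(-\Omega(n))$ uniformly in $\rho\in\cR$.

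To compute $\Erw\brk{N_\rho}$, first note that the number of balanced $\tau$ with $\rho(\sigma,\tau)=\rho$ equals $\prod_i\bink{n/k}{\rho_{i1}n/k,\ldots,\rho_{ik}n/k}=\exp(\tfrac{n}{k}\sum_i H(k\rho_{i\cdot})+O(\ln n))$. Fixing such a $\tau$, the blocks $V_{ia}=\sigma^{-1}(i)\cap\tau^{-1}(a)$ have size $\rho_{ia}n/k$, and $\tau$ is proper iff no edge of $\cG(\sigma,\mu)$ lies inside any block $V_{ia}\times V_{ja}$. Within each bipartite $V_i$--$V_j$ block, $\vec\Gamma_{\sigma,\mu}$ is a uniform random matching of the clones of $V_i$ with those of $V_j$ subject only to containing $\mu_{ij}dn$ edges, so the bipartite analogue of \Lem~\ref{Lemma_partition} applied block by block yields
\begin{equation*}
\pr\brk{\tau\text{ is proper in }\cG(\sigma,\mu)}=\exp\!\left(\frac{dn}{2}\sum_{i\neq j}\mu_{ij}\ln\!\Bigl(1-\sum_{a=1}^k\rho_{ia}\rho_{ja}\Bigr)+O(\ln n)\right).
\end{equation*}
Plugging in $\mu_{ij}=\tfrac{1}{k(k-1)}+O(n^{-1/3})$ from~(\ref{eqplantedmu}) and multiplying by the $\tau$-count then gives $\Erw\brk{N_\rho}=\exp(nf(\rho)+o(n))$ for an explicit function $f$ on the Birkhoff polytope~$\Birk$.

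The function $f$ so obtained agrees, up to lower-order terms, with the function whose maximum over the ``bad'' overlap region was bounded in~\cite{ACOVilenchik} for $\gnm$; indeed, \Cor~\ref{Cor:Skewed} expresses precisely the fact that the logarithmic counts of balanced colorings of $\gnm$ and of $\Gnd$ with a prescribed edge-density matrix coincide up to $O(\ln n/n)$. Importing the variational analysis of~\cite{ACOVilenchik} therefore yields $f(\rho)\leq-\Omega_k(1)$ uniformly on $\cR$, and summing over the polynomially many elements of $\cR$ gives $\pr\brk{\sigma\text{ not separable}}\leq\exp(-\Omega(n))=O(1/n)$, which is stronger than claimed.

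The main obstacle will be executing this last step cleanly: importing the optimisation bound of~\cite{ACOVilenchik} requires care because our first-moment formula features the planted edge-distribution $\mu$ rather than the uniform $\hat\rho$, and because we must control $f$ specifically over the intermediate region $\rho_{11}\in(0.51,1-\kappa)$ where the energy-type term $\ln(1-\langle\rho_{i\cdot},\rho_{j\cdot}\rangle)$ is most delicate. Fortunately, the $O(n^{-1/3})$ slack allowed in~(\ref{eqplantedmu}) is too small to perturb the optimization at exponential order, so the heart of the argument reduces to the variational analysis already performed in~\cite{ACOVilenchik}.
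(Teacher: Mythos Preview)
Your approach differs substantially from the paper's, and the final import step contains a genuine gap.

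The paper does \emph{not} argue by a first-moment count over competing colorings $\tau$. Section~\ref{Sec_pro_p_sep} instead shows that $\cG(\sigma,\mu)$ enjoys certain expansion properties w.h.p.\ and then deduces separability deterministically from them: \Lem~\ref{lem_s_less} rules out any overlap entry in $[0.509,1-k^{-0.499}]$ by proving that every $S\subset V_i$ of this relative size leaves too few non-neighbours in $V\setminus V_i$ to accommodate a balanced $\tau$; the narrow remaining range $(1-k^{-0.499},1-\kappa)$ is handled by combining \Lem~\ref{lem_neigbours_less_15} (few vertices have fewer than $15$ neighbours in any given color class) with the small-set expansion estimate \Cor~\ref{lem_edges_5s}. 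The $O(1/n)$ failure probability is exactly the one coming from \Cor~\ref{lem_edges_5s}.

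Your route reduces to showing that the planted first-moment exponent $g(\rho)=\tfrac1n\ln\Erw[N_\rho]$ is strictly negative throughout the non-separable region. The black box you invoke does not deliver this. \Prop~\ref{Prop_opt} (the optimisation imported from~\cite{ACOVilenchik}) concerns the second-moment exponent $f$ of \Sec~\ref{Sec_second}, restricted to the set $\Dg$ of \emph{separable} matrices, and its conclusion is only $f(\rho)<f(\bar\rho)$. The planting identity gives, up to lower order, $g(\rho)=f(\rho)-\tfrac1n\ln\Erw[\Zkb]=f(\rho)-\tfrac12 f(\bar\rho)$; hence you would need the strictly stronger bound $f(\rho)<\tfrac12 f(\bar\rho)$, and on the \emph{complement} of $\Dg$ rather than on $\Dg$. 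Neither~\cite{ACOVilenchik} nor this paper establishes that: in both, separability is proved first via expansion precisely so that the second-moment optimisation can afterwards be confined to separable overlaps. (Note also that $g(\bar\rho)=\tfrac1n\ln\Erw[\Zkb]>0$, so your $g$ is certainly not negative everywhere; the issue is whether it is negative on the specific intermediate region, which requires its own analysis.) You rightly flag this step as ``the main obstacle'', but it is not a matter of tidying constants or absorbing the $O(n^{-1/3})$ slack in $\mu$: it is a different variational problem on a different domain, and the cited result does not solve it.
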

We defer the proof of \Prop~\ref{pro_p_sep} to \Sec~\ref{Sec_pro_p_sep}.
Furthermore, in \Sec~\ref{Sec_clusterSize} we are going to prove

\begin{proposition}\label{cor_cluster_size}
Let $\sigma:V\ra\brk k$ be balanced and
assume that $\mu$ satisfies~(\ref{eqplantedmu}).
With probability $1-O(1/n)$ the random multi-graph $\cG(\sigma,\mu)$ is such that 
\begin{equation*}
\frac1n\ln |\cC(\sigma)| \leq \bc{\frac1k+\tilde{O}_k(k^{-2})}\ln 2.
\end{equation*}
\end{proposition}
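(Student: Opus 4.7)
By Proposition~\ref{pro_p_sep} we may condition on $\sigma$ being separable in $\cG(\sigma,\mu)$, an event of probability $1-O(1/n)$; then every $\tau\in\cC(\sigma)$ satisfies $\rho_{ii}(\sigma,\tau)\geq 1-\kappa$ for all $i\in\brk k$. For such a $\tau$ define the \emph{disagreement set} $B(\tau)=\cbc{v\in V:\tau(v)\neq\sigma(v)}$ and $B_i(\tau)=B(\tau)\cap V_i$, so $|B_i(\tau)|\leq\kappa n/k$. Because $\tau$ is a proper colouring, whenever $v\in B_i(\tau)$ and $\tau(v)=j$ \emph{every} $V_j$-neighbour of $v$ in $\cG(\sigma,\mu)$ must also lie in $B_j(\tau)$; otherwise such a neighbour still carries colour $j$ under $\tau$ and creates a monochromatic edge. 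Thus $B(\tau)$ is \emph{closed}: every $v\in B(\tau)\cap V_i$ admits a flip colour $j\neq i$ with $N(v)\cap V_j\subseteq B(\tau)$.

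The key quantity to control is the \emph{loose set}
\[
U=\cbc{v\in V_i:\exists\, j\neq i\text{ with }N(v)\cap V_j=\emptyset\text{ in }\cG(\sigma,\mu)}.
\]
Using Lemma~\ref{lem_balls_bins_cap} to replace the hypergeometric distribution of $e_{\cG(\sigma,\mu)}(v,V_j)$ by an independent $\Bin(d,1/(k-1))$ at the cost of a polynomial factor, one gets
\[
\pr\brk{v\in U}\leq(k-1)\bc{1-\tfrac{1}{k-1}}^d\leq(k-1)\exp\bc{-\tfrac{d}{k-1}}\leq \tfrac{1}{k}+\tilde O_k(k^{-2})
\]
for $d$ in the range~(\ref{eqSec_Prop_goodFirstMoment}). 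Hence $\Erw\brk{|U|}\leq(1/k+\tilde O_k(k^{-2}))n$, and a standard Chebyshev bound -- once pairwise covariances are controlled through a second application of Lemma~\ref{lem_balls_bins_cap} to reduce to independent binomials -- yields $|U|\leq(1/k+\tilde O_k(k^{-2}))n$ with probability $1-O(1/n)$.

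It remains to argue that the ``cascade'' $B(\tau)\setminus U$ contributes negligibly. A vertex $v\in B(\tau)\setminus U$ has at least one $V_j$-neighbour for \emph{every} $j\neq\sigma(v)$, and concentration in the planted model forces $(1\pm o_k(1))d/(k-1)$ neighbours in each $V_j$. The closure condition would therefore demand that $\sim 2\ln k$ specific neighbours all land inside the tiny set $B(\tau)$ of size at most $\kappa n$; a first-moment count over closed extensions, using the edge-profile estimate of Corollary~\ref{Cor:Skewed} to bound the probability of many simultaneous such forcings, shows that the expected number of closed sets with $|B\setminus U|>\tilde O_k(n/k^2)$ is $\exp(-\Omega(n))$. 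Moreover almost every $v\in U$ has exactly one empty colour class, so fixing the subset $B(\tau)\subseteq U$ essentially determines $\tau$; carrying this bookkeeping through yields
\[
|\cC(\sigma)|\leq 2^{|U|+\tilde O_k(n/k^2)}\leq\exp\bc{\bc{\tfrac{1}{k}+\tilde O_k(k^{-2})}n\ln 2},
\]
as claimed. The principal obstacle is the cascade bound on $|B(\tau)\setminus U|$: a crude union bound over closed extensions would produce an error that does not fit inside the $\tilde O_k(k^{-2})\ln 2$ term, so the first-moment calculation must genuinely exploit the high planted degree $d=\Theta(k\ln k)$ together with the $d$-regularity.
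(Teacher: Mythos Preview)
Your overall shape is right---bound the loose set $U$ and argue that the cascade $B(\tau)\setminus U$ is negligible---but the cascade bound is the entire difficulty, and your sketch does not supply a working argument for it. A vertex $v\notin U$ is only guaranteed at least \emph{one} neighbour in each $V_j$, and those neighbours need not themselves lie outside $U$ (or have many well-spread neighbours); so the heuristic ``$v\in B(\tau)\setminus U$ forces $\sim 2\ln k$ further vertices into $B(\tau)$'' does not by itself contradict $|B(\tau)|\leq\kappa n$, and in any case fails for the non-negligible set of vertices whose degree into some colour class is far below $d/(k-1)$. A first-moment count over closed sets runs into exactly the obstacle you flag: there are $\exp(\Theta(n\ln k/k))$ candidate sets of size $\Theta(n/k)$, and the closure probability per set is not rare enough to beat this entropy with the required $\tilde O_k(k^{-2})$ slack. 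Corollary~\ref{Cor:Skewed} counts partitions with prescribed inter-class edge densities; it gives no handle on the event that a fixed small set is closed in your sense, so invoking it here is a misstep.

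The paper avoids any union bound over closed sets by introducing a $(\sigma,100)$-\emph{core} $V'$: the maximal induced subgraph in which every vertex has at least $100$ neighbours in every other colour class \emph{inside $V'$}. One proves $|V\setminus V'|=\tilde O_k(k^{-1})n$ with probability $1-O(1/n)$, and then rigidity of $V'$ follows deterministically from the expansion property of Corollary~\ref{lem_edges_5s} (no set of size $\leq k^{-4/3}n$ spans more than five times its size in edges): with $\Delta_i^{\pm}\subset V'$ the core vertices gaining/losing colour $i$ under $\tau\in\cC(\sigma)$, separability gives $|\Delta_i^+\cup\Delta_i^-|\leq 2\kappa n/k\leq k^{-4/3}n$, the core condition forces at least $100|\Delta_i^+|$ edges inside $\Delta_i^+\cup\Delta_i^-$, and the $5$-per-vertex bound yields $|\Delta_i^+|\leq|\Delta_i^-|/2$ for every $i$; combined with $\sum_i|\Delta_i^+|=\sum_i|\Delta_i^-|$ this forces all $\Delta_i^{\pm}=\emptyset$. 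Hence every complete vertex is frozen, and the cluster bound comes from counting $a$-free vertices \emph{relative to the core}: at most $(1/k+\tilde O_k(k^{-2}))n$ are $1$-free and $O_k(k^{-2})n$ are $2$-free, giving $|\cC(\sigma)|\leq 2^{|F_1\setminus F_2|}k^{|F_2|}$. The core construction is the missing ingredient that anchors both the rigidity argument and the freeness count; without it your route via $U$ lacks a fixed reference set on which the expansion lemma can bite.
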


\medskip\noindent{\em Proof of \Prop~\ref{Prop_goodFirstMoment} (assuming \Prop s~\ref{pro_p_sep} and~\ref{cor_cluster_size}).}
Let $\sigma:V\ra\brk k$ be balanced and let $M_\sigma$ be the set of all probability distributions $\mu$ that satisfy~(\ref{eqplantedmu}) such that $dn\mu_{ij}$ is integral for all $i,j$.
For any balanced $\sigma$ and for any $\mu$ we let $\Lambda_{\sigma,\mu}$ be the set of all 
$(n,d)$-configurations $\Gamma$ that satisfy~(\ref{eqMuCondition}).
In addition, let $\Lambda_{g,\sigma,\mu}$ be the set of all $(n,d)$-configurations $\Gamma\in\Lambda_{\sigma,\mu}$ such that $\sigma$ is a {\em good} $k$-coloring of
the multi-graph induced by $\Gamma$.
By Propositions~\ref{pro_p_sep} and~\ref{cor_cluster_size}, for any balanced $\sigma$ and for any
$\mu\in M_\sigma$ we have
\begin{equation}\label{equ_prob_good}
 \pr \left[\sigma \mbox{ is separable in } \cG(\sigma,\mu) \mbox{ and } \frac1n\ln|\cC(\sigma)| \leq (1/k+\tilde{O}_k(k^{-2}))\ln 2 \right] \sim 1. 
\end{equation}
Because
the ``planted'' configuration $\vec\Gamma_{\sigma,\mu}$ is nothing but a uniformly random element of $\Lambda_{\sigma,\mu}$, (\ref{equ_prob_good}) implies that
	\begin{equation}\label{equ_prob_good2}
	\abs{\Lambda_{g,\sigma,\mu}}\sim\abs{\Lambda_{\sigma,\mu}}
	\end{equation}
for any balanced $\sigma$ and any $\mu\in M_\sigma$.
Now, let
	$$\Lambda_\sigma=\bigcup_{\mu\in M_\sigma}\Lambda_{\sigma,\mu},\quad \Lambda_{g,\sigma}=\bigcup_{\mu\in M_\sigma}\Lambda_{g,\sigma,\mu}.$$
Then~(\ref{equ_prob_good2}) yields
	\begin{equation}\label{equ_prob_good3}
	\abs{\Lambda_{g,\sigma}}\sim\abs{\Lambda_{\sigma}}.
	\end{equation}

Summing over all balanced $\sigma$, we obtain from~(\ref{equ_prob_good3}) and the linearity of expectation
	\begin{eqnarray}\label{equ_prob_good4}
	 \Erw[\Zkg] &\geq&\sum_{\sigma}\frac{|\Lambda_{g,\sigma}|}{(dn-1)!!}\sim\sum_{\sigma}\frac{|\Lambda_{\sigma}|}{(dn-1)!!}.
	\end{eqnarray}
To relate~(\ref{equ_prob_good4}) to $\Erw[\Zkb]$, let 
 $\Lambda_{\sigma}'$ be the set of all configurations $\Gamma$ such that
$\sigma$ is a skewed $k$-coloring of the multi-graph induced by $\Gamma$.
Then
	\begin{eqnarray}\label{equ_prob_good5}
	 \Erw[\Zkb] &=&\sum_{\sigma}\frac{|\Lambda_{\sigma}\cup\Lambda_{\sigma}'|}{(dn-1)!!}
	 	\leq\sum_{\sigma}\frac{|\Lambda_{\sigma}|}{(dn-1)!!}+
			\sum_{\sigma}\frac{|\Lambda_{\sigma}'|}{(dn-1)!!}.
	\end{eqnarray}
Letting $\Zkb'$ denote the number of skewed balanced $k$-colorings of $\Gnd$, we obtain from \Cor~\ref{Cor_skewed}
	\begin{eqnarray}\label{equ_prob_good6}
	 \Erw[\Zkb'] &=&\sum_{\sigma}\frac{|\Lambda_{\sigma}'|}{(dn-1)!!}=o(\Erw[\Zkb]).
	\end{eqnarray}
Finally, combining~(\ref{equ_prob_good4})--(\ref{equ_prob_good6}), we see that $\Erw[\Zkg]\sim\Erw[\Zkb]$, as desired.
%
\qed

\subsection{Separability: proof of \Prop~\ref{pro_p_sep}}\label{Sec_pro_p_sep}

{\em Throughout this section, we let $\sigma:V\ra\brk k$ denote a balanced map. We let $V_i=\sigma^{-1}(i)$.
Moreover, $\mu$ denotes a probability distribution that satisfies~(\ref{eqplantedmu}) such that $dn\mu_{ij}$ is integral for all $i,j$.}

\bigskip\noindent
The proof of \Prop~\ref{pro_p_sep} proceeds in several steps, all of which depend on the
binomial approximation to the hypergeometric distribution from \Lem~\ref{lem_balls_bins_cap}.
We start by proving that \whp\ in the multi-graph $\cG(\sigma,\mu)$ with planted coloring $\sigma$
there is no other coloring $\tau$ such that the overlap matrix has an entry $\rho_{ii}(\sigma,\tau)\in(0.509,1 - k^{-0.499})$ \whp\

\begin{lemma}\label{lem_s_less}
Let $i \in [k]$ and let $0.509 \leq \alpha \leq 1 - k^{-0.499}$.
Then in $\cG(\sigma,\mu)$ with probability $1-\exp(-\Omega(n))$ the following is true.
\begin{equation}\label{prop_s_less}
\parbox{12cm}{
For any set $S \subset V_i$ of size $|S| = \alpha n/k$
the number of vertices $v \in V \setminus V_i$ that do not have a neighbor in $S$ is less than $(1-\alpha)n/k-n^{2/3}$.}
\end{equation}
\end{lemma}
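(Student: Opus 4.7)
The plan is a union bound over subsets $S \subset V_i$ of size $\alpha n/k$, where for each fixed $S$ the failure probability is controlled by a Chernoff estimate after reducing to independent binomials. Let
\[
Y_S := \abs{\cbc{v \in V \setminus V_i : e_{\cG(\sigma,\mu)}(v,S) = 0}}.
\]
The task is to prove $\pr\brk{Y_S \geq (1-\alpha)n/k - n^{2/3}} \leq \exp(-\Omega(n))$ with a failure exponent that strictly dominates $\ln\binom{n/k}{\alpha n/k}$.

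To set up the binomial approximation, I would expose the planted configuration $\vec\Gamma_{\sigma,\mu}$ in two stages: first assign to the clones in each $V_l$ a uniform random shading subject to the class-size constraints given by $\mu$, then for each pair $(i,j)$ pick a uniform random matching between the $j$-shaded clones in $V_i$ and the $i$-shaded clones in $V_j$. Two applications of \Lem~\ref{lem_balls_bins_cap}, one at each stage, show that up to a polynomial factor in $n$ the joint law of the per-vertex edge counts $(e_{\cG(\sigma,\mu)}(v,S))_{v \notin V_i}$ is dominated by that of i.i.d.\ $\Bin(d, p)$ variables with $p = \alpha/(k-1) + O(n^{-1/3})$. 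Hence $Y_S$ is stochastically dominated, up to a polynomial factor, by $\Bin(N, q)$ with $N = (k-1)n/k$ and $q = (1 - \alpha/(k-1))^d$.

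The decisive estimate is the mean bound
\[
Nq \leq \tfrac{k-1}{k}\, n\, \exp\bc{-\alpha d/(k-1)} \leq \tfrac{1-\alpha}{k}\, n \cdot k^{-(2\alpha-1)+ o_k(1)},
\]
which uses $d \geq (2k-1)\ln k - O(1)$. Over the range $\alpha \in [0.509, 1 - k^{-0.499}]$ this is at most $(1-\alpha) n/k - 2n^{2/3}$ for $k, n$ large. The Chernoff tail bound of \Lem~\ref{Lemma_Chernoff}, applied at threshold $t = (1-\alpha)n/k - n^{2/3}$ and using $t/(Nq) \geq k^{(2\alpha-1) - o_k(1)}$, gives an exponent of at least $\Omega\bc{(2\alpha-1)(1-\alpha)\, n\, \ln k / k}$. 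Combined with $\ln\binom{n/k}{\alpha n/k} \leq H(\alpha)\, n/k$, a union bound over $S$ then yields the claimed probability $1 - \exp(-\Omega(n))$, provided $k \geq k_0$ is large enough that the Chernoff exponent dominates the entropy $H(\alpha) n/k$ by a constant factor uniformly in $\alpha$.

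The main obstacle is exactly this uniform margin verification. At $\alpha = 0.509$ the prefactor $(2\alpha-1) = 0.018$ is barely positive, so the Chernoff exponent beats the entropy $\leq (\ln 2)n/k$ only via the extra $\ln k$ factor; this forces $k_0$ to be quite large. At the other end $\alpha = 1 - k^{-0.499}$ both the Chernoff exponent and the entropy are of order $(1-\alpha)\ln k /k$, and the $\Omega(1)$ ratio between them is secured precisely by the choice of cutoff exponent $0.499$, which keeps $(2\alpha-1)(1-\alpha)\ln k$ above $H(\alpha)$ uniformly.
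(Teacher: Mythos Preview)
Your approach is essentially the paper's: two applications of \Lem~\ref{lem_balls_bins_cap} followed by Chernoff and a union bound over $S$, with the margin checked at both ends of the $\alpha$-range. The paper makes explicit the intermediate conditioning on $e_{j,S}\ge(1-\delta)d|S|/(k-1)$ (with $\delta=\ln^{-1/3}k$) that your ``composed'' binomial approximation $p=\alpha/(k-1)$ implicitly requires, and it justifies the uniform margin over $\alpha$ by a convexity argument (the inequality~(\ref{equ_exp_bound}) compares an exponential to a linear function, so checking the two endpoints suffices) rather than leaving uniformity as an assertion.
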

\begin{proof}
Without loss of generality we may assume $i=1$.
Thus, let $S\subset V_1$ be a set of size $|S|=\alpha n/k$.
Let
	$$e_{j,S}=\abs{\cbc{(v,l)\in S\times\brk d:\vec\Gamma_{\sigma,\mu}(v,l)\in V_j\times\brk d}}$$
be the number of edges from $S$ to $V_j$ in $\cG(\sigma,\mu)$ for $j=2,\ldots,k$.
Since we are fixing the numbers $(\mu_{1j}dn)_{j=2,\ldots,k}$ of edges between $V_1$ and the other color classes,
we can think of $e_{j,S}$ as follows:
choose a  subset of $V_1\times\brk d$ of size $dn\mu_{1j}$ uniformly at random; then $e_{j,S}$ is the number of chosen elements that belong to $S\times\brk d$.
Thus, we are in the situation of \Lem~\ref{lem_balls_bins_cap}, which we are going to use to estimate $e_{j,S}$.
Hence, let $p_j=k\mu_{1j}$; then $p_j\sim(k-1)^{-1}$ by our assumption~(\ref{eqplantedmu}) on $\mu$.
Further, let $\hat e_{j,S}$ be a random variable with distribution $\Bin(|S| d,p_j)$.
Let $\delta=\ln^{-1/3}k$.
Then \Lem~\ref{lem_balls_bins_cap} yields
	\begin{eqnarray}\label{eqEstimateOutgoingEdges1}
	\pr\brk{e_{j,S}<\frac{(1-\delta)d|S|}{k-1}}&\leq&O(\sqrt n)\cdot\pr\brk{\hat e_{j,S}<\frac{(1-\delta)d|S|}{k-1}}.
	\end{eqnarray}
Further, by Lemma~\ref{Lemma_Chernoff} (to which we are going to refer as ``the Chernoff bound'' from now on),
	\begin{eqnarray}\label{eqEstimateOutgoingEdges2}
	\pr\brk{\hat e_{j,S}<\frac{(1-\delta)d|S|}{k-1}}&\leq&\exp\brk{-\frac{\delta^2 d|S|}{2(k-1)}}\leq\exp(-n/k).
	\end{eqnarray}
Since the total number of possible sets $S$ is bounded by $2^{n/k}$, (\ref{eqEstimateOutgoingEdges1}) and~(\ref{eqEstimateOutgoingEdges2}) yield
	\begin{eqnarray}\label{eqEstimateOutgoingEdges3}
	\pr\brk{\exists S,j:e_{j,S}<\frac{(1-\delta)d|S|}{k-1}}\leq (k-1)2^{n/k}\exp(-n/k)=\exp(-\Omega(n)).
	\end{eqnarray}
Thus, let $\cE_S$ be the event that $e_{j,S}\geq\frac{(1-\delta)d|S|}{k-1}$ for all $j=2,\ldots,k$.
Due to~(\ref{eqEstimateOutgoingEdges3}), we may condition on the event $\cE_S$ from now on.

Given the numbers $e_{j,S}$, the actual clones in $V_j\times\brk d$ that $\vec\Gamma_{\sigma,\mu}$ joins to $S\times\brk d$ are uniformly distributed.
Thus,  we can use \Lem~\ref{lem_balls_bins_cap} to estimate the number $X_{j,S}$ of vertices in $v\in V_j$ that $\vec\Gamma$ fails to join to $S$.
To this end, let $(b_v)_{v\in V_j}$ be a family of independent $\Bin(d,\frac{e_{j,S}}{d n/k})$ random variables. 
Let $$q_j=\pr\brk{b_v=0}\quad\mbox{for any $v\in V_j$, and }\quad\hat X_{j,S}=\Bin(n/k,q_j).$$
Then \Lem~\ref{lem_balls_bins_cap} yields
	\begin{eqnarray}\label{eqEstimateOutgoingEdges4}
	\pr\brk{X_{j,S}\geq t|\cE_S}\leq O(\sqrt n)\pr\brk{\hat X_{j,S}\geq t}\qquad\mbox{for any $t>0$}.
	\end{eqnarray}
Furthermore, since we are assuming that $e_{j,S}\geq(1-\delta)d|S|/(k-1)$, we find
	\begin{eqnarray}\label{eqEstimateOutgoingEdges5}
	q_j&=&\bc{1-\frac{e_{j,S}}{d n/k}}^d\leq\exp\brk{-\frac{e_{j,S}}{n/k}}\leq\exp\brk{-\frac{(1-\delta)\alpha d}{k-1}}\leq k^{-2\alpha(1-2\delta)}.
	\end{eqnarray}
Set $q=k^{-2\alpha(1-2\delta)}$, let $\hat X_S=\Bin((1-1/k)n,q)$, and let $X_S=\sum_{j=2}^k X_{j,S}$.
Then (\ref{eqEstimateOutgoingEdges4}) and~(\ref{eqEstimateOutgoingEdges5}) imply
	\begin{eqnarray}\label{eqEstimateOutgoingEdges6}
	\pr\brk{X_{S}\geq t|\cE_S}\leq O(\sqrt n)\pr\brk{\hat X_{S}\geq t}\qquad\mbox{for any $t>0$}.
	\end{eqnarray}

Thus, we are left to estimate the binomial random variable $\hat X_S$
%
%
%
%
with mean
	$\Erw[{\hat X_S}]=|V\setminus V_1|q
			\leq qn$.
By the Chernoff bound,
 \begin{eqnarray}\nonumber
 \pr\left[\hat X_S \geq (1-\alpha)n/k - n^{2/3}\right] &\leq& \exp\left[-(1-\alpha+o(1))\frac nk \cdot \ln\left(\frac{(1-\alpha)n/k}{\eul qn}\right)\right] \\
& \leq& \exp\left[-(1-\alpha+o(1))\frac nk \cdot \ln\left(\frac{1-\alpha}{\eul kq}\right)\right]. 
\label{equ_chern_bound_1a}
 \end{eqnarray}
 Combining~(\ref{eqEstimateOutgoingEdges6}) and~(\ref{equ_chern_bound_1a}), we see that
 \begin{eqnarray}
 \pr\brk{X_S \geq (1-\alpha)n/k - n^{2/3}\bigg|\cE_S} &\leq&\exp\brk{-(1-\alpha+o(1))\frac nk \cdot \ln\left(\frac{1-\alpha}{\eul kq}\right)}.
\label{equ_chern_bound_1}
 \end{eqnarray}
Furthermore, the number of ways to choose a $S\subset V_1$ of size $\alpha n/k$ is 
\begin{align}\label{equ_numb_s}
 \binom{n/k}{(1-\alpha)n/k} \leq \left(\frac{\eul}{1-\alpha}\right)^{(1-\alpha)\frac{n}{k}} = \exp\left[\frac{n}{k}(1-\alpha)(1-\ln(1-\alpha))\right].
\end{align}
Using (\ref{equ_chern_bound_1}), (\ref{equ_numb_s}) and the union bound, we obtain
\begin{eqnarray}\nonumber
 \pr\left[\exists S:X_S\geq (1-\alpha)n/k - n^{2/3},\cE_S\right]\\
 &		\hspace{-8cm}\leq&	\hspace{-4cm}
 \exp\left[\frac{(1-\alpha)n}{k} \cdot \left(1-\ln(1-\alpha)- \ln\left(\frac{1-\alpha}{2ekq}\right)\right)+o(n)\right].
 	\label{equ_chern_bound_1b}
\end{eqnarray}
We need to verify that the last term is $\exp(-\Omega(n))$. Thus, we need to estimate 
\begin{align} 	\label{equ_chern_bound_1c}
 1-\ln(1-\alpha)- \ln\left(\frac{1-\alpha}{2\eul kq}\right) = \ln\left(\frac{2e^2}{(1-\alpha)^2}k^{1-2\alpha+4\alpha\delta}\right).
\end{align}
This is negative iff
\begin{align} \label{equ_exp_bound}
 \exp\left[\left(\frac{1}{2}-\alpha+2\alpha\delta\right)\ln k\right] < \frac{1-\alpha}{\sqrt{2}e}.
\end{align}
By the convexity the exponential function, the l.h.s.\ and the linear function on the r.h.s.\ intersect at most twice.
Between these intersections the linear function is greater.
Moreover, it is easily verified that the 
r.h.s.\ of (\ref{equ_exp_bound}) is larger than the l.h.s.\ at both $\alpha = 0.509$ and $\alpha=1- k^{-0.499}$. Thus, (\ref{equ_exp_bound}) is true in the entire range
$0.509 < \alpha < 1- k^{-0.499}$.
Consequently, for such $\alpha$ the term (\ref{equ_chern_bound_1c}) is strictly negative, whence the r.h.s.\
of~(\ref{equ_chern_bound_1b}) is $\exp(-\Omega(n))$.
Thus, the assertion follows from~(\ref{eqEstimateOutgoingEdges3}).
\qed\end{proof}

To complete the proof of \Prop~\ref{pro_p_sep}, we also need to rule out the possibility that
$\cG(\sigma,\mu)$ has a coloring
$\tau$ such that $\rho_{ii}(\sigma,\tau)\in(1 - k^{-0.499},1-\kappa)$.
To this end, we are going to use an expansion argument.
This argument is based on establishing that in $\cG(\sigma,\mu)$ ``most'' vertices outside color class $V_i$
have a good number of neighbors in $V_i$ \whp\
More precisely, we have

\begin{lemma}\label{lem_neigbours_less_15}
 With probability $1-\exp(-\Omega(n))$ the random graph $\cG(\sigma,\mu)$ has the following property.
\begin{align}\label{prop_neigbours_less_15}
 \textrm{Let } i \in [k]. \textrm{ No more than } nk^{-2}\ln^{17}k \textrm{ vertices } v \notin V_i \textrm{ have less than } 15 \text{ neighbors in } V_i.
\end{align}
\end{lemma}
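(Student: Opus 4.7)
The plan is to bound, for each fixed color $i\in\brk k$, the number $N_i$ of vertices in $V\setminus V_i$ with fewer than $15$ neighbors in $V_i$ by a factorial-moment / union-over-subsets argument, and then to union-bound over $i\in\brk k$. The key observation is that if a set $T\subseteq V\setminus V_i$ consists entirely of ``bad'' vertices, then the total number $e_{\cG(\sigma,\mu)}(T,V_i)$ of $T$-to-$V_i$ edges is strictly less than $15|T|$; this is an extreme lower-tail event, because the expected count is $(2-o_k(1))|T|\ln k$ and $15\ll 2\ln k$ for large $k$.

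First I would compute the probability that a fixed set $T\subseteq V\setminus V_i$ is all-bad. Writing $T_j=T\cap V_j$ for $j\neq i$, the degree sequence $(e_{\cG(\sigma,\mu)}(v,V_i))_{v\in V_j}$ is determined by which of the $dn/k$ clones in $V_j\times\brk d$ receive the ``$V_i$-label''; exactly $\mu_{ji}dn$ such labels are placed uniformly at random. Since the labelings are performed independently across the classes, the events $\cbc{e_{\cG(\sigma,\mu)}(T_j,V_i)<15|T_j|}_{j\neq i}$ are mutually independent. Applying Lemma~\ref{lem_balls_bins_cap} to pass to the independent-binomial model $\Bin(d|T_j|,k\mu_{ji})$, whose mean is $(2-o_k(1))|T_j|\ln k$, and invoking Lemma~\ref{Lemma_Chernoff}, I would obtain
\begin{align*}
\pr\brk{\forall v\in T:\ e_{\cG(\sigma,\mu)}(v,V_i)<15}\ \leq\ n^{O_k(1)}\cdot k^{-(2-o_k(1))|T|}.
\end{align*}

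Setting $s=\lceil nk^{-2}\ln^{17}k\rceil$, Markov's inequality applied to the $s$-th falling factorial of $N_i$ would yield
\begin{align*}
\pr\brk{N_i\geq s}\ \leq\ \bink{n}{s}\cdot n^{O_k(1)}\cdot k^{-(2-o_k(1))s}\ \leq\ n^{O_k(1)}\cdot\bc{\eul k^{o_k(1)}/\ln^{17-o_k(1)}k}^{s}\ =\ \exp(-\Omega(n)),
\end{align*}
using $\bink ns\leq (\eul k^2/\ln^{17}k)^s$, that $s=\Theta(n)$ for fixed $k$, and that $\ln n=o(n)$. A final union bound over $i\in\brk k$ completes the proof.

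The main technical obstacle is the factor $(\sqrt{n/k})^{k-1}$ arising from applying Lemma~\ref{lem_balls_bins_cap} in each of the $k-1$ color classes separately: it is polynomial in $n$ for fixed $k$ and must be absorbed by the Chernoff tail. The absorption works precisely because the target threshold $nk^{-2}\ln^{17}k$ provides a $\ln^3k$ buffer beyond the first-moment count $\approx nk^{-2}\ln^{14}k$, which produces an $\ln\ln k$ slack in the Markov exponent that dominates $\ln n$.
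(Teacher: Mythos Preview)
Your argument is correct, but it is organized differently from the paper's. The paper fixes $i$ and, for each $j\neq i$, applies Lemma~\ref{lem_balls_bins_cap} directly to the \emph{count} $X_j=|\{v\in V_j:e_{\cG(\sigma,\mu)}(v,V_i)<15\}|$, dominating it by $\hat X_j\sim\Bin(n/k,q_j)$ with $q_j=\pr[\Bin(d,k\mu_{1j})<15]\leq O(k^{-2}\ln^{16}k)$; combining the independent classes yields $\pr[X\geq t]\leq n^{O_k(1)}\pr[\Bin((1-1/k)n,q)\geq t]$, and a \emph{single} Chernoff bound on this binomial count finishes. You instead union-bound over candidate bad sets $T$ of size $s$, relax $\{\forall v\in T_j:e(v,V_i)<15\}$ to the sum event $\{e(T_j,V_i)<15|T_j|\}$, and Chernoff that sum. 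This is effectively the $s$-th factorial-moment version of the paper's tail bound and delivers the same exponent. The paper's route is slightly cleaner because it avoids tracking the $(\ln k)^{15}$ factor hidden in your per-set Chernoff estimate (the one that must be beaten by the $\ln^{17}k$ in the threshold), and it sidesteps the relaxation step altogether; your route, on the other hand, makes the independence across color classes~$j$ explicit, which the paper glosses over.

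One small correction to your last paragraph: the polynomial prefactor $n^{O_k(1)}$ is absorbed simply because the main exponent is $\Theta(n)$ for fixed~$k$; no $k$-dependent slack is needed for that. The $\ln^3 k$ buffer (i.e., $\ln^{17}k$ versus the first-moment $\ln^{14}k$) is what drives the per-subset base $C/\ln^2 k$ below~$1$, not what kills the $n^{O_k(1)}$.
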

\begin{proof}
Assume without loss of generality that $i=1$.
We are going to use Lemma \ref{lem_balls_bins_cap} once more.
Our assumption~(\ref{eqplantedmu}) ensures that for each
$j\in\cbc{2,\ldots,k}$ the number of $V_1$-$V_j$ edges in $\cG(\sigma,\mu)$ is $\mu_{1j}\sim k^{-1}(k-1)^{-1}$.
Thus, let $(b_v)_{v\in V_j}$ be a family of independent random variables with distribution $\Bin(d,p_j)$, with $p_j=k\mu_{1j}\sim(k-1)^{-1}$.
Furthermore, let $X_j$ be the number of $v\in V_j$  with fewer than $15$ neighbors in $V_1$, and let
	$\hat X_j=\abs{\cbc{v\in V_j:b_v<15}}$.
Then by \Lem~\ref{lem_balls_bins_cap} we have	
	\begin{equation}\label{eqlem_neigbours_less_151}
	\pr\brk{X_j\geq t}\leq O(\sqrt n)\pr\brk{\hat X_j\geq t}\qquad\mbox{for any }t>0.
	\end{equation}
Furthermore,
because the random variables $b_v$, $v\in V_j$, are independent,
 $\hat X_j$ has a distribution $\Bin(n/k,q_j)$, with $q_j=\pr\brk{\Bin(d,p_j)< 15}$.

Now, let
$X=\sum_{j=2}^kX_j$ and let $\hat X$ be a random variable with distribution $\Bin((1-1/k)n,q)$, with $q=\max_{j\geq 2}q_j$.
Then~(\ref{eqlem_neigbours_less_151}) implies
	\begin{equation}\label{eqlem_neigbours_less_152}
	\pr\brk{X\geq t}\leq O(\sqrt n)\pr\brk{\hat X\geq t}\qquad\mbox{for any }t>0.
	\end{equation}
Furthermore, our assumption~(\ref{eqSec_Prop_goodFirstMoment})  on $d$ ensures that
	$$\Erw\brk{\hat X}\leq nq=n\pr\brk{\Bin\bc{d,\frac{1+o(1)}{k-1}}\leq 15}\leq O(k^{-2}\ln^{16}k) n.$$
Hence, $\pr[\hat X\geq nk^{-2}\ln^{17}k]\leq\exp(-\Omega(n))$ by the Chernoff bound.
Thus, the assertion follows from~(\ref{eqlem_neigbours_less_152}).
\qed\end{proof}

Given \Lem~\ref{lem_neigbours_less_15}, how do we argue that
\whp\ there is no $\tau$ such that $\rho_{ii}(\sigma,\tau)\in(1 - k^{-0.499},1-\kappa)$?
Such a coloring $\tau$ would have to give color $i$ to a good number of vertices from $V\setminus V_i$
with at least 15 neighbors in $V_i$ (because there is no sufficient supply of vertices that have less than $15$ neighbors in $V_i$).
However, we are going to show that assigning color $i$ to many such vertices ``displaces'' a very large number of vertices from $V_i$
due to expansion properties, and that it is therefore not possible that $\rho_{ij}(\sigma,\tau)\in(1 - k^{-0.499},1-\kappa)$ \whp\
To put this expansion argument together, we need the following upper bound on the probability that a specific set of edges occurs
in the random configuration $\vec\Gamma_{\sigma,\mu}$.

\begin{lemma}\label{lem_edges_cond}
Let $E$ be a set of edges of the complete graph on $V\times\brk d$ of size $|E|\leq\frac n{2k}$.
Let
	$$e_{ij}=\abs{\cbc{e\in E:e\cap (V_i\times d)\neq\emptyset\neq e\cap (V_j\times\brk d})}\qquad(i,j\in\brk k)$$
be the number of edges in $e\in E$ that join a $V_i$-clone with a $V_j$-clone and assume that $e_{ii}=0$ for all $i$.
%
Then
	$$\pr\brk{E\subset\vec\Gamma_{\sigma,\mu}}\leq\left(\frac{5}{dn}
			\right)^{|E|}.$$
\end{lemma}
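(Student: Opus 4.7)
The plan is to compute $\pr[E\subset \vec\Gamma_{\sigma,\mu}]$ exactly as a ratio of counts and then bound each resulting falling factorial. Specifically, this probability equals $N_E/N$, where $N$ is the number of $(n,d)$-configurations satisfying~(\ref{eqMuCondition}) and $N_E$ is the number of such configurations that additionally contain~$E$. The total count $N$ is obtained by the shading decomposition from the proof of Lemma~\ref{Lemma_partition}, adapted to the planted setting where $\sigma$ is fixed: using $\mu_{ii}=0$, we get
$$
N=\prod_{i\in\brk k}\binom{dn/k}{(\mu_{ij}dn)_{j\in\brk k}}\cdot\prod_{1\le i<j\le k}(\mu_{ij}dn)!.
$$
For the numerator, decompose $E=\bigcup_{i<j}E_{ij}$ with $|E_{ij}|=e_{ij}$, and let $s_i=\sum_{j\ne i}e_{ij}$ be the number of $V_i$-clones touched by $E$, so $\sum_i s_i=2|E|\le n/k$. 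Each edge in $E$ simultaneously fixes the shading of both its endpoints and removes them from the pool to be matched, giving
$$
N_E=\prod_{i\in\brk k}\binom{dn/k-s_i}{(\mu_{ij}dn-e_{ij})_{j\in\brk k}}\cdot\prod_{1\le i<j\le k}(\mu_{ij}dn-e_{ij})!.
$$

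Taking the ratio, most factorials cancel and we obtain
$$
\frac{N_E}{N}=\prod_{i\in\brk k}\frac{(dn/k-s_i)!}{(dn/k)!}\cdot\prod_{1\le i<j\le k}\frac{(\mu_{ij}dn)!}{(\mu_{ij}dn-e_{ij})!}.
$$
It remains to bound the two falling-factorial products. For the first, since $s_i\le 2|E|\le n/k$, every factor $(dn/k-\ell)^{-1}$ with $0\le\ell<s_i$ is bounded by $k/((d-1)n)$, giving a total of $(k/((d-1)n))^{2|E|}$. For the second, $\mu_{ij}=(1+o(1))/(k(k-1))$ by~(\ref{eqplantedmu}), so every factor $\mu_{ij}dn-\ell$ with $0\le\ell<e_{ij}$ is at most $\mu_{ij}dn\le (1+o(1))dn/(k(k-1))$, yielding $((1+o(1))dn/(k(k-1)))^{|E|}$. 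Multiplying,
$$
\frac{N_E}{N}\le\bc{\frac{(1+o(1))\,dk}{(d-1)^2(k-1)n}}^{|E|},
$$
and the parenthesised expression is at most $5/(dn)$ once $d,k,n$ are sufficiently large (in fact it is of order $1/(kdn)$, so the constant $5$ is very generous).

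The main obstacle is the bookkeeping in counting $N_E$: one must correctly track that each edge of $E$ plays two simultaneous roles (pinning down the shade of both endpoints and being part of the matching), and that the remaining shading extensions and matchings are counted without double-counting. A subsidiary point is verifying the hypotheses that make the bounds work, in particular that $|E|\le n/(2k)$ together with the lower bound on $d$ from~(\ref{eqSec_Prop_goodFirstMoment}) forces $e_{ij}\le\mu_{ij}dn/2$ and $s_i\le dn/(2k)$, so that none of the falling factorials threaten to degenerate. Once these hypotheses are checked the proof amounts to a clean factorial calculation.
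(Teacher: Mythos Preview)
Your proof is correct and follows essentially the same approach as the paper: write $\pr[E\subset\vec\Gamma_{\sigma,\mu}]$ as an exact ratio $N_E/N$ via the shading decomposition, simplify to a product of falling factorials, and bound each factor using $|E|\le n/(2k)$ and $\mu_{ij}\sim 1/(k(k-1))$. The only slip is in your final parenthetical remark: the quantity $(1+o(1))dk/((d-1)^2(k-1)n)$ is of order $1/(dn)$, not $1/(kdn)$, since for large $d,k$ it tends to $1/(dn)$; the constant $5$ is still comfortable but not by an extra factor of $k$.
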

\begin{proof}
Let $e_i=\sum_{j=1}^ke_{ij}$ and set $e=\sum_{i=1}^ke_i=2|E|$.
Let $m_{ij}=dn\mu_{ij}$ for all $i,j\in\brk k$.
We claim that
\begin{equation}\label{eqlem_edges_cond1}
 \pr[E\subset\vec\Gamma_{\sigma,\mu}]=
 		\frac{\left[\prod_{i=1}^k \binom{dn/k-e_i}{(m_{ij}-e_{ij})_{j\in\brk k}}   \right]    \left[\prod_{1\leq i<j\leq k}(m_{ij}-e_{ij})!\right]     }
				{    \left[\prod_{i=1}^k \binom{dn/k}{(m_{ij})_{j\in\brk k}}\right]    \left[\prod_{1\leq i<j\leq k}m_{ij}!\right] } .
\end{equation}
Indeed, the numerator is obtained by (fixing the edges in $E$ and) counting the number of ways to match the remaining clones, given $\mu$.
More precisely, for any fixed $i\in\brk k$ the corresponding factor in the first product counts the number of ways to choose the $m_{ij}-e_{ij}$ clones
that are going to be matched with clones from color class $j$.
Moreover, for fixed $i, j$ the corresponding factor in the second product counts the number of matchings between the clones thus designated.
The denominator simply is the number of configurations respecting $\sigma,\mu$.

Because $m_{ij}=m_{ji}$ by assumption and $e_{ij}=e_{ji}$ by definition, (\ref{eqlem_edges_cond1}) yields
\begin{eqnarray*}
\pr[E\subset\vec\Gamma_{\sigma,\mu}]
&=&\frac{\left[\prod_{i=1}^k \binom{dn/k-e_i}{(m_{ij}-e_{ij})_j}\right]  \left[\prod_{i,j=1}^k(m_{ij}-e_{ij})!\right]^{1/2} }
	{\left[\prod_{i=1}^k \binom{dn/k}{(m_{ij})_j}\right]  \left[\prod_{i,j=1}^km_{ij}!\right]^{1/2} } 
=\left[\prod_{i=1}^k \frac{1}{(dn/k)_{e_{i}}}\right]  \left[\prod_{i,j=1}^km_{ij}!\right]^{1/2}.
\end{eqnarray*}
Furthermore, because of the assumption $|E| \leq \frac{n}{2k}$
we have $$(dn/k-e_i)! \geq \bcfr{dn/k}{2}^{e_i} = \bcfr{dn}{2k}^{e_i}.$$
Finally, recalling from~(\ref{eqplantedmu}) that  $|\mu_{ij}-k^{-1}(k-1)^{-1}|\leq0.01k^{-2}$ 
for all $i,j\in\brk k$ and
using Stirlings formula, we get  
\begin{align*}
 \pr[E\subset\vec\Gamma_{\sigma,\mu}] &\leq \left[ \prod_{i=1}^k
 	 2^{e_i} \cdot \left( \frac{k}{dn}\right)^{e_i} \right] \left[ \prod_{i,j=1}^k\left(\frac{1.01dn}{k(k-1)})\right)^{e_{ij}/2} \right] \\
&= \left[  \prod_{i=1}^k \left(\frac{4k}{k-1}\right)^{e_i/2}\left(\frac{1}{dn}\right)^{e_i} \right] \left[ \prod_{i,j=1}^k\left(1.01dn\right)^{e_{ij}/2} \right] 
	\leq\left(\frac{5}{dn}\right)^{e/2},
\end{align*}
as claimed.
\qed\end{proof}

\begin{remark}\label{rem_edges_cond}
Even though in this section we are assuming that $\mu_{ij}\sim k^{-1}(k-1)^{-1}$ for all $1\leq i<j\leq k$,
proof of \Lem~\ref{lem_edges_cond} only requires that, say, $|\mu_{ij}-k^{-1}(k-1)^{-1}|\leq0.01k^{-2}$.
Moreover, the same proof also goes through if we merely assume that, say, $|\sigma^{-1}(i)-n/k|\leq0.01 n/k$ for all $i\in\brk k$
rather than that $\sigma$ is balanced.
This observation will be needed in \Sec~\ref{sec:prop:NoOfCompl+ItsDistr}.
\end{remark}

Using \Lem~\ref{lem_edges_cond}, we can now prove that \whp\ the random graph $\cG(\sigma,\mu)$ 
does not feature a ``small dense set'' of vertices (i.e., a small set of vertices that spans a much larger number of edges than expected).
This will be the key ingredient to our expansion argument.

\begin{corollary}\label{lem_edges_5s}
With probability $1- O(1/n)$ the random graph $\cG(\sigma,\mu)$ has the following property:
\begin{equation} \label{prop_edges_5s}
\parbox[c]{12cm}{For any set $S \subset V$ of size $|S| \leq k^{-4/3}n$ the number of edges spanned by $S$ in $\cG(\sigma,\mu)$ is
 	bounded by $ 5|S|.$}
\end{equation}
\end{corollary}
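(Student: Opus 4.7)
The plan is to prove (\ref{prop_edges_5s}) by a first-moment (union bound) argument leveraging \Lem~\ref{lem_edges_cond}. For each integer $s$ with $1 \le s \le k^{-4/3} n$, let $X_s$ denote the number of subsets $S \subset V$ of size $s$ that span at least $5s$ edges in $\cG(\sigma,\mu)$. By Markov's inequality it suffices to show $\sum_s \Erw[X_s] = O(1/n)$.

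I would fix $S$ together with a set $E$ of $5s$ candidate edges supported on the clone set $S \times [d]$, apply \Lem~\ref{lem_edges_cond} to obtain $\pr[E \subset \vec\Gamma_{\sigma,\mu}] \leq (5/(dn))^{5s}$, and then sum over $(S,E)$. The hypothesis $e_{ii} = 0$ of that lemma is automatic here: because $\mu_{ii}=0$ for all $i\in[k]$, the conditioning~(\ref{eqMuCondition}) forbids any matched clone-pair lying inside a single color class, so $\sigma$ is with certainty a proper coloring of $\cG(\sigma,\mu)$ (no self-loops or monochromatic multi-edges). Using $\binom{n}{s} \leq (en/s)^s$ together with the standard estimate $(esd^2/10)^{5s}$ on the number of matchings of size $5s$ among $sd$ clones, these bounds combine after routine algebra to yield
\begin{equation*}
\Erw[X_s] \;\leq\; \bc{\frac{e^6\, d^5\, s^4}{32\, n^4}}^{s}.
\end{equation*}

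To extract an $O(1/n)$ probability from this, I would split the range at $s_1 = \lceil \log_2 n \rceil$. For $s < s_1$ the factors $s^{4s}$ and $d^{5s}$ grow only sub-polynomially in $n$ (since $k$, and hence $d$, is a constant in this asymptotic), so the $n^{-4s}$ factor dominates and each term is at most $n^{-7}$ for $s \geq 2$; summing over the $O(\log n)$ values of $s$ gives $o(1/n)$. The case $s=1$ is vacuous since $\mu_{ii}=0$ rules out self-loops, so $X_1 = 0$. For $s \geq s_1$, the assumptions $s \leq k^{-4/3} n$ and $d \leq (2k-1)\ln k$ imply
\begin{equation*}
\frac{e^6\, d^5\, s^4}{32\, n^4} \;\leq\; O\bc{k^{-1/3}\ln^5 k} \;\leq\; \tfrac{1}{2}
\end{equation*}
provided $k \geq k_0$ is sufficiently large, so $\Erw[X_s] \leq 2^{-s}$; the geometric tail contributes $\sum_{s \geq s_1} 2^{-s} = O(1/n)$.

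The main obstacle is bookkeeping rather than any conceptual hurdle: the naive union bound alone only yields constant probability, and one must exploit both the huge $n^{-4s}$ decay at small $s$ and the fact that the per-term base contracts to $O(k^{-1/3}\ln^5 k) < 1/2$ at large $s$ in order to push the overall failure probability down to $O(1/n)$.
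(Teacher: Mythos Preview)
Your proof is correct and follows essentially the same approach as the paper's: fix $S$ and a candidate edge set $E$ of size $5s$, apply \Lem~\ref{lem_edges_cond} to get $(5/(dn))^{5s}$, union-bound over $E$ and $S$ to reach the per-$s$ estimate $[e^6(s/n)^4 d^5]^s$ (the paper drops the factor $1/32$), and sum over $s$. The only difference is presentational: the paper simply asserts that $\sum_{1\le s\le k^{-4/3}n}[e^6(s/n)^4 d^5]^s=O(1/n)$, whereas you spell out the split at $s_1=\lceil\log_2 n\rceil$, which is exactly the justification needed; your separate treatment of $s=1$ via the absence of self-loops is correct but unnecessary, since the $s=1$ term is already $O(d^5/n^4)=O(1/n^4)$.
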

\begin{proof}
Let $0\leq \alpha \leq k^{-4/3}$ and fix a set $S$ of size $s=|S|$ with $1\leq s\leq k^{-4/3}n$.
Furthermore, let $Y(S)$ be the number of edges in $\vec\Gamma_{\sigma,\mu}$ that join two clones in $S\times\brk d$.

We are going to use the union bound to estimate $Y(S)$.
Let $E$ be a set of $|E|=5s$ unordered pairs of clones in $S\times\brk d$.
Let $e_{ij}=\abs{\cbc{\cbc{x,y}\in E:\sigma(x)=i,\,\sigma(y)=j}}$.
Clearly, if $e_{ii}>0$ for some $i\in\brk k$, then $E\not\subset\vec\Gamma_{\sigma,\mu}$ (because $\vec\Gamma_{\sigma,\mu}$ respects $\sigma$).
Thus, assume that $e_{ii}=0$ for all $i\in\brk k$.
Then \Lem~\ref{lem_edges_cond} implies
	\begin{eqnarray}\label{eqlem_edges_5s1}
	\pr\brk{E\subset\vec\Gamma_{\sigma,\mu}}&\leq&\left(\frac{5}{dn}\right)^{5s}. 
	\end{eqnarray}
By the union bound and~(\ref{eqlem_edges_5s1}),
	\begin{eqnarray}
	\pr\brk{Y(S)\geq5s}&\leq&\pr\brk{\exists E\mbox{ as above}:E\subset\vec\Gamma_{\sigma,\mu}}\leq\bink{\bink{ds}{2}}{5s}
		\bcfr{5}{dn}^{5s}
		\leq\bc{\eul ds/n}^{5s}.\label{eqlem_edges_5s2}
	\end{eqnarray}

Using the union bound and~(\ref{eqlem_edges_5s2}), we find
	\begin{eqnarray}\nonumber
	\pr\brk{\exists S\subset V,|S|=s:Y(S)>5s}&\leq&
		\bink{n}{s}\bc{\eul s d/n}^{5s}\leq\brk{\frac{\eul n}{s}\cdot\bc{\eul sd/n}^{5}}^{s}\\
		&\leq&\brk{\exp(6)(s/n)^4 d^5}^{s}.\label{eqlem_edges_5s3}
	\end{eqnarray}
Finally, summing~(\ref{eqlem_edges_5s3}) up, we find
	\begin{eqnarray*}
	\pr\brk{\exists S\subset V,|S|\leq k^{-4/3}n:Y(S)>5s}&\leq&\sum_{1\leq s\leq k^{-4/3}n}\brk{\exp(6)(s/n)^4 d^5}^{s}=O(1/n),
	\end{eqnarray*}
as desired.
\qed\end{proof}

\medskip\noindent{\em Proof of \Prop~\ref{pro_p_sep}.}
 We need to show that the following holds w.h.p.
\begin{quote}
Let $\tau$ be a balanced $k$-coloring of $\cG(\sigma)$ and let $i \in [k]$ be such that $ \tau(v) = i$ for at least
$0.51n/k$ vertices $v \in V_i$. Then $|\left\{v \in V_i:\tau(v) = i \right\}| \geq \frac{n}{k}(1 - \kappa)$.
\end{quote}
By Lemmas \ref{lem_s_less}, \ref{lem_neigbours_less_15} and \ref{lem_edges_5s}, we may assume
that (\ref{prop_s_less}), (\ref{prop_neigbours_less_15}) and (\ref{prop_edges_5s}) hold.
Moreover, without loss of generality we may assume that $i=1$.

Let $\tau$ be a  balanced $k$-coloring and let $S=\tau^{-1}(1) \cap V_1$.
Assume that
\begin{align}\label{equ_0.49}
 0.51n/k \leq |S| \leq (1-k^{-0.49})n/k.
\end{align}
Let $T = \tau^{-1}(1)\setminus V_1$. 
Then $S\cup T = \tau^{-1}(1)$ is an independent set.
In particular, none of the vertices in $T$ has a neighbor in $S$.
Moreover, $|T| \geq n/k - |S|$ because $\tau$ is a balanced coloring.
But then~(\ref{equ_0.49}) contradicts~(\ref{prop_s_less}).
Thus, we know that $|S|>(1-k^{-0.49})n/k$.

Let $Q$ be the set of all vertices $v \in \tau^{-1}(1)\setminus V_1$ with at least $15$ neighbors in $V_1$.
Moreover, let $R= V_1\setminus \tau^{-1}(1)$.
Because both $\sigma$ and $\tau$ are balanced, we have
	\begin{equation}\label{eqRcupQ}
	|R\cup Q|\leq 2\brk{\frac nk-|S|}\leq 2nk^{-1.49}<k^{-4/3}n.
	\end{equation}
The set $R$ contains all the neighbors that the vertices in $Q$ have in $V_1$ (because $\tau^{-1}(1)$ is an independent set).
Hence, by the definition of $Q$, the number $E$ of edges spanned by $R\cup Q$ in $\cG(\sigma,\mu)$ is at least $E\geq 15|Q|$.
Hence, (\ref{prop_edges_5s}) and~(\ref{eqRcupQ}) yield
\begin{equation}\label{equ_1}
 15|Q|\leq E\leq 5|R\cup Q|, \mbox{\ \ \ whence } |Q|\leq|R|/2.
\end{equation}
Let $W=\tau^{-1}(1)\setminus (Q\cup V_1)$ be the set of all vertices with color $1$ under $\tau$
and another color under $\sigma$ that have fewer than $15$ neighbors in $V_1$.
Once more because $\sigma$ and $\tau$ are balanced, we get
\begin{equation*}
 |S| + |R| = n/k = |S|+|Q|+|W|
\end{equation*}
Thus, (\ref{equ_1}) yields
\begin{equation*}
 |R|=|Q|+|W| \leq |R|/2+|W|.
\end{equation*}
Hence, (\ref{prop_neigbours_less_15}) implies that $|R| \leq 2|W| \leq 2nk^{-2}\ln^{17}k\leq n\kappa/k$.
Finally, because $\tau$ is balanced this entails that
	$|\tau^{-1}(1)\cap V_1|=\frac nk-|R|\geq\frac nk(1-\kappa)$, as desired.
\qed

\subsection{Upper-bounding the cluster size: proof of \Prop~\ref{cor_cluster_size}}\label{Sec_clusterSize}

The goal in this section is to establish the bound on the cluster size $\abs{\cC(\sigma)}$ in the random graph
$\cG(\sigma,\mu)$, where we continue to assume that $\sigma$ is balanced and that $\mu$ satisfies~(\ref{eqplantedmu}).
The following definition provides the key concepts.

\begin{definition}\label{Def_core}
Let $\ell>0$ be an integer.
\begin{enumerate}
 \item The \bemph{$(\sigma,\ell)$-core} of $\cG(\sigma,\mu)$ is the largest induced subgraph $(V',E')$ 
 	such that for all $v \in V'$ and all $i \neq \sigma(v)$ we have
$\abs{e_{\cG(\sigma,\mu)}(v,V'\cap\sigma^{-1}(i))}\geq\ell.$
\item Let $V'$ be the $(\sigma,\ell)$-core and let $a\geq0$ be an integer. A vertex $u \in V$ is \bemph{$a$-free} if
$$
\abs{\{i \in [k]: e_{\cG(\sigma,\mu)}(u,V'\cap \sigma^{-1}(i))=0\}} \geq a+1.
$$
\item A vertex that fails to be $1$-free is \bemph{complete}.
\end{enumerate}
\end{definition}

In words, the $(\sigma,\ell)$-core of $\cG(\sigma,\mu)$ is the largest subgraph $V'$ such that every vertex $v\in V'$
has at least $\ell$ edges into every other color class except its own.
Furthermore, a vertex $v$ is $a$-free if there are $a$ color classes in addition to its own such that $v$ fails to have a neighbor
in that color class that belongs to the $(\sigma,\ell)$-core.
Finally, a vertex is complete if in every other color class but its own it has a neighbor that belongs to the core.
For the sake of concreteness, we let $\ell=100$  in the following.

The proof strategy is as follows.
As a first step, we show that \whp\ the random multi-graph $\cG(\sigma,\mu)$ has a huge $(\sigma,\ell)$-core.
More precisely, in \Sec~\ref{Sec_Prop_100-Core} we will establish

\begin{proposition}\label{Prop_100-Core}
 With probability $1-O(1/n)$, $\cG(\sigma,\mu)$ has a $(\sigma,100)$-core containing all but $\tilde{O}_k(k^{-1})n$ vertices.
\end{proposition}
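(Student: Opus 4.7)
My plan is to bound the complement $B^\ast := V \setminus V'$ of the $(\sigma,100)$-core by decomposing it into an \emph{initially defective} part $B_0$ (vertices that cannot sit in any core because they already have too few neighbors in some other color class) and the \emph{cascaded} part $B^\ast \setminus B_0$, and to estimate the two contributions separately by a Chernoff-type concentration argument and a small-set expansion argument based on Corollary~\ref{lem_edges_5s}.

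Define
$$B_0 := \cbc{v \in V : \exists\, i \neq \sigma(v),\; \abs{N_{\cG(\sigma,\mu)}(v) \cap V_i} < 200},$$
where I use the threshold $200$ (rather than the core threshold $100$) in order to leave a comfortable margin for the cascade step below. The proof of Lemma~\ref{lem_neigbours_less_15} transfers verbatim with this constant threshold: Lemma~\ref{lem_balls_bins_cap} dominates $\abs{N(v) \cap V_i}$ stochastically by $\Bin(d,k\mu_{ij})$ (up to a $O(\sqrt n)$ prefactor), and since $d\cdot k\mu_{ij} \sim d/(k-1) = 2\ln k - O_k(1)$ dominates $200$, the lower Poisson tail yields $\pr\brk{\Bin(d,k\mu_{ij}) < 200} \leq \tilde O_k(k^{-2})$. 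A Chernoff bound (Lemma~\ref{Lemma_Chernoff}) then shows that for each pair $(i,j)$ the number of $v \in V_j$ with $\abs{N(v) \cap V_i} < 200$ is at most $\tilde O_k(k^{-3})n$ with probability $1-\exp(-\Omega(n))$, so a union bound over the $k(k-1)$ ordered pairs gives $\abs{B_0} \leq \tilde O_k(k^{-1})n$ \whp

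Next, for every $v \in B^\ast \setminus B_0$ there exists a witness color $i_v \neq \sigma(v)$ with $\abs{N(v) \cap V_{i_v} \setminus B^\ast} < 100$; since $v \notin B_0$ we have $\abs{N(v) \cap V_{i_v}} \geq 200$, so $v$ has at least $101$ of its $V_{i_v}$-neighbors \emph{inside} $B^\ast$. Summing over $v \in B^\ast \setminus B_0$ and accounting for double counting of edges yields $e_{\cG(\sigma,\mu)}(B^\ast) \geq 50\abs{B^\ast \setminus B_0}$. On the other hand, if $\abs{B^\ast} \leq k^{-4/3}n$, Corollary~\ref{lem_edges_5s} gives $e_{\cG(\sigma,\mu)}(B^\ast) \leq 5\abs{B^\ast}$. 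Combining and rearranging, $\abs{B^\ast \setminus B_0} \leq \abs{B_0}/9$, whence $\abs{B^\ast} \leq \tfrac{10}{9}\abs{B_0} = \tilde O_k(k^{-1})n$.

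The main obstacle is that the target bound $\tilde O_k(k^{-1})n$ typically exceeds $k^{-4/3}n$, so Corollary~\ref{lem_edges_5s} cannot be applied to the whole of $B^\ast$ in one shot. I plan to circumvent this by running the peeling in iterative stages $B_0 \subseteq B_1 \subseteq \cdots$, showing via a local-tree first-moment estimate (the bounded-depth neighborhood of a fixed vertex in $\cG(\sigma,\mu)$ is tree-like with probability $1-O(1/n)$, and the survival of the root under peeling is then governed by a straightforward branching-process fixed-point iteration whose nontrivial fixed point lies at $1-\tilde O_k(k^{-1})$) that each incremental set $B_{t+1} \setminus B_t$ of newly cascaded vertices is small enough for Corollary~\ref{lem_edges_5s} to apply to it, and that the total cascade $\bigcup_{t \geq 1}(B_t \setminus B_{t-1})$ telescopes to $O(\abs{B_0})$. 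The resulting bound $\Erw\brk{\abs{B^\ast}} = \tilde O_k(k^{-1})n$ sharpens to $\abs{B^\ast} = \tilde O_k(k^{-1})n$ with probability $1-O(1/n)$ via a standard edge-swap martingale / Azuma concentration inequality, using that a single swap in the configuration model alters $\abs{B^\ast}$ by $O(1)$ and that there are $O(n)$ swaps to expose.
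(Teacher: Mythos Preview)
Your first two paragraphs are essentially on the right track and indeed mirror the paper's strategy: bound an ``initially defective'' set by Chernoff, then control the cascade via the small-set expansion of Corollary~\ref{lem_edges_5s}. You also correctly identify the obstacle, namely that $|B_0|=\tilde O_k(k^{-1})n$ already exceeds the $k^{-4/3}n$ threshold of Corollary~\ref{lem_edges_5s}, so you cannot feed $B^\ast$ (or even $B_0$) into that corollary directly.

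However, your proposed workaround has a genuine gap. The Azuma step is false as stated: a single edge swap in the configuration model does \emph{not} change $|B^\ast|$ by $O(1)$. The $(\sigma,100)$-core is defined by a peeling process, and flipping one edge can push a borderline vertex out of the core, which in turn can evict its neighbors, and so on; the Lipschitz constant for $|B^\ast|$ under edge swaps is governed by the depth of this cascade, not by $O(1)$. This is precisely why core-type quantities are notoriously hard to concentrate via naive bounded-differences. Your branching-process sketch for the expectation is plausible but would require substantial work to make rigorous (you would need to control the local neighborhood exploration in $\cG(\sigma,\mu)$, not just assert tree-likeness), and even then you are left without a concentration argument.

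The paper sidesteps the obstacle entirely by a sharper two-tier decomposition. Instead of a single bad set $B_0$, it defines a \emph{large static} set $W$ of vertices that have fewer than $3\ell=300$ neighbors in some other color class \emph{and} at most $2\ell\ln k$ neighbors in every class; $|W|=\tilde O_k(k^{-1})n$. Crucially, the cascade is seeded not from $W$ but from the \emph{tiny} set $U\cup U'$ (vertices outside $W$ that either have many neighbors in some $W_j$, or have $>2\ell\ln k$ neighbors in some $V_j$), which satisfies $|U\cup U'|\leq nk^{-30}$. The iterative set $Y\supseteq U\cup U'$ then stays below $4nk^{-30}\ll k^{-4/3}n$, so Corollary~\ref{lem_edges_5s} applies cleanly to $Y$. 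One then checks directly that $V\setminus(W\cup Y)$ sits inside the $(\sigma,\ell)$-core: each $v\notin W\cup Y$ has $\geq 3\ell$ neighbors in $V_j$, loses $\leq\ell$ to $W_j$ (since $v\notin U$) and $\leq\ell$ to $Y$ (by construction of $Y$), leaving $\geq\ell$. The point is that $W$ is removed in one shot with no peeling, and the peeling only ever touches a set small enough for the expansion corollary. You were missing the separation of the ``cap on maximum degree into any class'' (the $2\ell\ln k$ condition) that makes $|U|$ tiny rather than merely $\tilde O_k(k^{-1})n$.
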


Based on this estimate, we can bound the number of $1$-free and $2$-free vertices.
Indeed, in \Sec~\ref{Sec_pro_free_1_2} we are going to prove

\begin{proposition}\label{pro_free_1_2}
 With probability $1-O(1/n)$ the random graph $\cG(\sigma,\mu)$ has the following properties.
\begin{enumerate}
 \item At most $\frac{n}{k}(1+\tilde O_k(1/k))$ vertices are $1$-free.
 \item At most $O_k(k^{-2})n$ vertices are $2$-free.
\end{enumerate}
\end{proposition}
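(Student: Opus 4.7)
My plan is to prove both parts of \Prop~\ref{pro_free_1_2} by first-moment calculations supported by the structural facts already assembled. Set $N_j(v)=e_{\cG(\sigma,\mu)}(v,V_j)$ and $N_j^\star(v)=e_{\cG(\sigma,\mu)}(v,V'\cap V_j)$, where $V'$ denotes the $(\sigma,100)$-core and $R=V\setminus V'$. By \Prop~\ref{Prop_100-Core} I may assume $|R|\leq\tilde O_k(n/k)$. A vertex $v$ is $a$-free iff $N_j^\star(v)=0$ for at least $a$ indices $j\neq\sigma(v)$, and for each such pair $(v,j)$ I split the event $N_j^\star(v)=0$ into the disjoint cases (a) $N_j(v)=0$ and (b) $N_j(v)\geq1$ with all $V_j$-neighbors of $v$ lying in $R\cap V_j$.

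To handle case (a) I use \Lem~\ref{lem_balls_bins_cap} to compare the hypergeometric distribution of $N_j(v)$ to the binomial $\Bin(d,k\mu_{ij})$; since $k\mu_{ij}=(k-1)^{-1}+O(n^{-1/3})$ by~(\ref{eqplantedmu}), the bound~(\ref{eqSec_Prop_goodFirstMoment}) on $d$ yields $\pr[N_j(v)=0]=(1-1/(k-1))^d(1+o(1))=k^{-2}(1+\tilde O_k(1/k))$. Summing over the $n(k-1)$ eligible pairs gives $\Erw[|\{(v,j):v\notin V_j,\,N_j(v)=0\}|]\leq (n/k)(1+\tilde O_k(1/k))$; a standard concentration step (e.g.\ a second-moment computation followed by Chebyshev, or a switching-based Azuma inequality adapted to the configuration model) then promotes this to a probability-$1-O(1/n)$ bound. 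Since this pair count is an upper bound on the number of distinct 1-free vertices coming from case (a), it yields the desired contribution $(n/k)(1+\tilde O_k(1/k))$ for Part~1. For Part~2 the analogous calculation over triples $(v,j_1,j_2)$ with $N_{j_1}(v)=N_{j_2}(v)=0$ gives an expected count of $\binom{k-1}{2} k^{-4}n(1+o_k(1))=O_k(n/k^2)$.

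For the case-(b) ``core correction'' I need to show that at most $\tilde O_k(n/k^2)$ vertices are 1-free by virtue of having, for some $j$, all of their $V_j$-neighbors inside $R\cap V_j$. Pairs $(v,j)$ with $1\leq N_j(v)\leq 14$ are controlled directly by \Lem~\ref{lem_neigbours_less_15}. For $v\notin V_j$ with $N_j(v)\geq 15$, letting $Q_j$ denote the set of such $v$ whose $V_j$-neighbors all fall in $R\cap V_j$, the induced subgraph on $Q_j\cup(R\cap V_j)$ carries at least $15|Q_j|$ edges on at most $O_k(n/k)\ll nk^{-4/3}$ vertices, so \Cor~\ref{lem_edges_5s} forces $15|Q_j|\leq 5(|Q_j|+|R\cap V_j|)$, giving $|Q_j|\leq |R\cap V_j|/2$. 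A symmetry-plus-concentration argument over the $k$ color classes supplies $|R\cap V_j|\leq\tilde O_k(n/k^2)$ for every $j$, and summing the bounds on the $|Q_j|$ produces the required $\tilde O_k(n/k^2)$ correction for Part~1; the analogous correction for Part~2 is of smaller order $\tilde O_k(n/k^3)$ and is subsumed by the main $O_k(n/k^2)$ first-moment term.

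The main obstacle will be this case-(b) step: while the total-mass bound $|R|\leq\tilde O_k(n/k)$ from \Prop~\ref{Prop_100-Core} is free, squeezing the core correction down to the tight $\tilde O_k(n/k^2)$ required by the statement of Part~1 forces me to establish the sharper per-class estimate $|R\cap V_j|\leq\tilde O_k(n/k^2)$ uniformly over $j$ with probability $1-O(1/n)$ before I can feed it into \Cor~\ref{lem_edges_5s} to control $|Q_j|$, and this is where most of the technical work will concentrate.
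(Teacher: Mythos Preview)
Your decomposition into case (a) ($N_j(v)=0$) and case (b) (all $V_j$-neighbors outside the core) matches the paper's, and the case-(a) first moment is correct (though Chebyshev is far too weak for an $O(1/n)$ failure probability with deviation $\tilde O_k(n/k^2)$; you need Azuma or, as the paper does, \Lem~\ref{lem_balls_bins_cap} plus Chernoff). The genuine gap is in case (b): both of your sub-arguments deliver only a $\tilde O_k(n/k)$ correction, not the $\tilde O_k(n/k^2)$ you claim. \Lem~\ref{lem_neigbours_less_15} gives $\tilde O_k(n/k^2)$ vertices with $N_j(v)<15$ \emph{per color $j$}, so summing over the $k$ colors yields $\tilde O_k(n/k)$. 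For the $Q_j$ sub-case, even granting $|R\cap V_j|\le\tilde O_k(n/k^2)$---which is immediate from $R\subset W\cup Y$ together with \Lem s~\ref{lem_w} and~\ref{lem_z}, so it is \emph{not} the hard step---you obtain $|Q_j|\le\tilde O_k(n/k^2)$ per $j$, and summing over $j$ again gives only $\tilde O_k(n/k)$. (Also, your inequality ``$O_k(n/k)\ll nk^{-4/3}$'' has the direction reversed.) Thus your bound on 1-free vertices comes out as $\frac nk(1+\tilde O_k(1))$, a full factor of $k$ short.

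What is missing is a \emph{conditional} argument that extracts an extra factor of $k^{-1}$ from the event ``all $V_j$-neighbors land in a set of density $\tilde O_k(k^{-1})$ inside $V_j$''. The paper gets this by exploiting the explicit structure $R\subset W\cup Y$ rather than treating $R$ as a black box: vertices with any neighbor in $Y$ number at most $d|Y|\le nk^{-26}$ (\Lem~\ref{lem_y_ny}); for the remaining case-(b) vertices all $V_j$-neighbors must lie in $W_j$, and a conditional-probability calculation (\Cor~\ref{lem_s_1}) using $|W_j|/|V_j|=\tilde O_k(k^{-1})$ shows there are only $\tilde O_k(n/k^4)$ such vertices per pair $(i,j)$, hence $\tilde O_k(n/k^2)$ after summing over $k^2$ pairs. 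Your expansion route via \Cor~\ref{lem_edges_5s} is one power of $k$ too crude to substitute for this step.
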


As a next step, we observe that, due to the expansion properties of $\cG(\sigma,\mu)$, the colors of all the complete vertices are ``frozen'' in $\cC(\sigma)$.
More specifically, \whp\ there does not exist a coloring $\tau$ in the cluster $\cC(\sigma)$ that assigns a complete vertex a different color than $\sigma$ does.

\begin{lemma}\label{lem_core_sim}
 With probability $1-O(1/n)$ the random graph $\cG(\sigma,\mu)$ has the following property.
\begin{align} \label{prop_core_sim}
 \mbox{For all complete $v$ and all } \tau \in \cC(\sigma) \mbox{ we have } \sigma(v) = \tau(v).
\end{align}
\end{lemma}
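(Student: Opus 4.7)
\medskip\noindent\emph{Proof strategy.}
The plan is to argue by contradiction and construct a set of core vertices in $\cG(\sigma,\mu)$ whose existence is ruled out with probability $1-O(1/n)$ via a union bound based on \Lem~\ref{lem_edges_cond}. I first condition on $\sigma$ being separable, which by \Prop~\ref{pro_p_sep} holds with probability $1-O(1/n)$. Given separability, every $\tau\in\cC(\sigma)$ satisfies $\rho_{ii}(\sigma,\tau)\geq 1-\kappa$ for each $i\in\brk k$, so the disagreement set $D=\{w\in V:\sigma(w)\neq\tau(w)\}$ has $|D\cap V_i|\leq\kappa n/k$ for every $i$.

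Suppose for contradiction that some $\tau\in\cC(\sigma)$ assigns a complete vertex $v$ a color $\tau(v)\neq\sigma(v)$; say $\sigma(v)=i$ and $\tau(v)=j$. Completeness of $v$ supplies at least one neighbor $u\in V'\cap V_j$; the edge $\{v,u\}$ forces $\tau(u)\neq\tau(v)=\sigma(u)$, so $u\in D\cap V'$. Let $X=D\cap V'$; then $X\neq\emptyset$. Now iterate: for any $w\in X$ the core property provides at least $100$ neighbors $w'\in V'\cap V_{\tau(w)}$, and each such $w'$ is again forced into $D\cap V'=X$ by the same coloring constraint. Hence every $w\in X$ has at least $100$ neighbors inside the color-class slice $X\cap V_{\tau(w)}$, while separability ensures $|X\cap V_i|\leq\kappa n/k$ for every $i$.

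I would then show that w.p.\ $1-O(1/n)$ no such $X$ exists in $\cG(\sigma,\mu)$. Attach to each $w\in X$ a certificate consisting of $100$ specific edges to $X\cap V_{\tau(w)}$; across $X$ this yields at least $50|X|$ distinct edges, and by \Lem~\ref{lem_edges_cond} the probability that they all occur in $\vec\Gamma_{\sigma,\mu}$ is at most $(5/(dn))^{50|X|}$. Summed against the enumeration of triples $(X,\tau|_X,\text{certificate})$, the crucial saving is that each certified edge must terminate in a small slice $X\cap V_{\tau(w)}$ of size at most $\kappa n/k$, contributing a factor of order $\kappa n/k$ per certified edge rather than $n$. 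Combined with $\kappa=\ln^{500}k/k$ and $d\leq(2k-1)\ln k$, this drives the expected count of valid $(X,\tau|_X)$ below $1/n$ once $k_0$ is chosen large enough.

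The hard part lies in the intermediate size regime where $|X|>k^{-4/3}n$, so that \Cor~\ref{lem_edges_5s} is inapplicable, yet $|X|\leq\kappa n$ by separability. Here the saving must come entirely from the structural restriction that the $100$ required neighbors of each $w\in X$ lie in the pre-specified slice $X\cap V_{\tau(w)}$: for a random set of size $\kappa n$ in $\cG(\sigma,\mu)$ the expected number of internal edges already exceeds $50|X|$, so no useful expansion bound is available without exploiting the slice constraint. Verifying that the $\ln^{500}k/k$ savings per slice beats the $enk/|X|$ enumeration factor uniformly for $|X|\in[200,\kappa n]$—with $|X|\geq 200$ forced by self-consistency, since each slice targeted by the cascade must contain at least $100$ vertices—is the technical core of the argument.
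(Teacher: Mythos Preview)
Your setup is right—separability plus the core property are exactly the ingredients—but you have missed a simplification that makes the ``hard part'' disappear. Instead of bounding edges inside the full disagreement set $X=D\cap V'$ (whose size can indeed be as large as $\kappa n$), decompose by \emph{target} color: for each $i\in\brk k$ set
\[
\Delta_i^+=\{v\in V':\tau(v)=i\neq\sigma(v)\},\qquad
\Delta_i^-=\{v\in V':\sigma(v)=i\neq\tau(v)\},\qquad
S_i=\Delta_i^+\cup\Delta_i^-.
\]
Separability together with balancedness of $\sigma,\tau$ gives $|\Delta_i^+|,|\Delta_i^-|\leq\kappa n/k$, hence $|S_i|\leq 2\kappa n/k\leq k^{-3/2}n<k^{-4/3}n$, which \emph{is} in the range of \Cor~\ref{lem_edges_5s}. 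Your own cascade observation—every $w\in\Delta_i^+$ has $\geq 100$ core neighbors in $V_i$, all forced into $\Delta_i^-$—says precisely that $S_i$ spans at least $100|\Delta_i^+|$ internal edges. Hence $100|\Delta_i^+|\leq 5|S_i|$, i.e.\ $|\Delta_i^-|\geq 19|\Delta_i^+|$ for every $i$; combined with the global identity $\sum_i|\Delta_i^+|=|X|=\sum_i|\Delta_i^-|$ this forces $X=\emptyset$, and then no complete vertex can change color. No first-moment enumeration over $(X,\tau|_X,\text{certificate})$ triples is needed: all the randomness is absorbed into the events of \Prop~\ref{pro_p_sep} and \Cor~\ref{lem_edges_5s}, each holding with probability $1-O(1/n)$, and the remainder of the argument is deterministic.

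Besides being longer, your direct union-bound route also has a technical snag: \Lem~\ref{lem_edges_cond} requires the edge set to have size at most $n/(2k)$, which your certificate of $50|X|$ edges violates once $|X|\gtrsim n/(100k)$—exactly the regime you flag as hard. And the calculation you defer as the ``technical core'' is genuinely left open; you would have to track carefully how the slice constraint on the certified endpoints interacts with the enumeration of $(X,\tau|_X)$ uniformly over $|X|\in[200,\kappa n]$. The per-color decomposition above sidesteps all of this.
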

\begin{proof}
By Proposition \ref{pro_p_sep} we may assume that $\sigma$ is separable in $\cG(\sigma,\mu)$ and
by Lemma \ref{lem_edges_5s} we may assume that $\cG(\sigma,\mu)$ has the property (\ref{prop_edges_5s}).
Let $V'$ be the $(\sigma,\ell)$-core.
Moreover, let $\tau\in\cC(\sigma)$ and let
\begin{align*}
 &\Delta_i^+ = \left\{v\in V': \tau(v) = i \neq \sigma(v) \right\}\\
 &\Delta_i^- = \left\{v\in V': \tau(v) \neq i = \sigma(v) \right\}
\end{align*}
In words, $\Delta_i^+$ are the vertices that take color $i$ under $\tau$ and a different color under $\sigma$, and $\Delta_i^-$ are the
vertices that receive color $i$ under $\sigma$ and a different color under $\tau$.
Clearly,
\begin{equation}\label{equ_delta_sum}
 \sum_{i=1}^k |\Delta_i^+|=|\left\{v \in V':\sigma(v)\neq \tau(v)\right\}|=\sum_{i=1}^k |\Delta_i^-|.
\end{equation}
Moreover, because $\sigma$ is separable and as both $\sigma$, $\tau$ are balanced, we have
\begin{equation}\label{equ_delta_max}
 \max_{i \in [k]} |\Delta_i^+| \leq \frac{\kappa \cdot n}{k} \ \ \mbox{\ \ \ and } \ \ \max_{i \in [k]} |\Delta_i^-| \leq \frac{\kappa \cdot n}{k}.
\end{equation}
We are going to show that
	\begin{equation}\label{eqProofComplete}
	\cbc{v \in V':\sigma(v) \neq \tau(v)} = \emptyset.
	\end{equation}
This implies that indeed $\sigma(v)=\tau(v)$ for all complete vertices $v$,
because in order to change the color of a complete vertex it is necessary to change the color of a vertex in the core $V'$ as well.

To establish~(\ref{eqProofComplete}) 
let $S_i = \Delta_i^+ \cup \Delta_i^-$ for $i \in [k]$.
Then (\ref{equ_delta_max}) implies that $|S_i| \leq k^{-3/2}n$ for all $i$.
Furthermore, (\ref{prop_edges_5s}) implies that none of the set $S_i$ spans more than $5|S_i|$ edges.
Because $\tau$ is a $k$-coloring, all the neighbors of $v\in\Delta_i^+$ in $V'$ that take color $i$ under $\sigma$ must belong to  $\Delta_i^-$.
Since any $v\in \Delta_i^+\subset V'$ has at least $100$ neighbors in $V'\cap\sigma^{-1}(i)$, we thus obtain
\begin{equation*}
 100|\Delta_i^+| \leq 
 	5|S_i| \leq 5(|\Delta_i^+|+ |\Delta_i^-|).
\end{equation*}
Consequently, $|\Delta_i^-| \geq2|\Delta_i^+|$ for all $i$.
Therefore, (\ref{equ_delta_sum}) yields $\Delta_i^+ = \Delta_i^-=0$ for all $i$, whence~(\ref{eqProofComplete}) follows.
\qed\end{proof}

%
%
%
%

\medskip\noindent\emph{Proof of \Prop~\ref{cor_cluster_size} (assuming \Prop s~\ref{Prop_100-Core} and~\ref{pro_free_1_2}).}
By Lemma \ref{lem_core_sim} we may assume that (\ref{prop_core_sim}) holds. Let $F_a$ be the set of all $a$-free vertices.
If a vertex $v$ is $1$-free but not $2$-free, then~(\ref{prop_core_sim}) implies that there is a set $C_v\subset\brk k$ of size two such that
	$$\tau(v)\in C_v\qquad\mbox{for all }\tau\in\cC(\sigma).$$
Hence,
	\begin{equation}\label{eqcor_cluster_size1}
	\abs{\cC(\sigma)}\leq 2^{|F_1\setminus F_2|}\cdot k^{|F_2|}.
	\end{equation}
Thus, the assertion follows by comparing the bounds on $|F_1|,|F_2|$ provided by \Prop~\ref{pro_free_1_2}
with the estimate of $\Erw\brk{\Zkb}$ from \Prop~\ref{Prop_KPGWfirstMoment}.
Indeed, \Prop~\ref{pro_free_1_2} and~(\ref{eqcor_cluster_size1}) imply that with probability $1-O(1/n)$ we have
	\begin{eqnarray}\label{eqcor_cluster_size2}
	\frac1n\ln\abs{\cC(\sigma)}&\leq& \frac{|F_1\setminus F_2|}n\ln 2+\frac{|F_2|}n\ln k=
		\frac{\ln 2}k+\tilde O_k(k^{-2}).
	\end{eqnarray}
By comparison, \Prop~\ref{Prop_KPGWfirstMoment} yields
	\begin{eqnarray}\nonumber
	\frac1n\ln\Erw\brk{\Zkb}&=&\ln k+\frac d2\ln(1-1/k)\\
		&=&\ln k-\frac d2\bc{\frac1k+\frac1{2k}}+\tilde O_k(k^{-2})\quad\mbox{[as $\ln(1+z)=z+z^2/2+O(z^3)$, $d\leq 2k\ln k$]}\nonumber\\
		&=&\frac{c}{2k}+\tilde O_k(k^{-2})\qquad\qquad\qquad\qquad\ \ \ \mbox{[as $d=(2k-1)\ln k-c$]}.\label{eqcor_cluster_size3}
	\end{eqnarray}
Comparing~(\ref{eqcor_cluster_size2}) and~(\ref{eqcor_cluster_size3}), we see that
indeed $\frac1n\ln\Erw\brk{\Zkb}$ is strictly greater than $\frac1n\ln\abs{\cC(\sigma)}$ if $c\geq2\ln 2-\eps_k$ with, say, $\eps_k=\Theta_k(k^{-0.9})$.
\qed

\subsection{Proof of \Prop~\ref{Prop_100-Core}}\label{Sec_Prop_100-Core}

The ``canonical'' way of constructing the core is by iteratively evicting vertices that violate the core condition from \Def~\ref{Def_core},
i.e., that have too small a number of neighbors in some color class other than their own inside the core.
In principle, this process could be studied accurately via, e.g., the differential equations method.
However, there is a technically far simpler way to obtain the estimate promised in \Prop~\ref{Prop_100-Core}.
Roughly speaking, the simpler argument is based on the observation that, due to the expansion properties of $\cG(\sigma,\mu)$, the
core ``almost'' contains the set of vertices that have at least $3\ell$ neighbors in each color class other than their own
in the {\em entire} graph $\cG(\sigma,\mu)$.
The size of this set of vertices can be estimated fairly easily.

More precisely, to estimate the size of the core we introduce a few vertex sets.
Ultimately, the idea is to define a big subset of the core whose size can be estimated (relatively) easily.
Recall that we set $\ell=100$ and let $V_i=\sigma^{-1}(i)$.
%
First, we consider the sets
	$$W_{ij} = \left\{v \in V_i:e_{\cG(\sigma,\mu)}(v,V_j) < 3\ell \mbox{ and } e_{\cG(\sigma,\mu)}(v,V_h)<2\ell\ln k \mbox{ for all } h \in [k] \right\}\quad(i,j\in\brk k,\,i\neq j).$$
In words, $W_{ij}$ contains all vertices $v$ of color $i$ that have ``only'' $3\ell$ edges towards color class $j$,
while there is no color class $h$ where $v$ has more than $2\ell\ln k$ neighbors.
This definition is motivated by the observation that, because $\sigma$ is balanced and $d=(2+o_k(1))k\ln k$, the {\em expected} number of neighbors
that a vertex $v\in V_i$ has in some other color class $V_j$ is about $2\ln k$.
Hence, we expect that for $k$ sufficiently large only very few vertices either satisfy $e_{\cG(\sigma,\mu)}(v,V_j) < 3\ell$ or
$e_{\cG(\sigma,\mu)}(v,V_h)\geq2\ell\ln k$ for $i\neq j,h$.
Thus, we expect $W_{ij}$ to be ``small''.
In addition, we let
	\begin{equation}\label{eqW}
	W_{ii}=\emptyset,W_i=\bigcup_{j=1}^k W_{ij} \mbox{ for all $i\in\brk k$, and }W=\bigcup_{i=1}^k W_i .
	\end{equation}

Furthermore, for $i,j\in\brk k$, $i\neq j$ we let 
	\begin{eqnarray*}
	U_{ij}&=&\left\{v \in V_i\setminus W: e_{\cG(\sigma,\mu)}(v,W_j) > \ell \right\}\quad\mbox{ and }U=\bigcup_{ij} U_{ij},\\
	U'_{ij}&=&\left\{v \in V_i\setminus W: e_{\cG(\sigma,\mu)}(v,V_j) > 2\ell\ln k \right\}\quad\mbox{ and }U'=\bigcup_{ij} U'_{ij}.
	\end{eqnarray*}
Thus, $U_{ij}$ contains those vertices $v\in V_i$ that have ``a lot'' of neighbors in the ``bad'' set $W_{j}$.
Because the sets $W_j$ are small, the expansion properties of $\cG(\sigma,\mu)$ will imply that
	the set $U$ is tiny.
Moreover, $U'$ consists of vertices that have much more neighbors than the expected $2\ln k$ in one of the color classes.
The set $U'$ will turn out to be tiny as well, because the numbers $e_{\cG(\sigma,\mu)}(v,V_j)$ will emerge to be
somewhat concentrated about their expectations.

Finally, we define a sequence of sets $Y^{\bc t}$, $t\geq0$.
We let $Y^{(0)} = U \cup U'$.
For $t\geq1$, we define $Y^{\bc t}$ as follows:
	\begin{quote}
	If there exists a vertex $v \in V\setminus Y^{(t)}$ that has more than $\ell$ neighbors in $Y^{(t-1)}$, 
	then let $v_t$ be the smallest such vertex and let $Y^{(t)}=Y^{(t-1)} \cup \left\{v\right\}$.
	If there is no such vertex $v$, then let $Y^{\bc{t}}=Y^{\bc{t-1}}$.
	\end{quote}
Let
	\begin{equation}\label{eqY}
	Y=\bigcup_{t\geq 0}Y^{\bc t}.
	\end{equation}
With this construction in place, we have

\begin{proposition}\label{Prop_core_constr}
The set $V\setminus(W\cup Y)$ is contained in the $\ell$-core of $\cG(\sigma,\mu)$.
\end{proposition}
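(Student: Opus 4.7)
\medskip\noindent\emph{Proof strategy for \Prop~\ref{Prop_core_constr}.}
The plan is to verify that $V^\star := V \setminus (W \cup Y)$ itself satisfies the defining property of the $(\sigma,\ell)$-core from \Def~\ref{Def_core}; since that core is by definition the \emph{largest} induced subgraph with this property, this immediately yields that $V^\star$ is contained in it. Accordingly, I fix an arbitrary vertex $v \in V^\star$ with $\sigma(v) = i$ and an arbitrary color $j \neq i$, and the goal is to show $e_{\cG(\sigma,\mu)}(v, V^\star \cap V_j) \geq \ell$.

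The argument unpacks the three exclusions in turn. First, because $v \notin W$, in particular $v \notin W_{ij}$, so the definition of $W_{ij}$ forces that either $e_{\cG(\sigma,\mu)}(v, V_j) \geq 3\ell$ or there exists some color $h$ with $e_{\cG(\sigma,\mu)}(v, V_h) \geq 2\ell \ln k$. Second, since $v \notin Y$ and $Y \supseteq Y^{(0)} = U \cup U'$, we have $v \notin U'$; hence the definition of $U'_{ij'}$ implies $e_{\cG(\sigma,\mu)}(v, V_h) \leq 2\ell \ln k$ for every $h$, ruling out the second alternative. Thus $e_{\cG(\sigma,\mu)}(v, V_j) \geq 3\ell$.

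Next, I bound the number of $v$-edges landing in $(W \cup Y) \cap V_j$. By~(\ref{eqW}) we have $W \cap V_j = W_j$; and since $v \notin U \supseteq U_{ij}$, the definition of $U_{ij}$ gives $e_{\cG(\sigma,\mu)}(v, W_j) \leq \ell$. For the $Y$-contribution, the key point is that $Y = \bigcup_{t \geq 0} Y^{(t)}$ is the fixed point of the closure procedure in~(\ref{eqY}): the iteration halts exactly when no vertex outside $Y$ has more than $\ell$ neighbors in $Y$, whence $e_{\cG(\sigma,\mu)}(v, Y) \leq \ell$. Using $e_{\cG(\sigma,\mu)}(v, A \cup B) \leq e_{\cG(\sigma,\mu)}(v, A) + e_{\cG(\sigma,\mu)}(v, B)$, these two bounds combine to $e_{\cG(\sigma,\mu)}(v, (W \cup Y) \cap V_j) \leq 2\ell$.

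Finally, because $V_j$ is the disjoint union of $V^\star \cap V_j$ and $(W \cup Y) \cap V_j$, I conclude
\[
e_{\cG(\sigma,\mu)}(v, V^\star \cap V_j) \;=\; e_{\cG(\sigma,\mu)}(v, V_j) \,-\, e_{\cG(\sigma,\mu)}(v, (W \cup Y) \cap V_j) \;\geq\; 3\ell - 2\ell \;=\; \ell.
\]
Since $v \in V^\star$ and $j \neq \sigma(v)$ were arbitrary, $V^\star$ has the core property and is therefore contained in the $(\sigma,\ell)$-core. There is no genuine obstacle here: the entire argument is deterministic once the definitions of $W, U, U', Y$ are in place, and reduces to careful bookkeeping — tracking which of the three exclusions $W$, $U$, $U'$ (together with the closure $Y$) supplies each of the numerical inputs $3\ell$, $\ell$, $\ell$ that combine to give the desired $\ell$ lower bound. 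The probabilistic content, namely that $W$, $U$, $U'$, and $Y$ are small with high probability, is deferred to the subsequent lemmas of this section.
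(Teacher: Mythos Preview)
Your proof is correct and follows essentially the same approach as the paper's: verify that $V\setminus(W\cup Y)$ itself satisfies the core property, using $v\notin W$ for the $3\ell$ input and $v\notin U,\,v\notin Y$ for the two $\ell$ subtractions. If anything, your version is slightly more careful than the paper's---you explicitly invoke $v\notin U'$ to rule out the high-degree alternative in the definition of $W_{ij}$, and you correctly subtract $e(v,W_j)$ rather than $e(v,W)$.
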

\begin{proof}
Let $V''=V\setminus(W\cup Y)$.
To show that $V''$ is contained in the $\ell$-core of $\cG(\sigma,\mu)$, it suffices to verify
that every vertex $v\in V''$ has at least $\ell$ edges into $V''\cap V_j$ for any $j\neq\sigma(v)$.
Indeed, because $v\not\in W$ we know that $e_{\cG(\sigma,\mu)}(v,V_j)\geq3\ell$.
Furthermore, as $v\not\in U\subset Y$, we have $e_{\cG(\sigma,\mu)}(v,W)\leq\ell$.
Finally, the construction of $Y$ ensures that $e_{\cG(\sigma,\mu)}(v,Y)\leq\ell$.
Hence,
	$$e_{\cG(\sigma,\mu)}(v,V''\cap V_j)\geq e_{\cG(\sigma,\mu)}(v,V_j)-e_{\cG(\sigma,\mu)}(v,W)-e_{\cG(\sigma,\mu)}(v,Y)\geq\ell,$$
as desired.
\qed\end{proof}

%



Thus, to complete the proof of \Prop~\ref{Prop_100-Core}, we are left to estimate the sizes
of the sets $W$, $U$, $U'$, $Y$.
These estimates are based on the approximation to the hypergeometric distribution from \Lem~\ref{lem_balls_bins_cap}.

\begin{lemma}\label{lem_w}\label{lem_u'}
With probability $1- \exp(-\Omega(n))$,  we have 
	$$W_{ij}\leq n\tilde O_k(k^{-3})\quad\mbox{ for all $i,j\in\brk k$}.$$
Hence, $|W_i| \leq n\cdot\tilde O_k(k^{-2})$ for all $i\in\brk k$  and $|W| \leq n\cdot\tilde O_k(k^{-1})$.
Furthermore, with probability $1- \exp(-\Omega(n))$ we have $|U'|\leq k^{-100}n$.
\end{lemma}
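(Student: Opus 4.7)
The plan is to estimate both $|W_{ij}|$ and $|U'|$ by a first-moment computation via Lemma~\ref{lem_balls_bins_cap}, followed by a union-bound-over-subsets concentration argument in the style of Lemma~\ref{lem_s_less}. For any fixed $v_0\in V_i$ with $i\neq j$, Lemma~\ref{lem_balls_bins_cap} allows us to approximate the edge count $e_{\cG(\sigma,\mu)}(v_0,V_j)$ by a $\Bin(d,k\mu_{ij})$ random variable; by~(\ref{eqplantedmu}) and~(\ref{eqSec_Prop_goodFirstMoment}) this binomial has mean $\lambda=(2+o_k(1))\ln k$.

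For $|W_{ij}|$ I would bound $\pr[v_0\in W_{ij}]$ by the probability of the weaker event $\{e_{\cG(\sigma,\mu)}(v_0,V_j)<3\ell\}$. For $k$ sufficiently large the mean $\lambda$ eventually exceeds $3\ell=300$, so Lemma~\ref{Lemma_Chernoff} yields
$$\pr[\Bin(d,k\mu_{ij})<3\ell]\leq\exp(-\lambda+O(\ln\ln k))=\tilde O_k(k^{-2}),$$
whence $\Erw[|W_{ij}|]\leq n\tilde O_k(k^{-3})$. For exponential concentration I would apply the joint form of Lemma~\ref{lem_balls_bins_cap} to a fixed $S\subset V_i$ of size $s$, which yields $\pr[S\subset W_{ij}]\leq O(\sqrt n)\cdot(\tilde O_k(k^{-2}))^s$. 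A union bound over subsets of size $s=Cn\tilde O_k(k^{-3})$, with $C$ a sufficiently large $k$-independent constant, then produces
$$\binom{n/k}{s}\cdot O(\sqrt n)\cdot(\tilde O_k(k^{-2}))^s\leq O(\sqrt n)\cdot(\eul/C)^s=\exp(-\Omega(n)),$$
and an outer union bound over the $k^2$ pairs $(i,j)$ yields the claim on $W_{ij}$. The bounds on $|W_i|$ and $|W|$ then follow by summing over $j$ and $i$ respectively, each introducing only a factor of $k$.

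For $|U'|$ the same strategy applies, but with a dramatically stronger per-vertex bound. The upper-tail event $\{e_{\cG(\sigma,\mu)}(v_0,V_j)>2\ell\ln k\}$ corresponds to $\Bin(d,k\mu_{ij})>\ell\lambda$, so the sharp form of Lemma~\ref{Lemma_Chernoff} gives $\pr[v_0\in U'_{ij}]\leq\exp(-\ell\lambda\ln(\ell/\eul))\leq k^{-C}$ with $C=2\ell\ln(\ell/\eul)\gg 100$ for $\ell=100$. The same union-bound-over-subsets argument with $s=k^{-102}n$, followed by a union bound over the $k(k-1)$ pairs $(i,j)$, then delivers $|U'|\leq k^{-100}n$ with probability $1-\exp(-\Omega(n))$.

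The main technical obstacle throughout is verifying that the $O(\sqrt n)$ correction from Lemma~\ref{lem_balls_bins_cap} is absorbed into the exponential decay of $\binom{n/k}{s}$ times the $s$-fold product of per-vertex probabilities. This succeeds because $s$ is chosen so that the per-slot ratio $(\eul n/(ks))\cdot q$, where $q$ is the per-vertex bound, equals $\eul/C<1$ for the appropriate $C$, making the overall bound decay as $\exp(-\Omega(s))=\exp(-\Omega(n))$ with the implicit constant depending on $k$ but not on $n$.
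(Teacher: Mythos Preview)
Your approach is correct and matches the paper's: both use Lemma~\ref{lem_balls_bins_cap} to replace the hypergeometric edge-counts $(e_{\cG(\sigma,\mu)}(v,V_j))_{v\in V_i}$ by independent $\Bin(d,k\mu_{ij})$ variables $(b_v)_{v\in V_i}$, bound the per-vertex probability $q=\pr[b_v<3\ell]$ (resp.\ $\tilde q=\pr[b_v>2\ell\ln k]$) via Lemma~\ref{Lemma_Chernoff}, and then establish concentration for the number of bad vertices. The only difference is in the concentration step, and it is cosmetic: where you run a union bound over subsets $S\subset V_i$ of size $s$, the paper simply observes that under the binomial model the count $\hat W_{ij}=|\{v\in V_i:b_v<3\ell\}|$ is itself a $\Bin(n/k,q)$ random variable, so a single Chernoff bound on $\hat W_{ij}$ already gives $\pr[\hat W_{ij}\geq n\tilde O_k(k^{-3})]\leq\exp(-\Omega(n))$, and likewise for $\tilde W_{ij}$ in the $U'$ part. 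This saves you the subset union bound entirely.
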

\begin{proof}
Fix two indices $i,j\in\brk k$, $i\neq j$, and let 
	$$W_{ij}' = \left\{v \in V_i:e_{\cG(\sigma,\mu)}(v,V_j) < 3\ell\right\}.$$
Since we are fixing the number $dn\mu_{ij}$ of $V_i$-$V_j$ edges, the set of clones in $V_i\times\brk d$
that $\vec\Gamma_{\sigma,\mu}$ matches to the set $V_j\times\brk d$ is a uniformly random set of size $dn\mu_{ij}$.
Hence, \Lem~\ref{lem_balls_bins_cap} applies.
Thus, let $(b_v)_{v\in V_i}$ be a family of independent $\Bin(d,p)$ variables, with $p=k\mu_{ij}\sim(k-1)^{-1}$.
Let $\hat W_{ij}=\abs{\cbc{v\in V_i: b_v<3\ell}}$.
Then \Lem~\ref{lem_balls_bins_cap} yields
	\begin{equation}\label{eqlem_w1}
	\pr\brk{|W_{ij}'|\geq t}\leq O(\sqrt n)\cdot\pr\brk{\hat W_{ij}\geq t}\qquad\mbox{for any }t\geq 0.
	\end{equation}
Furthermore, because the random variables $b_v$ are mutually independent, $\hat W_{ij}$ has distribution $\Bin(n/k,q)$, with $q=\pr\brk{\Bin(d,p)<3\ell}$.
Since $p\sim(k-1)^{-1}$, our assumption~(\ref{eqSec_Prop_goodFirstMoment}) on $d$ implies that
	$q\leq k^{-2}\ln^{3\ell}k.$
Therefore, by the Chernoff bound
	\begin{equation}\label{eqlem_w2}
	\pr\brk{\hat W_{ij}\geq nk^{-2}\ln^{3\ell+1}k=n \tilde O(k^{-3})}\leq\exp(-\Omega(n)).
	\end{equation}

Further, let $W_{ij}''=\abs{\cbc{v \in V_i:e_{\cG(\sigma,\mu)}(v,V_j) > 2\ell\ln k}}$.
To estimate the size of this set, we consider $\tilde W_{ij}=\abs{\cbc{v\in V_i: b_v>2\ell\ln k}}$.
Applying \Lem~\ref{lem_balls_bins_cap} once more, we see that
	\begin{equation}\label{eqlem_w3}
	\pr\brk{|W_{ij}''|\geq t}\leq O(\sqrt n)\cdot\pr\brk{\tilde W_{ij}\geq t}\qquad\mbox{for any }t\geq 0.
	\end{equation}
Due to the independence of the $b_v$, $\tilde W_{ij}$ has distribution $\Bin(n_i,\tilde q)$, where $\tilde q=\pr\brk{\Bin(d,p)>2\ell\ln k}$.
Since $p\sim(k-1)^{-1}$, we have $dp\leq 3\ln k$.
Hence, by the Chernoff bound
	$$\tilde q\leq\exp(-2\ell\ln k)\leq k^{-200}.$$
Consequently, invoking the Chernoff bound once more, we find
	\begin{equation}\label{eqlem_w4}
	\pr\brk{\tilde W_{ij}\geq n k^{-200}}\leq \exp(-\Omega(n)).
	\end{equation}

Finally,
	$$W_{i}\subset \bigcup_{j=1}^k W_{ij}'\cup W_{ij}''.$$
Hence, combining~(\ref{eqlem_w1})--(\ref{eqlem_w4}), we see that with probability $1-\exp(-\Omega(n))$ we have
	$|W_{i}|\leq\tilde O_k(k^{-2})n$.
Furthermore, 
	$$U'\subset\bigcup_{i,j=1}^k W_{ij}''.$$
Hence, (\ref{eqlem_w3})--(\ref{eqlem_w4}) show that $|U'|\leq k^{-100}n$ (with room to spare) with probability $1-\exp(-\Omega(n))$.
\qed\end{proof}

\begin{lemma}\label{lem_u}
 With probability at least $1-\exp(-\Omega(n))$ we have $|U| \leq nk^{-30}$.
\end{lemma}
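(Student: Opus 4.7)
Since $U = \bigcup_{i \neq j} U_{ij}$ consists of at most $k^2$ pieces, it suffices to show $|U_{ij}| \leq nk^{-32}$ with probability $1-\exp(-\Omega(n))$ for each fixed pair $i \neq j$ and then union bound. By Lemma~\ref{lem_w} I may condition on $|W_j| \leq Kn$ for $K = \tilde O_k(k^{-2})$, which holds with probability $1-\exp(-\Omega(n))$. If $|U_{ij}| > nk^{-32}$, then there must exist $S \subseteq V_i$ of size $\lceil nk^{-32}\rceil$ and $T \subseteq W_j$ with $|T| = tn \leq 101|S|$ such that every $v \in S$ has at least $101 = \ell+1$ edges into $T$. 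The plan is to union bound over all such pairs $(S, T)$.

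For a fixed $(S, T)$ the witness probability splits into two factors. First, by Lemma~\ref{lem_edges_cond} the probability that any specified set of $101|S|$ edges between $S$-clones and $T$-clones lies in $\vec\Gamma_{\sigma,\mu}$ is at most $(5/(dn))^{101|S|}$; multiplying by the $(d^2|T|)^{101|S|}$ ways to choose such an edge set (pick $101$ clones of each $v \in S$ and $101$ targets in $T\times\brk d$) yields the bound $(5dt)^{101|S|}$ on the probability that every $v \in S$ has $\geq 101$ neighbors in $T$. Second, membership $u \in W_j$ requires $e(u, V_{j'(u)}) < 3\ell$ for some $j'(u) \in \brk k$; a union bound over the $k^{|T|}$ choices of the $j'(u)$ combined with the per-vertex estimate $\pr[u \in W_{jj'(u)}] \leq \tilde O_k(k^{-2})$ from the proof of Lemma~\ref{lem_w} gives $\pr[T \subseteq W_j] \leq (\tilde O_k(k^{-1}))^{tn}$. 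Because these two events concern essentially disjoint clone pairs whenever $j'(u) \neq i$, their joint probability is bounded by the product to leading order.

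Summing over $(S, T)$ produces the bound
\[
\pr\brk{|U_{ij}| > nk^{-32}} \leq \binom{n/k}{|S|}\sum_t \binom{n/k}{tn}(5dt)^{101|S|}(\tilde O_k(k^{-1}))^{tn}.
\]
Substituting $|S| = nk^{-32}$ and $d \leq 2k\log k$, the entropy contributions $O(|S|\log k + tn\log k)$ from the two binomial coefficients are dominated by the large negative pieces inside $(5dt)^{101|S|}$ (which yields $-\Omega(|S|\log k)$ once $t \leq 101k^{-32}$) and by the $-tn\log k$ from $(\tilde O_k(k^{-1}))^{tn}$. Optimizing over $t$ in the admissible range $[1/n, 101k^{-32}]$, the net exponent evaluates to $-\Omega(nk^{-32}\log k) = -\Omega(n)$ (with implicit constant depending on $k$ but positive for $k \geq k_0$), giving probability $\exp(-\Omega(n))$; summing over the $k^2$ pairs $(i,j)$ completes the proof.

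The main technical obstacle is justifying the quasi-independence used in the joint probability estimate: in the configuration model the edge-presence events ($S$-to-$T$ adjacencies) and the edge-deficit events (forcing $T \subseteq W_j$) are not strictly independent, and the boundary case $j'(u) = i$ even introduces overlapping clone pairs. This case can be handled, for instance, by restricting to $j'(u) \neq i$ at a minor loss in constants, which does not affect the exponential rate.
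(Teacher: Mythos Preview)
Your approach differs substantially from the paper's and has an acknowledged gap at the crucial step.

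The paper's argument is much shorter. It rests on a structural observation you did not use: the set $W_j$ is determined entirely by the vectors $(e(u,V_h))_{h\neq j}$ for $u\in V_j$ --- that is, by the shading of the $V_j$-clones alone. Consequently, once $W_j$ is revealed (together with the edge count $\eta_{ji}=e_{\cG(\sigma,\mu)}(V_i,W_j)$, which is $\tilde O_k(k^{-2})n$ since each $u\in W_j$ has $e(u,V_i)\leq 2\ell\ln k$ by construction), the set of $V_i$-clones matched into $W_j\times[d]$ is still a \emph{uniformly random} subset of $V_i\times[d]$ of size $\eta_{ji}$. Lemma~\ref{lem_balls_bins_cap} then applies directly: the number of $v\in V_i$ with $e(v,W_j)\geq\ell/2$ is dominated (up to $\mathrm{poly}(n)$) by the number of $v$ with $b_v\geq\ell/2$, where the $b_v\sim\Bin(d,\eta_{ji}/(dn/k))$ are independent. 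Since $\Erw[b_v]=\tilde O_k(k^{-1})$, one Chernoff bound gives $\pr[b_v\geq\ell/2]=\tilde O_k(k^{-\ell/2})=\tilde O_k(k^{-50})$, and a second Chernoff on the resulting binomial count of vertices finishes the proof.

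Your union-bound over witness pairs $(S,T)$ could in principle be made to work, but the step you yourself flag as ``the main technical obstacle'' --- that the edge-presence event for $Q$ and the event $T\subseteq W_j$ have joint probability bounded by the product --- is not established. Both events depend on the shading of the $T$-clones and are genuinely correlated in the configuration model; making the near-independence rigorous requires exactly the kind of decoupling (revealing the $V_j$-side first, then analysing the $V_i$-side matching) that the paper performs, and once you do that the detour through $(S,T)$-pairs becomes unnecessary. Moreover, you cannot sidestep this by dropping the constraint $T\subseteq W_j$ and simply taking $T=W_j$ (or any $T$ of size $\tilde O_k(k^{-2})n$): the entropy $\binom{n/k}{|T|}$ is then of order $n\,\tilde O_k(k^{-2}\ln k)$, which dominates the $-\Theta(nk^{-32}\ln k)$ gain from the $S$-factor. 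So in your scheme the joint estimate is not optional, and as written it is a genuine gap.
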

\begin{proof}
For $i,j\in\brk k$, $i\neq j$ let
\begin{equation}
 U_{ij}^* = \left\{v \in V_i : e_{\cG(\sigma,\mu)}(v,W_j
		) \geq \frac{\ell}{2}\right\} 
		\supset U_{ij}.
\end{equation}
We are going to bound $| U_{ij}^*|$.
By construction, for all $v\in W_{j}$ we have $e_{\cG(\sigma,\mu)}(v,V_i)\leq 2\ell\ln k$.
Moreover, by \Lem~\ref{lem_w} we may assume that $|W_{j}|=\tilde O_k(k^{-2})n$.
Hence, the number $\eta_{ji}$ of $V_i\times\brk d$-$W_{j}\times\brk d$ edges in $\vec\Gamma_{\sigma,\mu}$ satisfies
	 $\eta_{ji}=\tilde O_k(k^{-2})n$.
Given $\eta_{ji}$, the actual {\em set} of clones in $V_i\times\brk d$ that $\vec\Gamma_{\sigma,\mu}$ connected
with $W_{j}\times\brk d$ is a uniformly random set.
This is because the definition of the set $W_{j}$ is just in terms of the {\em numbers} $e(v,V_h)$ of edges from $v\in V_j$ to $V_h$ for $h\neq j$
in the contracted multi-graph $\cG(\sigma,\mu)$.

Thus, we are in the situation described in \Lem~\ref{lem_balls_bins_cap}.
Hence, consider a family $(b_v)_{v\in V_i}$ of mutually independent random variables with distribution $\Bin(d,p)$ with $p=\frac{\eta_{ji}}{d n/k}$.
Let $\hat U_{ij}$ be the number of vertices $v\in V_i$ such that $b_v\geq l/2$.
Then \Lem~\ref{lem_balls_bins_cap} yields
	\begin{equation}\label{eqUji1}
	\pr\brk{|U_{ij}|\geq t}\leq \pr\brk{|U_{ij}^*|\geq t}\leq\pr\brk{\hat U_{ij}\geq t}\qquad\mbox{for all }t\geq0.
	\end{equation}
Furthermore, $\hat U_{ij}$ has distribution $\Bin(n_i,q)$ with
	$q=\pr\brk{\Bin(d,p)\geq \ell/2}.$
Since $\eta_{ji}=\tilde O_k(k^{-2})n$, we have $p=\tilde O_k(k^{-2})$ and thus $dp=\Erw\brk{b_v}=\tilde O_k(k^{-1})$.
Consequently, the Chernoff bound yields
	$$q=\pr\brk{\Bin(d,p)\geq \ell/2}\leq \tilde O_k(k^{-\ell/2}).$$
Hence, using the Chernoff bound once more, we find that
	\begin{equation}\label{eqUji2}
	\pr\brk{\hat U_{ij}\leq \tilde O_k(k^{-{\ell/2}})n}\geq1-\exp(-\Omega(n)).
	\end{equation}
Thus, the assertion follows from~(\ref{eqUji1}), (\ref{eqUji2}) and our choice of $\ell$.
\qed\end{proof}

\begin{lemma}\label{lem_z}
 With probability at least $1-O(1/n)$ 
 the set $Y$ satisfies $|Y| \leq 4nk^{-30}$.
\end{lemma}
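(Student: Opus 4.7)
\medskip\noindent\emph{Proof plan for \Lem~\ref{lem_z}.}
The plan is to combine the bounds on $|Y^{(0)}|=|U\cup U'|$ from \Lem s~\ref{lem_u'} and~\ref{lem_u} with the no-dense-small-set property~(\ref{prop_edges_5s}) from \Cor~\ref{lem_edges_5s}, and to run a short contradiction argument on the growth of $Y^{(t)}$. All three statements hold simultaneously with probability $1-O(1/n)$, so we may condition on this event throughout.

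Concretely, I would first record that on this event we have $|Y^{(0)}|\leq|U|+|U'|\leq n k^{-30}+n k^{-100}\leq 2nk^{-30}$, which gives the base of the induction for free. Then, arguing by contradiction, suppose $|Y|>4nk^{-30}$ and let $T$ be the first index $t$ with $|Y^{(t)}|\geq \lceil 4nk^{-30}\rceil$. Since at each step exactly one vertex is added, $|Y^{(T)}|\leq 4nk^{-30}+1$, which for $k$ large enough is below the threshold $k^{-4/3}n$ at which~(\ref{prop_edges_5s}) kicks in.

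Now I would double-count the edges spanned by $Y^{(T)}$ in $\cG(\sigma,\mu)$. On the one hand, by the very definition of the construction, every vertex $v_t\in Y^{(T)}\setminus Y^{(0)}$ was added because it had strictly more than $\ell=100$ neighbors in $Y^{(t-1)}\subseteq Y^{(T)}$; summing over these $|Y^{(T)}\setminus Y^{(0)}|\geq 4nk^{-30}-2nk^{-30}=2nk^{-30}$ vertices yields at least $100\cdot 2nk^{-30}=200nk^{-30}$ edges inside $Y^{(T)}$. On the other hand, since $|Y^{(T)}|\leq k^{-4/3}n$, property~(\ref{prop_edges_5s}) bounds the number of edges spanned by $Y^{(T)}$ by $5|Y^{(T)}|\leq 20nk^{-30}+5$. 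For large $n$ and $k$ these two estimates are incompatible, giving the desired contradiction and hence $|Y|\leq 4nk^{-30}$.

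There is no real obstacle here: the whole argument is a one-line expansion computation, and the only mild subtlety is checking that the upper bound $4nk^{-30}+1$ on $|Y^{(T)}|$ still lies in the regime $|S|\leq k^{-4/3}n$ where \Cor~\ref{lem_edges_5s} applies, which is immediate for $k\geq k_0$. Collecting the failure probabilities from \Lem s~\ref{lem_u'}, \ref{lem_u} (each $\exp(-\Omega(n))$) and \Cor~\ref{lem_edges_5s} ($O(1/n)$) yields the claimed $1-O(1/n)$ bound. \qed
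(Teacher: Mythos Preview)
Your proof is correct and follows essentially the same approach as the paper: bound $|Y^{(0)}|$ via \Lem s~\ref{lem_u'} and~\ref{lem_u}, then use~(\ref{prop_edges_5s}) from \Cor~\ref{lem_edges_5s} to contradict any further growth of $Y$ beyond $4nk^{-30}$ by double-counting the edges inside the growing set. The only cosmetic difference is your stopping criterion (first time $|Y^{(t)}|$ reaches $\lceil 4nk^{-30}\rceil$) versus the paper's (after $t_0=2nk^{-30}$ steps), but both yield the same edge lower bound $\ell\cdot 2nk^{-30}$ against the upper bound $5\cdot 4nk^{-30}$.
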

\begin{proof}
By Lemmas \ref{lem_u'} and \ref{lem_u} 
we may assume that $|U \cup U'|\leq 2nk^{-30}$. 
Now, let $t_0=2nk^{-30}$.
If $Y=Y^{\bc t}$ for some $t<t_0$, then clearly $|Y|=|Y^{\bc t}|\leq 2nk^{-30}$,
because only one vertex is added at a time.
Thus, we need to show that the probability that $Y\neq Y^{\bc t}$ is $O(1/n)$.

Indeed, after completing step $t_0$,  
the subgraph of $\cG(\sigma)$ induced on $Y^{(t_0)}$ spans at least $\ell\cdot t_0$ edges, while
the number of vertices is $|Y^{(t_0)}|\leq |U\cup U'|+t_0\leq 2t_0\leq 8nk^{-30}$.
Hence, $\cG(\sigma)$ violates~(\ref{prop_edges_5s}).
 \Lem~\ref{lem_edges_5s} shows that the probability of this event is $O(1/n)$.
\qed\end{proof}

\medskip\noindent
Finally, \Prop~\ref{Prop_100-Core} follows immediately from \Prop~\ref{Prop_core_constr} and \Lem s~\ref{lem_u'}--\ref{lem_z}.

\subsection{Proof of \Prop~\ref{pro_free_1_2}} \label{Sec_pro_free_1_2}

Let $V_i=\sigma^{-1}(i)$ for $i\in\brk k$.
In order to estimate the number of complete vertices, we need to get a handle on two events.
First, the event that a vertex $v\in V_i$ fails to have a neighbor in some color class $V_j$ with $j\neq i$.
Second, the event that, given that $v$ has at least one neighbor in color class $V_j$, it indeed has a neighbor inside the core.
More precisely, with $W,Y$ the sets defined in~(\ref{eqW}) and~(\ref{eqY}), it suffices to bound the probability that
all neighbors of $v$ in $V_j$ lie in $W\cup Y$.
This is because $V\setminus(W\cup Y)$ is contained in the core by \Prop~\ref{Prop_core_constr}.

Thus, $S_0$ be the set of vertices that fail to have a neighbor in at least one color class other than their own in $\cG(\sigma,\mu)$.
Moreover, let $S_1$ be the set of vertices $v\not\in S_0$ such that for some color $i\neq\sigma(v)$ all neighbors of $v$ in $V_i$ 
belong to $W_i$.

\begin{proposition}\label{Prop_P123}
If $v$ is a $1$-free vertex, then one of the following three statements is true.
\begin{description}
\item[(P1)] $v \in S_0$.
\item[(P2)] $v \in S_1$.
\item[(P3)] 
		$v$ has a neighbor in $Y$.
\end{description}
\end{proposition}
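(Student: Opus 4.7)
The plan is to prove this by a short case analysis that essentially unpacks the definitions, using \Prop~\ref{Prop_core_constr} as the only non-trivial input. That proposition guarantees $V\setminus V' \subseteq W\cup Y$, where $V'$ denotes the $(\sigma,\ell)$-core of $\cG(\sigma,\mu)$.

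The first step is a small but crucial observation: since $\sigma$ is a proper $k$-coloring of $\cG(\sigma,\mu)$, $v$ has no neighbor in $V_{\sigma(v)}$, and hence no neighbor in $V'\cap V_{\sigma(v)}$. Thus the color $\sigma(v)$ is automatically one of the colors counted in the $1$-free condition of \Def~\ref{Def_core}. If $v$ is $1$-free, the relevant set has size at least two, so there exists a color $i\neq\sigma(v)$ such that $e_{\cG(\sigma,\mu)}(v,V'\cap V_i)=0$. Fix such an $i$ for the remainder of the argument.

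Next I would split into cases. If $v$ has no neighbor at all in $V_i$, then $v$ fails to have a neighbor in a color class other than its own, so $v\in S_0$ and (P1) holds. Otherwise every neighbor of $v$ in $V_i$ lies in $V_i\setminus V'$, which by \Prop~\ref{Prop_core_constr} is contained in $(W\cup Y)\cap V_i$. Because each $W_{hj}$ is a subset of $V_h$, we have $W\cap V_i=W_i$, so every neighbor of $v$ in $V_i$ belongs to $W_i\cup(Y\cap V_i)$. Assuming (P1) fails, either all these neighbors lie in $W_i$ (so that $i$ witnesses membership in $S_1$ and (P2) holds), or at least one such neighbor lies in $(Y\cap V_i)\setminus W_i\subseteq Y$, giving (P3). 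This exhausts the cases.

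I do not anticipate any real obstacle: the argument is essentially tautological once \Prop~\ref{Prop_core_constr} and the decomposition $W\cap V_i=W_i$ are in hand. The one conceptual point worth making explicit in the write-up is the initial observation that $\sigma(v)$ is automatically a counted color, because without it one would only extract a color $i$ possibly equal to $\sigma(v)$, and then no neighbor of $v$ in $V_i$ would exist in the first place, making (P2) and (P3) vacuous. Everything else is bookkeeping.
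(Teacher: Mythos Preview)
Your proposal is correct and follows essentially the same approach as the paper, which proves the contrapositive (assuming none of (P1)--(P3) hold and showing that $v$ has a core-neighbor in every color class $j\neq\sigma(v)$) using the same key input, \Prop~\ref{Prop_core_constr}. Your direct argument is the natural dual, and your explicit observation that $\sigma(v)$ is automatically one of the counted colors is exactly the point the paper leaves implicit.
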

\begin{proof}
Let $v$ be a vertex that satisfies none of {\bf(P1)--(P3)}.
Let $j\in\brk k\setminus\cbc{\sigma(v)}$.
Since $v\not\in S_0$, $v$ has at least one neighbor in $V_j$.
In fact, as $v\not\in S_1$, $v$ has a neighbor $w\in V_j\setminus W$.
Furthermore, because $v$ does not have a neighbor in $Y$, we have $w\in V\setminus(W\cup Y)$.
 \Prop~\ref{Prop_core_constr} implies that $w$ belongs to the $(\sigma,\ell)$-core,
which means that $v$ is not $1$-free.
\qed\end{proof}

Thus, in order to prove \Prop~\ref{pro_free_1_2} it suffices to estimate $|S_0|$, $|S_1|$ and the number of vertices that satisfy {\bf(P3)}.
These estimates employ the binomial approximation to the hypergeometric distribution provided by \Lem~\ref{lem_balls_bins_cap}.

\begin{lemma}\label{lem_s_0}
 With probability at least $1-O(1/n)$ we have $|S_{0}|\leq \frac nk(1+\tilde O_k(1/k))$.
\end{lemma}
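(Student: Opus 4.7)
The plan is to write $S_0$ as a union over ordered pairs of distinct color classes, couple each piece to an independent binomial via \Lem~\ref{lem_balls_bins_cap}, and then apply the Chernoff bound. For $i\neq j$ in $\brk k$ put
$$T_{ij}=\cbc{v\in V_i:e_{\cG(\sigma,\mu)}(v,V_j)=0},$$
so that $S_0=\bigcup_{i\neq j}T_{ij}$. The definition of $\vec\Gamma_{\sigma,\mu}$ ensures that, for each fixed pair $(i,j)$, the set of clones in $V_i\times\brk d$ matched to $V_j\times\brk d$ is a uniformly random subset of size $\mu_{ij}dn$; hence \Lem~\ref{lem_balls_bins_cap} applies with $U=V_i$. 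Setting $p_{ij}=k\mu_{ij}$ and letting $(b_v)_{v\in V_i}$ be independent $\Bin(d,p_{ij})$ variables, every event measurable with respect to $(e_{\cG(\sigma,\mu)}(v,V_j))_{v\in V_i}$ has probability at most $O(\sqrt n)$ times that of the analogous event defined in terms of $(b_v)_{v\in V_i}$.

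Next I would compute $q_{ij}:=\pr\brk{b_v=0}=(1-p_{ij})^d$. By~(\ref{eqplantedmu}) we have $p_{ij}=(k-1)^{-1}+O(n^{-1/3})$, and plugging the hypothesis $d=(2k-1)\ln k-2\ln 2-\eps_k$ into the expansion $d\ln(1-p_{ij})=-dp_{ij}-dp_{ij}^2/2+O(dp_{ij}^3)$ yields $q_{ij}=k^{-2}(1+\tilde O_k(1/k))$. Because the $b_v$ are independent, $\abs{\cbc{v\in V_i:b_v=0}}$ has distribution $\Bin(n/k,q_{ij})$ with mean $nk^{-3}(1+\tilde O_k(1/k))$. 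The Chernoff bound (\Lem~\ref{Lemma_Chernoff}) with relative deviation $1/k$ gives
$$\pr\brk{\abs{\cbc{v\in V_i:b_v=0}}>(1+1/k)(n/k)q_{ij}}\leq\exp(-\Omega(nq_{ij}/k^3))=\exp(-\Omega(n)),$$
the final identity using that $k$ is $n$-independent. Transferring this back to $|T_{ij}|$ through \Lem~\ref{lem_balls_bins_cap} costs only a factor of $O(\sqrt n)$ and still leaves an $\exp(-\Omega(n))$ upper bound.

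A union bound over the $k(k-1)=O(1)$ pairs $(i,j)$ then yields
$$\abs{S_0}\leq\sum_{i\neq j}\abs{T_{ij}}\leq k(k-1)\cdot nk^{-3}(1+\tilde O_k(1/k))=\frac{n}{k}\bc{1-\tfrac 1k}(1+\tilde O_k(1/k))=\frac nk\bc{1+\tilde O_k(1/k)}$$
with probability $1-\exp(-\Omega(n))$, which in particular is $1-O(1/n)$. The main obstacle is the bookkeeping in the second paragraph: verifying that the Taylor expansion of $(1-p_{ij})^d$, under the tight hypothesis on $d$, really produces a multiplicative error of order $\tilde O_k(1/k)$ in $q_{ij}$, and that the $O(k^2)$ error terms aggregated by the union bound fit inside the advertised envelope $(1+\tilde O_k(1/k))$ rather than worsening to $(1+\tilde O_k(1))$ or to an additive $O_k(1)\cdot n/k$.
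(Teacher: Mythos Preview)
Your proposal is correct and follows essentially the same approach as the paper: decompose $S_0$ into the sets $T_{ij}=S_{0ij}$, apply \Lem~\ref{lem_balls_bins_cap} to approximate each $|T_{ij}|$ by a binomial $\Bin(n/k,q_{ij})$ with $q_{ij}=(1-k\mu_{ij})^d=k^{-2}(1+\tilde O_k(1/k))$, bound the tail by Chernoff, and take a union bound over the $k(k-1)$ pairs. One small imprecision: you plug in the \emph{upper} endpoint $d=(2k-1)\ln k-2\ln2-\eps_k$ when estimating $q_{ij}$, whereas an upper bound on $q_{ij}$ requires the \emph{lower} bound on $d$ from~(\ref{eqSec_Prop_goodFirstMoment}); however, the estimate $q_{ij}=k^{-2}(1+\tilde O_k(1/k))$ holds uniformly over the entire range of $d$ in~(\ref{eqSec_Prop_goodFirstMoment}), so the conclusion is unaffected.
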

\begin{proof}
 Let us fix $i,j \in [k]$, $i\neq j$, and $v\in V_j$.
Let $S_{0ij}$ be the set of all $v\in V_i$ that do not have a neighbor in $V_j$ in $\cG(\sigma,\mu)$.
Given the number $dn\mu_{ij}$ of $V_i$-$V_j$-edges, the actual set of clones in $V_i\times\brk d$ that $\vec\Gamma_{\sigma,\mu}$ joins
to a clone in $V_j\times\brk d$ is uniformly distributed.
Hence, \Lem~\ref{lem_balls_bins_cap} applies:
 let $(b_v)_{v\in V_i}$ be a family of independent $\Bin(d,p_{ij})$ random variables with $p_{ij}=k\mu_{ij}\sim(k-1)^{-1}$.
Moreover, let $$q_{ij}=\pr\brk{\Bin(d,p_{ij})=0}\sim(1-1/(k-1))^d.$$
Then with $\hat S_{0ij}$ a random variable with distribution $\Bin(n/k,q_{ij})$ we have
	\begin{equation}\label{eqStochDom1}
	\pr\brk{|S_{0ij}|\geq t}\leq O(\sqrt n)\cdot\pr\brk{\hat S_{0ij}\geq t}\qquad\mbox{for all }t\geq0.
	\end{equation}
Since by our assumption~(\ref{eqSec_Prop_goodFirstMoment}) on $d$ we have
	$$q_{ij}\sim (1-1/(k-1))^d\leq\exp(-d/(k-1))\leq k^{-2}+\tilde O_k(k^{-3}),$$ 
we see that $\Erw[\hat S_{0ij}]\leq n (k^{-3}+\tilde O_k(k^{-4}))$ for all $i\neq j$.
Hence, by the Chernoff bound we have
	$$\pr\brk{\hat S_{0ij}\leq n (k^{-3}+\tilde O_k(k^{-4}))}=o(n^{-2}).$$
Summing over all $i\neq j$ and using~(\ref{eqStochDom1}), we thus obtain
	$\pr[|S_{0}|\leq n(k^{-1}+\tilde O_k(k^{-2}))]\geq1-O(1/n)$.
\qed\end{proof}

To bound the size of $S_1$, 
consider first for every vertex $v \in V_i$ and every set of colors $J \subset [k]\setminus\cbc{i}$ the event 
	$\cB_{v,J}=\{e(v,\bigcup_{j\in J}V_j)\leq 5\}.$
Let $B_{i,J}$ be the number of vertices $v\in V_i$ for which the event $\cB_{v,J}$  occurs.

\begin{lemma}\label{lem_b_j}
 For any set $J$ of size $|J|\leq 2$ we have
	\begin{equation}\label{eqBadEvent1}
	\pr\brk{B_{i,J}\leq \frac nk\cdot\tilde O_k(k^{-2|J|})}\geq1-\exp(-\Omega(n)).
	\end{equation}
\end{lemma}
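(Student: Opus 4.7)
The plan is to mimic the approach used for \Lem~\ref{lem_w} and \Lem~\ref{lem_u}: reduce the event in question to a binomial tail via \Lem~\ref{lem_balls_bins_cap}, and then apply the Chernoff bound. Fix $i\in\brk k$ and a set $J\subset\brk k\setminus\cbc i$ with $|J|\leq 2$, and write $E_J=\bigcup_{j\in J}V_j$. By~(\ref{eqplantedmu}), the total number of $V_i$-$E_J$ edges in $\cG(\sigma,\mu)$ is the deterministic quantity $\eta_J=dn\sum_{j\in J}\mu_{ij}\sim|J|dn/(k(k-1))$. Conditional on the constraints~(\ref{eqMuCondition}), the set of clones in $V_i\times\brk d$ that $\vec\Gamma_{\sigma,\mu}$ matches into $E_J\times\brk d$ is distributed uniformly over subsets of $V_i\times\brk d$ of size $\eta_J$. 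Hence \Lem~\ref{lem_balls_bins_cap} applies: letting $(b_v)_{v\in V_i}$ be a family of independent $\Bin(d,p)$ variables with $p=\eta_J/(d\cdot n/k)\sim|J|/(k-1)$ and setting $\hat B_{i,J}=\abs{\cbc{v\in V_i:b_v\leq 5}}$, we obtain
$$\pr\brk{B_{i,J}\geq t}\leq O(\sqrt n)\cdot\pr\brk{\hat B_{i,J}\geq t}\qquad\mbox{for every }t\geq 0.$$

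Next, I would estimate the binomial parameter $q=\pr\brk{\Bin(d,p)\leq 5}$. The assumption~(\ref{eqSec_Prop_goodFirstMoment}) combined with $p\sim|J|/(k-1)$ yields $dp=(2|J|+o_k(1))\ln k$, and so
$$q\leq\sum_{j=0}^{5}\bink dj p^j(1-p)^{d-j}\leq O((dp)^5)\cdot\exp(-dp)\leq\tilde O_k(k^{-2|J|}).$$
Since $\hat B_{i,J}$ has distribution $\Bin(n/k,q)$, we obtain $\Erw[\hat B_{i,J}]\leq\bar\mu$, where $\bar\mu:=(n/k)\cdot\tilde O_k(k^{-2|J|})$. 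Because $k$ and $|J|$ are held fixed as $n\ra\infty$, we have $\bar\mu=\Omega(n)$, so the Chernoff bound of \Lem~\ref{Lemma_Chernoff} yields
$$\pr\brk{\hat B_{i,J}\geq 2\bar\mu}\leq\exp(-\Omega(\bar\mu))=\exp(-\Omega(n)).$$
Combining this with the binomial approximation above establishes~(\ref{eqBadEvent1}).

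There is no real conceptual obstacle here: this step merely repackages the template already deployed in \Lem s~\ref{lem_w} and~\ref{lem_u}. The one point requiring care is the polylog-in-$k$ bookkeeping hidden inside the $\tilde O_k(\cdot)$ notation, which arises from the $(dp)^j$ prefactors in the binomial tail estimate; the exponent $-2|J|$ in $k^{-2|J|}$ comes directly from $dp\sim2|J|\ln k$ under the regime~(\ref{eqSec_Prop_goodFirstMoment}). Uniformity of the bound in the choice of $(i,J)$ is automatic since both $k$ and $|J|$ are constants as $n\ra\infty$.
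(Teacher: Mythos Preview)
Your argument is correct and in fact slightly cleaner than the paper's. The key difference is how the set $J$ is handled. The paper treats each colour $j\in J$ separately: it observes that $\cB_{v,J}\subset\bigcap_{j\in J}\cbc{e(v,V_j)\leq 5}$, introduces independent binomials $(b_{v,j})_{j\in J}$ with parameter $p_{ij}\sim(k-1)^{-1}$, notes that the events $\cbc{e_{v,j}\leq 5}$ are negatively correlated across $j$, and therefore obtains the factor $O(n^{|J|/2})$ from $|J|$ separate applications of \Lem~\ref{lem_balls_bins_cap}; the $k^{-2|J|}$ then arises as a product $\prod_{j\in J}\tilde O_k(k^{-2})$.

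You instead treat $E_J=\bigcup_{j\in J}V_j$ as a single target set and apply \Lem~\ref{lem_balls_bins_cap} once, with $p\sim|J|/(k-1)$. This is legitimate because, under the constraints~(\ref{eqMuCondition}), the clones of $V_i\times\brk d$ are partitioned uniformly at random into blocks of the prescribed sizes $(\mu_{ij}dn)_j$, and the union of the blocks indexed by $J$ is therefore a uniformly random subset of size $\eta_J$. Your route avoids the negative-correlation step altogether and gives the exponent $-2|J|$ directly from $dp\sim2|J|\ln k$; the paper's route, on the other hand, keeps the colours $j\in J$ separate, which is closer in spirit to how the downstream applications (\Cor~\ref{lem_s_1}, the $2$-free bound) are phrased. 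Either approach yields the stated bound.
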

\begin{proof}
 Let $j\in J$.
Given $\mu_{ij}$, the set of clones in $V_i\times\brk d$ that $\vec\Gamma_{\sigma,\mu}$ links to $V_j\times\brk d$ is uniformly distributed.
Thus, \Lem~\ref{lem_balls_bins_cap} applies:
let $(b_{v,j})_{v\in V_i}$ be a family of independent random variables with distribution $\Bin(d,p_{ij})$, where $p_{ij}=k\mu_{ij}\sim(k-1)^{-1}$.
Let $\hat B_{i,J}$ be the number of vertices $v$ such that $b_{v,j}\leq 5$ for all $j\in J$.
Moreover, let $e_{v,j}$ be the number of clones $(v,h)$, $h\in\brk d$ with $\vec\Gamma_{\sigma,\mu}(v,h)\in V_j\times\brk d$.
The events $(\cbc{e_{v,j}\leq 5})_{j\in J}$ are negatively correlated (namely, the fact that $v$ has a small number of neighbors in some
color class $j$ makes it less likely that the same occurs for another color class $j'$).
Therefore, \Lem~\ref{lem_balls_bins_cap} yields
	\begin{eqnarray}\label{eqBadEvent2}
	\pr\brk{B_{i,J}\geq t}&\leq&O(n^{|J|/2})\cdot\pr\brk{\hat B_{i,J}\geq t}\qquad\mbox{ for any }t\geq0.
	\end{eqnarray}
Furthermore, because the random variables $(b_{v,j})$ are independent and
	$\Erw\brk{b_{v,j}}=d p_{ij}\geq2\ln k$,
the Chernoff bound yields
	\begin{equation}\label{eqBadEvent3}
	\pr\brk{\hat B_{i,J}\leq \frac nk\cdot \tilde O_k(k^{-2|J|})}\geq1-\exp(-\Omega(n)).
	\end{equation}
Thus, (\ref{eqBadEvent1}) follows from~(\ref{eqBadEvent2}) and~(\ref{eqBadEvent3}).
\qed\end{proof}

\begin{corollary}\label{lem_s_1}
 With probability at least $1-o(n^{-2})$ we have $|S_1|\leq n\cdot \tilde{O}_k(k^{-2})$.
\end{corollary}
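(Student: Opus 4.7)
\emph{Proof plan.} I decompose $|S_1| \leq \sum_{i \neq j}|S_{1,ij}|$, where $S_{1,ij}$ denotes the set of $v \in V_j$ with $e(v, V_i) \geq 1$ and $e(v, V_i \setminus W_i) = 0$. By \Lem~\ref{lem_w} we may condition on the event that $|W_i| \leq \tilde O_k(k^{-2}) n$ for every $i \in \brk k$, which fails only with probability $\exp(-\Omega(n))$. It therefore suffices to show that for each pair $(i, j)$ with $i \neq j$,
$$|S_{1,ij}| \leq n \tilde O_k(k^{-4})$$
with probability $1 - o(n^{-2}/k^2)$; summing over the $k(k-1)$ pairs and taking a union bound then yields the corollary.

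Fix such a pair $(i, j)$. Following the template of \Lem s~\ref{lem_w} and \ref{lem_b_j}, I invoke \Lem~\ref{lem_balls_bins_cap} in two stages — first to the set of $V_j$-clones matched into $V_i \times \brk d$, and then, conditional on their number, to the uniform placement of their $V_i$-endpoints inside $V_i \times \brk d$ — in order to compare $|S_{1,ij}|$ with the analogous count $\hat S_{1,ij}$ arising from an independent model. In that independent model each $v \in V_j$ classifies its $d$ clones independently into $W_i$, $V_i \setminus W_i$ and the remainder with probabilities $q_1$, $q_2$ and $1 - q_1 - q_2$ respectively, where $q_1 \leq \tilde O_k(k^{-2})$ and $q_2 = (1 + o_k(1))/(k-1)$. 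A direct calculation yields
$$\pr\brk{v \in \hat S_{1,ij}} = (1 - q_2)^d - (1 - q_1 - q_2)^d \leq d q_1 (1 - q_2)^{d-1} = \tilde O_k(k^{-3}),$$
using $(1 - q_2)^d = \tilde O_k(k^{-2})$ (since $d/(k-1) = (2 + o_k(1))\ln k$) and $d q_1 \leq \tilde O_k(k^{-1})$. Hence $\Erw\brk{\hat S_{1,ij}} \leq (n/k) \tilde O_k(k^{-3}) = n \tilde O_k(k^{-4})$.

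Because $\hat S_{1,ij}$ is a sum of independent indicators, \Lem~\ref{Lemma_Chernoff} yields $\pr\brk{\hat S_{1,ij} \geq 2 \Erw\brk{\hat S_{1,ij}}} \leq \exp(-\Omega(n))$ for $n$ large compared to $k$; the polynomial factor lost in transferring back to $|S_{1,ij}|$ via \Lem~\ref{lem_balls_bins_cap} is inconsequential. Summing over the $k(k-1)$ pairs produces $|S_1| \leq n \tilde O_k(k^{-2})$ with the advertised probability. The only real technical point is the two-stage invocation of \Lem~\ref{lem_balls_bins_cap}: the event defining $S_{1,ij}$ simultaneously constrains both the number of $V_i$-neighbors of $v$ and their placement relative to the graph-dependent set $W_i$, and these two sources of randomness must be disentangled cleanly before the independent approximation takes over. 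Once this is arranged the ensuing Chernoff estimate is entirely routine.
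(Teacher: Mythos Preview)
Your proof plan is correct and arrives at the same bound, but the route differs meaningfully from the paper's.

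The paper conditions on the full profile $\cF=\cF(w_{ij},(e_{vj})_{v\in V_i})$ and then \emph{splits into two cases} according to whether $e(v,V_j)\geq 5$ or $1\leq e(v,V_j)<5$. For the first case one gets $(\tilde O_k(k^{-1}))^5=\tilde O_k(k^{-5})$ per vertex directly; for the second case the paper first bounds the \emph{number} of such vertices via \Lem~\ref{lem_b_j} (giving $\tilde O_k(k^{-3})n$) and then multiplies by the $\tilde O_k(k^{-1})$ probability that all $<5$ neighbours land in $W_j$. The two contributions are combined and summed over pairs.

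You instead treat both regimes in a single stroke via the trinomial model and the inequality
\[
(1-q_2)^d-(1-q_1-q_2)^d\;\leq\;dq_1(1-q_2)^{d-1}=\tilde O_k(k^{-1})\cdot\tilde O_k(k^{-2})=\tilde O_k(k^{-3}),
\]
which simultaneously encodes ``at least one neighbour in $V_i$'' (the factor $dq_1$) and ``none outside $W_i$'' (the factor $(1-q_2)^{d-1}$). This is cleaner: it dispenses with the arbitrary threshold $5$ and with the appeal to \Lem~\ref{lem_b_j}. The price is that you need a multinomial variant of \Lem~\ref{lem_balls_bins_cap} (three cells rather than two), whereas the paper gets by with two sequential applications of the binomial version. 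That generalisation is routine --- the multi-hypergeometric is the multinomial conditioned on exact cell counts, and the conditioning event has probability $\Theta(n^{-1})$ --- so your acknowledged ``only real technical point'' is genuinely the only one.

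One small clarification worth making explicit when you write this out: you must first condition on the \emph{shading} of $V_i$ (equivalently, on all the counts $e(u,V_h)$ for $u\in V_i$), which determines $W_i$ and hence $e(W_i,V_j)$; only then does the partition of $V_j\times\brk d$ into $\{W_i,\,V_i\setminus W_i,\,\text{else}\}$ become a bona fide multi-hypergeometric independent of the remaining randomness. Your phrase ``graph-dependent set $W_i$'' shows you see this, but the order of conditioning should be stated.
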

\begin{proof}
Let $i,j\in\brk k$, $i\neq j$.
By \Lem~\ref{lem_w} we may assume that $|W_j|\leq\tilde O_k(k^{-2})n$.
Hence,
	$$e_{\cG(\sigma,\mu)}(V_i,W_j)\leq \tilde O_k(k^{-2})n,$$
because $e_{\cG(\sigma,\mu)}(w,V_i)=O_k(\ln k)$ for all $w\in W_j$ by the definition of $W_j$.
By comparison, 
	$$e_{\cG(\sigma,\mu)}(V_i,V_j)=dn\mu_{ij}\sim dn/\bc{k(k-1)}.$$
Now, condition on the event that $e_{\cG(\sigma,\mu)}(V_i,W_j)=w_{ij}$ for some specific number $w_{ij}=\tilde O_k(k^{-2})n$.
In addition, let $(e_{vj})_{v\in V_i}$ be a sequence of non-negative integers such that $\sum_{v\in V_i}e_{vj}=dn\mu_{ij}$,
and condition on the event that $e_{\cG(\sigma,\mu)}(v,V_j)=e_{vj}$ for all $v\in V_i$.
Given this event $\cF=\cF(w_{ij},\cbc{e_{vj}})$, we are interested in the random variables $f_{v}=e_{\cG(\sigma,\mu)}(v,W_j)$, $v\in V_i$.
Let $(g_{v})_{v\in V_i}$ be a family of independent random variables such that $g_v$ has distribution $\Bin(e_{vj},w_{ij}/(dn\mu_{ij}))$.
Given $\cF$, the set of clones among $V_i\times\brk d$ that $\vec\Gamma_{\sigma,\mu}$ matches to $W_{j}\times\brk d$  is simply a random subset of size $w_{ij}$
of the set of clones that get matched to $V_j\times\brk d$.
Therefore, by \Lem~\ref{lem_balls_bins_cap},
for any sequence $(t_v)_{v\in V_i}$ of integers
 we have
	\begin{eqnarray}\nonumber
	\pr\brk{\forall v\in V_i:f_v=t_v|\cF}&=&\pr\brk{\forall v\in V_i:g_v=t_v\bigg|\sum_{v\in V_i}g_v=w_{ij}}\\
		&\leq& O(\sqrt n)\pr\brk{\forall v\in V_i:g_v=t_v}.\label{eqTrivialize1}
	\end{eqnarray}

Now, let $S_{1ij}'$ be the number of all vertices $v\in V_i$ such that
all neighbors of $v$ in $V_j$ belong to $W_j$ and such that $e_{\cG(\sigma,\mu)}(v,V_j)\geq5$.
Moreover, let $\hat S_{1ij}'$ be the number of $v\in V_i$ such that $g_v=e_{vj}\geq5$.
Because $w_{ij}/(dn\mu_{ij})=\tilde O_k(k^{-1})$, we find that
	$$\Erw\brk{\hat S_{1ij}'}\leq \frac nk\cdot\tilde O_k(k^{-5}).$$
Furthermore, $\hat S_{1ij}'$ is a binomial random variable.
Therefore, the Chernoff bound yields
	\begin{equation}\label{eqTrivialize2}
	\pr\brk{\hat S_{1ij}'\leq \frac nk\cdot\tilde O_k(k^{-5})}\geq1-\exp(-\Omega(n)).
	\end{equation}
Combining~(\ref{eqTrivialize1}) and~(\ref{eqTrivialize2}), we obtain
	\begin{equation}\label{eqTrivialize3}
	\pr\brk{S_{1ij}'\leq\frac nk\cdot\tilde O_k(k^{-5})|\cF}\geq1-\exp(-\Omega(n)).
	\end{equation}
Further, because~(\ref{eqTrivialize3}) holds for all $w_{ij},\cbc{e_{vj}}$, we obtain the unconditional bound
	\begin{equation}\label{eqTrivialize4}
	\pr\brk{S_{1ij}'\leq \frac nk\cdot\tilde O_k(k^{-5})}\geq1-\exp(-\Omega(n)).
	\end{equation}

In addition, let $S_{1ij}''$ be the number of vertices $v\in V_i$ such that 
all neighbors of $v$ in $V_j$ belong to $W_j$ and $1\leq e(v,V_j)<5$.
Because we are conditioning on the numbers $e_{vj}$, the event $\cF$ determines the number $B_{i,\cbc j}$
of vertices $v\in V_i$ with $e_{vj}=e(v,V_j)<5$.
Now, consider the number $\hat S_{1ij}''$ of vertices $v\in V_i$ with $1\leq e_{vj}<5$ such that $g_v=e_{vj}$.
Then $\hat S_{1ij}''$ is a binomial random variable with
	$$\Erw\brk{\hat S_{1ij}''}\leq B_{i,\cbc j}\cdot\tilde O_k(k^{-1}).$$
Hence, by the Chernoff bound
	\begin{equation}\label{eqTrivialize5}
	\pr\brk{\hat S_{1ij}''\leq B_{i,\cbc j}\cdot\tilde O_k(k^{-1})+n^{2/3}|\cF}\geq1-o(n^{-2}).
	\end{equation}
Combining (\ref{eqTrivialize1}) and~(\ref{eqTrivialize5}), we find
	$$\pr\brk{S_{1ij}''\leq B_{i,\cbc j}\cdot\tilde O_k(k^{-1})+n^{2/3}|\cF}\geq1-o(n^{-2}).$$
Thus, Lemma \ref{lem_b_j} yields the unconditional bound
	\begin{equation}\label{eqTrivialize6}
	\pr\brk{S_{1ij}''\leq n\cdot\tilde O_k(k^{-4})}\geq1-o(n^{-2}).
	\end{equation}	
Combining~(\ref{eqTrivialize4}) and~(\ref{eqTrivialize6}) and using the union bound, we obtain
	\begin{equation}\label{eqTrivialize7}
	\pr\brk{|S_1|\leq\sum_{i,j\in\brk k:i\neq j}S_{1ij}'+S_{1ij}''\leq n\cdot\tilde O_k(k^{-2})}\geq1-o(n^{-2}),
	\end{equation}	
as claimed.
\qed\end{proof}

\begin{lemma}\label{lem_y_ny}
With probability at least $1-\exp(-\Omega(n))$ there are no more than $nk^{-26}$ vertices that have
a neighbor in $Y$. 
\end{lemma}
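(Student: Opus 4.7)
The plan is two-step: first strengthen \Lem~\ref{lem_z} to $|Y|\leq 4nk^{-30}$ with probability $1-\exp(-\Omega(n))$, then conclude via a trivial degree count.

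For the bound on $|Y|$, by \Lem s~\ref{lem_u'} and~\ref{lem_u} we have $|U\cup U'|\leq 2nk^{-30}$ with the required probability. Suppose $|Y|>4nk^{-30}$. During the greedy construction, the set $Y^{(t)}$ first reaches size $s:=\lceil 4nk^{-30}\rceil$ at some step $t_0$, at which point at least $s-|U\cup U'|\geq 2nk^{-30}$ vertices have been added, each contributing more than $\ell=100$ new edges; hence $\cG(\sigma,\mu)$ contains a set $S$ of size $s$ spanning at least $q:=200\,nk^{-30}$ edges. I then apply \Lem~\ref{lem_edges_cond} (whose hypothesis $e_{ii}=0$ holds automatically since $\sigma$ properly colors $\cG(\sigma,\mu)$, and whose hypothesis $|E|\leq n/(2k)$ holds because $q=O(nk^{-29})\ll n/(2k)$) and union-bound over the size-$q$ partial matchings of clone-pairs inside $S\times\brk d$, yielding
\[ \pr\brk{e_{\cG(\sigma,\mu)}(S)\geq q}\leq\binom{\binom{ds}{2}}{q}\bc{\frac{5}{dn}}^q\leq\bc{\frac{5eds^2}{2qn}}^q=\bc{\tilde O_k(k^{-29})}^q, \]
where the last estimate uses $q/s=50$, $s=4nk^{-30}$ and $d\leq(2k-1)\ln k$ from~(\ref{eqSec_Prop_goodFirstMoment}). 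A further union bound over the $\binom{n}{s}\leq\exp(O(nk^{-30}\ln k))$ choices of $S$ leaves an overall bound of $\exp(-\Omega(n))$, because the per-set exponent $\sim 29 q\ln k$ beats the union-bound entropy factor by a wide multiplicative constant.

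Once $|Y|\leq 4nk^{-30}$ is in hand, the lemma is immediate: every vertex has degree $d$, so the number of vertices incident to $Y$ is at most $d|Y|\leq(2k-1)\ln k\cdot 4nk^{-30}\leq 8n\ln k\cdot k^{-29}\leq nk^{-26}$ for $k$ sufficiently large.

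The main obstacle is the strengthened bound on $|Y|$: \Lem~\ref{lem_z} itself only gives probability $1-O(1/n)$ because it passes through the ``no small dense subgraph'' \Lem~\ref{lem_edges_5s}, whose failure probability is only polynomial. The direct argument above succeeds in the exponentially-small regime because the target size $s=\Theta(n)$ is macroscopic (for fixed $k$), so the per-set first-moment bound decays exponentially in $n$ and overwhelms the $\binom{n}{s}$ union-bound factor with room to spare.
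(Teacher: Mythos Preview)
Your proof is correct and follows the same two-step outline as the paper: bound $|Y|$ and then apply the trivial degree count $|N(Y)|\leq d|Y|$. The paper's own proof simply cites \Lem~\ref{lem_z} while asserting the exponential probability bound $1-\exp(-\Omega(n))$, even though \Lem~\ref{lem_z} as stated only delivers $1-O(1/n)$ (its proof goes through \Lem~\ref{lem_edges_5s}, whose failure probability is polynomial because it must cover sets of all sizes down to~$1$). You correctly identify this discrepancy and supply the missing argument: by fixing the target size at the macroscopic scale $s=\Theta(n/k^{30})$, the first-moment bound from \Lem~\ref{lem_edges_cond} decays like $\exp(-\Omega(q\ln k))$ with $q=\Theta(n/k^{30})$, which for fixed $k$ is $\exp(-\Omega(n))$ and comfortably absorbs the $\binom{n}{s}$ union-bound factor. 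So your route is the paper's route, just with the probability bookkeeping actually carried out; the paper's proof as written has a gap (or, more charitably, implicitly relies on exactly the strengthening you spell out).
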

\begin{proof}
 \Lem~\ref{lem_z} shows that with probability $1-\exp(-\Omega(n))$ we have $\abs Y\leq nk^{-29}$.
In this case, the number of neighbors of vertices in $Y$ is bounded by $d|Y|\leq n k^{-27}$, because all vertices have degree $d\leq 2k\ln k$.
Thus, 
	$
	\pr\brk{|Y\cup N(Y)|\leq nk^{-26}}\geq1-\exp(-\Omega(n)).$
\qed\end{proof}

\medskip\noindent{\em Proof of \Prop~\ref{pro_free_1_2}.}
Since \Prop~\ref{Prop_P123} shows that any $1$-free vertex satisfies one of the conditions {\bf(P1)--(P3)}, Lemmas \ref{lem_s_0}--\ref{lem_y_ny}
imply that with probability $1-O(1/n)$ the number of $1$-free vertices is bounded by $n(k^{-1}+\tilde O_k(k^{-2}))$.
This establishes the first assertion.

To bound the number of $2$-free vertices, let $i\in\brk k$, let $J\subset\brk k\setminus\cbc i$ be a set of size $|J|=2$ and let $T_{i,J}$ be the number of vertices $v\in V_i$
that fail to have a neighbor in $\bigcup_{j\in J} V_j$.
Then $|T_{i,J}|\leq B_{i,J}$.
Therefore, Lemma \ref{lem_b_j} implies that
	\begin{equation}\label{eq2free1}
	\pr\brk{|T_{i,J}|\leq \frac nk\cdot \tilde O_k(k^{-4})}\geq1-\exp(-\Omega(n)).
	\end{equation}
Furthermore, the total number $T$ of $2$-free vertices satisfies
	\begin{equation}\label{eq2free2}
	T\leq\sum_{i=1}^k\sum_{J\subset\brk k\setminus\cbc i:|J|=2}T_{i,J}.
	\end{equation}
Combining~(\ref{eq2free1}) and~(\ref{eq2free2}) and using the union bound, we thus obtain the desired bound.
\qed

\section{The second moment}\label{Sec_second}

{\em Throughout this section, we assume that $k$ divides $n$ and that $d$ satisfies~(\ref{eqSec_Prop_goodFirstMoment}).}

\subsection{Outline}

In this section we complete the proof of the first part of \Thm~\ref{Thm_main} (the upper bound on the chromatic number of $\gnd$).
The key step is to carry out a second moment argument for the number $\Zkg$ of good $k$-colorings.
Let $\cB$ be the set of all balanced maps $\sigma:V\ra\brk k$ and let
	$\cR=\cbc{\rho(\sigma,\tau):\sigma,\tau\in\cB}$
be the set of all possible overlap matrices (as defined in~(\ref{eqOverlapMatrix})).
For each $\rho\in\cR$ we consider
	\begin{eqnarray*}
	\Zrg&=&\abs{\cbc{(\sigma,\tau):\sigma,\tau\mbox{ are good $k$-colorings }\rho(\sigma,\tau)=\rho}}\quad\mbox{ and}\\
	\Zrb&=&\abs{\cbc{(\sigma,\tau):\sigma,\tau\mbox{ are balanced $k$-colorings with }\rho(\sigma,\tau)=\rho}}\geq\Zrg.
	\end{eqnarray*}
Because the second moment $\Erw[\Zkg^2]$ of the number of good $k$-colorings of $\Gnd$ is nothing but
the expected number of {\em pairs} of good $k$-colorings, 
we have the expansion
	\begin{eqnarray}\label{eqsecond1}
	\Erw\brk{\Zkg^2}&=&\sum_{\rho\in\cR}\Erw\brk{\Zrg}.
	\end{eqnarray}

The second moment argument for the number $\Zrb$ of balanced $k$-colorings
of $\Gnd$ carried out in~\cite{KPGW} does not work for the (entire) range of $d$ in \Thm~\ref{Thm_main}.
However, an important part of that argument does carry over to this entire range of $d$.
More precisely, we can salvage the following estimate of the contribution 
of $\rho$ ``close'' to the flat matrix $\bar\rho=\frac1k\vecone$ with all entries equal to $1/k$.
	
\begin{proposition}[\cite{KPGW}]\label{Prop_centre}
Let
	\begin{equation}\label{eqbarcR}
	\bar\cR=\cbc{\rho\in\cR:\norm{\rho-\bar\rho}_{\infty}\leq n^{-1/2}\ln^{2/3}n}.
	\end{equation}
Then with $\delta_j,\lambda_j$ as in~(\ref{eqsmallSubgraphConditioning}) we have
	$$\sum_{\rho\in\bar\cR}\Erw\brk{\Zrb}\leq(1+o(1))\Erw\brk{\Zkb}^2\cdot\exp\brk{\sum_{j=1}^\infty\lambda_j\delta_j^2}.$$
\end{proposition}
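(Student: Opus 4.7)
The plan is to adapt the central-region second moment calculation from~\cite{KPGW} to our setting. Fix $\rho\in\bar\cR$. Any pair $(\sigma,\tau)\in\cB^2$ with overlap $\rho(\sigma,\tau)=\rho$ refines $V$ into cells $V_{ij}=\sigma^{-1}(i)\cap\tau^{-1}(j)$ of size $\rho_{ij}n/k$, and the event that both $\sigma,\tau$ are proper $k$-colorings forces every edge of $\Gnd$ to join two cells $V_{ij},V_{i'j'}$ with $i\neq i'$ \emph{and} $j\neq j'$. Applying \Cor~\ref{Lemma_skewed} with $K=k^2$ and marginal distribution $(\rho_{ij}/k)_{i,j\in\brk k}$, and summing over admissible edge distributions $\mu$ supported off the ``forbidden'' pairs, one obtains
$$\Erw[\Zrb]=\sum_{\mu\in M(\rho)}\exp\bc{H(\rho/k)n-\tfrac{dn}{2}\KL{\mu}{(\rho/k)\tensor(\rho/k)}+O(\ln n)}.$$

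The next step is a Laplace evaluation. For each $\rho$ the sum is dominated by the unique constrained KL minimizer $\mu^*(\rho)\in M(\rho)$, identifiable by Lagrange multipliers. Writing $\rho=\bar\rho+x$ with $\norm{x}_\infty\leq n^{-1/2}\ln^{2/3}n$ and expanding via \Lem~\ref{Lemma_chainrule} together with~(\ref{eqKLdiff1})--(\ref{eqKLdiff2}), the linear term in $x$ vanishes by symmetry (since $\bar\rho$ is a stationary point) and one obtains
$$\Erw[\Zrb]=(1+o(1))\,\Erw[Z_{\bar\rho,\mathrm{bal}}]\cdot\exp\bc{-\tfrac n2 Q(x)}$$
for an explicit positive definite quadratic form $Q$ on the $(k-1)^2$-dimensional affine subspace through $\bar\rho$ along which $\rho$ varies; the cubic error term is $O(n\norm{x}_\infty^3)=O(n^{-1/2}\ln^2n)=o(1)$.

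Since the feasible $\rho$ form a lattice of spacing $k/n$ on this subspace, the sum over $\rho\in\bar\cR$ becomes a Riemann approximation to an unrestricted Gaussian integral,
$$\sum_{\rho\in\bar\cR}\Erw[\Zrb]=(1+o(1))\,\Erw[Z_{\bar\rho,\mathrm{bal}}]\cdot\bcfr{n}{k}^{(k-1)^2}\bcfr{2\pi}{n}^{(k-1)^2/2}\det(Q)^{-1/2},$$
and it remains to match this explicit expression with $(1+o(1))\,\Erw[\Zkb]^2\cdot\exp[\sum_{j\geq1}\lambda_j\delta_j^2]$. Using the first-moment formula in \Prop~\ref{Prop_KPGWfirstMoment} to expand $\Erw[\Zkb]^2$, and diagonalising $Q$ via the tensor-product structure coming from the uniform vector and its orthogonal complement in $\RR^k$, the prefactor reduces exactly to $\exp[\sum_{j\geq1}\lambda_j\delta_j^2]$.

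The hard part is this last algebraic identification, which is precisely the core of the KPGW analysis; we invoke it as a black box. Crucially, their computation depends on $d$ only through the positive-definiteness of $Q$ at $\bar\rho$, a purely local condition that continues to hold throughout the range~(\ref{eqSec_Prop_goodFirstMoment}). Hence the central-region estimate transfers to our setting without modification, even though the second moment argument for $\Zkb$ as a whole does not.
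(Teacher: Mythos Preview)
Your sketch is essentially correct, and in fact the paper does not give its own proof of this proposition at all: it is stated with the citation~\cite{KPGW} and used as a black box. The paper's only commentary is the remark preceding the statement, namely that while the full second moment argument for $\Zkb$ from~\cite{KPGW} fails for the range~(\ref{eqSec_Prop_goodFirstMoment}), ``an important part of that argument does carry over to this entire range of $d$'', which is precisely the point you make in your final paragraph about the local positive-definiteness of $Q$ at $\bar\rho$ being the only $d$-dependent ingredient.
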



Of course, to estimate the right-hand side of~(\ref{eqsecond1}), we also need to estimate the contribution of overlaps $\rho\not\in\bar\cR$.
To this end, we are going to establish an explicit connection between~(\ref{eqsecond1}) and the second moment argument
for $\gnm$ performed in~\cite{ACOVilenchik}.
%
As in~\cite{AchNaor,ACOVilenchik}, we define for a doubly-stochastic $k\times k$ matrix $\rho=(\rho_{ij})_{i,j\in\brk k}$ the functions
	\begin{eqnarray*}
	f(\rho)&=&H(\rho/k)+E(\rho),\qquad\mbox{where}\\
	H(\rho/k)&=&\ln k-\sum_{i,j=1}^k\rho_{ij}\ln\rho_{ij}\quad\mbox{ is the entropy of the distribution $\rho/k=(\rho_{ij}/k)_{i,j\in\brk k}$, and }\\
	E(\rho)&=&\frac d2\ln\brk{1-\frac2k+\frac1{k^2}\sum_{i,j=1}^k\rho_{ij}^2}.
	\end{eqnarray*}
In \Sec~\ref{Sec_Lemma_f} we are going to establish the following bound.
\begin{proposition}\label{Prop_f}
For any $\rho\in\cR$ we have
	$\Erw\brk{\Zrg}\leq\Erw\brk{\Zrb} \leq n^{O(1)}\exp\brk{nf(\rho)}$.
\end{proposition}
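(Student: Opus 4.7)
Since $\Zrg\leq\Zrb$ trivially, it suffices to bound $\Erw\brk{\Zrb}$. The plan is to combine a multinomial count of ordered balanced pairs $(\sigma,\tau)$ of overlap $\rho$ with the partition-probability estimate from \Sec~\ref{Sec_partitions}, applied to the refined partition of $V$ into $K=k^2$ blocks $V_{ij}=\sigma^{-1}(i)\cap\tau^{-1}(j)$ of sizes $\rho_{ij}n/k$. With this refinement both $\sigma$ and $\tau$ are proper colourings of $\Gnd$ if and only if every edge joins some $V_{ij}$ to some $V_{i'j'}$ with $i\neq i'$ \emph{and} $j\neq j'$.

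First I would factor $\Erw\brk{\Zrb}=N_\rho\cdot P_\rho$, where $N_\rho$ counts the ordered balanced pairs of overlap $\rho$ (a single multinomial coefficient that Stirling reduces to $\exp(nH(\rho/k)+O(\ln n))$) and $P_\rho$ is the probability that both colourings are proper. Decomposing $P_\rho$ according to the edge distribution on the refined partition and applying \Lem~\ref{Lemma_partition} with marginal $\rho'=(\rho_{ij}/k)_{i,j\in\brk k}$, one obtains
	$$P_\rho \leq n^{O(1)}\exp\bc{-\tfrac{dn}{2}\inf_\mu\KL{\mu}{\rho'\otimes\rho'}},$$
where the infimum ranges over $(d,n)$-admissible symmetric probability distributions $\mu$ on $[K]\times[K]$ supported on the ``good'' set $G=\cbc{((i,j),(i',j')):i\neq i',\,j\neq j'}$, and the $n^{O(1)}$ prefactor absorbs the polynomially many such $\mu$.

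The key step is the lower bound $\inf_\mu\KL{\mu}{\rho'\otimes\rho'}\geq-\ln p$ with $p=(\rho'\otimes\rho')(G)$, valid over \emph{any} $\mu$ supported on $G$. This follows from the one-line identity
	$$\KL{\mu}{\rho'\otimes\rho'}=\KL{\mu}{(\rho'\otimes\rho')_G}-\ln p\ \geq\ -\ln p,$$
where $(\rho'\otimes\rho')_G$ denotes $\rho'\otimes\rho'$ conditioned on $G$, together with non-negativity of KL. A short inclusion--exclusion using that each row and column of $\rho$ sums to $1$ evaluates
	$$p=\frac1{k^2}\sum_{i\neq i',\,j\neq j'}\rho_{ij}\rho_{i'j'}=1-\frac{2}{k}+\frac{1}{k^2}\sum_{i,j=1}^k\rho_{ij}^2,$$
so that $\tfrac{d}{2}\ln p=E(\rho)$. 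Combining everything,
$\Erw\brk{\Zrb}\leq n^{O(1)}\exp(n(H(\rho/k)+E(\rho)))=n^{O(1)}\exp(nf(\rho))$, as claimed.

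There is no substantial obstacle; the proof is essentially a one-line KL-divergence computation wrapped around \Lem~\ref{Lemma_partition}. The only bookkeeping concerns admissibility (the block sizes $\rho_{ij}n/k$ are automatically integral for $\rho\in\cR$ since $\rho$ arises from actual colourings) and the harmless observation that the minimum over admissible $\mu$ with marginal $\rho'$ is at least the unconstrained minimum over $\mu$ supported on $G$, so the lower bound $-\ln p$ still applies.
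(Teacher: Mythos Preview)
Your proof is correct and is essentially the same as the paper's. The paper first states $\frac1n\ln\Erw[Z_{\rho,\mu}]=H(\rho/k)-\frac d2\KL{\mu}{\rho'\otimes\rho'}+O(\ln n/n)$ via \Cor~\ref{Lemma_skewed}, then defines $\hat\rho=(\rho'\otimes\rho')_G$ and rewrites this as $f(\rho)-\frac d2\KL{\mu}{\hat\rho}+O(\ln n/n)$ (\Cor~\ref{Cor_compPair}) before invoking $\KL{\mu}{\hat\rho}\geq0$ and summing over the polynomially many admissible $\mu$; your ``one-line'' identity $\KL{\mu}{\rho'\otimes\rho'}=\KL{\mu}{(\rho'\otimes\rho')_G}-\ln p$ is exactly this rewriting, and your inclusion--exclusion for $p$ is the same computation the paper carries out in the proof of \Cor~\ref{Cor_compPair}.
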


Similar bounds as \Prop~\ref{Prop_f} were derived, somewhat implicitly, in~\cite{AMoColor,KPGW}.
We include the proof here because the present argument is substantially simpler than those in~\cite{AMoColor,KPGW} and because
we are going to need some details of the calculation later to finish the proof of \Thm~\ref{Thm_main}.

Thus, we need to bound $f(\rho)$ for $\rho\in\cR\setminus\bar\cR$.
This is precisely the task that was solved in~\cite{ACOVilenchik} and that does, indeed, form the technical core of that paper.
Hence, let us recap some of the notation from~\cite{ACOVilenchik}.
We start by observing that the definition of ``good'' entails that {\em a priori} $\Zrg=0$ for quite a few $\rho\in\cR\setminus\bar\cR$.
More precisely, call a doubly-stochastic matrix $\rho$ \bemph{separable} if for any $i,j\in\brk k$ such that $\rho_{ij}>0.51$ we have $\rho_{ij}\geq1-\kappa$
	(with $\kappa$ as in \Def~\ref{Def_separable}).

The definition of ``good $k$-coloring'' ensures that $\Zrg=0$ unless $\rho$ is separable.
Indeed, assume that there exist balanced $k$-colorings $\sigma,\tau$ such that $\rho(\sigma,\tau)$ fails to be separable.
Then there is a permutation $\pi$ of the colors $\brk k$ such that $0.51<\rho_{11}(\sigma,\tau)<1-\kappa$.
Hence, neither $\sigma$ nor $\tau$ is a separable $k$-coloring, and thus none of them is good.

%
The set of separable matrices can be split canonically into subsets determined by the number of entries that are greater than $0.51$.
Let us say that $\rho$ is \bemph{$s$-stable} if there are precisely $s$ pairs $(i,j)\in\brk k\times\brk k$ such that $\rho_{ij}\geq1-\kappa$.
Let
	\begin{eqnarray*}
	\Rgs&=&\cbc{\rho\in\cR:\rho\mbox{ is separable and $s$-stable for some $0\leq s\leq k-1$}}\qquad\mbox{and}\\
	\Rg&=&\bigcup_{s=0}^{k-1}\Rgs.
	\end{eqnarray*}

Let us turn the problem of estimating $f(\rho)$ over $\rho$ in the discrete set $\Rg$ into 
a continuous optimization problem.
As $n\ra\infty$ the set $\cR$ of overlap matrices lies dense in the set $\Birk$
of all doubly-stochastic $k\times k$ matrices, the {\em Birkhoff polytope}.
Furthermore, the sets $\Rgs$ and $\Rg$ are dense in
	\begin{eqnarray*}
	\Dgs&=&\cbc{\rho\in\Birk:\rho\mbox{ is separable and $s$-stable for some $0\leq s\leq k-1$}},\\
	\Dg&=&\bigcup_{s=0}^{k-1}\Dgs.
	\end{eqnarray*}

%

\begin{proposition}[\cite{ACOVilenchik}]\label{Prop_opt}
For any fixed $\eta>0$ there is $\delta>0$ such that
	$$\max\cbc{f(\rho):\rho\in\Dg\mbox{ such that }\norm{\rho-\bar\rho}_{\infty}\geq\eta}<f(\bar\rho).$$
\end{proposition}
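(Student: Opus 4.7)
The plan is to exploit compactness together with a region-by-region analysis of $f$ on $\Dg$. Since $\Dg=\bigcup_{s=0}^{k-1}\Dgs$ is a closed subset of the compact Birkhoff polytope $\Birk$ and $f$ is continuous, the set $K_\eta=\cbc{\rho\in\Dg:\norm{\rho-\bar\rho}_\infty\geq\eta}$ is compact and $f$ attains its maximum there. Once the maximum is shown to be strictly less than $f(\bar\rho)$, the quantitative gap $\delta>0$ follows automatically from compactness.

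I would partition $K_\eta=\bigcup_{s=0}^{k-1}(K_\eta\cap\Dgs)$ according to the stability index. For $s=0$, separability forces $\rho_{ij}\leq 0.51$ for every $i,j$, so $\rho$ is bounded away from every partial permutation matrix. In this ``bulk'' regime I would perform a Hessian analysis at $\bar\rho$: the Hessian of $H(\rho/k)=\ln k-\sum_{i,j}\rho_{ij}\ln\rho_{ij}$ at $\bar\rho$ is the diagonal matrix $-k\cdot I_{k^2}$, while the Hessian of $E(\rho)$ has diagonal entries of order $d/k^2$ and off-diagonal entries of order $d/k^6$. Since $d=O(k\ln k)$, the sum is strictly negative definite on the tangent space to $\Birk$, so $\bar\rho$ is a strict local maximum. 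The entry bound $\rho_{ij}\leq 0.51$ prevents $\rho$ from approaching the boundary of $\Birk$, and a convexity argument (playing the concavity of $H$ off against the mild $E$-perturbation) promotes the local maximum to a global one on $K_\eta\cap\Dgs$ with $s=0$.

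For $s\geq 1$, any $\rho\in\Dgs$ has $s$ entries within $\kappa$ of $1$; up to permuting rows and columns these sit in an $s\times s$ near-identity block, and double stochasticity together with $s\leq k-1$ forces the complementary $(k-s)\times(k-s)$ submatrix to be approximately doubly stochastic after renormalization. Splitting $f(\rho)$ into contributions from the two blocks, one sees that the entropy term is reduced by roughly $s\ln k$ compared with $f(\bar\rho)$, whereas $E(\rho)$ grows by only $O(d/k)$. Reducing the remaining question to a $(k-s)$-dimensional variational problem on the residual block and plugging in the range of $d$ from~(\ref{eqSec_Prop_goodFirstMoment}) shows that the trade-off is strictly unfavourable for every $s\in\cbc{1,\ldots,k-1}$.

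The principal obstacle lies in the regime $s$ close to $k-1$, where $\rho$ is within $O(\kappa)$ of a full permutation matrix except on a single residual block. There the entropy loss from concentration and the energy gain from the near-permutation structure are delicately balanced, and the gap $f(\bar\rho)-f(\rho)$ is only of order $1/k$, so the variational calculation must be carried out to leading order in $1/k$. This is precisely where the constant $2\ln 2$ in~(\ref{eqSec_Prop_goodFirstMoment}) becomes asymptotically tight. The exclusion $s\leq k-1$ (built into the cluster-size definition of ``good'') is essential: for $s=k$ the two colourings coincide up to relabelling and $f$ attains the same value as at $\bar\rho$, so without this exclusion the proposition would fail. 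The detailed fine-grained computation is carried out in~\cite{ACOVilenchik}.
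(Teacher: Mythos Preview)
The paper does not prove \Prop~\ref{Prop_opt}; it is quoted verbatim from~\cite{ACOVilenchik} and used as a black box (note the citation in the proposition header). So there is no ``paper's own proof'' to compare against, and your final sentence deferring the computation to~\cite{ACOVilenchik} is in fact exactly what the paper does.

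That said, your outline of how such a proof would go has one genuine soft spot. For the $s=0$ regime you propose to establish that $\bar\rho$ is a strict local maximum via the Hessian and then ``promote the local maximum to a global one'' by ``a convexity argument (playing the concavity of $H$ off against the mild $E$-perturbation).'' This is the step that does \emph{not} go through easily: $f$ is not concave on $\Birk$ (nor on $\cD_{0,\mathrm{good}}$), and no soft convexity/perturbation argument is known to close this gap. In~\cite{AchNaor} the $s=0$ problem is handled by a non-trivial relaxation---optimising $f$ row by row over singly-stochastic matrices and solving the resulting one-dimensional problems explicitly---and~\cite{ACOVilenchik} builds on that machinery. Your Hessian computation gives only a neighbourhood of $\bar\rho$, and the entry bound $\rho_{ij}\leq 0.51$ does not by itself prevent $\rho$ from reaching the boundary of $\Birk$ (entries can still vanish), so the passage from local to global needs a genuinely different idea.

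Your treatment of $s\geq 1$ is closer to the mark: the near-permutation block decomposition, the entropy-versus-energy bookkeeping, and the identification of $s$ near $k-1$ as the delicate case where the $2\ln 2$ constant becomes tight are all correct in spirit and match the structure of the argument in~\cite{ACOVilenchik}. The observation that excluding $s=k$ is essential (handled in the present paper via the cluster-size bound in Definition~\ref{XDef_good} rather than via \Prop~\ref{Prop_opt}) is also right.
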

Based on this estimate, we will prove the following bound in \Sec~\ref{Sec_Prop_opt_0}.

\begin{proposition}\label{Prop_opt_0}
We have
	$$\sum_{\rho\in\cR_{0,\mathrm{good}}\setminus\bar\cR}\Erw\brk{\Zrb}=o(\Erw\brk{\Zkb}^2).$$
\end{proposition}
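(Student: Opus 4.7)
The plan is to bound $\Erw\brk{\Zrb}$ via Proposition~\ref{Prop_f} and to show that $f$ attains a \emph{strict} local maximum at $\bar\rho$, so that $f(\rho)-f(\bar\rho)$ decays at least quadratically as $\rho$ moves away from $\bar\rho$. Proposition~\ref{Prop_KPGWfirstMoment} gives $\Erw\brk{\Zkb}^2=\Theta(n^{-(k-1)})\exp[nf(\bar\rho)]$, and $|\cR|$ is polynomial in $n$ because every entry of an overlap matrix lies in $(k/n)\mathbf{Z}\cap[0,1]$. Hence it suffices to prove that $\Erw\brk{\Zrb}\leq n^{O(1)}\exp[nf(\bar\rho)]\cdot\exp[-\omega(\ln n)]$ uniformly in $\rho\in\cR_{0,\mathrm{good}}\setminus\bar\cR$.

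Fix a small constant $\eta>0$ and decompose $\cR_{0,\mathrm{good}}\setminus\bar\cR$ into a far part $\cR_1=\cbc{\rho:\norm{\rho-\bar\rho}_\infty\geq\eta}$ and a near part $\cR_2=\cbc{\rho:n^{-1/2}\ln^{2/3}n<\norm{\rho-\bar\rho}_\infty<\eta}$. For $\cR_1$ I would invoke Proposition~\ref{Prop_opt}: every $\rho\in\cR_1$ is separable and $0$-stable, hence lies in $\Dg$; the set $\cbc{\rho\in\Dg:\norm{\rho-\bar\rho}_\infty\geq\eta}$ is compact and $f$ is continuous, so Proposition~\ref{Prop_opt} yields $\delta=\delta(\eta)>0$ with $f(\rho)\leq f(\bar\rho)-\delta$ throughout. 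Proposition~\ref{Prop_f} and the polynomial bound on $|\cR|$ then make the contribution from $\cR_1$ at most $n^{O(1)}\exp[n(f(\bar\rho)-\delta)]=o(\Erw\brk{\Zkb}^2)$.

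The near region $\cR_2$ calls for a second-order Taylor expansion of $f$ around $\bar\rho$. By the $S_k\times S_k$ permutation symmetry of $f$, both $\nabla H(\bar\rho/k)$ and $\nabla E(\bar\rho)$ are scalar multiples of the all-ones $k\times k$ matrix and hence vanish on the tangent space $T_{\bar\rho}\Birk=\cbc{M:M\mathbf{1}=M^{T}\mathbf{1}=0}$, so $\bar\rho$ is a critical point of $f|_{\Birk}$. A direct differentiation (using Lemma~\ref{Lemma_chainrule} together with formulas \eqref{eqKLdiff1}--\eqref{eqKLdiff2} applied to $H$) gives $\nabla^2 H(\bar\rho/k)=-I$ in the standard coordinates on $k\times k$ matrices, while $\nabla^2 E(\bar\rho)$ has operator norm of order $d/k^2=O(\ln k/k)$. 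Thus for $k$ large enough the Hessian $\nabla^2 f(\bar\rho)|_{T_{\bar\rho}\Birk}$ is negative definite with smallest absolute eigenvalue at least some $c>0$ independent of $n$. Shrinking $\eta$ if necessary to absorb the cubic Taylor remainder, this produces $f(\rho)-f(\bar\rho)\leq -c\norm{\rho-\bar\rho}_2^2$ throughout $\cR_2$. Since $\norm{\rho-\bar\rho}_2\geq\norm{\rho-\bar\rho}_\infty\geq n^{-1/2}\ln^{2/3}n$ on $\cR_2$, we get $n(f(\bar\rho)-f(\rho))\geq c\ln^{4/3}n=\omega(\ln n)$. Combining with Proposition~\ref{Prop_f} and $|\cR_2|\leq n^{O(1)}$ shows that the contribution from $\cR_2$ is $o(\Erw\brk{\Zkb}^2)$ as well, completing the argument.

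The main obstacle is the Hessian analysis at $\bar\rho$: although $H(\cdot/k)$ is strictly concave, $E$ is not concave, and one must verify that for $d\leq (2k-1)\ln k$ the negative curvature from the entropy dominates the positive curvature contributed by the energy uniformly on $T_{\bar\rho}\Birk$. The crucial numerical input is $\nabla^2 E(\bar\rho)=O(d/k^2)$, which is exactly where the assumption $d=O(k\ln k)$ is used. Essentially the same Hessian computation has already been carried out in \cite{AchNaor,KPGW}, so it should be possible to cite rather than redo from scratch.
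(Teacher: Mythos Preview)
Your proposal is correct and follows essentially the same route as the paper: the paper also splits according to whether $\norm{\rho-\bar\rho}_\infty$ is below or above an $n$-independent threshold $\eta$, invokes Proposition~\ref{Prop_opt} on the far part, and handles the near part via a second-order Taylor bound (packaged as a separate Lemma~\ref{Lemma_Hessian}, which gives exactly your inequality $f(\rho)\le f(\bar\rho)-\frac14\norm{\rho-\bar\rho}_2^2$). The only cosmetic difference is that the paper linearly eliminates a single coordinate (the $(k,k)$ entry) rather than restricting to $T_{\bar\rho}\Birk$; since negative-definiteness on the larger affine slice implies it on the smaller one, both variants yield the same conclusion.
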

%

\begin{corollary}\label{Cor_opt_0}
The random variable $\Zkg$ has the properties i.--iii.\ in \Thm~\ref{Thm_smallSubgraphConditioning}.
Furthermore, we have
	\begin{equation}\label{eqCor_opt_X}
	\sum_{\rho\in\cR\setminus\bar\cR}\Erw\brk{\Zrg}=o(\Erw\brk{\Zkg}^2).
	\end{equation}
\end{corollary}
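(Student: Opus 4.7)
The plan is to verify condition iii.\ of \Thm~\ref{Thm_smallSubgraphConditioning} (conditions i.\ and ii.\ being already secured by \Cor~\ref{Cor_goodFirstMoment}) and then to read off~(\ref{eqCor_opt_X}) as a byproduct. Starting from the expansion~(\ref{eqsecond1}) and using that $\Zrg\le\Zrb$ and that $\Zrg=0$ whenever $\rho$ fails to be separable (since both members of a good pair are separable by \Def~\ref{Def_separable}), I would split the sum over $\rho$ into four ranges: (a)~$\rho\in\bar\cR$; (b)~the remaining $0$-stable separable overlaps $\cR_{0,\mathrm{good}}\setminus\bar\cR$; (c)~the $s$-stable separable overlaps for $1\le s\le k-1$; and (d)~the $k$-stable separable overlaps, which lie outside $\Rg$.

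Bucket (a) is handled directly by \Prop~\ref{Prop_centre}, which supplies the factor $(1+o(1))\exp[\sum_{j}\lambda_{j}\delta_{j}^{2}]\,\Erw[\Zkb]^{2}$ required in condition iii.; \Prop~\ref{Prop_goodFirstMoment} then lets one swap $\Erw[\Zkb]$ for $\Erw[\Zkg]$ with no asymptotic loss. Bucket (b) is precisely what \Prop~\ref{Prop_opt_0} is engineered to absorb, contributing $o(\Erw[\Zkb]^{2})=o(\Erw[\Zkg]^{2})$. For bucket (c), every such $\rho$ has an entry $\ge 1-\kappa$ and hence satisfies $\|\rho-\bar\rho\|_{\infty}\ge 1-\kappa-1/k$; \Prop~\ref{Prop_opt} then yields a uniform gap $f(\rho)\le f(\bar\rho)-\delta$ with $\delta>0$, which combined with $\Erw[\Zrb]\le n^{O(1)}\exp[nf(\rho)]$ from \Prop~\ref{Prop_f} and the fact that $\exp[nf(\bar\rho)]$ and $\Erw[\Zkb]^{2}$ agree up to a polynomial factor (direct computation from \Prop~\ref{Prop_KPGWfirstMoment}) bounds each summand by $\exp(-\Omega(n))\Erw[\Zkb]^{2}$; summing over the $n^{O(1)}$ admissible $\rho$'s keeps the total exponentially small.

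Bucket (d) is the piece not covered by \Prop~\ref{Prop_opt} and is where the good-coloring construction is indispensable. A $k$-stable separable $\rho$ has its $k$ dominant entries aligned along some permutation $\pi\in S_{k}$ (by the doubly-stochastic and separability constraints), so that $\rho_{ii}(\sigma,\pi^{-1}\!\circ\!\tau)=\rho_{i,\pi(i)}(\sigma,\tau)\ge 1-\kappa>0.51$ for every $i$, whence $\pi^{-1}\!\circ\!\tau\in\cC(\sigma)$. Summing over the $k!$ permutations and over all good $\sigma$, the cluster-size bound $|\cC(\sigma)|\le\Erw[\Zkb]/n$ hard-wired into the definition of ``good'' yields
$$\sum_{\rho\ k\text{-stable}}\Zrg\ \le\ k!\cdot\Zkg\cdot\Erw[\Zkb]/n,$$
so taking expectations and applying \Prop~\ref{Prop_goodFirstMoment} bounds this total by $O(\Erw[\Zkg]^{2}/n)=o(\Erw[\Zkg]^{2})$. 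Adding the four buckets yields $\Erw[\Zkg^{2}]\le(1+o(1))\Erw[\Zkg]^{2}\exp[\sum_{j}\lambda_{j}\delta_{j}^{2}]$, i.e.\ condition iii., and dropping bucket (a) from the same decomposition proves~(\ref{eqCor_opt_X}). The main obstacle sits inside bucket (b) and is exactly the content of \Prop~\ref{Prop_opt_0}; buckets (c) and (d) reduce to routine book-keeping once the split is in place.
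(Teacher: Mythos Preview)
Your proposal is correct and follows essentially the same route as the paper's own proof: the same four-way split of $\cR$ into $\bar\cR$, $\cR_{0,\mathrm{good}}\setminus\bar\cR$, the $s$-stable separable overlaps with $1\le s\le k-1$, and the $k$-stable overlaps, with each bucket dispatched by \Prop~\ref{Prop_centre}, \Prop~\ref{Prop_opt_0}, \Prop s~\ref{Prop_f}/\ref{Prop_opt}, and the cluster-size cap in \Def~\ref{XDef_good}, respectively. The only cosmetic difference is that the paper invokes the weaker bound $\rho_{ij}\ge 0.51$ (rather than your $\rho_{ij}\ge 1-\kappa$) to trigger \Prop~\ref{Prop_opt} in bucket (c), but this is immaterial.
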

\begin{proof}
\Cor~\ref{Cor_goodFirstMoment} already 
establishes conditions i.--ii.
Recall that condition iii.\ reads
	\begin{equation}\label{Cor_opt_01}
	\Erw\brk{\Zkg^2}\leq(1+o(1))\Erw\brk{\Zkg}^2\cdot\exp\brk{\sum_{j=1}^\infty\lambda_j\delta_j^2}.
	\end{equation}
\Prop s~\ref{Prop_goodFirstMoment} 
readily yields 
	\begin{eqnarray}
	\sum_{\rho\in\bar\cR}\Erw\brk{\Zrg}&\leq&
		\sum_{\rho\in\bar\cR}\Erw\brk{\Zrb}
			\leq(1+o(1))\Erw\brk{\Zkb}^2\cdot\exp\brk{\sum_{j=1}^\infty\lambda_j\delta_j^2}.
	\label{Cor_opt_02}
	\end{eqnarray}
Additionally, we need to bound the contribution of $\rho\in\cR\setminus\bar\cR$.


We start with $\rho\in\Rg\setminus\cR_{0,\mathrm{good}}$.
Any such $\rho$ has an entry $\rho_{ij}\geq0.51$, whence $\norm{\rho-\bar\rho}_\infty\geq\frac12$.
Therefore, \Prop~\ref{Prop_opt} implies that there is an $n$-independent number $\delta>0$ such that
$f(\rho)<f(\bar\rho)-\delta$.
(This $\delta$ exists because \Prop~\ref{Prop_opt} is {\em not} an asymptotic statement but just a result concerning
the maximum of the $n$-independent function $f$ over the equally $n$-independent compact set $\Dg$.)
Consequently, by \Prop~\ref{Prop_f}
	\begin{eqnarray}
	\Erw\brk{\Zrg}&\leq&\exp\brk{f(\bar\rho)n-\Omega(n)}.
			\label{Cor_opt_04}
	\end{eqnarray}
Moreover, a direct calculation yields
	\begin{eqnarray}
	f(\bar\rho)&=&2\ln k+d\ln(1-1/k)\sim\frac 2n\ln\Erw\brk{\Zkb}\qquad\mbox{[by \Prop~\ref{Prop_KPGWfirstMoment}]}.
			\label{Cor_opt_05}
	\end{eqnarray}
Combining~(\ref{Cor_opt_04}) and~(\ref{Cor_opt_05}), we obtain
	\begin{eqnarray*}
	\Erw\brk{\Zrg}&\leq&\Erw\brk{\Zkb}^2\cdot\exp\brk{-\Omega(n)}.
			\label{Cor_opt_06}
	\end{eqnarray*}
Because the {\em entire} set $\cR$ of overlap matrices has size $|\cR|\leq n^{k^2}$ (with room to spare), we thus obtain
	\begin{eqnarray}			\label{Cor_opt_07}
	\sum_{\rho\in\Rg\setminus\cR_{0,\mathrm{good}}}\Erw\brk{\Zrg}&\leq& n^{k^2}\Erw\brk{\Zkb}^2\cdot\exp\brk{-\Omega(n)}
		=o(\Erw\brk{\Zkb}^2).
	\end{eqnarray}

Further, if $\Zrg>0$ for some $\rho\not\in\Rg$, then $\rho$ must be $k$-stable (because $\Rg$ contains
	all separable overlap matrices that are $s$-stable for some $s<k$).
Thus, let $\cR_k$ be the set of all $k$-stable $\rho\in\cR$.
If $\sigma,\tau$ are balanced $k$-colorings such that $\rho(\sigma,\tau)$ is $k$-stable, then
there is a permutation $\lambda$ of $\brk k$ such that $\lambda\circ\tau\in\cC(\sigma)$.
Therefore, letting $\sigma$ range over good $k$-colorings of $\Gnd$, we obtain
from the upper bound on $|\cC(\sigma)|$ imposed in \Def~\ref{XDef_good}
	\begin{eqnarray}
	\Erw\brk{\sum_{\rho\in\cR_k}\Zrg}&\leq&\Erw\brk{\sum_{\sigma}k!|\cC(\sigma)|}\leq \frac{k!}{n}\cdot\Erw\brk{\Zkb}\Erw\brk{\Zkg}
		=o(\Erw\brk{\Zkb}^2).
			\label{Cor_opt_03}
	\end{eqnarray}

Finally, combining~(\ref{Cor_opt_02}), (\ref{Cor_opt_07}), (\ref{Cor_opt_03}) and \Prop~\ref{Prop_opt_0}, we see that
	\begin{eqnarray}				\label{Cor_opt_0999}
	\Erw\brk{\Zkg^2}&\leq&(1+o(1))\Erw\brk{\Zkb}^2\cdot\exp\brk{\sum_{j=1}^\infty\lambda_j\delta_j^2}+o(\Erw\brk{\Zkb}^2)
	\end{eqnarray}
Furthermore, as $\Erw[\Zkb]\sim\Erw[\Zkg]$ by \Prop~\ref{Prop_goodFirstMoment}, (\ref{Cor_opt_0999}) yields
	\begin{eqnarray}				\label{Cor_opt_099}
	\Erw\brk{\Zkg^2}&\leq&(1+o(1))\Erw\brk{\Zkg}^2\cdot\exp\brk{\sum_{j=1}^\infty\lambda_j\delta_j^2}+o(\Erw\brk{\Zkg}^2).
	\end{eqnarray}
Recalling the values of $\lambda_j$, $\delta_j$ from~(\ref{eqsmallSubgraphConditioning}), we see that the sum $\sum_{j=1}^\infty\lambda_j\delta_j^2$ converges.
Therefore, (\ref{Cor_opt_099}) implies~(\ref{Cor_opt_01}).
\qed\end{proof}

Together with \Thm~\ref{Thm_smallSubgraphConditioning}, \Cor~\ref{Cor_opt_0} implies that
$\Gnd$ is $k$-colorable \whp\ in the case that $k$ divides $n$.
In \Sec~\ref{Sec_Thm_main} we are going to provide a supplementary argument
that allows us to extend this result also to the case that the number of vertices is not divisible by $k$,
thereby completing the proof of the first part of \Thm~\ref{Thm_main}.
But before we come to that, let us prove \Prop s~\ref{Prop_f} and~\ref{Prop_opt_0}
	(under the assumption that $k$ divides $n$).

\subsection{Proof of \Prop~\ref{Prop_f}}\label{Sec_Lemma_f}

Let $\rho$ be a doubly-stochastic $k\times k$ matrix.
Moreover, let $\mu=(\mu_{ijst})_{i,j,s,t\in\brk k}$ have entries in $\brk{0,1}$.
We call $(\rho,\mu)$  a \bemph{compatible pair} if the following conditions are satisfied.
\begin{itemize}
\item $\frac nk\rho_{ij}$ is an integer for all $i,j\in\brk k$.
\item $dn\mu_{ijst}$ is an integer for all $i,j,s,t\in\brk k$.
\item We have
		\begin{eqnarray}\label{eqOverlapProp1}
	\mu_{ijst}&=&\mu_{stij},\quad
		\mu_{ijit}=0,\quad\mu_{ijsj}=0\qquad\forall i,j,s,t\in\brk k,\\
	\sum_{s,t=1}^k\mu_{ijst}&=&\rho_{ij}/k
		\qquad\forall i,j\in\brk k.\label{eqOverlapProp2}
	\end{eqnarray}
\end{itemize}
If $(\rho,\mu)$ is a compatible pair, then~(\ref{eqOverlapProp2}) ensures that $(\frac1k\rho,\mu)$ is $(d,n)$-admissible (cf.\ \Sec~\ref{Sec_partitions}),
if we view $\frac1k\rho$ as a probability distribution on $\brk k\times\brk k$ and $\mu$ as a probability distribution on $(\brk k\times\brk k)^2=\brk k^4$.

Let us also say that a pair $(\sigma,\tau)$  of $k$-colorings  of a multi-graph $\cG$ has \bemph{type $(\rho,\mu)$} if
$\rho(\sigma,\tau)=\rho$ and
	$$e_\cG(\sigma^{-1}(i)\cap\tau^{-1}(j),\sigma^{-1}(s)\cap\tau^{-1}(t))=\mu_{ijst}dn\quad\mbox{ for all }i,j,s,t\in\brk k$$
Let $Z_{\rho,\mu}$ be the number of pairs of $k$-colorings of $\Gnd$ of type $(\rho,\mu)$.
Recall that $H\bc\cdot$ denotes the entropy.
Applied to the notion of compatible pairs, \Cor~\ref{Lemma_skewed} directly yields

\begin{fact}\label{Lemma_compPair}
Let $(\rho,\mu)$ be a compatible pair.
Then
	$$\frac1n\ln\Erw\brk{Z_{\rho,\mu}}=H\bcfr{\rho}k-\frac d2\KL{\mu}{\frac\rho k\tensor\frac\rho k}+O(\ln n/n).$$
\end{fact}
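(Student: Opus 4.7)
The plan is to reduce the statement to a direct application of Corollary~\ref{Lemma_skewed} with $K=k^2$. To this end, I would identify a pair $(\sigma,\tau)$ of balanced $k$-colorings of $\Gnd$ with the partition $(V_{ij})_{(i,j)\in\brk k\times\brk k}$ of $V$ defined by $V_{ij}=\sigma^{-1}(i)\cap\tau^{-1}(j)$. Under this identification, the overlap condition $\rho(\sigma,\tau)=\rho$ translates into the class-size constraint $|V_{ij}|=(\rho_{ij}/k)\cdot n$, while the type condition on edge densities becomes exactly $e_{\Gnd}(V_{ij},V_{st})=\mu_{ijst}dn$ for all $(i,j),(s,t)\in\brk k\times\brk k$. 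Hence $Z_{\rho,\mu}$ coincides with the count $Z_{\mu}$ of Corollary~\ref{Lemma_skewed} once one takes as index set $\brk k\times\brk k$ of size $K=k^2$, as class-size vector the probability distribution $\rho/k=(\rho_{ij}/k)_{(i,j)\in\brk k\times\brk k}$, and as edge-density tensor the distribution $\mu$ on $(\brk k\times\brk k)^2$.

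Next I would verify that $(\rho/k,\mu)$ is $(d,n)$-admissible in the sense of Section~\ref{Sec_partitions}. The integrality of $(\rho_{ij}/k)n$ and of $\mu_{ijst}dn$ is built into the definition of a compatible pair, and the symmetry $\mu_{ijst}=\mu_{stij}$ is part of~(\ref{eqOverlapProp1}). The marginal identity $\sum_{s,t}\mu_{ijst}=\rho_{ij}/k$ is exactly~(\ref{eqOverlapProp2}); combined with the symmetry just mentioned, it also yields the marginal in the second coordinate. The remaining conditions $\mu_{ijit}=\mu_{ijsj}=0$ in~(\ref{eqOverlapProp1}) are not needed for admissibility; rather, they encode that the partition corresponds to a pair of proper $k$-colorings, by forcing edge density zero between any two classes sharing a common $\sigma$-color or a common $\tau$-color.

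Applying Corollary~\ref{Lemma_skewed} with these parameters then yields
$$\frac1n\ln\Erw\brk{Z_{\rho,\mu}}=H\bcfr{\rho}{k}-\frac{d}{2}\KL{\mu}{\bcfr{\rho}{k}\tensor\bcfr{\rho}{k}}+O(\ln n/n),$$
which is the claim. Since all the real work is done by Corollary~\ref{Lemma_skewed}, there is no substantial obstacle; the only point that requires a moment's care is the bookkeeping around the factor $1/k$ that appears when $\rho$ (a doubly-stochastic $k\times k$ matrix with entries summing to $k$) is rescaled into a probability distribution $\rho/k$ on the index set of cardinality $k^2$.
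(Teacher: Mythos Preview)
Your proposal is correct and follows exactly the paper's approach: the paper states that Fact~\ref{Lemma_compPair} is obtained by applying Corollary~\ref{Lemma_skewed} directly to compatible pairs, viewing $\rho/k$ as a probability distribution on $\brk k\times\brk k$ and $\mu$ as a distribution on $\brk k^4$ (this is spelled out in the paragraph immediately preceding the Fact). You have simply filled in the bookkeeping details of this application, including the verification of $(d,n)$-admissibility and the role of the vanishing conditions in~(\ref{eqOverlapProp1}).
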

To proceed, we need to rephrase the bound provided by Fact~\ref{Lemma_compPair} in terms of the function $f(\rho)$.

\begin{corollary}\label{Cor_compPair}
Let $(\rho,\mu)$ be a compatible pair.
Let $\cF=\{(i,j,s,t)\in\brk k^4:i=s\vee j=t\}$ and define
	\begin{equation}\label{eqCorKLmurho}
	\hat\rho=\bcfr{\rho_{ij}\rho_{st}\vecone_{(i,j,s,t)\not\in\cF}}{k^2-2k-\norm{\rho}_2^2}_{i,j,s,t\in\brk k}.
	\end{equation}
Then $\hat\rho$ is a probability distribution on $\brk k^4$ and
	\begin{eqnarray*}
	\frac1n\ln\Erw\brk{Z_{\rho,\mu}}&=&
		f(\rho)-\frac d{2k}\KL{\mu}{\hat\rho}+O(\ln n/n).
	\end{eqnarray*}
\end{corollary}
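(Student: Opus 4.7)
The plan is to derive Corollary \ref{Cor_compPair} directly from Fact \ref{Lemma_compPair} by rewriting $\KL{\mu}{\frac\rho k \tensor \frac\rho k}$ in terms of $\KL{\mu}{\hat\rho}$, mirroring the passage from Corollary \ref{Lemma_skewed} to Corollary \ref{Cor:Skewed}. The argument reduces to a brief algebraic manipulation once the right bookkeeping is in place.

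First I would verify that $\hat\rho$ is indeed a probability distribution on $\brk k^4$. By construction it is supported on $\brk k^4 \setminus \cF$, so the task is to compute $\sum_{(i,j,s,t) \notin \cF} \rho_{ij}\rho_{st}$. Using inclusion-exclusion and the doubly stochastic structure of $\rho$, the total $\sum_{ijst} \rho_{ij}\rho_{st}$ equals $k^2$; each of the slices $\{i=s\}$ and $\{j=t\}$ contributes $\sum_{i,j,t} \rho_{ij}\rho_{it} = \sum_i 1 = k$ by the row and column marginal identities; and their intersection contributes $\norm{\rho}_2^2$. This identifies the normaliser of $\hat\rho$ and confirms that it is a probability distribution.

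Second, I would unwind $\KL{\mu}{\frac\rho k \tensor \frac\rho k}$. The compatibility conditions~(\ref{eqOverlapProp1}) force $\mu_{ijst} = 0$ on $\cF$, so the sum defining the KL divergence is confined to $\brk k^4 \setminus \cF$. On that support the ratio between $\rho_{ij}\rho_{st}/k^2$ and $\hat\rho_{ijst}$ is the single constant $1 - 2/k + \norm{\rho}_2^2/k^2$. Splitting the logarithm additively and using $\sum \mu_{ijst} = 1$ then produces the identity
$$\KL{\mu}{\frac\rho k \tensor \frac\rho k} = \KL{\mu}{\hat\rho} - \ln\bc{1 - \frac{2}{k} + \frac{\norm{\rho}_2^2}{k^2}}.$$

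Substituting this identity into the formula of Fact \ref{Lemma_compPair} and recognising that $\frac d 2 \ln\bc{1 - 2/k + \norm{\rho}_2^2/k^2}$ is exactly $E(\rho)$, which combines with $H(\rho/k)$ to yield $f(\rho)$, delivers the desired formula. I do not foresee any real obstacle: the argument is essentially a four-dimensional analogue of the proof of Corollary \ref{Cor:Skewed}, and the one step where attention is warranted is the inclusion-exclusion computation of the normaliser, where the row and column marginals of $\rho$ (both equal to $1$) play the essential role.
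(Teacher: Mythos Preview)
Your proposal is correct and follows essentially the same route as the paper: an inclusion-exclusion computation using the doubly-stochastic marginals of $\rho$ to identify the normaliser of $\hat\rho$, followed by the additive splitting of $\KL{\mu}{\frac\rho k\tensor\frac\rho k}$ into $\KL{\mu}{\hat\rho}$ plus the logarithm of the normalising constant, and then substitution into Fact~\ref{Lemma_compPair}. Note that your computation (and the paper's own proof) actually yields the coefficient $\frac{d}{2}$ rather than the $\frac{d}{2k}$ printed in the statement, and the normaliser should read $k^2-2k+\norm{\rho}_2^2$; these are typos in the statement, not flaws in your argument.
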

\begin{proof}
Because $\rho$ is doubly-stochastic, we have
	\begin{eqnarray*}
	\sum_{(i,j,s,t)\not\in\cF}\rho_{ij}\rho_{st}&=&\sum_{i,j,s,t\in\brk k}\rho_{ij}\rho_{st}-\sum_{(i,j,s,t)\in\cF}\rho_{ij}\rho_{st}\\
		&=&k^2-\sum_{i,j,t\in\brk k}\rho_{ij}\rho_{it}-\sum_{i,j,s\in\brk k}\rho_{ij}\rho_{sj}+\sum_{i,j=1}^k\rho_{ij}^2=k^2-2k+\norm\rho_2^2.
	\end{eqnarray*}
Thus, $\hat\rho$ is a probability distribution.
Moreover, 
	\begin{eqnarray*}\nonumber
	\KL{\mu}{\frac\rho k\tensor\frac\rho k}+\ln(1-2/k+k^{-2}\norm\rho_2^2)
		\nonumber\\
		&\hspace{-10cm}=&\hspace{-5cm}\sum_{i,j,s,t\in\brk k}\mu_{ijst}\brk{\ln\bcfr{k^2\mu_{ijst}}{\rho_{ij}\rho_{st}}+\ln(1-2/k+k^{-2}\norm\rho_2^2)}
			\quad\mbox{[as $\sum_{i,j,s,t\in\brk k}\mu_{ijst}=1$]}\nonumber\\
		&\hspace{-10cm}=&\hspace{-5cm}\sum_{(i,j,s,t)\not\in\cF}\mu_{ijst}\ln\bc{\mu_{ijst}\cdot\frac{k^2-2k+\norm\rho_2^2}{\rho_{ij}\rho_{st}}}
			\qquad\qquad\qquad\qquad\mbox{[due to (\ref{eqOverlapProp1})]}\nonumber\\
		&\hspace{-10cm}=&\hspace{-5cm}\KL{\hat\mu}{\hat\rho}.\label{eqCor_KL1}
	\end{eqnarray*}
The assertion thus follows from Fact~\ref{Lemma_compPair}.
\qed\end{proof}

\noindent{\em Proof of \Prop~\ref{Prop_f}.}
Let $\rho\in\cR$ and let $\cM(\rho)$ be the set of all probability distributions $\mu$ on $\brk k^4$ such that $(\rho,\mu)$ is a compatible pair.
Then
	$$\Zrb=\sum_{\mu\in\cM(\rho)}Z_{\rho,\mu}.$$
Furthermore, $|M(\rho)|\leq (dn)^{k^4}$ because of the requirement that $\mu_{ijst}dn$ be integral for all $i,j,s,t\in\brk k$.
Hence, 
	\begin{equation}\label{eqProp_fproof1}
	\frac1n\ln\Erw\brk{\Zrb}\leq \frac1n\ln|M(\rho)|+\frac1n\max_{\mu\in\cM(\rho)}\ln\Erw\brk{Z_{\rho,\mu}}
		=O(\ln n/n)+\frac1n\max_{\mu\in\cM(\rho)}\ln\Erw\brk{Z_{\rho,\mu}}.
	\end{equation}
Since $\KL{\mu}{\hat\rho}\geq0$ for any $\mu$, \Cor~\ref{Cor_compPair} yields
	\begin{equation}\label{eqProp_fproof2}
	\frac1n\max_{\mu\in\cM(\rho)}\ln\Erw\brk{Z_{\rho,\mu}}\leq f(\rho)+O(\ln n/n).
	\end{equation}
The assertion is immediate from~(\ref{eqProp_fproof1}) and~(\ref{eqProp_fproof2}).
\qed

\subsection{Proof of \Prop~\ref{Prop_opt_0}}\label{Sec_Prop_opt_0}

We begin by estimating $f(\rho)$ for $\rho$ close to $\bar\rho$.
The proof of the following lemma is based on considering the first two differentials of $f$ at the point $\bar\rho$;
	a very similar calculation appears in~\cite{ACOVilenchik}.

\begin{lemma}\label{Lemma_Hessian}
There is a number $\eta>0$  (independent of $n$) such that for all
	$$\rho\in\tilde\cR_0=\cbc{\rho\in\cR_0:\norm{\rho-\bar\rho}_\infty<\eta}$$
 we have
 	$f(\rho)\leq f(\bar\rho)-\frac14\norm{\rho-\bar\rho}_2^2.$
\end{lemma}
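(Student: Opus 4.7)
The plan is a second-order Taylor expansion of $f$ around the flat matrix $\bar\rho$, restricted to the tangent space of the Birkhoff polytope $\Birk$. Since $\bar\rho$ is the ``symmetric'' point of $\Birk$, the gradient of $f$ vanishes on this tangent space, and the dominant term will come from the Hessian, which I expect to be strictly negative with a leading coefficient close to $-1$.

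First I would verify that $\bar\rho$ is a critical point of $f|_{\Birk}$. A direct computation shows that both $\partial H/\partial\rho_{ij}|_{\bar\rho}$ and $\partial E/\partial\rho_{ij}|_{\bar\rho}$ are constants independent of $(i,j)$. Any tangent vector $\xi=\rho-\bar\rho$ to $\Birk$ at $\bar\rho$ satisfies $\sum_j\xi_{ij}=\sum_i\xi_{ij}=0$, and in particular $\sum_{i,j}\xi_{ij}=0$, so $\scal{\nabla f(\bar\rho)}{\xi}=0$.

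Next I would compute the Hessian of $f$ at $\bar\rho$. The entropy term contributes a diagonal Hessian equal (up to the overall scaling implicit in the definition) to $-I$, since $\partial^2 H/\partial\rho_{ij}^2|_{\bar\rho}=-1/(k\rho_{ij})|_{\bar\rho}=-1$ and all off-diagonal entries vanish. The energy term $E$ contributes $\frac{d}{k^2(1-1/k)^2}I$ plus a rank-one correction proportional to the all-ones matrix; the latter annihilates the tangent space because $\sum_{i,j}\xi_{ij}=0$. Combining yields
\[
f''_{\bar\rho}(\xi,\xi)=\Bigl(-1+\frac{d}{(k-1)^2}\Bigr)\norm\xi_2^2.
\]
Under hypothesis~(\ref{eqSec_Prop_goodFirstMoment}) we have $d\leq(2k-1)\ln k$, so $d/(k-1)^2=O((\ln k)/k)=o_k(1)$, and therefore for $k\geq k_0$ large enough,
$f''_{\bar\rho}(\xi,\xi)\leq -\tfrac34\norm\xi_2^2$.

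Finally, Taylor's theorem with remainder gives, for $\rho\in\Birk$ in a small neighborhood of $\bar\rho$,
\[
f(\rho)-f(\bar\rho)\leq -\tfrac{3}{8}\norm{\rho-\bar\rho}_2^2+R_3(\rho-\bar\rho),
\]
where $R_3$ is controlled by the third derivatives of $f$, which are uniformly bounded on a neighborhood of $\bar\rho$ on which all $\rho_{ij}\geq 1/(2k)$. This yields $|R_3(\xi)|\leq M\norm{\xi}_\infty\norm\xi_2^2$ for some $M=M(k)$, and choosing $\eta$ small enough so that $M\eta\leq\tfrac18$ delivers the desired bound $f(\rho)\leq f(\bar\rho)-\tfrac14\norm{\rho-\bar\rho}_2^2$. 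The only delicate point is organizing the Hessian calculation so that the rank-one piece from $E$ is visibly killed by the constraint $\sum\xi_{ij}=0$; once this is in place, absorbing the cubic remainder is routine, since $\eta$ is allowed to depend on $k$.
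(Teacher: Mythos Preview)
Your proof is correct and follows essentially the same approach as the paper: a second-order Taylor analysis of $f$ at $\bar\rho$, using that the gradient is constant (hence vanishes on directions with $\sum_{i,j}\xi_{ij}=0$) and that the Hessian quadratic form equals $(-1+d/(k-1)^2)\norm\xi_2^2$ after the rank-one piece from $E$ is annihilated by that same constraint. The paper packages this via an explicit chart $\cL$ eliminating $\rho_{kk}$ under the single constraint $\sum_{i,j}\rho_{ij}=k$ and then invokes continuity of the Hessian on a neighborhood (so eigenvalues stay below $-1/2$ throughout) rather than bounding a cubic remainder as you do; both ways of closing the argument are routine and equivalent.
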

\begin{proof}
By construction, we have $\sum_{i,j=1}^k\rho_{ij}=k$ for all $\rho\in\cR$.
Therefore, we can parametrize the set $\cR$ as follows.
Let
	\begin{eqnarray*}
	\cL&:&\brk{0,1}^{k^2-1}\ra\brk{0,1}^{k^2},\quad
	\hat\rho=(\hat\rho_{ij})_{(i,j)\in\brk k^2\setminus\cbc{(k,k)}}\mapsto\cL(\hat\rho)=(\cL_{ij}(\hat\rho))_{i,j\in\brk k}
	\end{eqnarray*}
where
\begin{eqnarray*}
	\cL_{ij}(\hat\rho)&=&\left\{\begin{array}{cl}\hat\rho_{ij}&\mbox{ if }(i,j)\neq(k,k)\\
		k-\sum_{(s,t)\neq(k,k)}\hat\rho_{st}&\mbox{ if }i=j=k.
		\end{array}\right.
	\end{eqnarray*}
Let $\hat\cR_0=\cL^{-1}(\tilde\cR_0)$.
Then $\cL$ induces a bijection $\hat\cR_0\ra\tilde\cR_0$.

It is straightforward to compute the first two differentials 
of $f\circ\cL=H\circ \cL+E\circ\cL$. 
The result is that the first differential $D(f\circ\cL)$ equals zero at $\bar\rho$.
Furthermore,
for $\hat\rho\in\hat\cR_0$ the second differential is given by
	\begin{eqnarray*}
	\frac{\partial^2f\circ\cL}{\partial\hat\rho_{ij}^2}(\hat\rho)&=&-\frac1k\brk{\frac{1}{\cL_{ij}(\hat\rho)}+\frac{1}{\cL_{kk}(\hat\rho)}}
			+O_k(\ln k/k)\ \quad(i,j\in\brk{k-1})\\
	\frac{\partial^2f\circ\cL}{\partial\hat\rho_{ij}\partial\hat\rho_{ab}}(\hat\rho)&=&-\frac1{k\cL_{kk}(\hat\rho)}
		+\tilde O_k(\ln k/k)\qquad\qquad\qquad\quad(a,b,i,j\in\brk{k-1},\,(a,b)\neq(i,j)).
	\end{eqnarray*}
Thus, 
for $\eta>0$ sufficiently small
the Hessian $D^2(f\circ\cL)$ is negative-definite with all eigenvalues smaller than $-1/2$.
Hence, the assertion follows from Taylor's theorem.
\qed\end{proof}


\medskip\noindent\emph{Proof of \Prop~\ref{Prop_opt_0}.}
Assume that $\rho\in\cR_{0,\mathrm{good}}\setminus\bar\cR$.
We claim that
	\begin{equation}\label{eqProp_opt_0_1}
	f(\rho)\leq f(\bar\rho)-\Omega(n^{-1}\ln^{4/3}n).
	\end{equation}
To see this, let $\eta>0$ be the ($n$-independent) number promised by \Lem~\ref{Lemma_Hessian}.
We consider two cases.
\begin{description}
\item[Case 1: $\norm{\bar\rho-\rho}_\infty<\eta$.]
	By the definition~(\ref{eqbarcR}) of $\bar\cR$ and as $\rho\not\in\bar\cR$, we have $\norm{\rho-\bar\rho}_\infty\geq n^{-\frac12}\ln^{\frac23}n$.
	Moreover, because $\norm{\bar\rho-\rho}_\infty<\eta$, \Lem~\ref{Lemma_Hessian} applies and yields
		$$f(\rho)-f(\bar\rho)\leq -\frac14\norm{\bar\rho-\rho}_2^2\leq -\frac14\norm{\bar\rho-\rho}_\infty^2\leq n^{-1}\ln^{4/3}n,$$
	as desired.
\item[Case 1: $\norm{\bar\rho-\rho}_\infty\geq\eta$.]
	Since $\eta>0$ remains fixed as $n\ra\infty$, \Prop~\ref{Prop_opt} yields an $n$-independent number $\xi=\xi(\eta)>0$ such that
		$f(\rho)\leq f(\bar\rho)-\xi$.
		Hence, (\ref{eqProp_opt_0_1}) is satisfied with room to spare.
\end{description}
Finally, plugging~(\ref{eqProp_opt_0_1}) into \Prop~\ref{Prop_f}, we obtain
	\begin{eqnarray*}
	\sum_{\rho\in\cR_{0,\mathrm{good}}\setminus\bar\cR}\Erw\brk{\Zrb}&\leq&
		\abs{\cR_{0,\mathrm{good}}}\cdot\max_{\rho\in\cR_{0,\mathrm{good}}\setminus\bar\cR}\Erw\brk{\Zrb}\\
		&\leq&n^{O(1)}\cdot \max_{\rho\in\cR_{0,\mathrm{good}}}\exp(f(\rho)n)\qquad\mbox{[as $\cR_{0,\mathrm{good}}\leq\abs\cR\leq n^{k^2}$]}\\
		&\leq&\exp(f(\bar\rho)n-\Omega(\ln^{4/3}))=o(\Erw\brk{\Zkb}^2),
	\end{eqnarray*}
as claimed.
\qed

\subsection{Proof of \Thm~\ref{Thm_main} (part 1)}\label{Sec_Thm_main}

\Cor~\ref{Cor_opt_0} shows that $\Zkg(\Gnd)$ satisfies the assumptions of \Thm~\ref{Thm_smallSubgraphConditioning},
which therefore implies that $\Gnd$ is $k$-colorable \whp\ for $n$ divisible by $k$.
To also deal with the case that the number of vertices is not divisible by $k$, we need a few definitions.
Recall from \Sec~\ref{Sec_good} 
that a balanced $k$-coloring $\sigma$ of $\Gnd$ skewed if
	$$\max_{1\leq i<j\leq k}\abs{e_{\Gnd}(\sigma^{-1}(i),\sigma^{-1}(j))-\frac{dn}{k(k-1)}}>\sqrt n\ln n.$$
In addition, a \bemph{skewed pair} is a pair $(\sigma,\tau)$ of good $k$-colorings 
such that
either 
	\begin{eqnarray*}
	\norm{\rho(\sigma,\tau)-\bar\rho}_\infty&>&\sqrt n\ln^{2/3} n\qquad\mbox{ or}\\
	\max_{i,j,s,t\in\brk k:i\neq s,j\neq t}\abs{e_{\Gnd}(\sigma^{-1}(i)\cap\tau^{-1}(j),\sigma^{-1}(s)\cap\tau^{-1}(t))-\frac{dn}{k^2(k-1)^2}}&>&\sqrt n\ln n.
	\end{eqnarray*}
The following lemma paraphrases the argument from~\cite[\Sec~4]{KPGW}.

\begin{lemma}\label{Lemma_kndivn}
Assume that for $n$ divisible by $k$ the following is true.
\begin{enumerate}
\item The random variable $\Zkg$ satisfies the conditions i.---iii.\ of \Thm~\ref{Thm_smallSubgraphConditioning}.
\item The expected number of skewed $k$-colorings is $o(\Erw\brk{\Zkg})$.
\item The expected number of skewed pairs is $o(\Erw\brk{\Zkg}^2)$.
\end{enumerate}
Then $\cG(n+z,d)$ is $k$-colorable \whp\ for any $0\leq z<k$ such that $d(n+z)$ is even.
\end{lemma}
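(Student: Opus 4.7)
The plan is to rerun the second-moment / small-subgraph-conditioning argument of Theorem~\ref{Thm_smallSubgraphConditioning} directly on $\cG(n+z,d)$, using the natural analog of $\Zkg$ adapted to the fact that no perfectly balanced coloring exists. Concretely, let $\tilde Z=\tilde Z(\cG(n+z,d))$ be the number of $k$-colorings $\sigma$ of $\cG(n+z,d)$ such that (a) each color class has size $\lfloor(n+z)/k\rfloor$ or $\lceil(n+z)/k\rceil$ (``nearly balanced''), and (b) the obvious analog of Definition~\ref{XDef_good} holds, with $\cC(\sigma)$ taken to consist of nearly balanced colorings $\tau$ with $\rho_{ii}(\sigma,\tau)>0.51$ for all $i$. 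Verifying the three hypotheses of Theorem~\ref{Thm_smallSubgraphConditioning} for $\tilde Z$ would give $\tilde Z>0$ \whp, hence $\chi(\cG(n+z,d))\leq k$ \whp.

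For condition~i, I would relate $\Erw[\tilde Z]$ on $\cG(n+z,d)$ to $\Erw[\Zkg]$ on $\cG(n,d)$. Any nearly balanced partition of the $n+z$ vertices arises from a balanced partition of $n$ vertices by inserting $z$ extra vertices into $z$ of the color classes, which contributes at most a factor $\binom{n+z}{z}k^z=n^{O(1)}$ to the first moment. Combined with the $n^{O(1)}$ loss incurred when using Lemma~\ref{Lemma_partition} for edge densities $\mu_{ij}$ that are only $O(1/n)$-perturbations of the balanced ones, this shows $\Erw[\tilde Z]=\exp(\Omega(n))$ from assumption~1. For condition~ii I would imitate Corollary~\ref{Cor_goodFirstMoment}: after Proposition~\ref{Prop_KPGWfirstMoment}'s Poisson moment identities are recast for nearly balanced $k$-colorings on $n+z$ vertices (again valid up to $n^{O(1)}$ factors, since $z$ is bounded), a symmetry argument followed by Cauchy--Schwarz converts the corresponding factorial moment identity for the ``nearly balanced'' analog of $\Zkb$ into one for $\tilde Z$. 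Assumption~2 of the lemma is what kills the $o(\Erw[\tilde Z])$ error arising when replacing the full count by its ``good'' subset.

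For condition~iii I would expand $\Erw[\tilde Z^2]$ according to overlaps, exactly as in~(\ref{eqsecond1}). Overlaps $\rho$ that fall in the analog of $\bar\cR$ are handled by Proposition~\ref{Prop_centre}, which is stable under $|n'-n|<k$: the functional bound $\exp(\sum_j\lambda_j\delta_j^2)$ depends on $d,k$ only. Overlaps in $\Rg\setminus\cR_{0,\mathrm{good}}$ are controlled by Propositions~\ref{Prop_f} and~\ref{Prop_opt}, both of which are genuinely $n$-independent once the first moment has been factored out. Overlaps in $\cR_{0,\mathrm{good}}\setminus\bar\cR$ are controlled by Proposition~\ref{Prop_opt_0} together with Lemma~\ref{Lemma_Hessian}, whose conclusion $f(\rho)\leq f(\bar\rho)-\frac14\|\rho-\bar\rho\|_2^2$ is again $n$-independent. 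Finally, the $k$-stable overlaps contribute $o(\Erw[\tilde Z]^2)$ via the cluster-size cap from the goodness condition, as in~(\ref{Cor_opt_03}). The skewed-pair contributions (in which the edge distribution between color-class intersections deviates strongly from uniform) are dispatched using assumption~3 together with the same $n^{O(1)}$ polynomial-loss transfer between $n$ and $n+z$ used for the first moment.

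The main obstacle will be Step~3: for a pair $(\sigma,\tau)$ of nearly balanced $k$-colorings of $\cG(n+z,d)$ the overlap matrix $\rho(\sigma,\tau)$ is only doubly-stochastic up to $O(1/n)$ corrections, so that the optimization landscape in Propositions~\ref{Prop_centre}--\ref{Prop_opt_0} must be applied over a slightly enlarged neighborhood of the Birkhoff polytope. What makes this work is precisely the hypotheses of the lemma: assumption~2 confines the dominant $\sigma$'s to balanced edge distributions, assumption~3 confines the dominant $(\sigma,\tau)$-pairs to balanced joint edge distributions, and assumption~1 guarantees that the lossy polynomial factors coming from the ``excess'' $z$ vertices are absorbed by the exponential growth of $\Erw[\tilde Z]$. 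Once these routine but fiddly adaptations are in place, Theorem~\ref{Thm_smallSubgraphConditioning} yields the claim.
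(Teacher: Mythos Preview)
Your high-level strategy---rerun small-subgraph conditioning on $\cG(n+z,d)$ with a ``nearly balanced good'' variable $\tilde Z$---is reasonable, and indeed the paper gives no proof of its own here, deferring to \cite[\Sec~4]{KPGW}.  But there is a real gap in how you deploy the hypotheses.  Conditions~ii.\ and~iii.\ of \Thm~\ref{Thm_smallSubgraphConditioning} demand asymptotics exact to $1+o(1)$; your repeated ``$n^{O(1)}$ polynomial-loss transfer'' between $n$ and $n+z$ does not by itself deliver that precision.  What is actually required is that the polynomial corrections cancel \emph{exactly} in the ratios $\Erw[\tilde Z\prod_j(\Xi_j)_{q_j}]/\Erw[\tilde Z]$ and $\Erw[\tilde Z^2]/\Erw[\tilde Z]^2$, and you have not said why they should.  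You also invoke hypothesis~2 to pass from the nearly-balanced count to the good count, but hypothesis~2 concerns \emph{skewed} (i.e.\ non-uniform edge-density) colorings, which is a different notion from ``not good''; the good/balanced comparison is \Prop~\ref{Prop_goodFirstMoment}, whose proof (all of \Sec~\ref{Sec_Prop_goodFirstMoment}) is not a black-box consequence of hypothesis~2.

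The argument the paper cites is built around a more explicit device that makes hypotheses~2 and~3 do the real work.  One sets up a correspondence: from a \emph{non-skewed} good balanced $k$-coloring of an $n$-point configuration, delete $dz/2$ suitably chosen bichromatic edges and attach the $dz$ freed clones to $z$ fresh vertices (one in each of $z$ chosen color classes), yielding an equitable coloring of an $(n{+}z)$-point configuration.  Because the input coloring is non-skewed, the number of edges available between every pair of color classes is $\frac{dn}{k(k-1)}(1+O(n^{-1/2}\ln n))$, so the number of valid break-and-attach operations is the \emph{same} polynomial $P(n)$ up to $1+o(1)$ for every non-skewed coloring.  This yields $\Erw[\tilde Y]=(1+o(1))\,P(n)\,\Erw[\Zkg\cdot\vecone_{\text{non-skewed}}]$ and, with the analogous uniformity for non-skewed pairs, $\Erw[\tilde Y^2]=(1+o(1))\,P(n)^2\,\Erw[\Zkg^2\cdot\vecone_{\text{non-skewed pair}}]$.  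Hypotheses~2 and~3 now say precisely that dropping the non-skewed indicators costs only $1+o(1)$, so the $P(n)$ factors cancel in all the ratios and conditions~ii.--iii.\ for $\tilde Y$ follow directly from hypothesis~1.  Your proposal does not identify this mechanism, and without it the three hypotheses of the lemma are left doing much less than the statement requires.
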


\medskip\noindent{\em Proof of \Thm~\ref{Thm_main}, part 1.}
Due to \Lem~\ref{Lemma_configurationModel}
we just need to verify the assumptions of \Lem~\ref{Lemma_kndivn}.
\Cor~\ref{Cor_opt_0} readily implies the first assumption.
Furthermore, the second assertion follows from \Cor~\ref{Cor_skewed} and \Prop~\ref{Prop_goodFirstMoment}.


With respect to the third assertion, we call from~(\ref{eqCor_opt_X}) that
%
	\begin{equation}\label{eqFinishingUp3}
	\sum_{\rho:\norm{\rho-\bar\rho}_\infty>n^{-1/2}\ln^{2/3}n}\Erw\brk{\Zrg}
		=o(\Erw\brk{\Zkg}^2).
	\end{equation}

Now, assume that $\rho$ satisfies $\norm{\rho-\bar\rho}_\infty\leq n^{-1/2}\ln^{2/3}n$.
Let $\mu=(\mu_{ijst})_{i,j,s,t\in\brk k}$ be such that $(\rho,\mu)$ is a compatible pair.
Let $Z_{\rho,\mu}$ be as in \Sec~\ref{Sec_Lemma_f} and let
 $\hat\rho$ be as in~(\ref{eqCorKLmurho}).
Then \Cor~\ref{Cor_compPair} yields
	\begin{eqnarray}\label{eqFinishingUp4}
	\Erw\brk{Z_{\rho,\mu}}&=&n^{O(1)}\Erw\brk{\Zrg}\exp\brk{-\frac{dn}{2k}\KL{\mu}{\hat\rho}}.
	\end{eqnarray}
Suppose that $i,j,s,t\in\brk k$, $i\neq s$, $j\neq t$ are indices such that $|\mu_{ijst}-k^{-2}(k-1)^{-2}|>n^{-1/2}\ln n$.
Since $\norm{\rho-\bar\rho}_\infty\leq n^{-1/2}\ln^{2/3}n$, we have
$$|\mu_{ijst}-\hat\rho_{ijst}|=\Omega\bc{n^{-1/2}\ln n}.$$
Therefore, Fact~\ref{Fact_KL} implies that
	$\KL{\hat\mu}{\hat\rho}=\Omega(\ln^2n/n).$
Hence, (\ref{eqFinishingUp4}) yields
	\begin{eqnarray}\label{eqFinishingUp5}
	\Erw\brk{Z_{\rho,\mu}}&=&n^{O(1)}\Erw\brk{\Zrg}\exp\brk{-\Omega(\ln^2n)}=\Erw\brk{\Zrg}\exp\brk{-\Omega(\ln^2n)}.
	\end{eqnarray}
Since the number of possible matrices $\mu$ is bounded by $n^{k^4}$, (\ref{eqFinishingUp5}) entails that the number $Z_\rho'$
of skewed pairs $(\sigma,\tau)$ with overlap $\rho$ satisfies
	\begin{eqnarray}\label{eqFinishingUp6}
	\Erw\brk{Z_{\rho}'}&\leq&n^{k^4}\cdot\Erw\brk{\Zrg}\exp\brk{-\Omega(\ln^2n)}=\Erw\brk{\Zrg}\exp\brk{-\Omega(\ln^2n)}.
	\end{eqnarray}
Since $\sum_{\rho\in\cR}\Erw\brk{\Zrg}=O(\Erw\brk{\Zrg}^2)$ by \Cor~\ref{Cor_opt_0}, (\ref{eqFinishingUp3}) and~(\ref{eqFinishingUp6}) imply that the
total expected number 
 of skewed pairs satisfies is
	$o(\Erw\brk{\Zkg}^2),$
as desired.
\qed

\section{The Lower Bound on the Chromatic Number}\label{Sec_lower}

\subsection{Outline}\label{Sec_lower_Outline}

The goal in this section is to establish the second part of \Thm~\ref{Thm_main}, i.e.,  the lower bound on the chromatic number of $\chi(\gnd)$.
More precisely, we are going to show that with
	\begin{eqnarray}
	 d^+=(2k-1)\ln k-1+\frac{3}{(\ln k)^{3/2}}, \nonumber
	\end{eqnarray}
the random multi-graph $\gnd$ fails to be $k$-colorable \whp\ for $d>d^+$;
	then \Lem~\ref{Lemma_configurationModel} implies that the same is true of $\gnd$.
To get started, we recall the upper bound on the expected number of $k$-colorings of $\Gnd$.
This bound has been attributed to Molloy and Reed~\cite{MolloyReed}.
We include the simple calculation here for the sake of completeness.

\begin{lemma}\label{Lemma_simpleFirst}
Let $\rho=(\rho_1,\ldots,\rho_k)$ be a probability distribution on $\brk k$ such that $\rho_in$ is an integer for all $i\in\brk k$.
Let $Z^\rho$ be the number of $k$-colorings $\sigma$ of $\Gnd$
such that $\abs{\sigma^{-1}(i)}=\rho_i n$ for all $i\in\brk k$.
Then
	\begin{equation}\label{eqProp_simpleFirst1}
	\frac1n\ln\Erw[Z^\rho]= H(\rho)+\frac{d}{2}\ln (1-\norm\rho_2^2)+O(\ln n/n).
	\end{equation}
\end{lemma}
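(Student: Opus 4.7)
The plan is to deduce \Lem~\ref{Lemma_simpleFirst} almost directly from \Cor~\ref{Cor:Skewed}. Indeed, recall that a $k$-coloring of $\Gnd$ is nothing but a partition $V_1,\ldots,V_k$ of $V$ (with $V_i=\sigma^{-1}(i)$) such that $e_{\Gnd}(V_i)=0$ for every $i\in\brk k$. Therefore, if we let $\cM(\rho)$ denote the set of probability distributions $\mu=(\mu_{ij})_{i,j\in\brk k}$ on $\brk k\times\brk k$ such that $(\rho,\mu)$ is $(d,n)$-admissible and $\mu_{ii}=0$ for all $i$, and if $Z_\mu$ is the random variable from \Cor~\ref{Cor:Skewed}, then
	\[
	Z^\rho \;=\; \sum_{\mu\in\cM(\rho)} Z_\mu.
	\]
Since $dn\mu_{ij}$ must be an integer for every $i,j$, we have the crude bound $|\cM(\rho)|\leq (dn)^{k^2}$.

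The upper bound in~(\ref{eqProp_simpleFirst1}) is then immediate. \Cor~\ref{Cor:Skewed} gives
	\[
	\frac{1}{n}\ln\Erw[Z_\mu]\;=\;H(\rho)+\frac{d}{2}\ln(1-\norm{\rho}_2^2)-\frac{d}{2}\KL{\mu}{\hat\rho}+O(\ln n/n),
	\]
where $\hat\rho_{ij}=\vecone_{i\neq j}\rho_i\rho_j/(1-\norm{\rho}_2^2)$. Because $\KL{\mu}{\hat\rho}\geq 0$ by Fact~\ref{Fact_KL}, each summand is bounded above by $\exp[n(H(\rho)+\frac d2\ln(1-\norm\rho_2^2))+O(\ln n)]$; summing over the at most $(dn)^{k^2}$ choices of $\mu$ only swells the polynomial prefactor, yielding the desired upper bound on $\frac{1}{n}\ln\Erw[Z^\rho]$.

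For the matching lower bound I would exhibit a single $\mu^\star\in\cM(\rho)$ for which $\KL{\mu^\star}{\hat\rho}$ contributes negligibly. The natural candidate is $\mu^\star\approx\hat\rho$ itself, but one must round the entries of $\hat\rho$ to integer multiples of $1/(dn)$ while preserving the marginals $\sum_j\mu^\star_{ij}=\rho_i$ and the symmetry $\mu^\star_{ij}=\mu^\star_{ji}$. This is a routine rounding problem: adjusting each off-diagonal entry by at most $O(1/(dn))$ suffices, and the resulting $\mu^\star$ satisfies $\|\mu^\star-\hat\rho\|_\infty=O(1/n)$, whence Fact~\ref{Fact_KL}(3) (or a direct Taylor expansion of~(\ref{eqKLdiff2}) around $\hat\rho$) gives $\KL{\mu^\star}{\hat\rho}=O(1/n^2)$. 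Consequently $\frac{d n}{2}\KL{\mu^\star}{\hat\rho}=O(1/n)$, and \Cor~\ref{Cor:Skewed} applied to $Z_{\mu^\star}\leq Z^\rho$ yields the lower bound in~(\ref{eqProp_simpleFirst1}). The only mildly technical step is the rounding argument, and even this is standard; the whole proof is essentially a one-line invocation of \Cor~\ref{Cor:Skewed} once the decomposition $Z^\rho=\sum_\mu Z_\mu$ is set up.
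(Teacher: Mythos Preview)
Your proof is correct and follows essentially the same approach as the paper: decompose $Z^\rho=\sum_\mu Z_\mu$ over admissible $\mu$ with zero diagonal, apply \Cor~\ref{Cor:Skewed}, use $\KL{\mu}{\hat\rho}\ge0$ and $|\cM(\rho)|\le(dn)^{k^2}$ for the upper bound, and (implicitly in the paper, explicitly in your version) take $\mu^\star\approx\hat\rho$ for the lower bound. One small slip: Fact~\ref{Fact_KL}(3) gives a \emph{lower} bound on the Kullback--Leibler divergence, not an upper one, so it does not directly yield $\KL{\mu^\star}{\hat\rho}=O(1/n^2)$; your parenthetical Taylor argument via~(\ref{eqKLdiff1})--(\ref{eqKLdiff2}) is what actually does the job.
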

\begin{proof}
Let $M$ be the set of all probability distributions $\mu$ on $\brk k\times\brk k$ such that $(\rho,\mu)$ is $(d,n)$-admissible (as defined in \Sec~\ref{Sec_partitions}).
Moreover, for any $\mu\in M$
let $Z_{\rho,\mu}$ be the number of $k$-colorings of $\Gnd$ such that  $\abs{\sigma^{-1}(i)}=\rho_i n$ for all $i\in\brk k$
and such that $e_{\Gnd}(\sigma^{-1}(i),\sigma^{-1}(j))=dn\mu_{ij}$ for all $i,j\in\brk k$.
Then Fact~\ref{Fact_KL} and \Cor~\ref{Cor:Skewed} yield
	\begin{eqnarray}\label{eqProp_simpleFirst1a}
		\frac1n\ln\Erw\brk{Z_{\rho,\mu}}&=&H(\rho)+\frac{d}{2}\ln (1-\norm\rho_2^2)-\frac{d}2\KL{\mu}{\hat\rho}+O(\ln n/n)\quad\mbox{for any }\mu\in M.
	\end{eqnarray}
Since $\abs M\leq n^{k^2}$ (due to the condition that $dn\mu_{ij}$ must be an integer for all $i,j$), (\ref{eqProp_simpleFirst1a}) implies together with Fact~\ref{Fact_KL} that
	$$\frac1n\ln\Erw[Z^\rho]=\frac1n\ln\sum_{\mu\in M}\Erw\brk{Z_{\rho,\mu}}= H(\rho)+\frac{d}{2}\ln (1-\norm\rho_2^2)+O(\ln n/n),$$
as claimed.
\qed\end{proof}

\begin{corollary}\label{Prop_simpleFirst}
We have
	$$\frac1n\ln\Erw\brk{\Zkc}= \ln k+\frac d2\ln(1-1/k)+O(\ln n/n).$$
Furthermore, if $d\geq(2k-1)\ln k$, then
 $\Erw[\Zkc]\leq\exp(-\Omega(n))$.
\end{corollary}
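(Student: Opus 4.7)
The first identity is an immediate consequence of \Lem~\ref{Lemma_simpleFirst}. Writing $\Zkc = \sum_\rho Z^\rho$, where $\rho$ ranges over probability distributions on $\brk k$ with $\rho_i n$ integral for every $i$, I observe that the number of such $\rho$ is at most $(n+1)^k$, which is polynomial in $n$. Combining this with linearity of expectation and \Lem~\ref{Lemma_simpleFirst} yields
\begin{equation*}
\frac{1}{n}\ln\Erw\brk{\Zkc} = \max_\rho \frac{1}{n}\ln\Erw\brk{Z^\rho} + O(\ln n/n) = \max_\rho g(\rho) + O(\ln n/n),
\end{equation*}
where $g(\rho) = H(\rho) + (d/2)\ln(1-\norm\rho_2^2)$. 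Thus the first assertion reduces to maximizing $g$ over the simplex.

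The function $g$ is concave on the simplex: entropy is strictly concave, and $\rho\mapsto\ln(1-\norm\rho_2^2)$ is concave as the composition of the concave increasing function $\ln$ with the concave function $\rho\mapsto 1-\norm\rho_2^2$. Moreover $g$ is symmetric under permutations of coordinates. Consequently its unique maximizer on the simplex is the uniform distribution $\bar\rho = k^{-1}\vecone$, and a direct substitution gives $g(\bar\rho) = \ln k + (d/2)\ln(1-1/k)$. This establishes the first assertion.

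For the second claim the task is to verify that $\ln k + (d/2)\ln(1-1/k) \leq -c(k)$ for some constant $c(k)>0$ whenever $d\geq(2k-1)\ln k$; together with the first assertion this gives $\Erw\brk{\Zkc}\leq\exp(-\Omega(n))$. Since $\ln(1-1/k)<0$, the left-hand side is strictly decreasing in $d$, so it suffices to check the case $d=(2k-1)\ln k$. The main, routine obstacle is that the first two terms of the expansion $-\ln(1-1/k) = 1/k + 1/(2k^2) + 1/(3k^3) + O(1/k^4)$ do not suffice: truncating there leaves a non-negative upper bound. Retaining three terms, however, the $\ln k$ and $\ln k/(2k)$ contributions cancel and one is left with a net value of $-\Theta(\ln k/k^2)$, which is strictly negative for $k\geq k_0$. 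Exponentiating and invoking the first identity then yields $\Erw\brk{\Zkc}\leq\exp(-\Omega(n))$ as required.
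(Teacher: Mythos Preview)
Your proof is correct and follows essentially the same route as the paper: sum over the polynomially many class-size profiles $\rho$, invoke \Lem~\ref{Lemma_simpleFirst}, and observe that the expression $H(\rho)+\frac d2\ln(1-\norm\rho_2^2)$ is maximized at the uniform distribution; then for the second part expand $\ln(1-1/k)$ to third order. The one cosmetic difference is that you phrase the optimization via concavity of $g$ (which is a clean way to do it), whereas the paper simply notes that $H(\rho)$ is maximized and $\norm\rho_2^2$ minimized separately at $\bar\rho$; both lead to the same conclusion.
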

\begin{proof}
Let $\rho$ be a probability distribution on $\brk k$ and let $Z^\rho$ be as in \Lem~\ref{Lemma_simpleFirst}.
Clearly, the entropy $H(\rho)$ is maximized if $\rho=\frac1k\vecone$ is the uniform distribution.
The uniform distribution $\rho=\frac1k\vecone$ also happens to minimize  $\norm\rho_2^2$.
Therefore,
	(\ref{eqProp_simpleFirst1}) implies that for any probability distribution $\rho$ we have
	\begin{eqnarray}
		\frac1n\ln\Erw\brk{Z^\rho}&\leq&\ln k+\frac{d}{2}\ln (1-1/k)+O(\ln n/n)\label{eqProp_simpleFirst2},
	\end{eqnarray}
with equality in the case that $\norm{\rho-\frac1k\vecone}_\infty=O(n^{-1/2})$.
Since the number of possible distributions $\rho$ such that $\rho_in$ is an integer for all $i\in\brk k$ is bounded by $n^k$,
(\ref{eqProp_simpleFirst2}) implies that
	$$\frac1n\ln\Erw\brk{\Zkc}
		=\ln k+\frac{d}{2}\ln (1-1/k)+O(\ln n/n).$$
Furthermore, for $d\geq (2k-1)\ln k$ 
the elementary inequality $\ln(1-z)\leq-z-z^2/2-z^3/3$ yields
	\begin{eqnarray*}
		\frac1n\ln\Erw\brk{\Zkc}
			&\leq&\ln k-\frac d2\bc{\frac 1k+\frac1{2k^2}+\frac1{3k^3}}+O(\ln n/n)\nonumber\\
			&\leq&-\bc{\frac1{12k^2}-\frac1{6k^3}}\ln k+O(\ln n/n)<0,\label{eqProp_simpleFirst22}
	\end{eqnarray*}
as desired.
\qed\end{proof}

Due to \Cor~\ref{Prop_simpleFirst}, we may assume in the following that $d$ is the unique integer satisfying
	\begin{equation}\label{eqLowerdAssumption}
	d^+\leq d<(2k-1)\ln k.
	\end{equation}
\Cor~\ref{Prop_simpleFirst} shows that for this $d$, the first moment is
	\begin{eqnarray}\label{eqFirstMomentNaive}
	\frac1n\ln\Erw[\Zkc]&=&\ln k-\frac d2\bc{\frac1k+\frac1{2k^2}}+\tilde O_k(k^{-2})\leq\frac1{2k}-\frac3{2k\ln^{3/2}k}+\tilde O_k(k^{-2}).
	\end{eqnarray}
The fact that the right-hand side is positive is not an ``accident'':
indeed the first moment $\Erw[\Zkc]$ is generally exponentially large in $n$ for this $d$.
Therefore, the standard first moment argument does not suffice to prove that $\chi(\gnd)>k$ \whp

Instead, we develop an argument that takes the geometry of the set of $k$-colorings into account:
	we already saw that the $k$-colorings of $\Gnd$ come in clusters that of exponential size.
Roughly speaking, the volume of these clusters is what drives up the first moment, even though $\Gnd$ does not have a single $k$-coloring \whp\
To overcome this issue, we are going to perform a first-moment argument over the number of {\em clusters} rather than individual $k$-colorings.
To implement this idea, we need the following

\begin{definition}
Let $\sigma$ be a $k$-coloring of a multi-graph $\cG$ and let $p\in\brk{0,1}$.
\begin{enumerate}
\item A vertex $v$ is \bemph{rainbow} 
		if for any color $i\in\brk k\setminus\cbc{\sigma(v)}$ there is a neighbor $w$ of $v$ with $\sigma(w)=i$.
\item We call $\sigma$ \bemph{$p$-rainbow} if precisely $pn$ vertices are rainbow.
\end{enumerate}
\end{definition}

For two (not necessarily balanced) $k$-colorings $\sigma,\tau$ of $\Gnd$ we define the overlap $\rho(\sigma,\tau)$ just
as in~(\ref{eqOverlapMatrix}).
Similarly, we define the cluster 
	$$\cC^*(\sigma)=\cbc{\sigma:\mbox{$\sigma$ is a $k$-coloring with $\rho_{ii}(\sigma,\tau)>0.51$ for all $i\in\brk k$}}.$$
(The difference between $\cC(\sigma)$ as defined in~(\ref{XeqCluster}) and $\cC^*(\sigma)$ is that the former only contains {\em balanced} $k$-colorings.)
Furthermore, for $p\in\brk{0,1}$ let
	$$\Sigma^p=\abs{\cbc{\cC^*(\sigma):\mbox{$\sigma$ is a $p$-rainbow $k$-coloring of $\Gnd$}}}.$$
In words, $\Sigma^p$ is the number of {\em clusters} of $p$-rainbow $k$-colorings of $\Gnd$.
(Of course, the cluster $\cC^*(\sigma)$ may contain colorings $\tau\neq\sigma$ that are not $p$-rainbow.)

A priori, the definition of the cluster $\cC^*(\sigma)$ does not ensure that the clusters
of two colorings $\sigma,\tau$ are either disjoint or identical.
In order to enforce that this is indeed the case, we are going to show that we may confine ourselves to ``nice'' $k$-colorings
with certain additional properties.

\begin{definition}\label{Def_nice}
Let $\sigma$ be a $k$-coloring of $\Gnd$.
We call $\sigma$ \bemph{nice} if the following three conditions are satisfied.
\begin{enumerate}
\item Let $\rho=(\rho_i)_{i\in\brk k}$ be the vector with entries $\rho_i=|\sigma^{-1}(i)|/n$. Then
		\begin{eqnarray}\label{eqrhoNice}
		\norm{\rho-k^{-1}\vecone}_2<k^{-1}\ln^{-\frac13}k.
		\end{eqnarray}
\item Let $\mu=(\mu_{ij})_{i,j\in\brk k}$ be the matrix with entries
		$\mu_{ij}=e_{\Gnd}(\sigma^{-1}(i),\sigma^{-1}(j))/dn$.
		Moreover, let $\bar\mu=(\bar\mu_{ij})_{i,j\in\brk k}$ be the matrix with entries $\bar\mu_{ij}=\vecone_{i\neq j}k^{-1}(k-1)^{-1}$.
		Then
		\begin{eqnarray}\label{eqmuNice}
		\norm{\mu-\bar\mu}_2<8k^{-1}(k-1)^{-1}\ln^{-\frac13}k.
		\end{eqnarray}
\item If $\tau\in\cC^*(\sigma)$ is a $k$-coloring such that
		$$\abs{|\tau^{-1}(i)|-\frac nk}<\frac n{k(\ln k)^{1/3}}\quad\mbox{for all }i\in\brk k$$
		 then the overlap matrix satisfies $\rho_{ii}(\sigma,\tau)\geq0.9$ for all $i\in\brk k$.
\end{enumerate}
\end{definition}
Hence, in a nice coloring all the color classes have size about $n/k$, and the edge densities between different color classes
are approximately uniform.
Furthermore, the third condition ensures that for two nice colorings $\sigma,\tau$, the clusters $\cC^*(\sigma)$, $\cC^*(\tau)$
are either disjoint or identical.
Let $Z'$ be the  number of $k$-colorings of $\Gnd$ that fail to be nice.
In Section \ref{sec:prop:ExpctZMaxAiMuij} we are going to derive the following bound.

\begin{proposition}\label{prop:ExpctZMaxAiMuij}
We have $\frac{1}{n}\ln\Erw[Z']\leq -\frac14 k^{-1}\ln^{\frac13}k.$
\end{proposition}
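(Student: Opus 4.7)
The plan is to bound $\Erw[Z']$ by splitting $Z' \leq Z_1' + Z_2' + Z_3'$, where $Z_i'$ counts $k$-colorings of $\Gnd$ that violate the $i$-th clause in \Def~\ref{Def_nice}, and then to show that each $\Erw[Z_i']$ is at most $\exp(-\Omega(n k^{-1}\ln^{1/3}k))$.

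For $Z_1'$ I would sum $\Erw[Z^\rho]$ from \Lem~\ref{Lemma_simpleFirst} over the (polynomially many) probability vectors $\rho$ with $\rho_in\in\ZZ$ and $\norm{\rho-k^{-1}\vecone}_2 \geq k^{-1}\ln^{-1/3}k$. Writing $\rho = \bar\rho + \eps$ with $\sum_i\eps_i=0$ and Taylor-expanding $g(\rho) = H(\rho) + \frac d2 \ln(1-\norm\rho_2^2)$ around the critical point $\bar\rho$, the first-order terms vanish while the Hessian contributes a loss of order $(k + d/(1-1/k))\norm\eps_2^2$. Combined with $d = \Theta(k\ln k)$ and $\norm\eps_2^2\geq k^{-2}\ln^{-2/3}k$, this yields a drop of $\Omega(k^{-1}\ln^{1/3}k)$ in $g(\rho)$, which is comfortably below the value $g(\bar\rho)=O(1/k)$ from \Cor~\ref{Prop_simpleFirst}. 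For $Z_2'$ I would apply \Cor~\ref{Cor:Skewed} to admissible pairs $(\rho,\mu)$ with $\mu_{ii}=0$, restricting (without loss by the previous case) to $\rho$ close to $\bar\rho$ so that $\hat\rho$ is within $O(k^{-2}\ln^{-1/3}k)$ of $\bar\mu$ entrywise. By Fact~\ref{Fact_KL}(3) with constant $\xi = \Omega(k^2)$ (since all entries of $\hat\rho$ are $\Theta(k^{-2})$), the hypothesis $\norm{\mu - \bar\mu}_2 \geq 8k^{-1}(k-1)^{-1}\ln^{-1/3}k$ forces $\KL{\mu}{\hat\rho} = \Omega(k^{-2}\ln^{-2/3}k)$ and hence a loss of $\frac d2 \KL{\mu}{\hat\rho} = \Omega(k^{-1}\ln^{1/3}k)$. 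Summing over the polynomially many admissible $\mu$ leaves the claimed margin.

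For $Z_3'$, which I expect to be the main obstacle, I would bound $Z_3'$ above by the number of ordered pairs $(\sigma,\tau)$ of $k$-colorings of $\Gnd$ where $\tau$ is nearly balanced in the sense of \Def~\ref{Def_nice}(3) and $\rho(\sigma,\tau)$ has $\rho_{ii} > 0.51$ for every $i\in\brk k$ but $\rho_{ii}<0.9$ for at least one $i$. For each such overlap matrix $\rho$, \Prop~\ref{Prop_f} gives $\Erw[Z_{\rho}]\leq n^{O(1)} \exp(nf(\rho))$, so the task reduces to proving a quantitative bound $f(\rho) \leq -\frac13 k^{-1}\ln^{1/3}k$ throughout this sub-region of the Birkhoff polytope. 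Since some $\rho_{ii}\in(0.51,0.9)$, we are macroscopically bounded away from $\bar\rho$; the plan is to follow the optimization strategy of \Prop~\ref{Prop_opt} and its proof in \cite{ACOVilenchik}, but now adapted to the non-separable slab where one diagonal entry is of constant size strictly inside $(\tfrac12,1)$. The crucial input is that we are working above the upper threshold, $d\geq d^+=(2k-1)\ln k -1 +3\ln^{-3/2}k$, which is enough room beyond the replica-symmetric point for $f$ to become negative (and of the required magnitude) on this slab; this is exactly the region where the cluster-based argument in the upper bound breaks down, but since here we only need $f$ to be negative rather than to merely drop below $f(\bar\rho)$, an effective version of the Birkhoff-polytope optimization suffices.

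Once the three bounds are in place, $\Erw[Z'] \leq \Erw[Z_1'] + \Erw[Z_2'] + \Erw[Z_3'] \leq 3\exp(-\Omega(nk^{-1}\ln^{1/3}k))$, and absorbing absolute constants into the exponent yields the bound $\frac1n\ln\Erw[Z']\leq -\frac14 k^{-1}\ln^{1/3}k$. The main technical content is the quantitative, non-separable variant of \Prop~\ref{Prop_opt} needed for Case 3; the other two cases are direct applications of the rate-function machinery already developed in \Sec~\ref{Sec_partitions}.
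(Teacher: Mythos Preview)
Your treatment of $Z_1'$ and $Z_2'$ is correct and matches the paper almost verbatim: the paper proves \Lem~\ref{lemma:max-ai} by Taylor-expanding $\rho\mapsto\ln(1-\norm\rho_2^2)$ around $\frac1k\vecone$, and \Lem~\ref{Lemma_muFreaks} via \Cor~\ref{Cor:Skewed} and the quadratic lower bound on the Kullback--Leibler divergence, exactly as you propose.

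Your handling of $Z_3'$, however, diverges from the paper and contains a real gap. The paper does \emph{not} count pairs $(\sigma,\tau)$ or invoke the function $f(\rho)$ here. Instead it works in the planted model $\cG(\sigma,\mu)$: \Lem~\ref{lem_s_less_Lower} (a mild variant of \Lem~\ref{lem_s_less} under the weaker hypotheses~(\ref{eqlem_s_less_LowerAssumption})) shows that with probability $1-\exp(-n\,\Omega_k(\ln k/k))$ the planted graph has the expansion property~(\ref{prop_s_less_Lower}), and this property forces every nearly-balanced $\tau\in\cC^*(\sigma)$ to satisfy $\rho_{ii}(\sigma,\tau)>0.99$. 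Combining this conditional probability bound with the first-moment estimate $\frac1n\ln\Erw[Z_{\rho,\mu}]\leq O_k(1/k)$ from~(\ref{eqFirstMomentNaive}) immediately gives $\frac1n\ln\Erw[Z_{\rho,\mu}'']\leq -\Omega_k(\ln k/k)$, which is stronger than required.

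Your route instead requires the quantitative bound $f(\rho)\leq -\tfrac13 k^{-1}\ln^{1/3}k$ uniformly over the slab $\{\rho\mbox{ doubly-stochastic}:\rho_{ii}>0.51\ \forall i,\ \rho_{jj}<0.9\mbox{ for some }j\}$. You do not prove this; you appeal to \Prop~\ref{Prop_opt} ``adapted to the non-separable slab'', but \Prop~\ref{Prop_opt} concerns the \emph{separable} set $\Dg$ (where $\rho_{ij}>0.51$ forces $\rho_{ij}\geq 1-\kappa$), is stated for $d$ in the range~(\ref{eqSec_Prop_goodFirstMoment}), and only asserts $f(\rho)<f(\bar\rho)$, not negativity. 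The region you need is precisely the complement of the separable set near the identity, so the machinery from~\cite{ACOVilenchik} does not transfer. While numerical checks suggest your inequality is true (the energy loss $E(\rho)-E(\mathrm{id})\approx -\frac{d}{2k^2}(k-\norm\rho_2^2)$ tends to dominate the entropy gain), turning this into a proof over the whole slab is a genuine optimisation problem that you have not carried out. The planted-model argument avoids this entirely, at the modest cost of reproving an expansion lemma already available in slightly different form from \Sec~\ref{Sec_pro_p_sep}.
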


Furthermore, in Section \ref{sec:prop:NoOfCompl+ItsDistr} we are going to establish the following proposition,
which effectively puts a lower bound on the cluster size of a nice $p$-rainbow $k$-coloring.
Let $\Lambda=\Lambda_k(\Gnd)$ denote the set of all nice $k$-colorings of $\Gnd$.
Moreover, let $\Delta=[1-\frac{20}{k}, 1-\frac{1}{20k}]$.

\begin{proposition}\label{prop:NoOfCompl+ItsDistr}
Let $p\in\brk{0,1}$.
For a $k$-coloring $\sigma$ of $\Gnd$ let 
 $C_{p,\sigma}$ denote the number of nice $p$-rainbow $k$-colorings in $\cC^*(\sigma)$.
Moreover, let $A_{p,\sigma}$ be the event that $\sigma$ is $p$-rainbow.
\begin{enumerate}
\item If $p\in\Delta$, then
 there exist numbers $p',q$ satisfying
	\begin{equation}\label{eqqthrm:C^p}
	p'=p+O_k(k^{-1}\ln^{-7/8}k)\mbox{ and }q=1-1/k+O_k(k^{-1}\ln^{-2}k)
	\end{equation}
	such that
	\begin{eqnarray*}\nonumber
	\frac{1}{n}\ln\sum_{\sigma\in [k]^{n}}\sum_{i\geq1}\frac{\pr[\sigma \in \Lambda,A_{p,\sigma},C_{p,\sigma}=i, Z'=0]}{i}\\
	 &\hspace{-6cm}   \leq &\hspace{-3cm}
	 	\ln k+\frac{d}{2}\ln(1-k^{-1})-D_{KL}(p',q)-(1-p)\ln 2+o(1).
	\end{eqnarray*}
\item If $p\not\in\Delta$, then
	 \begin{eqnarray*}\nonumber
	\frac{1}{n}\ln\sum_{\sigma\in [k]^{n}}\sum_{i\geq1}\frac{\pr[\sigma \in \Lambda,A_{p,\sigma},C_{p,\sigma}=i, Z'=0]}{i}
		&\leq&-\frac{1}{4k}.
	\end{eqnarray*}
\end{enumerate}
\end{proposition}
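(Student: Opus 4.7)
The plan is to reinterpret the left-hand side as the expected number of clusters, and bound it via a first-moment-over-clusters argument. Restricted to $\cbc{Z'=0}$, every $k$-coloring of $\Gnd$ is nice, so part~(3) of Def.~\ref{Def_nice} makes the relation ``$\sigma_1\sim\sigma_2$ iff $\sigma_2\in\cC^*(\sigma_1)$'' an equivalence on nice $k$-colorings (transitivity follows because two nice clusters sharing a common nice element have centres with pairwise diagonal overlap $\geq 0.9+0.9-1=0.8>0.51$). The weight $1/C_{p,\sigma}$ then makes each such equivalence class containing a nice $p$-rainbow coloring contribute exactly once, so the sum equals $\Erw[N_p]$, where $N_p$ is the number of such classes. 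I will bound this by $\Erw[Z_{\mathrm{npr}}^{(p)}]/L_p$, with $Z_{\mathrm{npr}}^{(p)}$ the number of nice $p$-rainbow colorings of $\Gnd$ and $L_p$ a uniform lower bound on $C_{p,\sigma}$ valid for any nice $p$-rainbow $\sigma$.

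For the first moment I would use the planted-coloring machinery of \Sec~\ref{Sec_Prop_goodFirstMoment}. Fix a balanced $\sigma$ with an edge-density matrix $\mu$ satisfying the niceness constraints~(\ref{eqrhoNice})--(\ref{eqmuNice}); Cor.~\ref{Lemma_skewed} accounts for the entropy of such colored configurations. Conditional on $(\sigma,\mu)$, the binomial approximation of Lem.~\ref{lem_balls_bins_cap} gives that a fixed vertex is rainbow with probability $q = 1-\sum_{j=1}^{k-1}(-1)^{j+1}\binom{k-1}{j}(1-j/(k-1))^d$, which with $d=(2k-1)\ln k+O(1)$ evaluates to $q = 1-1/k+O_k(k^{-1}\ln^{-2}k)$ (the main correction comes from the $j=1$ term $(k-1)(1-1/(k-1))^d\approx (k-1)/k^2$). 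Rainbow indicators are near-independent, so the binomial large deviation~(\ref{eqBinLDP}) together with a Chernoff local estimate yields
$$\frac{1}{n}\ln\Erw[Z_{\mathrm{npr}}^{(p)}]\leq\ln k+\frac{d}{2}\ln(1-1/k)-\KL{p'}{q}+o(1),$$
where $p' = p+O_k(k^{-1}\ln^{-7/8}k)$ absorbs slack in niceness and the $O(\sqrt n)$ loss in Lem.~\ref{lem_balls_bins_cap}.

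For the cluster bound $L_p\geq 2^{(1-p)n(1-o_k(1))}$, I exploit the rainbow structure: each of the $(1-p)n$ non-rainbow vertices $v$ has a ``free'' colour $c_v\neq\sigma(v)$ absent from $N(v)$ (and typically exactly one, since $\geq 2$ missing colours has probability $O_k(k^{-2})$). For a random subset $S$ of non-rainbow vertices (each included independently with probability $1/2$), define $\tau_S(v)=c_v$ for $v\in S$ and $\tau_S(w)=\sigma(w)$ otherwise. An expansion argument in the spirit of Cor.~\ref{lem_edges_5s}, combined with the observation that typical adjacent non-rainbow vertices have distinct free colours, shows that $\tau_S$ is a valid $k$-coloring for a $1-o_k(1)$ fraction of the $2^{(1-p)n}$ choices of $S$. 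A Chernoff bound on the per-class distribution of flips gives $\rho_{ii}(\sigma,\tau_S)\geq 1-o_k(1)$, placing $\tau_S\in\cC^*(\sigma)$; a further concentration/expansion argument (using that every vertex has $\Omega(\ln k)$ neighbours in each other colour class) controls how flips change the rainbow status of other vertices, ensuring $\tau_S$ remains nice and $p$-rainbow on an exponentially large subfamily.

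Combining these gives part~1. For part~2, under $d\geq d^+$ the proof of Cor.~\ref{Prop_simpleFirst} yields $\ln k+\frac{d}{2}\ln(1-1/k)\leq\frac{1}{2k}-\frac{3}{2k(\ln k)^{3/2}}+\tilde O_k(k^{-2})$. If $p\leq 1-20/k$ then $(1-p)\ln 2\geq (20\ln 2)/k$ alone dwarfs the main term; if $p\geq 1-1/(20k)$, writing $p=1-\epsilon$ and $q=1-1/k$ with $\epsilon\leq 1/(20k)$, a direct expansion $\KL{p'}{q}=(1-\epsilon)\ln\tfrac{1-\epsilon}{1-1/k}+\epsilon\ln(\epsilon k)+o_k(1/k)\geq (4/5)/k$ gives $\KL{p'}{q}\geq 4/(5k)-o_k(1/k)$, and in both cases the sum is at most $-1/(4k)$. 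The main technical obstacle is the cluster bound: verifying that exponentially many flipped colorings $\tau_S$ simultaneously remain valid, lie in $\cC^*(\sigma)$, and retain niceness and $p$-rainbowness requires a delicate combination of the expansion estimates of \Sec~\ref{Sec_Prop_goodFirstMoment} with concentration arguments on second-order properties.
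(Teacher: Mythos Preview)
Your overall strategy aligns with the paper's: interpret the sum as counting clusters, bound the probability that $\sigma$ is $p$-rainbow via a large-deviation estimate in the planted model, and exhibit $\approx 2^{(1-p)n}$ colourings in the cluster by flipping non-rainbow vertices to a free colour. The first-moment calculation and the value $q=1-1/k+O_k(k^{-1}\ln^{-2}k)$ are correct.

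The genuine gap is the framing ``$\Erw[N_p]\leq\Erw[Z_{\mathrm{npr}}^{(p)}]/L_p$ for a uniform lower bound $L_p$ on $C_{p,\sigma}$ valid for any nice $p$-rainbow $\sigma$''. No such deterministic bound exists: even when $\sigma$ is nice and $p$-rainbow and $Z'=0$, the cluster can be atypically small (e.g.\ if unusually many non-rainbow vertices are doubly vacant, or if the $j$-vacant vertices span many edges). The paper therefore does \emph{not} divide by a uniform $L_p$; instead it uses the split
\[
\psi_p(\sigma)\;\leq\;2^{-y_p}\,\pr\brk{A_{p,\sigma}\mid\sigma\in\Lambda}\;+\;\pr\brk{C_{p,\sigma}<2^{y_p},\,A_{p,\sigma},\,Z'=0\mid\sigma\in\Lambda},
\]
with $y_p=n(1-p)(1-\ln^{-1/3}k)$, and bounds the second summand separately. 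Concretely, in the planted model $\cG(\sigma,\mu)$ the paper shows (\Prop s~\ref{Prop_vacant} and~\ref{Prop_spanned}) that the bad events ``more than $n/(k\ln^{3/4}k)$ doubly-vacant vertices'', ``$\hat\cV>2n/(k\ln^{3/4}k)$'', and ``more than $n/(k\ln^{4/5}k)$ edges among vacant vertices'' each have probability $\exp\bigl(-\Omega_k(\ln^{1/9}k/k)\,n\bigr)$, which beats the first-moment scale $\exp(O_k(1/k)n)$. Outside these bad events the flip construction goes through (and since $Z'=0$ every resulting $\tau_S$ is automatically nice). This case split, and the crucial matching of exponents so that the ``small-cluster'' probability is negligible against $\exp(-\KL{p'}{q}n)$, is the missing ingredient; your expansion sketch points in the right direction but does not supply it.

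For part~2 your argument is essentially right, though the paper gets away with less: once $p\notin\Delta$ one has $\KL{p'}{q}\geq 0.94/k$ directly, and combining this with $\frac1n\ln\Erw[\Zkc]\leq\frac1{2k}+o_k(1/k)$ from~(\ref{eqFirstMomentNaive}) already yields the bound $-1/(4k)$ without ever invoking the $(1-p)\ln 2$ cluster term.
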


\noindent
Based on \Prop~\ref{prop:NoOfCompl+ItsDistr}, we obtain the following bound on $\Sigma^p$.

\begin{corollary}\label{thrm:C^p}
For any $p\in\Delta$
then there exist $p',q$ satisfying~(\ref{eqqthrm:C^p}) such that
\begin{eqnarray*}\nonumber
 \frac{1}{n}\ln\Erw[\Sigma^p\cdot\vecone_{Z'=0}]&\leq& \max\cbc{\ln k+\frac{d}{2}\ln \left(1-1/k\right)-\KL{p'}q-(1-p)\ln 2,-\frac{\ln^{1/3}k}{4k}}+o(1).
\end{eqnarray*}
Furthermore, if $p\not\in\Delta$, then
$\frac{1}{n}\ln\Erw[\Sigma^p\cdot\vecone_{Z'=0}]<-\frac{1}{4k}$.
\end{corollary}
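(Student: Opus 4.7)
The plan is to deduce Corollary~\ref{thrm:C^p} directly from Proposition~\ref{prop:NoOfCompl+ItsDistr} via a cluster-counting identity that converts a sum over clusters into a sum over nice colorings weighted by reciprocal cluster size. First, I would verify that on the event $\{Z'=0\}$, every $k$-coloring of $\Gnd$ is nice, and that the clusters $\cC^*(\sigma)$ of nice colorings form a partition of the nice $k$-colorings. Indeed, any nice $\tau$ satisfies $\bigl||\tau^{-1}(i)|-n/k\bigr|<n/(k\ln^{1/3}k)$ for all $i$ by~(\ref{eqrhoNice}); hence, if both $\sigma$ and $\tau$ are nice and $\tau\in\cC^*(\sigma)$, then condition~(3) of Definition~\ref{Def_nice} applied to $\sigma$ yields $\rho_{ii}(\sigma,\tau)\geq 0.9$ for all $i$, so $\sigma\in\cC^*(\tau)$ and hence $\cC^*(\sigma)=\cC^*(\tau)$. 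Moreover $\sigma\in\cC^*(\sigma)$ always, so $C_{p,\sigma}\geq 1$ whenever $\sigma$ is nice and $p$-rainbow, and each such cluster is represented by exactly $C_{p,\sigma}$ of its nice $p$-rainbow members.

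With the partition property in hand, the counting identity reads
\[
\Sigma^p\cdot\vecone_{Z'=0}\;=\;\sum_{\sigma\in[k]^n}\frac{\vecone_{\sigma\in\Lambda,\;A_{p,\sigma},\;Z'=0}}{C_{p,\sigma}}.
\]
Taking expectations and decomposing according to the value of $C_{p,\sigma}$,
\[
\Erw\bc{\Sigma^p\cdot\vecone_{Z'=0}}\;=\;\sum_{\sigma\in[k]^n}\sum_{i\geq 1}\frac{\pr\brk{\sigma\in\Lambda,\;A_{p,\sigma},\;C_{p,\sigma}=i,\;Z'=0}}{i},
\]
which is exactly the quantity bounded in Proposition~\ref{prop:NoOfCompl+ItsDistr}.

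The final step is a direct application of that proposition. For $p\notin\Delta$, part~2 yields $\tfrac{1}{n}\ln\Erw[\Sigma^p\cdot\vecone_{Z'=0}]\leq-\tfrac{1}{4k}$, from which the strict inequality stated in the corollary follows for $n$ large by absorbing any sub-leading slack. For $p\in\Delta$, part~1 supplies $p',q$ satisfying~(\ref{eqqthrm:C^p}) such that the right-hand side is bounded by the first argument of the asserted $\max$; since $\max(a,b)\geq a$, the claimed inequality follows as a deliberate weakening. That weakening is convenient later: when the corollary is combined with the complementary bound from Proposition~\ref{prop:ExpctZMaxAiMuij}, which controls the contribution of non-nice colorings at rate $-\tfrac{1}{4k}\ln^{1/3}k$, the $\max$ form yields a single clean expression valid throughout $\Delta$. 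Since all the substantive work is concentrated in Proposition~\ref{prop:NoOfCompl+ItsDistr}, the proof of the corollary reduces to bookkeeping; the one point that merits explicit care is the cluster-partition property above, which condition~(3) of Definition~\ref{Def_nice} was tailored to guarantee.
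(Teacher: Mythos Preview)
Your proof is correct and follows essentially the same approach as the paper: both establish the double-counting identity $\Sigma^p\cdot\vecone_{Z'=0}=\sum_{\sigma}\vecone_{\sigma\in\Lambda,A_{p,\sigma},Z'=0}/C_{p,\sigma}$ via the cluster-partition property of nice colorings, take expectations, and read off the bounds from Proposition~\ref{prop:NoOfCompl+ItsDistr}. One small remark: the step ``$\sigma\in\cC^*(\tau)$ hence $\cC^*(\sigma)=\cC^*(\tau)$'' needs a touch more than symmetry---on $\{Z'=0\}$ every $\omega\in\cC^*(\sigma)$ is nice, so condition~(3) gives $\rho_{ii}(\sigma,\omega)\geq 0.9$ as well, and then the triangle-type estimate $\rho_{ii}(\tau,\omega)\geq\rho_{ii}(\sigma,\tau)+\rho_{ii}(\sigma,\omega)-1-o_k(1)>0.51$ yields $\omega\in\cC^*(\tau)$; the paper is equally informal at this point.
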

\begin{proof} 
Let $C_{p,\sigma}$ and $A_{p,\sigma}$ be as in \Prop~\ref{prop:NoOfCompl+ItsDistr}.
Moreover, let $\Lambda_p=\Lambda\cap A_p$ denote the set of nice $p$-rainbow $k$-colorings of ${\cal G}_{n,d}$
and let
	$\Sigma^p_*=\abs{\cbc{\cC^*(\sigma):\sigma\in\Lambda_p}}.$
Since the clusters of two colorings $\sigma,\tau\in\Lambda_p$ are either disjoint or identical, by double-counting we have
	$$\Sigma^p_*=\sum_{\sigma\in \Lambda_p}\frac{1}{C_{p,\sigma}}.$$
Furthermore, because $\Sigma^p\cdot\vecone_{Z'=0}=\Sigma^p_*\cdot\vecone_{Z'=0}$, we find
\begin{eqnarray}\nonumber
\frac1n\ln\Erw[\Sigma^p\cdot\vecone_{Z'=0}]&=&
 \frac1n\ln\Erw[\Sigma^p_*\cdot\vecone_{Z'=0}]=
 	\sum_{\sigma\in [k]^{n}}\sum_{i\geq 1}\frac{\pr[C_{p,\sigma}=i,\;\sigma\in\cZ_p,Z'=0]}{i}\\
	&=&\sum_{\sigma\in [k]^{n}}\sum_{i\geq 1}\frac{\pr[\sigma\in\Lambda,A_{p,\sigma},C_{p,\sigma}=i,Z'=0]}{i}.
	\label{eq:InterECp}
\end{eqnarray}
Together with \Prop~\ref{prop:NoOfCompl+ItsDistr}, (\ref{eq:InterECp})  yields
	\begin{eqnarray*}
	\frac1n\ln\Erw[\Sigma^p\cdot\vecone_{Z'=0}]&\leq&\left\{
		\begin{array}{cl}
		 	\ln k+\frac{d}{2}\ln(1-\frac1k)-\KL{p'}q-(1-p)\ln 2+o(1)&\mbox{if }p\in\Delta,\\
			-\frac{1}{4k}&\mbox{otherwise,}
		\end{array}
		\right.
	\end{eqnarray*}
as claimed.
%
%
%
%
%
%
%
%
\qed\end{proof}

\medskip\noindent
\emph{Proof of \Thm~\ref{Thm_main}, part 2 (assuming \Prop s~\ref{prop:ExpctZMaxAiMuij} and~\ref{prop:NoOfCompl+ItsDistr}).}
We start by deriving an explicit estimate of the bound provided by \Cor~\ref{thrm:C^p}.
Assume that  $p\in\Delta$ and let $p',q$ be the numbers promised by \Cor~\ref{thrm:C^p}.
Let $c>0$ be such that $d=(2k-1)\ln k-c$.
Using the estimate $\ln(1-z)=-z-z^2/2+O(z^3)$, we see that
	\begin{eqnarray}\label{eqthrm:NonExistenceOfClusters1}
	\ln k+\frac d2\ln(1-1/k)&=&\ln k-\frac d2\bc{\frac 1k+\frac1{2k^2}+O_k(k^{-3})}=\frac c{2k}+\tilde O_k(k^{-2}).
	\end{eqnarray}
To proceed, consider the function
	$h(p'',q'')=-D_{KL}(p'',q'')+(1-p'')\ln 2.$
Then
	\begin{eqnarray*}
	\frac{\partial h}{\partial p''}&=&-\ln\frac{p''}{q''}+\ln\frac{1-p''}{1-q''}+\ln 2,\quad
	\frac{\partial^2 h}{\partial {p''}^2}=-\frac{1}{p''(1-p'')}.
	\end{eqnarray*}	
Hence, for a fixed value of $q$ the function $h(p'',q)$ attains its maximum at $p''=p_0=\frac{2q}{1+q}$.
Consequently, for any fixed $q$ satisfying~(\ref{eqqthrm:C^p}) we have
	\begin{eqnarray*}
	h(p',q)&\leq&h(p_0,q)\leq
		-p_0\ln\frac{p_0}{q}-(1-p_0)\ln \frac{1-p_0}{1-q}-(1-p_0)\ln 2\\
 &=&-\frac{2q}{1+q}\ln \frac{2}{1+q}-\frac{1-q}{1+q}\ln \frac{1}{1+q}-\frac{1-q}{1+q}\ln 2 \hspace*{1.4cm}\left[\textrm{as } 1-p_0=\frac{1-q}{1+q}\right]\\
		 &=&\frac{1}{1+q}\left(-2q\ln \frac{2}{1+q}-(1-q)\ln \frac{2}{1+q}\right)\\
 &=&\frac{1}{1+q}\left(2q\ln \left(1-\frac{1-q}{2}\right) +(1-q)\ln \left(1-\frac{1-q}{2}\right)\right)\hspace*{0.2cm}\left[\textrm{as } 1+q=2\left(1-\frac{1-q}{2}\right)\right]\\
 &=&\ln \left(1-\frac{1-q}{2}\right)\leq\frac{1-q}{2}\leq-\frac1{2k}+O_k(k^{-1}\ln^{-2}k).
	\hspace*{0.7cm}\mbox{[as $\ln(1-z)\leq -z$ and due to~(\ref{eqqthrm:C^p})]}.
	\end{eqnarray*}
Combining this last estimate with~(\ref{eqthrm:NonExistenceOfClusters1}) and using \Cor~\ref{thrm:C^p}, we see that for $p\in\Delta$
	\begin{eqnarray}\label{eqthrm:NonExistenceOfClusters2}
	\frac1n\ln\Erw\brk{\Sigma^p\cdot\vecone_{Z'=0}}&\leq&\max\cbc{\frac{c-1}{2k}+\frac{25}{k\ln^2k},-\frac{\ln^{1/3}k}{4k}}+o(1).
	\end{eqnarray}
Our assumption~(\ref{eqLowerdAssumption}) on $d$ ensures that
	$c=(2k-1)\ln k-d<1-3\ln^{-3/2} k$,
whence (\ref{eqthrm:NonExistenceOfClusters2}) implies for $p\in\Delta$ that
	\begin{eqnarray}\label{eqthrm:NonExistenceOfClusters3}
	\frac1n\ln\Erw\brk{\Sigma^p\cdot\vecone_{Z'=0}}&\leq&-k^{-1}\ln^{-3/2}k.
	\end{eqnarray}
Further, \Cor~\ref{thrm:C^p} readily states that $\frac1n\ln\Erw\brk{\Sigma^p\cdot\vecone_{Z'=0}}<-\frac{1}{4k}$ for all $p\not\in\Delta$.
Hence, (\ref{eqthrm:NonExistenceOfClusters3}) does in fact hold for all $p\in\brk{0,1}$.

To complete the proof, consider the random variable
	$\cZ=Z'+\sum_{s=0}^n\vecone_{Z'=0}\cdot\Sigma^{s/n}.$
If $\Gnd$ is $k$-colorable, then $\cZ\geq1$, because either there is a $k$-coloring that is not nice (in which case $Z'\geq1$),
or $\Sigma^{s/n}\geq1$ for some $s$.
Since \Prop~\ref{prop:ExpctZMaxAiMuij} and~(\ref{eqthrm:NonExistenceOfClusters3}) imply that $\Erw\brk\cZ=\exp(-\Omega(n))$,
Markov's inequality entails that $\chi(\Gnd)>k$ \whp
\qed






\subsection{Proof of Proposition \ref{prop:ExpctZMaxAiMuij}}\label{sec:prop:ExpctZMaxAiMuij}


For a probability distribution $\rho=(\rho_1,\ldots,\rho_k)$ on $\brk k$ let $Z^\rho$ denote the number of $k$-colorings $\sigma$
of $\Gnd$ such that $|\sigma^{-1}(i)|=\rho_i n$ for all $i\in\brk k$.

\begin{lemma}\label{lemma:max-ai}
Let $\eps_k=k^{-1}\ln^{-1/3} k$ and let $\rho$ be  such that
$\norm{\rho-\frac1k\vecone}_2>\eps_k$.
Then $\frac1n\ln\Erw[Z^\rho]\leq -\frac{\ln^{1/3} k}{3k}.$
\end{lemma}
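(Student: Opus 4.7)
The plan is to invoke \Lem~\ref{Lemma_simpleFirst}, which gives $\frac1n\ln\Erw[Z^\rho]=\phi(\rho)+O(\ln n/n)$, where
$\phi(\rho):=H(\rho)+\frac d2\ln(1-\norm\rho_2^2)$.
Setting $\bar\rho=k^{-1}\vecone$ and $\delta=\rho-\bar\rho$, we have $\vecone^T\delta=0$ and $\norm\delta_2>\eps_k$ by assumption. The strategy is to show that $\phi$ is strictly concave on the simplex with unique maximiser $\bar\rho$, and then to turn the separation $\norm\delta_2>\eps_k$ into a quadratic drop in $\phi$ of order $\ln^{1/3}k/k$ via a Hessian estimate near $\bar\rho$.

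Concavity is immediate: $H$ is concave, and direct differentiation shows that the Hessian of $\rho\mapsto\ln(1-\norm\rho_2^2)$ equals $-\tfrac{2}{1-\norm\rho_2^2}I-\tfrac{4}{(1-\norm\rho_2^2)^2}\rho\rho^T$, which is negative semidefinite. Moreover $\nabla\phi(\bar\rho)$ is a scalar multiple of $\vecone$, so $\bar\rho$ maximises $\phi$ on the simplex, and concavity implies that the map $s\mapsto\phi(\bar\rho+s\delta/\norm\delta_2)$ is nonincreasing for $s\ge0$. Setting $\rho':=\bar\rho+\eps_k\delta/\norm\delta_2$, we thus have $\phi(\rho)\le\phi(\rho')$, so it suffices to bound $\phi(\rho')$ over the sphere $\norm{\rho'-\bar\rho}_2=\eps_k$.

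For any such $\rho'$, every point on the segment $\rho(t):=\bar\rho+t(\rho'-\bar\rho)$, $t\in[0,1]$, has coordinates in $[\tfrac1{2k},\tfrac3{2k}]$ (since $\eps_k=o(1/k)$) and squared $\ell_2$-norm at most $\tfrac1k+\eps_k^2\le\tfrac2k$. Hence along the whole segment $-D^2H\succeq\tfrac{2k}{3}I$ and, from the explicit Hessian above, $-D^2\ln(1-\norm\rho_2^2)\succeq2I$, yielding $-D^2\phi\succeq(\tfrac{2k}{3}+d)I\succeq2k\ln k\,(1-o_k(1))\,I$ (using the hypothesis $d\ge(2k-1)\ln k-1+3\ln^{-3/2}k$ built into \eqref{eqLowerdAssumption}). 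Because $g(t):=\phi(\rho(t))$ satisfies $g'(0)=0$ (as $\rho'-\bar\rho\perp\vecone$ and $\nabla\phi(\bar\rho)\in\RR\vecone$), Taylor's theorem with integral remainder gives
\[
\phi(\rho')-\phi(\bar\rho)\ =\ \int_0^1(1-t)g''(t)\,dt\ \le\ -k\ln k\,(1-o_k(1))\,\eps_k^2\ =\ -(1-o_k(1))\,\frac{\ln^{1/3}k}{k}.
\]

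Finally, the computation underlying~(\ref{eqFirstMomentNaive}) yields $\phi(\bar\rho)=\ln k+\tfrac d2\ln(1-1/k)\le\tfrac1{2k}+\tilde O_k(k^{-2})$, which is $o_k(k^{-1}\ln^{1/3}k)$. Combining this with the previous display and using $\phi(\rho)\le\phi(\rho')$, we obtain $\phi(\rho)\le-\tfrac12 k^{-1}\ln^{1/3}k$ for all $k\ge k_0$; the $O(\ln n/n)$ slack from \Lem~\ref{Lemma_simpleFirst} is easily absorbed for $n$ large, yielding $\frac1n\ln\Erw[Z^\rho]\le-\tfrac{\ln^{1/3}k}{3k}$ as required. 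I expect the main point requiring care to be the \emph{uniformity} of the Hessian lower bound along the whole segment, but since each $\rho(t)$ sits deep inside the positive orthant with every coordinate of order $1/k$, this is straightforward.
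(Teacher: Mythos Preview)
Your proof is correct and follows essentially the same Hessian/Taylor strategy as the paper. The only difference is organisational: the paper first discards the entropy contribution via the trivial bound $H(\rho)\le H(\bar\rho)$ and then observes that the Hessian of $\rho\mapsto\ln(1-\norm\rho_2^2)$ satisfies $D^2\ln(1-\norm\rho_2^2)\preceq -2I$ \emph{globally} on the simplex (since $1-\norm\rho_2^2<1$), which yields $\frac1n\ln\Erw[Z_\rho]\le\frac1{2k}-\frac d2\norm{\rho-\bar\rho}_2^2$ directly without any projection step. You instead keep the entropy term, use the concavity of the full $\phi$ to project onto the sphere $\norm{\rho'-\bar\rho}_2=\eps_k$, and then apply a local Hessian bound $-D^2\phi\succeq(\tfrac{2k}3+d)I$. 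Since the entropy curvature $2k/3$ is lower order compared with $d\sim 2k\ln k$, the two routes yield the same quadratic drop $\sim k^{-1}\ln^{1/3}k$; the paper's version is marginally more direct because the global Hessian bound on the logarithmic term makes the concavity-projection step unnecessary.
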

\begin{proof}
Let $\bar\rho$ be a probability distribution such that $\norm{\bar\rho-\frac1k\vecone}_\infty=O(n^{-1})$ and such
that $\bar\rho_in$ is an integer for all $i\in\brk k$.
Because the entropy function attains its global maximum at $\frac1k\vecone$,
\Lem~\ref{Lemma_simpleFirst} yields
	\begin{eqnarray}\nonumber
	\frac1n\ln\Erw\brk{Z_\rho}-\frac1n\ln\Erw\brk{Z_{\bar\rho}}&=&
		H(\rho)-H(\bar\rho)+\frac d2\brk{\ln(1-\norm\rho_2^2)-\ln(1-1/k)}+O(\ln n/n)\\
		&\leq&\frac d2\brk{\ln(1-\norm\rho_2^2)-\ln(1-1/k)}+O(\ln n/n).
			\label{eqlemma:max-ai1}
	\end{eqnarray}
To bound this expression, we compute the first two derivatives of the function
$\rho\mapsto\frac d2\ln(1-\norm\rho_2^2)$: for $i,j\in\brk k$, $i\neq j$ we find
	\begin{eqnarray*}
	\frac{\partial}{\partial\rho_i}\ln(1-\norm\rho_2^2)&=&-\frac{2\rho_i}{1-\norm\rho_2^2},\\
	\frac{\partial^2}{\partial^2\rho_i}\ln(1-\norm\rho_2^2)&=&-\frac2{1-\norm\rho_2^2}-\frac{4\rho_i^2}{(1-\norm\rho_2^2)^2},\\
	\frac{\partial^2}{\partial\rho_i\partial\rho_j}\ln(1-\norm\rho_2^2)&=&-\frac{4\rho_i\rho_j}{(1-\norm\rho_2^2)^2}.
	\end{eqnarray*}
Because the rank one matrix $(4\rho_i\rho_j/(1-\norm\rho_2^2))_{i,j\in\brk k}$ is positive semidefinite,
all eigenvalues of the Hessian
	$(\frac{\partial^2}{\partial\rho_i\partial\rho_j}\ln(1-\norm\rho_2^2))_{i,j\in\brk k}$
 are bounded by $-2/(1-\norm\rho_2^2)<-2$.
Therefore,  together with Taylor's formula, (\ref{eqlemma:max-ai1}) yields 
	\begin{eqnarray*}\nonumber
	\frac1n\ln\Erw\brk{Z_\rho}&\leq&\frac1n\ln\Erw\brk{Z_{\bar\rho}}-\frac d2\norm{\rho-\bar\rho}_2^2+O(\ln n/n)
		\leq	\frac1n\ln\Erw\brk{\Zkc}-\frac d2\norm{\rho-\bar\rho}_2^2+O(\ln n/n)	\nonumber\\
		&\leq&\frac1{2k}-\frac d2\norm{\rho-\bar\rho}_2^2+O(\ln n/n)\qquad\mbox{[due to~(\ref{eqFirstMomentNaive})]},\label{eqFirstMomentNotSoNaive}
	\end{eqnarray*}
whence the assertion is immediate. 
\qed
\end{proof}

Let $\rho$ be a probability distribution on $\brk k$ and let $\mu$ be a probability distribution on $\brk k\times\brk k$ such that
$(\rho,\mu)$ is $(d,n)$-admissible.
Let $Z_{\rho,\mu}$ be the number of $k$-colorings $\sigma$ of $\Gnd$ such that
$|\sigma^{-1}(i)|=\rho_in$ and $e_{\Gnd}(\sigma^{-1}(i),\sigma^{-1}(j))=dn\mu_{ij}$ for all $i,j\in\brk k$.
In addition, let $\bar\mu=(\bar\mu_{ij})_{i,j\in\brk k}$ be the probability distribution defined by $\bar\mu_{ij}=\vecone_{i\neq j}k^{-1}(k-1)^{-1}$.

\begin{lemma}\label{Lemma_muFreaks}
With $\eps_k=8/({k(k-1)\ln^{\frac 13}k})$
assume that $\norm{\rho-\frac1k\vecone}_2\leq k^{-1}\ln^{-\frac 13} k$ but $\norm{\mu-\bar\mu}_2>\eps_k$.
%
Then $$\frac1n\ln\Erw[Z_{\rho,\mu}]\leq-\frac14k^{-1}\ln^{1/3}k.$$
\end{lemma}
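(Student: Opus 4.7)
The strategy is to apply Corollary~\ref{Cor:Skewed} to the expected number of $k$-colorings realizing the prescribed color-class sizes $\rho$ and edge distribution $\mu$, and then exhibit a quantitative lower bound on $\KL{\mu}{\hat\rho}$ that dominates the gap. Since any proper $k$-coloring $\sigma$ has $\mu_{ii}=0$ for all $i\in\brk k$, Corollary~\ref{Cor:Skewed} directly yields
\begin{equation*}
\frac1n\ln\Erw[Z_{\rho,\mu}]=H(\rho)+\frac d2\ln\bc{1-\norm\rho_2^2}-\frac d2\KL{\mu}{\hat\rho}+O\bc{\frac{\ln n}{n}},
\end{equation*}
where $\hat\rho_{ij}=\vecone_{i\neq j}\rho_i\rho_j/(1-\norm\rho_2^2)$.

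For the first two terms, I would note that $H(\rho)+\frac d2\ln(1-\norm\rho_2^2)$ is jointly maximized at $\rho=\frac1k\vecone$, where it equals $\frac1n\ln\Erw[\Zkc]+o(1)$. Hence by~(\ref{eqFirstMomentNaive}) these terms contribute at most $\frac{1}{2k}+\tilde O_k(k^{-2})$ under our hypothesis on $\rho$, irrespective of the exact value of $\rho$ within the ball $\norm{\rho-\frac1k\vecone}_2\leq k^{-1}\ln^{-1/3}k$.

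The heart of the proof is the lower bound on $\KL{\mu}{\hat\rho}$. First, a Taylor expansion of $\rho\mapsto\hat\rho$ around $\rho=\frac1k\vecone$, using $\norm{\rho-\frac1k\vecone}_2\leq k^{-1}\ln^{-1/3}k$, gives $\norm{\hat\rho-\bar\mu}_2=O(k^{-2}\ln^{-1/3}k)$, which is much smaller than $\eps_k$. Hence by the triangle inequality,
\[
\norm{\mu-\hat\rho}_2\geq\norm{\mu-\bar\mu}_2-\norm{\hat\rho-\bar\mu}_2\geq\tfrac12\eps_k.
\]
Second, because $\hat\rho$ is supported on the $k(k-1)$ off-diagonal entries with every entry of size at most $(1+o_k(1))/(k(k-1))$, a Taylor expansion of $\mu\mapsto\KL{\mu}{\hat\rho}$ around $\mu=\hat\rho$ using~(\ref{eqKLdiff2}) shows that the constant $\xi(\hat\rho)$ in Fact~\ref{Fact_KL}(3) satisfies $\xi(\hat\rho)\geq\Omega(k^2)$. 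This gives $\KL{\mu}{\hat\rho}\geq\Omega(k^2)\cdot\norm{\mu-\hat\rho}_2^2\geq\Omega(k^2\eps_k^2)=\Omega(k^{-2}\ln^{-2/3}k)$.

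Putting the pieces together and using $d\geq(2k-1)\ln k-1$, one finds $\frac{d}{2}\KL{\mu}{\hat\rho}=\Omega(k^{-1}\ln^{1/3}k)$, which swamps the $\frac{1}{2k}$ contribution from the first two terms and yields $\frac1n\ln\Erw[Z_{\rho,\mu}]\leq-\frac14 k^{-1}\ln^{1/3}k$ for $k$ sufficiently large. The main obstacle is the $\Omega(k^2)$ lower bound for $\xi(\hat\rho)$: one must verify that the local quadratic approximation of KL around $\hat\rho$ remains valid globally (i.e.\ for arbitrary $\mu$, not just $\mu$ close to $\hat\rho$), which follows by splitting the sum defining $\KL{\mu}{\hat\rho}$ into indices where $\mu_{ij}\leq C/k^2$ (handled by the quadratic bound) and indices where $\mu_{ij}>C/k^2$ (handled by the fact that $\mu_{ij}\ln(\mu_{ij}/\hat\rho_{ij})$ itself is already large).
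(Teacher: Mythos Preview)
Your approach follows the same skeleton as the paper's proof: apply Corollary~\ref{Cor:Skewed}, bound $H(\rho)+\frac d2\ln(1-\|\rho\|_2^2)$ by $\frac{1}{2k}+o_k(1/k)$ via~(\ref{eqFirstMomentNaive}), pass from $\bar\mu$ to $\hat\rho$ by the triangle inequality, and then lower-bound $\KL{\mu}{\hat\rho}$ by $\Omega(k^2\eps_k^2)$. The only substantive difference is how this last bound is obtained. You aim for a \emph{global} Pinsker-type constant $\xi(\hat\rho)=\Omega(k^2)$ in Fact~\ref{Fact_KL}(3); note, however, that this claim is literally false (take $\mu$ concentrated on a single off-diagonal entry: then $\KL{\mu}{\hat\rho}=O(\ln k)$ while $k^2\|\mu-\hat\rho\|_2^2=\Omega(k^2)$). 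Your case-split does salvage what is actually needed---if some $\mu_{ij}>C/k^2$ then that single term already contributes $\Omega(k^{-2})$ to the divergence, and otherwise all entries are $O(k^{-2})$ so the Hessian bound applies---but it does not establish the stated $\xi$-bound and requires a bit of bookkeeping. The paper bypasses all of this with a one-line convexity trick: pick $\hat\mu$ on the segment from $\hat\rho$ to $\mu$ at $\ell^2$-distance exactly $\eps_k/2$ from $\hat\rho$; convexity of $\KL{\,\cdot\,}{\hat\rho}$ together with $\KL{\hat\rho}{\hat\rho}=0$ gives $\KL{\mu}{\hat\rho}\geq\KL{\hat\mu}{\hat\rho}$, and at $\hat\mu$ every entry is at most $2/k^2$, so the Taylor/Hessian bound $\KL{\hat\mu}{\hat\rho}\geq\frac{k^2}{4}\|\hat\mu-\hat\rho\|_2^2=\frac{k^2\eps_k^2}{16}$ applies directly with no case analysis.
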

\begin{proof}
Let $\hat\rho=(\hat\rho_{ij})_{i,j\in\brk k}$ be the probability distribution with
	$\hat\rho_{ij}=\frac{\vecone_{i\neq j}\cdot\rho_i\rho_j}{1-\norm\rho_2^2}.$
Then by Corollaries~\ref{Cor:Skewed} and~\ref{Prop_simpleFirst} we have
	\begin{eqnarray}
	\frac1n\ln\Erw[Z_{\rho,\mu}]&=&
		H(\rho)+\frac{d}{2}\ln (1-\norm\rho_2^2)-\frac{d}2\KL{\mu}{\hat\rho}+O(\ln n/n)\nonumber\\
		&\leq&\frac1n\ln\Erw[\Zkc]-\frac{d}2\KL{\mu}{\hat\rho}+O(\ln n/n)
		\leq\frac1{2k}-\frac{d}2\KL{\mu}{\hat\rho}+O(\ln n/n).\qquad
			\label{eqIntTaylor1}
	\end{eqnarray}

By Fact~\ref{Fact_KL} the function $\mu\mapsto\KL{\mu}{\hat\rho}$ takes its minimum value (namely, zero) at $\mu=\hat\rho$.
Recalling its differentials from (\ref{eqKLdiff1}), (\ref{eqKLdiff2}),
we see that the Hessian $(\frac{\partial^2}{\partial\mu_{ij}\partial\mu_{st}}\KL{\mu}{\hat\rho})_{i,j,s,t\in\brk k:i\neq j,s\neq t}$ is a positive-definite diagonal matrix
with diagonal entries $1/\mu_{ij}$ ($i\neq j$).

Because $\norm{\rho-\frac1k\vecone}_2\leq k^{-1}(\ln k)^{-1/3}$ we have
	$\norm{\hat\rho-\bar\mu}_2\leq\eps_k/2$.
Consequently, our assumption $\norm{\mu-\bar\mu}_2>\eps_k$ implies that
 $\norm{\mu-\hat\rho}_2>\eps_k/2$.
In fact, let $a\in\brk{0,1}$ be such that $\hat\mu=a\mu+(1-a)\hat\rho$ is at $\ell^2$-distance exactly $\eps_k/2$ from $\hat\rho$.
Then due to the convexity of the Kullback-Leibler divergence (Fact~\ref{Fact_KL}), we have $\KL{\mu}{\hat\rho}\geq\KL{\hat\mu}{\hat\rho}$.
Furthermore,  because $\norm{\hat\mu-\hat\rho}_2=\eps_k/2$, we have $\hat\mu_{ij}\leq 2/k^2$ for all $i,j\in\brk k$, $i\neq j$.
Therefore, Taylor's formula implies together with the above analysis of the Hessian of $\KL\cdot{\hat\rho}$ that
	\begin{equation}		\label{eqIntTaylor2}
	\KL{\mu}{\hat\rho}\geq\KL{\hat\mu}{\hat\rho}\geq\frac{k^2}4\norm{\hat\mu-\hat\rho}_2^2=\frac{k^2\eps_k^2}{16}.
	\end{equation}
Plugging~(\ref{eqIntTaylor2}) into~(\ref{eqIntTaylor1}), we see that for any $\mu$ such that $\norm{\mu-\hat\rho}_2>\eps_k$,
	\begin{eqnarray*}
	\frac1n\ln\Erw[Z_{\rho,\mu}]		
		&\leq&\frac1{2k}-\frac{dk^2\eps_k^2}{32}+O(\ln n/n)
		\leq-\frac{\ln^{1/3}k}{k}\qquad\mbox{[as $d\geq1.9k\ln k$]},
			\label{eqIntTaylor3}
	\end{eqnarray*}
thereby completing the proof.
\qed\end{proof}

\Lem s~\ref{lemma:max-ai} and~\ref{Lemma_muFreaks} put a bound on the expected number of $k$-colorings of $\Gnd$
that violate the first two conditions in \Def~\ref{Def_nice}.
To estimate the number of colorings for which the third condition is violated, we need to establish a similar
statement as \Lem~\ref{lem_s_less}, albeit under significantly weaker assumptions.
In particular, we need to work with the ``planted coloring model'' $\cG(\sigma,\mu)$ from \Sec~\ref{Sec_Prop_goodFirstMoment}.
The following statement is reminiscent of \Lem~\ref{lem_s_less};
	the difference is that here we make weaker assumptions as to the ``balancedness'' of the coloring,
	while also aiming at a weaker conclusion.

\begin{lemma}\label{lem_s_less_Lower}
Let $(\rho,\mu)$ be $(d,n)$-admissible and assume that for all $i,j\in\brk k$, $i\neq j$ we have
	\begin{equation}\label{eqlem_s_less_LowerAssumption}
	|\rho_i-1/k|\leq k^{-1}\ln^{-1/3}k,\quad|\mu_{ij}-k^{-1}(k-1)^{-1}|\leq 8/(k(k-1)\ln^{1/3}k).
	\end{equation}
Let $i \in [k]$ and let $0.509 \leq \alpha \leq 0.99$.
Then in $\cG(\sigma,\mu)$ with probability $1-\exp(-n\Omega_k(\ln k/k))$ the following is true.
\begin{equation}\label{prop_s_less_Lower}
\parbox{12cm}{
For any set $S \subset V_i$ of size $|S| = \alpha n/k$
the number of vertices $v \in V \setminus V_i$ that do not have a neighbor in $S$ is less than $\frac nk(1-\alpha-\ln^{-1/4}k)$.}
\end{equation}
\end{lemma}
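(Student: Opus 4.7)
The plan is to adapt the proof of \Lem~\ref{lem_s_less}, tracking how the weakened balance hypothesis~(\ref{eqlem_s_less_LowerAssumption}) and the weakened probability target change the quantitative estimates. Without loss of generality assume $i=1$, write $V_j=\sigma^{-1}(j)$, and fix a set $S\subset V_1$ of size $|S|=\alpha n/k$.

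First, for each $j\in\brk k\setminus\cbc 1$ I would control $e_{j,S}$, the number of $S$-$V_j$ edges in $\cG(\sigma,\mu)$. Since the clones in $V_1\times\brk d$ matched by $\vec\Gamma_{\sigma,\mu}$ to $V_j\times\brk d$ form a uniformly random subset of size $\mu_{1j}dn$, \Lem~\ref{lem_balls_bins_cap} bounds $e_{j,S}$ (up to a $O(\sqrt n)$ factor) by $\hat e_{j,S}\sim\Bin(d|S|,p_j)$ with $p_j=\mu_{1j}/\rho_1=(k-1)^{-1}(1+O_k(\ln^{-1/3}k))$, via hypothesis~(\ref{eqlem_s_less_LowerAssumption}). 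Fixing a small absolute constant $c>0$, the Chernoff bound \Lem~\ref{Lemma_Chernoff} gives $\pr\brk{e_{j,S}<(1-c)d|S|/(k-1)}\leq\exp(-\Omega(n\ln k/k))$, since $d=\Theta_k(k\ln k)$ and $\alpha\geq 0.509$. This bound easily absorbs a union bound over the $(k-1)$ choices of $j$ and the $\binom{\rho_1 n}{\alpha n/k}\leq\exp(O(n/k))$ choices of $S$; let $\cE$ denote the event that this lower bound on $e_{j,S}$ holds simultaneously for every $(j,S)$.

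Conditioning on $\cE$, I would apply \Lem~\ref{lem_balls_bins_cap} a second time, now on the $V_j$-side, to dominate the number $X_{j,S}$ of vertices $v\in V_j$ with no neighbor in $S$ by $\Bin(\rho_j n,q_j)$, where $q_j=(1-e_{j,S}/(d\rho_j n))^d$. On $\cE$, the inequality $(1-x)^d\leq\exp(-dx)$ together with $d\geq(2k-1)\ln k-1+3\ln^{-3/2}k$ and $\rho_j=k^{-1}(1+O_k(\ln^{-1/3}k))$ yields $q_j\leq k^{-2\alpha(1-c-o_k(1))}$. Summing over $j$, the variable $X_S=\sum_{j\neq 1}X_{j,S}$ is dominated by $\Bin((1-\rho_1)n,q)$ with $q\leq k^{-2\alpha(1-c-o_k(1))}$.

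Finally, choosing $c$ so small that $2\alpha(1-c-o_k(1))>1.01$ for $\alpha=0.509$ and all large $k$, we get $\Erw\brk{X_S\mid\cE}\leq n\cdot k^{-1.01}$. The target threshold $(1-\alpha-\ln^{-1/4}k)n/k$ is at least $0.005\,n/k$ throughout the range $\alpha\in[0.509,0.99]$ once $k$ is large, exceeding the mean by a factor $\geq k^{0.01}$. A Chernoff tail bound then gives $\pr\brk{X_S\geq(1-\alpha-\ln^{-1/4}k)n/k\mid\cE}\leq\exp(-\Omega((n/k)\ln k))$, with the implicit constant independent of both $k$ and $\alpha$. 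A final union bound over the $\exp(O(n/k))$ choices of $S$ preserves this bound, yielding the required $1-\exp(-n\Omega_k(\ln k/k))$. The only delicate point is verifying that the $O_k(\ln^{-1/3}k)$ perturbations in $\rho$ and $\mu$ combined with the constant loss $c$ shift the critical exponent $2\alpha$ by only $o_k(1)$, so that $2\alpha>1$ at the boundary $\alpha=0.509$ survives; this is precisely where the strict gap $\alpha\geq 0.509$ (rather than $\alpha\geq 1/2$) is exploited.
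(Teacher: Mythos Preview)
Your proposal is correct and follows essentially the same route as the paper's proof: the paper also fixes a small absolute constant ($\delta=10^{-4}$ in place of your $c$), applies \Lem~\ref{lem_balls_bins_cap} plus Chernoff once to lower-bound each $e_{j,S}$, union-bounds over $j$ and $S$, then applies \Lem~\ref{lem_balls_bins_cap} plus Chernoff a second time on the $V_j$-side to bound $X_S$ by a binomial with success probability $q\le k^{-2\alpha(1-2\delta)}$, and finishes with the observation that $2\alpha(1-2\delta)>1$ on the whole range $[0.509,0.99]$. The only cosmetic difference is that the paper carries the final Chernoff/union-bound step through to an explicit expression (their (\ref{equ_chern_bound_1bLOWER})--(\ref{equ_chern_bound_1bbbLOWER})) rather than your ratio-of-target-to-mean estimate, but the content is the same.
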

\begin{proof}
As in the proof of \Lem~\ref{lem_s_less}, we assume $i=1$, fix a set
 $S\subset V_1$ of size $|S|=\alpha n/k$, and let
	$$e_{j,S}=\abs{\cbc{(v,l)\in S\times\brk d:\vec\Gamma_{\sigma,\mu}(v,l)\in V_j\times\brk d}}.$$
Let $p_j=\mu_{1j}/\rho_j$.
Then~(\ref{eqlem_s_less_LowerAssumption}) ensures that $p_j=(1-o_k(1))/k$.
Let $\hat e_{j,S}$ be a $\Bin(|S| d,p_j)$ random variable.
Setting $\delta=10^{-4}$, we obtain from \Lem~\ref{lem_balls_bins_cap}
and the Chernoff bound (Lemma~\ref{Lemma_Chernoff})
	\begin{eqnarray}\nonumber
	\pr\brk{e_{j,S}<\frac{(1-\delta)d|S|}{k-1}}&\leq&O(\sqrt n)\cdot\pr\brk{\hat e_{j,S}<\frac{(1-\delta)d|S|}{k-1}}\\
		&\leq&O(\sqrt n)\exp\brk{-\frac{\delta^2 d|S|}{3(k-1)}}\leq\exp(-n\cdot\Omega_k(\ln k/k)).
			\label{eqEstimateOutgoingEdges2LOWER}
	\end{eqnarray}
Let $\cE_S$ be the event that $e_{j,S}\geq\frac{(1-\delta)d|S|}{k-1}$ for all $j=2,\ldots,k$.
Taking a union bound over all $\leq2^{n/k}$ possible sets $S$ and all $k-1$ colors $j$, 
we obtain from~(\ref{eqEstimateOutgoingEdges2LOWER})
	\begin{eqnarray}\label{eqEstimateOutgoingEdges3LOWER}
	\pr\brk{\exists S:\cE_S\mbox{ does not occur}}\leq (k-1)2^{n/k}\exp(-n\cdot\Omega_k(\ln k/k))\leq-\exp(-n\cdot\Omega_k(\ln k/k)).
	\end{eqnarray}

Conditioning on $\cE_S$,
let $X_{j,S}$ be the number of vertices in $v\in V_j$ that do not have a neighbor in $S$.
Using \Lem~\ref{lem_balls_bins_cap} (the binomial approximation to the hypergeometric distribution),
we can approximate $X_{j,S}$ by a binomial random variable $\hat X_{j,S}=\Bin(\rho_jn,q_j)$,
where
	\begin{eqnarray}\nonumber
	q_j&=&\pr\brk{\Bin\bc{d,\frac{e_{j,S}}{dn\rho_j}}=0}\leq
		\bc{1-\frac{e_{j,S}}{dn\rho_j}}^d\leq 
		\exp\brk{-\frac{(1-\delta)\alpha d}{k-1}}\quad\mbox{[as $e_{j,S}\geq\frac{1-\delta}{k-1}d|S|$]}\\
		&\leq&k^{-2\alpha(1-2\delta)}.
		\label{eqEstimateOutgoingEdges4LOWERa}
	\end{eqnarray}
More precisely, \Lem~\ref{lem_balls_bins_cap} yields
	\begin{eqnarray}\label{eqEstimateOutgoingEdges4LOWER}
	\pr\brk{X_{j,S}\geq t|\cE_S}\leq O(\sqrt n)\pr\brk{\hat X_{j,S}\geq t}\qquad\mbox{for any $t>0$}.
	\end{eqnarray}
Setting $q=k^{-2\alpha(1-2\delta)}$, $\hat X_S=\Bin((1-1/k)n,q)$, and $X_S=\sum_{j=2}^k X_{j,S}$,
we obtain from~(\ref{eqEstimateOutgoingEdges4LOWERa}) and~(\ref{eqEstimateOutgoingEdges4LOWER})
	\begin{eqnarray}\label{eqEstimateOutgoingEdges6LOWER}
	\pr\brk{X_{S}\geq t|\cE_S}\leq O(\sqrt n)\pr\brk{\hat X_{S}\geq t}\qquad\mbox{for any $t>0$}.
	\end{eqnarray}

Let $\alpha'=\alpha+\ln^{-1/4}k$.
By (\ref{eqEstimateOutgoingEdges6LOWER}) and the Chernoff bound,
 \begin{eqnarray}\nonumber
 \pr\brk{X_{S}\geq \frac nk(1-\alpha')|\cE_S}&\leq&
 O(\sqrt n)\pr\left[\hat X_S \geq \frac nk(1-\alpha')\right]\\
& \leq& \exp\left[-\frac nk(1-\alpha'+o(1))\ln\left(\frac{1-\alpha'}{\eul kq}\right)\right]. 
\label{equ_chern_bound_1aLOWER}
 \end{eqnarray}
Further, we take the union bound over
all $\bink{\rho_1 n}{(1-\alpha)\rho_1 n}\leq\exp(\rho_1n(1-\alpha)(1-\ln(1-\alpha)))$ ways to choose the 
set $S$: from~(\ref{equ_chern_bound_1aLOWER}) we obtain
\begin{equation} 	\label{equ_chern_bound_1bLOWER}
 \frac kn\ln\pr\left[\exists S:X_S\geq \frac nk(1-\alpha'),\cE_S\right]\leq
 	(1-\alpha)(1-\ln(1-\alpha))-(1-\alpha')\ln\frac{1-\alpha'}{\eul kq}+o(1).
\end{equation}
Because the function $z\in\brk{0,1}\mapsto -z\ln z$ is bounded, (\ref{equ_chern_bound_1bLOWER}) yields
\begin{eqnarray} 
 \frac kn\ln\pr\left[\exists S:X_S\geq \frac nk(1-\alpha'),\cE_S\right]&\leq&
 	O_k(1)+(1-\alpha')\ln(kq)\nonumber\\
	&\leq&O_k(1)+(1-2\alpha(1-2\delta))(1-\alpha')\ln k.	\label{equ_chern_bound_1bbLOWER}
\end{eqnarray}
Finally, because $0.509\leq\alpha\leq 0.99$ and $\delta=10^{-4}$, we see that $2\alpha(1-2\delta)\geq1.001$.
Hence, (\ref{equ_chern_bound_1bbLOWER}) implies
	\begin{equation}\label{equ_chern_bound_1bbbLOWER}
	\frac 1n\ln\pr\left[\exists S:X_S\geq \frac nk(1-\alpha'),\cE_S\right]\leq-\Omega_k(\ln k/k)n.
	\end{equation}
The assertion follows from~(\ref{eqEstimateOutgoingEdges3LOWER}) and~(\ref{equ_chern_bound_1bbbLOWER}).
\qed\end{proof}

\medskip\noindent{\em Proof of Proposition \ref{prop:ExpctZMaxAiMuij}.}
\Lem s~\ref{lemma:max-ai} and~\ref{Lemma_muFreaks} readily imply the desired bound on
the expected number of colorings that violate the first or the second conditions in \Def~\ref{Def_nice}.
With respect to the third condition, let $(\rho,\mu)$ be an admissible pair
that satisfies~(\ref{eqlem_s_less_LowerAssumption}) and let $Z_{\rho,\mu}''$ be the number of $k$-colorings $\sigma$  
such that $\sigma^{-1}(i)=\rho_in$ and $e_{\Gnd}(\sigma^{-1}(i),\sigma^{-1}(j))=dn\mu_{ij}$ for all $i,j\in\brk k$
that violate~(\ref{prop_s_less_Lower}) for some $0.509\leq\alpha\leq0.99$.
We claim that
	\begin{equation}\label{eqprop:ExpctZMaxAiMuij1}
	\frac1n\ln\Erw\brk{Z_{\rho,\mu}''}\leq-\Omega_k(\ln k/k).
	\end{equation}
Indeed, by (\ref{eqFirstMomentNaive}) the total number $Z_{\rho,\mu}$ of $k$-colorings
such that $\sigma^{-1}(i)=\rho_in$ and $e_{\Gnd}(\sigma^{-1}(i),\sigma^{-1}(j))=dn\mu_{ij}$ for all $i,j\in\brk k$ satisfies
	\begin{equation}\label{eqprop:ExpctZMaxAiMuij2}
	\frac1n\ln\Erw\brk{Z_{\rho,\mu}}\leq\frac1n\ln\Erw\brk{\Zkc}=O_k(k^{-1}).
	\end{equation}
Furthermore, if $\sigma:V\ra\brk k$ is such that $|\sigma^{-1}(i)|=\rho_in$ for all $i\in\brk k$,
then $\cG(\sigma,\mu)$ is nothing but the {\em conditional} distribution of the random graph $\Gnd$ given that 
	$e_{\Gnd}(\sigma^{-1}(i),\sigma^{-1}(j))=dn\mu_{ij}$.
Thus, \Lem~\ref{lem_s_less_Lower} shows that for any such $\sigma$,
	\begin{equation}\label{eqprop:ExpctZMaxAiMuij3}
	\frac1n\ln\pr\brk{\mbox{(\ref{prop_s_less_Lower}) is violated}|e_{\Gnd}(\sigma^{-1}(i),\sigma^{-1}(j))=dn\mu_{ij}\mbox{ for all }i,j\in\brk k}
		\leq-\Omega_k(\ln k/k).
	\end{equation}
Combining~(\ref{eqprop:ExpctZMaxAiMuij2}) and~(\ref{eqprop:ExpctZMaxAiMuij3}) and using the linearity of expectation, we obtain~(\ref{eqprop:ExpctZMaxAiMuij1}).

Finally, assume that $\sigma:V\ra\brk k$ has the property~(\ref{prop_s_less_Lower
		}).
Let $\tau:V\ra\brk k$ be another coloring that satisfies conditions 1.\ and 2.\ in \Def~\ref{Def_nice} and assume that $\tau\in\cC^*(\sigma)$.
Let $i\in\brk k$ and consider the sets $S=\sigma^{-1}(i)\cap\tau^{-1}(i)$ and $T=\tau^{-1}(i)\setminus\sigma^{-1}(i)$.
Because both $\sigma,\tau$ satisfy condition 1.\ in \Def~\ref{Def_nice}, we have $|S|\geq0.509\frac nk$.
For the same reason, the set $T$ satisfies $$|T|\geq\frac nk-|S|-O_k(k^{-1}\ln^{-1/3}k)n>\frac nk-|S|-\frac nk\ln^{-1/4}k.$$
Hence, (\ref{prop_s_less_Lower}) implies that $\frac nk\rho_{ii}(\sigma,\tau)=\abs S>0.99\frac nk$.
Thus, $\sigma$ satisfies the third condition in \Def~\ref{Def_nice}.
Therefore, the assertion follows from~(\ref{eqprop:ExpctZMaxAiMuij1}).
\qed

\section{Lower-bounding the cluster size}\label{sec:prop:NoOfCompl+ItsDistr}

{\em Throughout this section we keep the notation and the assumptions from \Sec~\ref{Sec_lower_Outline}.}

\subsection{Outline}

The aim in this section is to prove \Prop~\ref{prop:NoOfCompl+ItsDistr}.
Essentially this means that we need to establish a lower bound on the size of the cluster $\cC(\sigma)$ of the nice $p$-rainbow $k$-coloring $\sigma$
	(because the cluster size occurs in the denominator of the sums that we are interested in).
Roughly speaking, we are going to show  that almost all the vertices that fail to be rainbow have precisely two colors to choose from,
and that these color choices can be made nearly independently.
In effect, it is going to emerge that for a $p$-rainbow coloring the cluster size is about $(1-p)\ln2$.
The proof of \Prop~\ref{prop:NoOfCompl+ItsDistr} is going to be more or less immediate from this estimate.
Technically, the implementation of this strategy requires a bit of work
because we need to get a rather precise handle on the probability of certain ``rare events''.
That is, we need to perform some large deviations analysis relatively accurately.

More precisely, 
let us fix a probability distribution $\rho$ on $\brk k$ that satisfies the first condition~(\ref{eqrhoNice}) in the definition of ``nice''
along with a map $\sigma:V\ra\brk k$ such that $|\sigma^{-1}(i)|=\rho_i n$ for all $i\in\brk k$.
Let $p\in\brk{0,1}$ and consider the sum
	$$\psi_p\bc\sigma= \sum_{i\geq1}\frac{1}{i}\pr[A_{p,\sigma},C_{p,\sigma}=i,Z'=0|\sigma \in \Lambda].$$
(Recall from \Prop~\ref{prop:NoOfCompl+ItsDistr} that $C_{p,\sigma}$ is the number of nice $p$-rainbow $k$-colorings in the cluster $\cC^*(\sigma)$,
that $A_{p,\sigma}$ is the event that $\sigma$ is $p$-rainbow, that $\Lambda$ is the event that $\sigma$ is a nice $k$-coloring of $\Gnd$,
and that $Z'$ is the number of $k$-colorings that fail to be nice.)
Set	
	$$y_p=n(1-p)(1-\ln^{-\frac13} k).$$
Then
\begin{eqnarray}
 \psi_p(\sigma)&\leq&
	2^{-y_p}\pr[C_{p,\sigma}\geq 2^{y_p}, A_{p,\sigma}|\sigma\in \Lambda]+\pr[C_{p,\sigma}<2^{y_p}, A_{p,\sigma},Z'=0|\sigma\in \Lambda] \nonumber\\
  &\leq& 2^{-y_p}\pr[A_{p,\sigma}|\sigma \in \Lambda]+\pr[C_{p,\sigma}<2^{y_p}, A_{p,\sigma},Z'=0|\sigma\in \Lambda].\label{eq:RedSum2TwoProbs}
\end{eqnarray}

To estimate the two probabilities on the right hand side,
we need to get a handle on the number of rainbow vertices and on the cluster size.
To this end, we say that a vertex $v$ is \bemph{$i$-vacant} in $\Gnd$ with respect to $\sigma$
if $\sigma(v)\neq i$ and if $v$ does not have a neighbor in $\sigma^{-1}(i)$.
Furthermore, in order to deal with the conditioning on the event that $\sigma\in\Lambda$, we are going to work with the
random multi-graph $\cG(\sigma,\mu)$ with $\mu$ a probability distribution on $\brk k\times\brk k$ in which $\sigma$ is a ``planted'' $k$-coloring.


\begin{proposition}\label{Prop_vacant}
Let $\mu$ is a probability distribution on $\brk k\times\brk k$ that satisfies condition~(\ref{eqmuNice}) such that $(\rho,\mu)$ is $(d,n)$-admissible.
Then in the random multi-graph $\cG(\sigma,\mu)$ the following statements are true.
\begin{enumerate}
\item There exist $p',q$ satisfying~(\ref{eqqthrm:C^p}) such that
	\begin{eqnarray}\label{eq:target4NoOfComplete}
	 \pr[A_{p,\sigma}]\leq \exp\left[-\min\cbc{\KL{p'}q,\Omega_k(\ln^{1/8}k/k)} n+O(\ln n)\right].
	\end{eqnarray}
\item Let $\cV^*$ be the set of vertices $v$ such that there exist $1\leq j<j'\leq k$ such that $v$ is both $j$-vacant and $j'$-vacant.
	Then
	$$
	 \pr\brk{|\cV^*|>\frac{n}{k\ln^{3/4}k}}\leq \exp\left[-n\cdot\Omega_k(\ln^{1/9} k/k)\right].
	$$
\item Let $\cV_{ij}$ be the set of $j$-vacant $v\in \sigma^{-1}(i)$ and 	
	$\hat\cV=\sum_{i,j\in\brk k}|\cV_{ij}|\cdot\vecone_{|\cV_{ij}|>n/k^{2.9}}$. Then
	$$\pr\brk{\hat\cV>\frac{2n}{k\ln^{3/4}k}}\leq \exp\left[-n\Omega_k(\ln^{1/9} k/k)\right].$$
\end{enumerate}
\end{proposition}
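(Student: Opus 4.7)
The plan is to reduce each of the three events to a statement about independent binomial variables via the hypergeometric-to-binomial comparison of Lemma \ref{lem_balls_bins_cap}, and then apply the Chernoff bound (Lemma \ref{Lemma_Chernoff}) and the binomial LDP (\ref{eqBinLDP}). The key preliminary estimate, used throughout, is that under the niceness bounds (\ref{eqrhoNice}), (\ref{eqmuNice}) and the range of $d$ in (\ref{eqLowerdAssumption}), the single-vertex vacancy probability $(1-\mu_{ij}/\rho_i)^d$ equals $k^{-2+o_k(1)}$ uniformly in $i\ne j$.

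For Part 1, fix $v\in\sigma^{-1}(i)$. Applying Lemma \ref{lem_balls_bins_cap} separately to the matchings between $V_i\times[d]$ and each $V_j\times[d]$ shows that, up to a $\mathrm{poly}(n)$ factor in point probabilities, the degree vector $(e_{\cG(\sigma,\mu)}(v,V_j))_{j\ne i}$ is multinomially distributed with parameters $p_{ij}=\mu_{ij}/\rho_i$. Inclusion--exclusion then yields an explicit formula for the single-vertex rainbow probability $q_v$, and a Taylor expansion using (\ref{eqrhoNice}), (\ref{eqmuNice}), (\ref{eqLowerdAssumption}) gives $q_v=q+O_k(k^{-1}\ln^{-2}k)$ uniformly in $v$, with $q=1-1/k+O_k(k^{-1}\ln^{-2}k)$. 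Writing $R$ for the number of rainbow vertices, Lemma \ref{lem_balls_bins_cap} allows us to dominate the joint distribution of the $R$-indicators by independent Bernoullis of mean $q$ (up to a $\mathrm{poly}(n)$ factor), whence the LDP (\ref{eqBinLDP}) bounds $\pr[A_{p,\sigma}]$ by $\exp[-n\KL{p'}{q}+O(\log n)]$ for some $p'=p+O_k(k^{-1}\ln^{-7/8}k)$, the slack absorbing the aggregate effect of vertex-level inhomogeneity in $q_v$. In the degenerate regime where $\KL{p'}{q}$ is tiny (i.e.\ when $p\approx q$), uniform concentration of $R$ around its mean already provides a bound of order $\exp[-n\Omega_k(\ln^{1/8}k/k)]$, which accounts for the minimum in (\ref{eq:target4NoOfComplete}).

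For Part 2, the same scheme gives $\pr[v\text{ is both $j$- and $j'$-vacant}]\le\mathrm{poly}(n)\cdot(1-(\mu_{ij}+\mu_{ij'})/\rho_i)^d=k^{-4+o_k(1)}$ for $v\in V_i$ and distinct $j,j'\ne i$. Summing over the $\binom{k-1}{2}$ pairs and all vertices, $\Erw|\cV^*|\le(1+o_k(1))n/(2k^2)$. Since the threshold $n/(k\ln^{3/4}k)$ exceeds this mean by a factor $\Omega(k/\ln^{3/4}k)$, Lemma \ref{Lemma_Chernoff} applied via the binomial approximation yields a tail of $\exp[-n\Omega_k(\ln^{1/4}k/k)]$, which is stronger than required. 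For Part 3, a similar calculation shows $\Erw|\cV_{ij}|\le(1+o_k(1))n/k^3$, so the event $|\cV_{ij}|>n/k^{2.9}$ corresponds to $|\cV_{ij}|$ exceeding its mean by a factor $k^{0.1+o_k(1)}$. Chernoff combined with Lemma \ref{lem_balls_bins_cap} gives a sub-exponential bound on this event, and a union bound over the $k(k-1)$ pairs $(i,j)$ produces a uniform estimate of the required order; on the complementary event every $|\cV_{ij}|\le n/k^{2.9}$ and hence $\hat\cV=0<2n/(k\ln^{3/4}k)$.

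The main technical obstacle is Part 1: extracting the precise LDP rate $\KL{p'}{q}$ with $p',q$ in the form (\ref{eqqthrm:C^p}) requires tracking how the slack permitted by niceness propagates through the multinomial approximation and inclusion--exclusion, without spoiling either the $O_k(k^{-1}\ln^{-2}k)$ precision in $q$ or the $O_k(k^{-1}\ln^{-7/8}k)$ shift from $p$ to $p'$. In particular, one must separate the contribution of vertex-level inhomogeneity in $q_v$ (which shifts the effective $p$) from the global rainbow rate (which determines $q$), and verify that the quadratic term of the Taylor expansion of $\KL{\cdot}{q}$ absorbs the residuals.
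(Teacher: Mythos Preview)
Your approach for Part~2 is fine and matches the paper. Parts~1 and~3, however, each contain a genuine gap.

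\medskip
\textbf{Part 3 gives the wrong rate.} Your argument bounds the stronger event $\{\hat\cV>0\}=\{\exists(i,j):|\cV_{ij}|>n/k^{2.9}\}$ by a union bound. But for a single pair, $|\cV_{ij}|$ is (after the binomial approximation) roughly $\Bin(n/k,k^{-2+o_k(1)})$, whose mean $n/k^{3+o_k(1)}$ is only a factor $k^{0.1+o_k(1)}$ below the threshold $n/k^{2.9}$. Chernoff therefore gives at best
\[
\pr\bigl[|\cV_{ij}|>n/k^{2.9}\bigr]\le\exp\bigl(-\Theta_k(n\ln k/k^{2.9})\bigr),
\]
and the union bound over $k^2$ pairs does not improve this. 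The rate $n\ln k/k^{2.9}$ is far smaller than the required $n\ln^{1/9}k/k$; in fact it is $o_k(n/k)$, so it would not beat the $\KL{p'}{q}=\Theta_k(1/k)$ term downstream. The paper avoids this by \emph{not} bounding $\{\hat\cV>0\}$. Instead it bounds $\{\hat\cV>n/(k\ln^{3/4}k)\}$ directly via a first-moment argument: if this event occurs one can exhibit disjoint sets $S_{ij}\subset V_i$ with $|S_{ij}|>n/k^{2.9}$ and $\sum|S_{ij}|\sim n/(k\ln^{3/4}k)$, all consisting of $j$-vacant vertices. The vacancy probability contributes $k^{-1.95\sum|S_{ij}|}$, while the entropy of choosing the $S_{ij}$ is at most $\exp[\sum|S_{ij}|\ln(2\eul k^{1.9})]$ (here the lower bound $|S_{ij}|>n/k^{2.9}$ is what controls the per-vertex entropy). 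The product is $\exp[-\Theta_k(\ln k)\sum|S_{ij}|]=\exp[-\Theta_k(n\ln^{1/4}k/k)]$, which suffices.

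\medskip
\textbf{Part 1's uniformity claim fails.} The niceness conditions~(\ref{eqrhoNice}),~(\ref{eqmuNice}) are $\ell_2$ bounds, so they only give $|\rho_i-1/k|,|\mu_{ij}-\bar\mu_{ij}|=O(k^{-2}\ln^{-1/3}k)$ entrywise. For such an entry, $(1-\mu_{ij}/\rho_i)^d=k^{-2}\exp(\pm\Theta(\ln^{2/3}k))$, whence $1-q_v=\sum_j(1-\mu_{ij}/\rho_i)^d$ can differ from $1/k$ by a factor $\exp(\Theta(\ln^{2/3}k))$, not by $O(\ln^{-2}k)$. So ``$q_v=q+O_k(k^{-1}\ln^{-2}k)$ uniformly'' is false. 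The paper handles this by splitting at the level of \emph{pairs} $(i,j)$, not vertices: it introduces a set $\cI$ of ``good'' pairs (those with $|\rho_i-1/k|,|\mu_{ij}-\bar\mu_{ij}|\le O(k^{-2}\ln^{-4}k)$), notes that $|\brk k^2\setminus\cI|=O(\ln^{9}k)$ by the $\ell_2$ bound, and writes the number $U$ of non-rainbow vertices as $U_\cI\le U\le U_\cI+U_{\bar\cI}$. Here $U_\cI$ (vacancies for good pairs) is genuinely close to $\Bin(n,1/k)$, while $U_{\bar\cI}$ (vacancies for bad pairs) has mean only $O(n\ln^{O(1)}k/k^{2.9})$ and so exceeds $n/(k\ln^{7/8}k)$ with probability $\exp[-\Omega_k(n\ln^{1/8}k/k)]$. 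It is this last estimate, not ``concentration of $R$ around its mean'', that produces the $\Omega_k(\ln^{1/8}k/k)$ term inside the minimum in~(\ref{eq:target4NoOfComplete}). Splitting by vertex as you suggest would leave $O(n\ln^{O(1)}k/k)$ ``bad'' vertices, which is too many to absorb into a shift of size $O(k^{-1}\ln^{-7/8}k)$.
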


We defer the proof of \Prop~\ref{Prop_vacant} to \Sec~\ref{Sec_vacant}.
In addition, in \Sec~\ref{Sec_spanned} we are going to prove that the $j$-vacant vertices do not
span a lot of edges \whp\
More precisely, we have

\begin{proposition}\label{Prop_spanned}
With the notation and assumptions of \Prop~\ref{Prop_vacant},
let $\cV_{ij}'=\cV_{ij}\setminus\cV^*$ if $|\cV_{ij}|\leq n/k^{2.9}$, while $\cV_{ij}'=\emptyset$ otherwise.
Moreover, for each $j\in\brk k$ let $E_j$ be the number of edges 
spanned by $\bigcup_{i\in\brk k}\cV_{ij}'$ and set $E=\sum_{j\in\brk k}E_j$.
Then in $\cG(\sigma,\mu)$ we have
	$$
	\pr\brk{E>\frac n{k\ln^{4/5}k}}\leq\exp\brk{-\Omega_k(\ln^{1/9}k/k)n}.
	$$
\end{proposition}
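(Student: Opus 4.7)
The plan is to bound $\pr\brk{E>T}$ (with $T=n/(k\ln^{4/5}k)$) by a first-moment union bound over ``certificates''. Let $E^*\geq E$ count all pairs $(e,j)$ such that $e=uv$ is an edge of $\vec\Gamma_{\sigma,\mu}$ with $j\not\in\cbc{\sigma(u),\sigma(v)}$ and both $u,v$ are $j$-vacant, without requiring $u,v\in W_j$. Then $E\leq E^*$ because each edge contributing to $E$ does so for a unique color $j$: the sets $W_1,\ldots,W_k$ are pairwise disjoint since any $v\in W_j$ is $j$-vacant but outside $\cV^*$ and therefore cannot be $j'$-vacant for $j'\neq j$. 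Thus $\pr\brk{E>T}\leq\pr\brk{E^*\geq T}\leq\Erw\brk{\binom{E^*}{T}}$, and it suffices to bound the expected number of $T$-subsets of bad $(e,j)$ pairs.

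For a fixed $T$-subset let $F=\cbc{e_l}$ with $|F|=T'\leq T$. Since $T\leq n/(2k)$ and $\mu$ satisfies~\eqref{eqplantedmu}, \Lem~\ref{lem_edges_cond} yields $\pr\brk{F\subset\vec\Gamma_{\sigma,\mu}}\leq(5/(dn))^{T'}$. Conditional on $F\subset\vec\Gamma_{\sigma,\mu}$, each of the $2T$ endpoint-color vacancy events has marginal probability at most $(1-\mu_{ij}/\rho_i)^{d-O(1)}\leq k^{-2+o_k(1)}$ (the $O(1)$ accounts for clones tied up by $F$), exactly as in the Chernoff-bound argument underlying \Lem~\ref{lem_s_less}. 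Since the number of $V_i$-$V_j$ edges is fixed by $\mu$, the indicators that different vertices in $V_i$ have no $V_j$-neighbor are negatively correlated---a balls-in-bins fact provable through the hypergeometric-to-binomial comparison of \Lem~\ref{lem_balls_bins_cap}---so their joint probability is at most the product of marginals, yielding $k^{-4T+o(T)}$ over all $2T$ vacancies. Hence the joint probability associated with a $T$-subset is at most $(5/(dn))^{T'}\cdot k^{-4T+o(T)}$.

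Since there are at most $\binom{\binom{n}{2}(k-2)}{T}\leq(en^2k/(2T))^T$ such $T$-subsets, the dominant case $T'=T$ yields
\begin{align*}
\pr\brk{E>T}\leq\bc{\frac{en^2k}{2T}\cdot\frac{5}{dn}\cdot k^{-4+o(1)}}^T=\bc{\frac{5en}{2Tdk^{3-o(1)}}}^T,
\end{align*}
with smaller $T'$ contributing at the same exponential rate. Plugging in $T=n/(k\ln^{4/5}k)$ and $d\geq 2k\ln k$ gives $5en/(2Tdk^3)=5e/(4k^3\ln^{1/5}k)$, whence $\pr\brk{E>T}\leq\exp\bc{-(3+o(1))T\ln k}=\exp\bc{-(3+o(1))n\ln^{1/5}k/k}$; since $\ln^{1/5}k\gg\ln^{1/9}k$ for large $k$, this is strictly stronger than the required bound. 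The main technical obstacle will be making the vacancy product bound fully rigorous, since the $2T$ conditional vacancy events are not literally independent. The cleanest route is to first reveal all edges incident to each color class $V_j$ and then invoke negative association of the resulting ``no-$V_j$-neighbor'' hypergeometric indicators, along the lines of the computations in \Sec~\ref{Sec_pro_p_sep}.
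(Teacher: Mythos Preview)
Your overall plan---a union bound via $\Erw\brk{\binom{E^*}{T}}$---is close in spirit to the paper's argument, but the vacancy estimate $k^{-4T+o(T)}$ is not valid, and this is a genuine gap rather than a technicality.

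Two problems compound. First, the claim that each marginal is $(1-\mu_{ij}/\rho_i)^{d-O(1)}$ with ``$O(1)$ clones tied up by $F$'' is false: nothing stops a vertex $v$ from being an endpoint of up to $d\sim2k\ln k$ edges of $F$, in which case almost all its clones are tied up and its conditional $j$-vacancy is essentially automatic, with marginal close to $1$ rather than $k^{-2}$. Second, a product-of-marginals bound (even granting negative association) runs over the \emph{distinct} $(v,j)$ constraints, of which there may be as few as $2T/d$, not $2T$. Both issues are realized simultaneously by certificates where all $T$ pairs share one color $j$ and the edges form a perfect matching of $ds=2T$ clones on just $s=2T/d$ vertices: there the vacancy factor is $1$, so your per-certificate bound is only $(5/(dn))^T$. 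Multiplying this worst-case probability by the \emph{total} number of $T$-subsets---which is dominated by generic configurations with $2T$ distinct endpoints---yields a quantity $\gg1$; the ``max probability times total count'' inequality is too crude because the maximizers of probability and of count are completely different certificates.

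The paper avoids this by parametrizing certificates as (vacant vertex sets $S_{ij}$, edge set $Q$), keeping the hypothesis $|S_{ij}|\leq n/k^{2.9}$ that your passage to $E^*$ discards. It then obtains a vacancy factor $k^{-(2+o_k(1))s}$ depending on $s=\sum_{ij}|S_{ij}|$ (Lemma~\ref{Lemma_ESQ}) and, crucially, uses the size cap $n/k^{2.9}$ to bound the number of admissible edge sets $Q$ by $\binom{\binom{dn/k^{1.9}}{2}}{T/k}^k$ (Corollary~\ref{Cor_ESQ}, eq.~(\ref{eqCor_ESQ1})). Summing over $s$ then handles precisely the trade-off between vacancy cost and edge-placement entropy that your single uniform bound misses. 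If you want to salvage your edge-centric approach, you would have to stratify the $T$-subsets by the number of distinct $(v,j)$ constraints and redo the count for each stratum---at which point you are essentially reproducing the paper's proof.

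A minor separate slip: you count potential edges as $\binom{n}{2}$ (vertex pairs) but apply the clone-edge probability $5/(dn)$ from \Lem~\ref{lem_edges_cond}; these are inconsistent by a factor $d^2$ per edge. Also, the relevant assumption on $\mu$ here is~(\ref{eqmuNice}), not~(\ref{eqplantedmu}); \Lem~\ref{lem_edges_cond} still applies via \Rem~\ref{rem_edges_cond}.
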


%
%

\medskip\noindent
{\em Proof of \Prop~\ref{prop:NoOfCompl+ItsDistr}.}
Given $\rho,\sigma$, let $M$ be the set of all probability distributions $\mu$ 
on $\brk k\times\brk k$ that satisfy~(\ref{eqmuNice}) such that $(\rho,\mu)$ is $(d,n)$-admissible.
Then Bayes' formula shows that
	\begin{eqnarray}\label{eqBayes}
	\pr_{\Gnd}[A_{p,\sigma},C_{p,\sigma}=i, Z'=0|\sigma \in \Lambda]&\leq&(1+o(1))\max_{\mu\in M}\pr_{\cG(\sigma,\mu)}[A_{p,\sigma},C_{p,\sigma}=i, Z'=0].
	\end{eqnarray}

First assume that $p\in\Delta=[1-\frac{20}k,1-\frac1{20k}].$
With $(\cV_{ij}')_{i\neq j}$ the sets from \Prop~\ref{Prop_spanned}, let $\cV'=\bigcup_{i\neq j}\cV_{ij}'$.
To get an intuitive picture, observe that ${\cal V}'$ contains all the sets ${\cal V}_{ij}$ which are not
``exceedingly'' large.  Let $B_{p,\sigma}$ be the event that either $\abs{\cV'}<(1-p)(1-\ln^{-2/3}k)n$ or $E>nk^{-1}\ln^{-4/5}k$.
Then \Prop s~\ref{Prop_vacant} and~\ref{Prop_spanned} and~(\ref{eqBayes}) imply
	\begin{equation}\label{eqprop:NoOfCompl+ItsDistr1}
	\pr\brk{A_{p,\sigma},B_{p,\sigma}|\sigma\in\Lambda}\leq\exp(-\Omega_k(\ln^{1/9}k/k)n).
	\end{equation}	
Suppose that $\sigma\in\Lambda$ and that the event $A_{p,\sigma}$ occurs but $B_{p,\sigma}$ does not.
Let $\cV''$ be the set of vertices $v\in\cV'$ that are $j$-vacant and that are not adjacent to any other $j$-vacant vertex in $\cV'$  for some $j\in\brk k$.
Because $B_{p,\sigma}$ does not occur, we have $|\cV''|\geq(1-p)(1-2\ln^{-2/3}k)n$.
Furthermore, for any subset $S\subset\cV''$ there exists a $k$-coloring $\tau$ such that $\tau(v)\neq\sigma(v)$ for all $v\in S$ and $\tau(v)=\sigma(v)$ for all $v\in V\setminus S$.
More precisely, since every vertex $v\in S$ is $j$-vacant for some $j\neq\sigma(v)$, we can set $\tau(v)=j$.
This yields a $k$-coloring because, by the construction of $\cV''$, no two vertices in $S$ that receive color $j$ under $\tau$ are adjacent.
Let $\cC_*(\sigma)$ denote the set of colorings $\tau$ that can be obtained in this way.

Because $|\cV_{ij}'|\leq n/k^{2.9}$ for all $i,j\in\brk k$, we have $\cC_*(\sigma)\subset\cC^*(\sigma)$.
In addition, any coloring $\tau\in\cC_*(\sigma)$ has the exact same rainbow vertices as $\sigma$ does, and thus their number is $pn$.
Hence, if all the colorings in $\cC_*(\sigma)$ are nice, we have $C_{p,\sigma}\geq|\cC_*(\sigma)|\geq 2^{y_p}$.
Therefore, (\ref{eqprop:NoOfCompl+ItsDistr1}) yields
	\begin{eqnarray}
	\pr\brk{C_{p,\sigma}<2^{y_p},A_{p,\sigma},Z'=0|\sigma\in\Lambda}&\leq&\pr\brk{A_{p,\sigma},B_{p,\sigma}|\sigma\in\Lambda}
		\leq\exp(-\Omega_k(\ln^{1/9}k/k)n).\label{eqprop:NoOfCompl+ItsDistr2}
	\end{eqnarray}
Together with~(\ref{eq:RedSum2TwoProbs}) and~(\ref{eq:target4NoOfComplete}), (\ref{eqprop:NoOfCompl+ItsDistr2}) implies
	\begin{equation}\label{eqprop:NoOfCompl+ItsDistr3}
	\psi_p(\sigma)\leq\exp\left[-\KL{p'}q n+O(\ln n)\right]\quad\mbox{[as $\KL{p'}q=\Theta_k(1/k)$ for $p\in\Delta$].}
	\end{equation}
Summing~(\ref{eqprop:NoOfCompl+ItsDistr3}) over all $\sigma$ and using the bound~(\ref{eqFirstMomentNaive}) on the expected number
yields the first part of \Prop~\ref{prop:NoOfCompl+ItsDistr}.

Finally, if $p\not\in\Delta$, then for any $p',q$ satisfying~(\ref{eqqthrm:C^p}) we have $\KL{p'}q\geq0.94/k$.
Therefore, the first part of \Prop~\ref{Prop_vacant} implies together with (\ref{eqFirstMomentNaive}) and~(\ref{eqBayes}) that
	\begin{eqnarray*}
	\frac{1}{n}\ln\sum_{\sigma\in [k]^{n}}\sum_{i\geq1}\frac{\pr[\sigma \in \Lambda,A_{p,\sigma},C_{p,\sigma}=i, Z'=0]}{i}
		&\leq&\frac{1}{n}\ln\sum_{\sigma\in [k]^{n}}\pr[\sigma \in \Lambda,A_{p,\sigma},Z'=0]\\
		&\leq&\frac1n\ln\Erw\brk{\Zkc}-\frac{0.94}k+o(1)\leq-\frac{1}{4k},
	\end{eqnarray*}
as claimed.
\qed

\subsection{Proof of \Prop~\ref{Prop_vacant}}\label{Sec_vacant}

Clearly, whether a vertex is $i$-vacant or not only depends on the {\em colors} of its neighbors.
Recall from \Sec~\ref{Sec_Prop_goodFirstMoment} that for a given map $\sigma$ and a probability distribution $\mu$ on $\brk k\times\brk k$ we denote by
$\vec\Gamma_{\sigma,\mu}:V\times\brk d\ra V\times\brk d$ a random configuration that respects $\sigma$ and $\mu$.
Furthermore, because we are only interested in the colors of the neighbors of the vertices, we let
	$\vec\Gamma_{\sigma,\mu}^*:V\times\brk d\ra\brk k$ map each clone $(v,j)$ to the color $i$ such that $\vec\Gamma_{\sigma,\mu}\in V_i\times\brk d$.
Let $V_i=\sigma^{-1}(i)$ for all $i\in\brk k$.

To describe the distribution of the random map $\vec\Gamma_{\sigma,\mu}^*$ in simpler terms,
let $\vec g_{\sigma,\mu}=(g(v,j))_{j\in\brk k,v\in V_j}$ be a family of independent $\brk k$-valued random variables such that
	$$\pr\brk{g(v,j)=i}=\frac{\mu_{ij}}{\rho_i}\qquad\mbox{for }i,j\in\brk k,v\in V_j.$$
Let $\cB_\mu$ be the event that
	$\abs{\cbc{v\in V_j:g(v,j)=i}}=\mu_{ij}dn$ for all $i,j\in\brk k.$
Then we have the following multivariate analogue of \Lem~\ref{lem_balls_bins_cap} (the binomial approximation to the hypergeometric distribution).

\begin{fact}\label{Fact_g}
For any event $\cE$ we have
	$\pr\brk{\vec\Gamma_{\sigma,\mu}^*\in\cE}=\pr\brk{\vec g_{\sigma,\mu}\in\cE|\cB_{\mu}}\leq n^{O(1)}\cdot\pr\brk{\vec g_{\sigma,\mu}\in\cE}.$
\end{fact}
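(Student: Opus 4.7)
The plan is to prove the claim as a multivariate analogue of \Lem~\ref{lem_balls_bins_cap}, in two stages. Let
$\Phi_\mu=\cbc{\phi:V\times\brk d\ra\brk k\,:\,\abs{\cbc{(v,l)\in V_j\times\brk d:\phi(v,l)=i}}=\mu_{ij}dn\mbox{ for all }i,j\in\brk k}$
be the set of $\mu$-balanced colour assignments to the clones. The key observation is that both $\vec\Gamma_{\sigma,\mu}^*$ and $\vec g_{\sigma,\mu}$ conditional on $\cB_\mu$ coincide with the uniform distribution on $\Phi_\mu$.

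For $\vec g_{\sigma,\mu}\,|\,\cB_\mu$, this is immediate from independence: $\pr[\vec g_{\sigma,\mu}=\phi]$ factorises into clone-wise factors, and for any $\phi\in\Phi_\mu$ it collapses to a quantity depending only on $\mu$ and $\rho$ (because the number of clones falling into each $(i,j)$-cell is then fixed at $\mu_{ij}dn$). Since $\cB_\mu=\cbc{\vec g_{\sigma,\mu}\in\Phi_\mu}$, conditioning produces the uniform measure on $\Phi_\mu$. For $\vec\Gamma_{\sigma,\mu}^*$, the configuration $\vec\Gamma_{\sigma,\mu}$ is by construction uniform over all configurations respecting $(\sigma,\mu)$, and for any target $\phi\in\Phi_\mu$ the number of configurations $\Gamma$ with $\Gamma^*=\phi$ equals $\prod_{1\leq i<j\leq k}(\mu_{ij}dn)!\cdot\prod_{i\in\brk k}(\mu_{ii}dn-1)!!$: for each pair $i<j$, a bijection between the $\mu_{ij}dn$ colour-$j$-labelled $V_i$-clones and the $\mu_{ji}dn=\mu_{ij}dn$ colour-$i$-labelled $V_j$-clones, plus (for each $i$) a perfect matching among the $\mu_{ii}dn$ colour-$i$-labelled $V_i$-clones. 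This product depends only on $\mu$ and not on $\phi$, so $\vec\Gamma_{\sigma,\mu}^*$ is uniform on $\Phi_\mu$ as well, yielding the asserted equality.

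The inequality then reduces to establishing $\pr[\cB_\mu]\geq n^{-O(1)}$, because the identity $\pr[\vec g_{\sigma,\mu}\in\cE|\cB_\mu]=\pr[\vec g_{\sigma,\mu}\in\cE\cap\cB_\mu]/\pr[\cB_\mu]\leq\pr[\vec g_{\sigma,\mu}\in\cE]/\pr[\cB_\mu]$ is trivial. Since the colours assigned to clones of different colour classes are independent, $\cB_\mu$ is the intersection of $k$ independent events, one per colour class $j\in\brk k$. The $j$-th event asserts that a multinomial sample of $\rho_jdn$ trials with $k$ outcomes attains its (integral, by $(d,n)$-admissibility) mean vector $(\mu_{ij}dn)_{i\in\brk k}$ exactly, which by Stirling's formula has probability $\Theta(n^{-(k-1)/2})$; multiplying across $j\in\brk k$ yields $\pr[\cB_\mu]=\Theta(n^{-k(k-1)/2})=n^{-O(1)}$. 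No step of this argument is technically deep; the only mild care required is in the bookkeeping of paragraph two, to make transparent that the number of pre-images of $\phi$ under the projection $\Gamma\mapsto\Gamma^*$ really does not depend on $\phi$, and to keep track of the integrality of all multinomial means via $(d,n)$-admissibility.
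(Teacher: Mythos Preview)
Your proof is correct. The paper does not actually prove Fact~\ref{Fact_g}; it merely states it as the ``multivariate analogue of \Lem~\ref{lem_balls_bins_cap}'' and moves on. Your argument supplies precisely the details the paper omits: the uniformity of both $\vec\Gamma_{\sigma,\mu}^*$ and $\vec g_{\sigma,\mu}\mid\cB_\mu$ on the set $\Phi_\mu$ of balanced colour assignments (your pre-image count $\prod_{i<j}(\mu_{ij}dn)!\prod_i(\mu_{ii}dn-1)!!$ matches the quantity $\cM_\mu$ from the proof of \Lem~\ref{Lemma_partition}), followed by the polynomial lower bound on $\pr[\cB_\mu]$ via Stirling. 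This is the natural and intended route.

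One cosmetic remark: in the present context $\mu_{jj}=0$ (since $\sigma$ is a proper colouring), so each per-class multinomial has $k-1$ rather than $k$ outcomes and the exponent $k(k-1)/2$ is slightly off; but as you note, any $n^{-O(1)}$ bound suffices, so this does not affect the argument.
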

%

Let us call $v\in V$ \bemph{$j$-vacant} in $\vec g_{\sigma,\mu}$ if $\sigma(v)\neq j$ and $g(v,l)\neq j$ for all $l\in\brk d$.
Armed with Fact~\ref{Fact_g}, we can analyze the number of $j$-vacant vertices fairly easily.

\begin{lemma}\label{Lemma_dobulyVacant} 
Let $U^*$ be the number of vertices $v\in V$ such that for two distinct colors $j,j'\in\brk k\setminus\cbc{\sigma(v)}$,
$v$ is both $j$-vacant and $j'$-vacant in $\vec g_{\sigma,\mu}$.
Then 
	$$ \pr\brk{U^*>\frac{n}{k\ln^{3/4}k}}\leq \exp\left[-n\cdot\Omega_k(\ln^{1/9} k/k)\right].$$
\end{lemma}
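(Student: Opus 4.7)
The plan is to exploit the independence structure of $\vec g_{\sigma,\mu}$ and apply a Chernoff bound. Specifically, I would first use Fact~\ref{Fact_g} to pass from $\vec\Gamma^*_{\sigma,\mu}$ to $\vec g_{\sigma,\mu}$ at the cost of a factor of $n^{O(1)}$, which is harmless given the exponential tail bound we are aiming for. Under $\vec g_{\sigma,\mu}$ the crucial observation is that whether a given vertex $v$ is doubly-vacant depends only on the colors $(g(v,l))_{l\in[d]}$ of its own clones, which are independent of the variables attached to other vertices. Therefore, letting $X_v$ be the indicator that $v$ is $j$- and $j'$-vacant for some $j\neq j'$ in $\brk k\setminus\{\sigma(v)\}$, the random variables $(X_v)_{v\in V}$ are mutually independent, and $U^*=\sum_v X_v$ is a sum of independent Bernoulli random variables.

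Next I would estimate $\Erw[X_v]$. Fix $v$ with $\sigma(v)=s$, and for each unordered pair $\{j,j'\}\subset\brk k\setminus\{s\}$ note that
\[
\pr[v\text{ is }j\text{- and }j'\text{-vacant}]=\left(1-\frac{\mu_{js}+\mu_{j's}}{\rho_s}\right)^d.
\]
Conditions~(\ref{eqrhoNice}) and~(\ref{eqmuNice}) yield $\mu_{is}/\rho_s=(1+O_k(\ln^{-1/3}k))/(k-1)$ for $i\neq s$, so the expression inside the parentheses equals $1-2/(k-1)+O_k(1/(k\ln^{1/3}k))$. Using $\ln(1-x)=-x-x^2/2+O(x^3)$ and $d\leq(2k-1)\ln k$, a direct computation gives
\[
\left(1-\frac{\mu_{js}+\mu_{j's}}{\rho_s}\right)^d\leq k^{-4}\exp\bigl(O_k(\ln^{2/3}k)\bigr)=k^{-4+o_k(1)}.
\]
A union bound over the $\binom{k-1}{2}\leq k^2/2$ pairs then yields $\pr[X_v=1]\leq k^{-2+o_k(1)}$, whence $\mu:=\Erw[U^*]\leq n\cdot k^{-2+o_k(1)}$.

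Finally, I would apply Lemma~\ref{Lemma_Chernoff} in the form $\pr[X>t\mu]\leq\exp[-t\mu\ln(t/\eul)]$ with target $t\mu=n/(k\ln^{3/4}k)$. Since $\mu\leq n\cdot k^{-2+o_k(1)}$, we have $t\geq k^{1-o_k(1)}/\ln^{3/4}k$, so $\ln(t/\eul)=(1-o_k(1))\ln k$. Therefore
\[
\pr\!\left[U^*>\frac{n}{k\ln^{3/4}k}\right]\leq n^{O(1)}\exp\!\left[-\frac{n}{k\ln^{3/4}k}\cdot(1-o_k(1))\ln k\right]\leq\exp\!\left[-n\cdot\Omega_k\!\bigl(\ln^{1/4}k/k\bigr)\right],
\]
which is even stronger than the required $\exp[-n\cdot\Omega_k(\ln^{1/9}k/k)]$. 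The main technical nuisance is tracking how the multiplicative slack $1+O_k(\ln^{-1/3}k)$ inherited from~(\ref{eqrhoNice})--(\ref{eqmuNice}) propagates through the $d$-th power; since $d=\Theta(k\ln k)$ this produces a factor of $\exp(O_k(\ln^{2/3}k))=k^{o_k(1)}$, which is just small enough not to spoil the Chernoff estimate and which explains the weakened $\ln^{1/9}k$ in the stated bound.
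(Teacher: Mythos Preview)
Your proof is correct and follows essentially the same approach as the paper's: bound the per-vertex probability of being doubly-vacant (via the per-pair probability $(1-(\mu_{ij}+\mu_{ij'})/\rho_i)^d$ and a union bound over pairs), use independence across vertices to dominate $U^*$ by a binomial, and apply the Chernoff bound. One minor remark: since the lemma is already stated for $\vec g_{\sigma,\mu}$ rather than $\vec\Gamma_{\sigma,\mu}^*$, your initial appeal to Fact~\ref{Fact_g} is unnecessary (though harmless); the paper simply works directly with $\vec g_{\sigma,\mu}$ and uses the cruder estimate $(\mu_{ij}+\mu_{ij'})/\rho_i\geq 1.99/k$ together with $d\geq 1.99k\ln k$ to reach the same conclusion.
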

\begin{proof}
For a vertex $v$ and colors $j,j'\in\brk k\setminus\cbc{\sigma(v)}$, $j\neq j'$ let
	$p_{v,j,j'}=\pr\brk{\mbox{$g(v,l)\not\in\cbc{j,j'}$ for all $l\in\brk d$}}.$
Because the $(g(v,l))_{l\in\brk d}$ are mutually independent, we have
	$$p_{v,j,j'}=\bc{1-\frac{\mu_{ij}+\mu_{ij'}}{\rho_i}}^d.$$
Our assumptions~(\ref{eqrhoNice}) and~(\ref{eqmuNice}) on $\rho$ and $\mu$ ensure that $(\mu_{ij}+\mu_{ij'})/\rho_i\geq1.99/k$.
As, moreover, $d\geq1.99k\ln k$, we obtain
	$$p_{v,j,j'}\leq \bc{1-0.99/k}^{1.99k\ln k}\leq k^{-1.9}.$$
Because this estimate holds for all $v,j,j'$ and since the $(g(v,l))_{v\in V,l\in\brk d}$ are mutually independent,
we conclude that $U^*$ is stochastically dominated by a binomial random variable $\Bin(n,k^{-1.9})$.
Therefore, the Chernoff bound (Lemma \ref{Lemma_Chernoff}) yields
	\begin{eqnarray*}
	\pr\brk{U^*>\frac{n}{k\ln^{3/4}k}}&\leq&\pr\brk{\Bin(n,k^{-1.9})>\frac{n}{k\ln^{3/4}k}}\\
		&\leq&\exp\brk{-\frac{n}{k\ln^{3/4}k}\cdot\ln\bcfr{k^{0.9}}{\eul\ln^{3/4}k}}\leq
			\exp\brk{-n\cdot\Omega_k(\ln^{1/9}k/k)},
	\end{eqnarray*}
as claimed.
\qed\end{proof}

\begin{lemma}\label{Lemma_vacantKL} 
Let $U$ be the number of $v\in V$ that are $j$-vacant in $\vec g_{\sigma,\mu}$ for some color $j\in\brk k\setminus\cbc{\sigma(v)}$.
For any $p\in[0,1]$ there exist $p',q$ satisfying~(\ref{eqqthrm:C^p}) such that
	$$\frac1n\ln\pr\brk{U=(1-p)n}\leq\max\cbc{-\KL{p'}{q'},-\Omega_k(\ln^{1/8}k/k)}+o(1).$$
\end{lemma}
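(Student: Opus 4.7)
The approach hinges on the fact that under the product measure $\vec g_{\sigma,\mu}$ the indicators $X_v := \mathbf{1}\{v\text{ is non-rainbow}\}$ are mutually independent across $v$, since the event depends only on the $d$ independent coordinates $(g(v,l))_{l\in[d]}$. Thus $U=\sum_v X_v$ is a sum of independent Bernoullis, and the Chernoff--Cram\'er bound (via the MGF identity $\Erw[e^{\lambda U}]=\prod_v((1-\beta_{\sigma(v)})+\beta_{\sigma(v)}e^\lambda)$ and Jensen's inequality applied to the concave logarithm) yields, with $\beta:=\Erw[U]/n$,
\[
\frac{1}{n}\ln\pr\brk{U=(1-p)n}\leq -\KL{1-p}{\beta}+o(1)=-\KL{p}{1-\beta}+o(1),
\]
where the second equality is the binary-entropy symmetry $\KL{1-p}{\beta}=\KL{p}{1-\beta}$. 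Setting $p':=p$ and $q:=1-\beta$, the assertion thus reduces to establishing $q=1-1/k+O_k(k^{-1}\ln^{-2}k)$.

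To estimate $\beta$, fix $v\in V_i$ and apply inclusion--exclusion to the event that $v$ is non-rainbow:
\[
\beta_i=\sum_{j\neq i}(1-x_{ji})^d-\sum_{j<j'}(1-x_{ji}-x_{j'i})^d+\cdots,\qquad x_{ji}:=\mu_{ji}/\rho_i.
\]
Write $x_{ji}=\frac{1}{k-1}+\eta_{ji}$; admissibility of $(\rho,\mu)$ forces $\sum_{j\neq i}\eta_{ji}=0$, while the niceness hypotheses (\ref{eqrhoNice})--(\ref{eqmuNice}) give the row-wise $\ell^2$ bound $\sum_j\eta_{ji}^2=O_k(k^{-2}\ln^{-2/3}k)$. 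The $m$-th order inclusion--exclusion term is at most $\binom{k-1}{m}(1-m/(k-1))^d\leq k^{-m}/m!$, so the terms with $m\geq 2$ contribute at most $O(k^{-2})$ in total. The leading term equals $(k-1)(1-1/(k-1))^d\cdot\frac{1}{k-1}\sum_j\exp(d\ln(1-\eta_{ji}(k-1)/(k-2)))$, and since $d=(2k-1)\ln k-O(1)$ we have $(k-1)(1-1/(k-1))^d=1/k+O_k(\ln k/k^2)$. It remains to analyze the exponential sum by splitting indices $j$ according to whether $|d\eta_{ji}|\leq 1$ (``good'') or not (``bad''): the $\ell^2$ bound forces at most $O_k(\ln^{4/3}k)$ bad indices, each contributing at most $(1-1/(k-1))^d\exp(|d\eta_{ji}|)\leq k^{-2+O_k(\ln^{-1/3}k)}$ (using that $\max_j|d\eta_{ji}|=O(\ln^{2/3}k)$ by the same $\ell^2$ bound), for a combined total of $o_k(k^{-1}\ln^{-2}k)$; for the good indices, second-order Taylor expansion of $\exp$ uses $\sum_j\eta_{ji}=0$ to cancel the linear term and bounds the quadratic term by $O_k(d^2\sum_j\eta_{ji}^2)=O_k(\ln^{4/3}k)$, contributing at most $O_k(\ln^{4/3}k/k^2)$ after multiplication by $(1-1/(k-1))^d$. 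Summing all estimates and using $\ln^{4/3}k/k^2=o_k(k^{-1}\ln^{-2}k)$ yields $\beta_i=1/k+o_k(k^{-1}\ln^{-2}k)$ uniformly in $i$, hence $\beta=\sum_i\rho_i\beta_i=1/k+O_k(k^{-1}\ln^{-2}k)$.

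The $-\Omega_k(\ln^{1/8}k/k)$ alternative in the $\max$ is automatically dominated by the Chernoff estimate just established, so it plays no active role in the proof; its inclusion in the statement amounts to a convenient uniform baseline decay rate that streamlines the invocation of the lemma in \Prop~\ref{Prop_vacant}. The principal technical obstacle is achieving the precision $O_k(k^{-1}\ln^{-2}k)$ on $\beta_i$: the individual quantities $|d\eta_{ji}|$ can be as large as $\Theta(\ln^{2/3}k)$, invalidating naive Taylor expansion of $\exp$ around zero; the good/bad dichotomy, justified by the row-wise $\ell^2$ constraint on $(\eta_{ji})_j$, is the device that keeps both the leading behaviour and the error rigorous.
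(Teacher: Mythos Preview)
Your approach is correct and genuinely different from the paper's. The paper decomposes $U=U_{\cI}+U_{\bar\cI}$, where $\cI$ is the set of ``regular'' pairs $(i,j)$ with $|\rho_i-1/k|\le k^{-1}\ln^{-4}k$ and $|\mu_{ij}-k^{-1}(k-1)^{-1}|\le k^{-1}(k-1)^{-1}\ln^{-4}k$; it sandwiches $U_{\cI}$ between two binomials with parameters $k^{-1}(1+O_k(\ln^{-4}k))$ and controls $U_{\bar\cI}$ by a separate Chernoff tail bound (this is where the $-\Omega_k(\ln^{1/8}k/k)$ alternative originates), finally absorbing the small $U_{\bar\cI}$ into the $O_k(k^{-1}\ln^{-7/8}k)$ slack in $p'$. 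You bypass the decomposition entirely: the single Jensen step on the log-MGF replaces the heterogeneous Bernoulli sum by a homogeneous one with parameter $\beta=\Erw[U]/n$, and then everything hinges on pinning down $\beta$ to within $o_k(k^{-1}\ln^{-2}k)$. Your route is shorter and lets you take $p'=p$; the paper's route is more modular and makes the origin of the $\max$ alternative transparent, but needs the slack in $p'$.

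Two small points. First, your row-wise bound $\sum_{j}\eta_{ji}^2=O_k(k^{-2}\ln^{-2/3}k)$ is correct, but the cleanest justification is that since $\sum_{j}\eta_{ji}=0$ we have $\sum_j\eta_{ji}^2=\rho_i^{-2}\sum_j(\mu_{ji}-\rho_i/(k-1))^2\le\rho_i^{-2}\sum_j(\mu_{ji}-\bar\mu_{ji})^2\le\rho_i^{-2}\|\mu-\bar\mu\|_2^2$, using that the sum of squared deviations is minimized at the true mean. Second, in the good-index step the linear term $d\sum_{j\text{ good}}\zeta_{ji}$ does \emph{not} vanish: only $\sum_{\text{all }j}\eta_{ji}=0$, so the residual equals $-d\sum_{j\text{ bad}}\zeta_{ji}$, which is $O(\ln^2k)$ (at most $O(\ln^{4/3}k)$ bad indices times $\max_j|d\zeta_{ji}|=O(\ln^{2/3}k)$). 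After multiplying by $(1-1/(k-1))^d\sim k^{-2}$ this is $O(k^{-2}\ln^2k)=o_k(k^{-1}\ln^{-2}k)$, so the conclusion survives; just adjust the wording.
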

\begin{proof}
Let $\cI$ be the set of pairs $(i,j)$ such that
	$$|\rho_i-1/k|\leq k^{-1}\ln^{-4}k\mbox{ and }|\mu_{ij}-k^{-1}(k-1)^{-1}|\leq k^{-1}(k-1)^{-1}\ln^{-4}k.$$
Our assumptions~(\ref{eqrhoNice}) and~(\ref{eqmuNice}) on $\rho,\mu$ ensure that $\abs\cI\geq k^2-2\ln^9k$.
Let $U_{\cI}$ be the number of vertices $v\in V_i$ that are $j$-vacant for a pair $(i,j)\in\cI$, and let
$U_{\bar\cI}$ be the number $j$-vacant $v\in V_i$ with $(i,j)\not\in\cI$.
Then $U_{\cI}\leq U\leq U_{\cI}+U_{\bar\cI}$.
We are going to estimate $U_{\cI}$, $U_{\bar\cI}$ separately.

Assume that $(i,j)\in\cI$.
The probability that a vertex $v\in V_i$ is $j$-vacant in $\vec g_{\sigma,\mu}$ is
	\begin{eqnarray*}
	p_{ij}&=&(1-\mu_{ij}/{\rho_i})^d=(1-1/k+O_k(k^{-1}\ln^{-4}k))^d=\exp(-2\ln k+O_k(\ln^{-4}k)).
	\end{eqnarray*}
Furthermore, if $j'\not\in\cbc{i,j}$ is another index such that $(i,j')\in\cI$, then the probability that $v\in V_i$ is both $j$-vacant and $j'$-vacant in $\vec g_{\sigma,\mu}$ is
	\begin{eqnarray*}
	p_{ijj'}&=&(1-(\mu_{ij}+\mu_{ij'})/{\rho_i})^d=(1-2/k+O_k(k^{-1}\ln^{-4}k))^d=\exp(-4\ln k+O_k(\ln^{-4}k)).
	\end{eqnarray*}
Hence, by inclusion/exclusion the probability that $v$ is $j$-vacant in $\vec g_{\sigma,\mu}$  for some $j$ with $(i,j)\in\cI$ is
	\begin{eqnarray}\nonumber
	p_{i}&=&\abs{\cbc{j\in\brk k\setminus\cbc i:(i,j)\in\cI}}\cdot\exp(-2\ln k+O_k(\ln^{-4}k))\\
		&=&(k+O_k(\ln^9k))\exp(-2\ln k+O_k(\ln^{-4}k))=k^{-1}(1+O_k(\ln^{-4}k)).
			\label{eqLemStochDom1}
	\end{eqnarray}
Because the events $\cbc{\mbox{$v$ is $j$-vacant in $\vec g_{\sigma,\mu}$}}$ are mutually independent for all $v$ by the definition of $\vec g_{\sigma,\mu}$,
(\ref{eqLemStochDom1}) implies that $U_\cI$ is stochastically dominated by a random variable with distribution $\Bin(n,q^*)$ with parameter $q^*=k^{-1}(1+O_k(\ln^{-4}k)))$.
On the other hand, (\ref{eqLemStochDom1}) also implies that $U_\cI$ stochastically dominates a random variable with 
distribution $\Bin(n,q_*)$ with another $q_*=k^{-1}(1+O_k(\ln^{-4}k)))$.
Hence, for any integer $\nu$ we have
	\begin{eqnarray}\label{eqLemStochDom2}
	\min\cbc{\pr\brk{\Bin(n,q^*)=\nu},\pr\brk{\Bin(n,q_*)=\nu}}
	&\leq&\pr\brk{U_{\cI}=\nu}\\
	&\hspace{-6cm}\leq&\hspace{-3cm}\max\cbc{\pr\brk{\Bin(n,q^*)=\nu},\pr\brk{\Bin(n,q_*)=\nu}}.\nonumber
	\end{eqnarray}

To estimate $U_{\bar\cI}$, we observe that our assumption on $\mu,\rho$ ensures that for any $i\in\brk k$, $j\in\brk k\setminus\cbc i$ and any $v\in V_i$ we have
	\begin{eqnarray}\label{eqLemStochDom3}
	\pr\brk{\mbox{$v$ is $j$-vacant in $\vec g_{\sigma,\mu}$}}&=&(1-\mu_{ij}/\rho_i)^d\leq(1-0.99/k)^{1.99k\ln k}\leq k^{-1.9}.
	\end{eqnarray}
Let $\bar n$ be the number of vertices $v$ that belong to a class $V_i$ such that $(i,j)\in\cI$ for some $j\in\brk k$.
Then $\bar n\leq 1.01n\ln^9 k/k$ because $|\brk k^2\setminus\cI|\leq\ln^9 k$ and because~(\ref{eqrhoNice}) ensures that
 $|V_i|=\rho_in\leq1.01/k$ for all $i$.
Hence, (\ref{eqLemStochDom3}) implies that $U_{\bar\cI}$ is stochastically dominated by
a random variable with distribution $\Bin(\lceil1.01n\ln^9 k/k\rceil,\ln^9k/k^{1.9})$, i.e., for any $\nu\geq0$ we have
	\begin{eqnarray}\label{eqLemStochDom4}
	\pr\brk{U_{\bar\cI}\geq\nu}&\leq&
	\pr\brk{\Bin(\lceil1.01n\ln^9 k/k\rceil,\ln^9k/k^{1.9})\geq\nu}.
	\end{eqnarray}
Consequently, \Lem~\ref{Lemma_Chernoff} (the Chernoff bound) gives
	\begin{eqnarray}\label{eqLemStochDom4}
	\pr\brk{U_{\bar\cI}\geq\frac n{k\ln^{7/8}k}}&\leq&
	\exp\brk{-n\Omega_k(\ln^{1/8}k/k)}.
	\end{eqnarray}

Suppose that $\nu=(1-p)n$ is an integer.
Since $U_{\cI}\leq U\leq U_{\cI}+U_{\bar\cI}$, (\ref{eqLemStochDom4})  yields
	\begin{eqnarray}\label{eqLemStochDom5}
	\pr\brk{U=pn}&\leq&\pr\brk{U_{\cI}=n(p+O_k(k^{-1}\ln^{-7/8}k))}+\exp\brk{-n\Omega_k(\ln^{1/8}k/k)}.
	\end{eqnarray}
Hence, consider a number $p'=p+O_k(k^{-1}\ln^{-7/8}k)$.
Then (\ref{eqLemStochDom2}) shows that there exists $q'=k^{-1}(1+O_k(\ln^{-4}k))$ such that
	\begin{eqnarray}\label{eqLemStochDom6}
	\pr\brk{U_{\cI}=p'n}&=&\pr\brk{\Bin(n,q')=p'n}\;\stacksign{(\ref{eqBinLDP})}{=}\;\exp\brk{-\KL{p'}{q'}n+O(\ln n)}.
	\end{eqnarray}
The assertion follows from~(\ref{eqLemStochDom5}) and~(\ref{eqLemStochDom6}).
\qed\end{proof}

\begin{lemma}\label{Lemma_bigVacantSets} 
Let $U_{ij}$ be the number of vertices $v\in V_i$ that $j$-vacant in $\vec g_{\sigma,\mu}$.
The random variable $$\hat U=\sum_{i,j\in\brk k}\abs{U_{ij}}\cdot\vecone_{\abs{U_{ij}}>n/k^{2.9}}$$ satisfies
	$$\pr\brk{\hat U>\frac{2n}{k\ln^{3/4}k}}\leq \exp\left[-n\Omega_k(\ln^{1/9} k/k)\right].$$
\end{lemma}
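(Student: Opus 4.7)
The plan is to split $\hat U$ according to the partition of $\brk k^2$ into the set $\cI$ from the proof of \Lem~\ref{Lemma_vacantKL} (those pairs $(i,j)$ where $\rho_i$ and $\mu_{ij}$ are very close to $1/k$ and $1/(k(k-1))$ respectively) and its complement $\bar\cI$. Recall that $|\bar\cI|\leq2\ln^9 k$. By Fact~\ref{Fact_g} it suffices to prove the claimed tail bound in the product model $\vec g_{\sigma,\mu}$, losing only a polynomial factor that is swallowed by the exponential. This is convenient because the vacancy events for distinct vertices are mutually independent, so each $U_{ij}$ is a sum of $\rho_i n$ i.i.d.\ Bernoullis.

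For $(i,j)\in\cI$, the computation in \Lem~\ref{Lemma_vacantKL} gives that the probability $p_{ij}$ that any vertex in $V_i$ is $j$-vacant satisfies $p_{ij}=\exp(-2\ln k+O_k(\ln^{-4}k))=(1+o_k(1))/k^2$, so $\Erw[U_{ij}]=(1+o_k(1))n/k^3$. The event $\cbc{U_{ij}>n/k^{2.9}}$ therefore corresponds to exceeding the mean by a factor $\sim k^{0.1}$, and \Lem~\ref{Lemma_Chernoff} yields
\[
\pr\brk{U_{ij}>n/k^{2.9}}\leq\exp\brk{-\Omega_k\bc{n\ln k/k^{2.9}}}.
\]
A union bound over the at most $k^2$ pairs in $\cI$ shows that with probability $1-\exp(-\Omega_k(n\ln k/k^{2.9}))$ every such $U_{ij}$ stays below the threshold, and consequently contributes nothing to $\hat U$.

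For $(i,j)\in\bar\cI$, the vacancy probability can only be bounded by $k^{-1.9}$ (as in~(\ref{eqLemStochDom3})). The total contribution of these pairs to $\hat U$ is at most $U_{\bar\cI}=\sum_{(i,j)\in\bar\cI}U_{ij}$, which by the stochastic domination established in~(\ref{eqLemStochDom4}) is bounded by $\Bin(\lceil1.01 n\ln^9 k/k\rceil,\ln^9 k/k^{1.9})$. Its mean is $O_k(n\ln^{18}k/k^{2.9})$, and another invocation of \Lem~\ref{Lemma_Chernoff} gives
\[
\pr\brk{U_{\bar\cI}>n/(k\ln^{7/8}k)}\leq\exp\brk{-n\Omega_k(\ln^{1/8}k/k)}.
\]
Combining the two estimates, transferring back to $\vec\Gamma_{\sigma,\mu}^*$ via Fact~\ref{Fact_g}, and using $n/(k\ln^{7/8}k)<2n/(k\ln^{3/4}k)$ for $k$ sufficiently large, we conclude $\pr[\hat U>2n/(k\ln^{3/4}k)]\leq\exp[-n\Omega_k(\ln^{1/9}k/k)]$, as claimed.

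The proof is essentially bookkeeping built on top of \Lem~\ref{Lemma_vacantKL}; the only mild subtlety is to check that the threshold $n/k^{2.9}$ is chosen precisely so that the deviation factor $k^{0.1}$ produces a tail $\exp(-\Omega_k(n\ln k/k^{2.9}))$ which comfortably beats the target $\exp(-n\Omega_k(\ln^{1/9}k/k))$, while at the same time leaving a polynomial slack between $n/k^{2.9}$ and the mean $\Theta(n/k^{2.9})$ of $U_{\bar\cI}$ so that the $\bar\cI$-contribution can be absorbed into the $\ln^{7/8}k$-factor savings.
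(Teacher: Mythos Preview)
Your argument contains a genuine quantitative gap in the $\cI$-part. You assert that the per-pair Chernoff tail $\exp\bigl(-\Omega_k(n\ln k/k^{2.9})\bigr)$ ``comfortably beats'' the target $\exp\bigl(-n\,\Omega_k(\ln^{1/9}k/k)\bigr)$, but the inequality goes the wrong way: since
\[
\frac{\ln k/k^{2.9}}{\ln^{1/9}k/k}=\frac{\ln^{8/9}k}{k^{1.9}}\longrightarrow 0,
\]
your exponent is \emph{much smaller} than the required one, so your probability bound is far weaker than the target. Concretely, a union bound $\pr\bigl[\exists(i,j)\in\cI:U_{ij}>n/k^{2.9}\bigr]\leq k^2\exp\bigl(-\Theta_k(n\ln k/k^{2.9})\bigr)$ is useless for establishing a rate of order $n\ln^{1/9}k/k$. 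The issue is structural: ruling out that any \emph{single} $U_{ij}$ exceeds the threshold forces you to control a deviation of size only $n/k^{2.9}$, which is far too small.

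The paper's proof avoids this by not attempting to keep every individual $U_{ij}$ below threshold. Instead it bounds directly the event that the \emph{total} $\hat U$ exceeds $n/(k\ln^{3/4}k)$: on that event one can extract a system of pairwise disjoint sets $S_{ij}\subset V_i$ with $|S_{ij}|>n/k^{2.9}$ and $\sum_{(i,j)}|S_{ij}|\sim n/(k\ln^{3/4}k)$, all of whose vertices are simultaneously $j$-vacant. The probability of this for fixed $(S_{ij})$ is at most $k^{-1.95\sum|S_{ij}|}$, now governed by the large total $\sim n/(k\ln^{3/4}k)$ rather than by $n/k^{2.9}$. The union bound over the choices of the $S_{ij}$ costs roughly $(2\eul k^{1.9})^{|S_{ij}|}$ per set (the constraint $|S_{ij}|>n/k^{2.9}$ caps the entropy at about $1.9\ln k$ per vertex), leaving a net exponent $\Omega_k(n\ln k)\cdot k^{-1}\ln^{-3/4}k=\Omega_k(n\ln^{1/4}k/k)$, which comfortably dominates the claimed $\ln^{1/9}k/k$ rate. (Incidentally, your invocation of Fact~\ref{Fact_g} is superfluous: the lemma is already stated for the independent model $\vec g_{\sigma,\mu}$.)
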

\begin{proof}
Let $U_{ij}'$ be the number of vertices $v\in V_i$ that $j$-vacant in $\vec g_{\sigma,\mu}$ but not $j'$-vacant in $\vec g_{\sigma,\mu}$ for any $j\in\brk k\setminus\cbc{i,j}$.
Let $$\hat U'=\sum_{i,j\in\brk k}\abs{U_{ij}'}\cdot\vecone_{\abs{U_{ij}'}>n/k^{2.9}}.$$
Due to \Lem~\ref{Lemma_dobulyVacant} it suffices to prove that
	\begin{equation}\label{eqLemma_bigVacantSets1}
	\pr\brk{\hat U'>\frac{n}{k\ln^{3/4}k}}\leq \exp\left[-n\Omega_k(\ln^{1/9} k/k)\right].
	\end{equation}

To establish~(\ref{eqLemma_bigVacantSets1}) we use a first moment argument.
Let $\cI\subset\brk k^2$ be a set of pairs $(i,j)$ such that $i\neq j$.
Moreover, let $\vec s=(s_{ij})_{i,j\in\cI}$ be a family of non-negative integers such that 
	\begin{equation}\label{eqLemma_bigVacantSets2}
	\mbox{$s_{ij}>n/k^{2.9}$ for all $(i,j)\in\cI$ and }
	\sum_{(i,j)\in\cI}s_{ij}=\left\lceil\frac{n}{k\ln^{3/4}k}\right\rceil.
	\end{equation}
Furthermore, let 
	$\vec S=(S_{ij})_{i,j\in\brk \cI}$ be a family or pairwise disjoint sets such that
	\begin{equation}\label{eqLemma_bigVacantSets3}
	\mbox{$S_{ij}\subset V_i$ and $|S_{ij}|=s_{ij}$ for all $(i,j)\in\cI$.}
	\end{equation}
Let $\cE(\vec S)$ be the event that for all $(i,j)\in\cI$ the vertices $v\in S_{ij}$ are $j$-vacant in $\vec g_{\sigma,\mu}$,
and let $\cE(\vec s)$ be the event that there exists $\vec S$ satisfying~(\ref{eqLemma_bigVacantSets3}) such that $\cE(\vec S)$ occurs.
Clearly, if $\hat U>nk^{-1}\ln^{-3/4}k$, then $\cE(\vec s)$ occurs for some $\cI$ and some $\vec s$ satisfying~(\ref{eqLemma_bigVacantSets2}).
Thus, we need to bound $\pr\brk{\cE(\vec s)}$.

We begin by estimating $\pr\brk{\cE(\vec S)}$.
Consider a vertex $v\in S_{ij}$ for some $(i,j)\in\cI$.
Our assumptions~(\ref{eqrhoNice}) and~(\ref{eqmuNice}) on $\mu$ and $\rho$ ensure that
	$$\pr\brk{\mbox{$v$ is $j$-vacant in $g$}}=(1-\mu_{ij}/\rho_i)^d\leq(1-0.99/k)^{1.99k\ln k}\leq k^{-1.95}.$$
Since these events occur independently for all $v\in S_{ij}$ and because the sets $S_{ij}$ are pairwise disjoint, we obtain
	\begin{eqnarray}\label{eqLemma_bigVacantSets4}
	\pr\brk{\cE(\vec S)}&\leq&\prod_{(i,j)\in\cI}\prod_{v\in S_{ij}}\pr\brk{\mbox{$v$ is $j$-vacant in $\vec g_{\sigma,\mu}$}}\leq k^{-1.95\sum_{(i,j)\in\cI}s_{ij}}.
	\end{eqnarray}
		
To estimate $\pr\brk{\cE(\vec s)}$, we use the union bound.
More precisely, for a given $\vec s$ satisfying~(\ref{eqLemma_bigVacantSets2}) the number of possible $\vec S$ satisfying~(\ref{eqLemma_bigVacantSets3}) is bounded by
	\begin{eqnarray}\nonumber
	\cH&=&\prod_{(i,j)\in\cI}\bink{\rho_i n}{s_{ij}}\leq\bink{2n/k}{s_{ij}}\qquad\mbox{[by our assumption~(\ref{eqrhoNice}) on the $\rho_i$]}\\
		&\leq&\exp\brk{\sum_{(i,j)\in\cI}s_{ij}\ln\bcfr{2\eul n/k}{s_{ij}}}\leq\exp\brk{\sum_{(i,j)\in\cI}s_{ij}\ln\bc{2\eul k^{1.9}}}\quad\mbox{[as $s_{ij}>k^{-2.9}n$]}.
			\label{eqLemma_bigVacantSets5}
	\end{eqnarray}
Combining~(\ref{eqLemma_bigVacantSets4}) and~(\ref{eqLemma_bigVacantSets5}), we obtain
	\begin{eqnarray}\nonumber
	\pr\brk{\cE(\vec s)}&\leq&\cH\cdot k^{-1.95\sum_{(i,j)\in\cI}s_{ij}}\leq\exp\brk{\sum_{(i,j)\in\cI}s_{ij}\ln\bc{2\eul k^{-0.05}}}\\
		&\leq&\exp\bc{-n\Omega_k(\ln^{1/4}k/k)}\qquad\qquad\mbox{[as $\sum_{(i,j)\in\cI}s_{ij}>nk^{-1}\ln^{-3/4}k$]}.\label{eqLemma_bigVacantSets6}
	\end{eqnarray}
Since the total number of sets $\cI$ and vectors $\vec s$ satisfying~(\ref{eqLemma_bigVacantSets2}) is bounded by a polynomial in $n$,
the assertion follows from~(\ref{eqLemma_bigVacantSets6}).
\qed\end{proof}

\medskip\noindent
Finally, \Prop~\ref{Prop_vacant} follows by combining Fact~\ref{Fact_g} with \Lem s~\ref{Lemma_dobulyVacant}, \ref{Lemma_vacantKL} and~\ref{Lemma_bigVacantSets}.


\subsection{Proof of \Prop~\ref{Prop_spanned}}\label{Sec_spanned}


The proof is based on a first moment argument.
Let $V_i=\sigma^{-1}(i)$ for all $i\in\brk k$.
Let $\cI\subset\brk k^2$ be a set of pairs $(i,j)$ such that $i\neq j$.
Moreover, let $\vec s=(s_{ij})_{(i,j)\in\cI}$ be a non-negative integer vector such that
	\begin{equation}\label{eqProp_spanned1}
	\mbox{$0<s_{ij}\leq k^{-2.9}n$ for all $(i,j)\in\cI$ and $0.01\frac nk\leq\sum_{(i,j)\in\cI}s_{ij}\leq100\frac nk.$}
	\end{equation}
Further, let $\vec S=(S_{ij})_{(i,j)\in\cI}$ be a family of pairwise disjoint sets such that
	\begin{equation}\label{eqProp_spanned2}
	\mbox{$S_{ij}\subset V_i$ and $|S_{ij}|=s_{ij}$ for all $(i,j)\in\cI$.}
	\end{equation}
In addition, let $Q$ be a set of edges of the complete graph on $V\times\brk d$ such that the following is true.
	\begin{equation}\label{eqProp_spanned3}
	\parbox[c]{12cm}{We have $|Q|=\lceil nk^{-1}\ln^{-4/5}k\rceil$.
		Moreover, for any edge
		$\cbc{(v,l),(v',l')}\in Q$  there exist indices $i,i',j$ such that $i\neq i'$, $(i,j)\in\cI$, $(i',j)\in\cI$, $v\in S_{ij}$, $v'\in S_{i'j}$.}
	\end{equation}
In words, any edge in $Q$ connects clones of vertices in sets $S_{ij}$, $S_{i'j}$ with $i\neq i'$.

Now, let $\cE(\vec S,Q)$ be the event that the vertices in $S_{ij}$ are $j$-vacant for all $(i,j)\in\cI$
and that the matching $\vec\Gamma_{\sigma,\mu}$ contains $Q$.
Furthermore, let $\cE(\vec S)$ be the event that $\cE(\vec S,Q)$ occurs for some $Q$ satisfying~(\ref{eqProp_spanned3}), 
let $\cE(\vec s)$ be the event that $\cE(\vec S)$ occurs for some $\vec S$ satisfying~(\ref{eqProp_spanned2}), and 
let $\cE$ be the event that $\cE(\vec s)$ occurs for some $s$ that satisfies~(\ref{eqProp_spanned1}).
If $E>n k^{-1}\ln^{-4/5}k$, then the event $\cE$ occurs.
Hence, our task is to prove that
	\begin{equation}\label{eqProp_spanned4}
	\pr\brk\cE\leq\exp(-\Omega_k(\ln^{1/9}k/k)n).
	\end{equation}

To establish~(\ref{eqProp_spanned4}), we are going to work our way from bounding $\pr[\cE(\vec S,Q)]$ to bounding $\pr[\cE]$.
Let us begin with the following simple bound on the probability that the edges $Q$ occur in $\vec\Gamma_{\sigma,\mu}$.

\begin{lemma}\label{Lemma_Q}
Suppose that $\vec s$, $\vec S$ and $Q$ satisfy~(\ref{eqProp_spanned1})--(\ref{eqProp_spanned3}).
Then
	$\pr\brk{Q\subset \vec\Gamma_{\sigma,\mu}}\leq\bcfr{5}{dn}^{|Q|}$
\end{lemma}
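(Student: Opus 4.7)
The plan is to reduce this statement directly to Lemma \ref{lem_edges_cond} (together with Remark \ref{rem_edges_cond}), which gives an essentially identical bound on $\pr[E \subset \vec\Gamma_{\sigma,\mu}]$ for sets $E$ of ``bichromatic'' edges. The content of the present lemma is that the hypotheses of Lemma \ref{lem_edges_cond} are met by the set $Q$ under assumptions (\ref{eqProp_spanned1})--(\ref{eqProp_spanned3}), so no new probabilistic calculation is needed.

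First I would check the size bound: by (\ref{eqProp_spanned3}), $|Q| = \lceil n k^{-1}\ln^{-4/5}k\rceil$, and for $k$ sufficiently large this is certainly at most $n/(2k)$. Next I would verify that $Q$ has the ``$e_{ii}=0$'' structure needed by Lemma \ref{lem_edges_cond}: any edge $\{(v,l),(v',l')\} \in Q$ connects a clone in $S_{ij} \subset V_i$ with a clone in $S_{i'j} \subset V_{i'}$ where $i \neq i'$, and hence $\sigma(v) \neq \sigma(v')$. Thus the number of edges of $Q$ whose endpoints both lie in some $V_h$ is zero.

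The only mild subtlety is that Lemma \ref{lem_edges_cond} was originally stated under the assumptions that $\sigma$ is balanced and that $\mu_{ij}\sim k^{-1}(k-1)^{-1}$. Here, by the standing assumptions of Section \ref{sec:prop:NoOfCompl+ItsDistr}, we only know that $(\rho,\mu)$ satisfies (\ref{eqrhoNice}) and (\ref{eqmuNice}), i.e., $|\rho_i - 1/k| \leq k^{-1}\ln^{-1/3}k$ and $|\mu_{ij} - k^{-1}(k-1)^{-1}| \leq 8 k^{-1}(k-1)^{-1}\ln^{-1/3}k$ for $i \neq j$. But these deviations are far smaller than the slack permitted by Remark \ref{rem_edges_cond} (which explicitly allows $|\mu_{ij}-k^{-1}(k-1)^{-1}|\leq 0.01 k^{-2}$ and $|\sigma^{-1}(i)-n/k|\leq0.01n/k$), so the proof of Lemma \ref{lem_edges_cond} applies verbatim and delivers the bound $\pr[Q \subset \vec\Gamma_{\sigma,\mu}] \leq (5/(dn))^{|Q|}$. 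No step is really an ``obstacle''; the only thing to be careful about is confirming that the Stirling-based estimate in the proof of Lemma \ref{lem_edges_cond} still yields the constant $5$ under our slightly more generous range for $\mu$ and $\rho$, but the loss is absorbed comfortably into the factor $1.01$ that already appears there.
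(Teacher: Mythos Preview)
Your proposal is correct and matches the paper's approach exactly: the paper's proof is the single line ``This follows immediately from \Lem~\ref{lem_edges_cond} and \Rem~\ref{rem_edges_cond},'' and you have simply spelled out the hypothesis-checking that this sentence presupposes. (One minor caveat: the deviation $8k^{-1}(k-1)^{-1}\ln^{-1/3}k$ from~(\ref{eqmuNice}) is not literally below the illustrative ``$0.01k^{-2}$'' of \Rem~\ref{rem_edges_cond} for moderate $k$, but since that constant is explicitly non-sharp (``say'') and $\ln^{-1/3}k=o_k(1)$, the proof of \Lem~\ref{lem_edges_cond} still delivers the constant $5$ once $k\geq k_0$.)
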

\begin{proof}
This follows immediately from \Lem~\ref{lem_edges_cond} and \Rem~\ref{rem_edges_cond}.
\qed\end{proof}

\noindent
Based on \Lem~\ref{Lemma_Q}, we can estimate $\pr[\cE(\vec S,Q)]$.

\begin{lemma}\label{Lemma_ESQ}
Suppose that $\vec s$, $\vec S$ and $Q$ satisfy~(\ref{eqProp_spanned1})--(\ref{eqProp_spanned3}).
Let $s=\sum_{(i,j)\in\cI}s_{ij}$.
Then
	$$\pr\brk{\cE(\vec S,Q)|Q\subset\vec\Gamma_{\sigma,\mu}}\leq k^{-(2+O_k(\ln^{-4}k))s}.$$
\end{lemma}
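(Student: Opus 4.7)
The strategy is to work directly in the configuration model, exploiting the observation that conditioning on $Q\subset\vec\Gamma_{\sigma,\mu}$ freezes only $2|Q|$ clones, all of which belong to vertices in $\bigcup_{(i,j)\in\cI}S_{ij}$ and, by the coupling hypothesis (\ref{eqProp_spanned3}), are matched to clones that lie outside $V_j\times\brk d$ for the relevant second-index $j$. In particular, the edges of $Q$ are already consistent with every $v\in S_{ij}$ being $j$-vacant; we must only argue that the remaining clones can be matched in a $j$-vacant way with probability $k^{-(2+O_k(\ln^{-4}k))s}$.

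First I would let $q_v$ denote the number of clones of $v$ appearing in $Q$, so that $\sum_v q_v=2|Q|\le 2n/(k\ln^{4/5}k)$, and write $\pr[\cE(\vec S,Q)\mid Q\subset\vec\Gamma_{\sigma,\mu}]$ as the ratio of the number of configurations that respect $\sigma,\mu$, contain $Q$, and make every $v\in S_{ij}$ $j$-vacant, to the number that merely respect $\sigma,\mu$ and contain $Q$. After deleting the clones used by $Q$, both counts are matching counts on the remaining clones with modified edge-density parameters $\mu'_{ij}=\mu_{ij}-e_{ij}(Q)/(dn)$ and modified vertex sizes; since $|Q|=O(n/(k\ln^{4/5}k))$ is tiny compared to both $dn\mu_{ij}=\Theta(n\ln k/k)$ and $dn\rho_i$, the relative perturbations are $O_k(\ln^{-4/5}k/k)$, well below the $O_k(\ln^{-4}k)$ budget in the conclusion.

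Next, I would apply the multivariate binomial approximation (Fact~\ref{Fact_g}), conditioned on $Q$, to replace the true configuration-model probabilities by a product of independent colour-choice probabilities, at a cost of an $n^{O(1)}$ multiplicative factor. Because the vacancy events for distinct vertices are independent under $\vec g_{\sigma,\mu}$, the probability that every $v\in S_{ij}$ is $j$-vacant factorises as $\prod_{(i,j)\in\cI}\prod_{v\in S_{ij}}(1-\mu'_{ij}/\rho'_i)^{d-q_v}$. By (\ref{eqrhoNice}) and (\ref{eqmuNice}) we have $\mu_{ij}/\rho_i=k^{-1}(1+O_k(\ln^{-4}k))$, which together with $d\ge 1.99 k\ln k$ (from our assumption~(\ref{eqLowerdAssumption})) yields $(1-\mu_{ij}/\rho_i)^d\le k^{-2(1+O_k(\ln^{-4}k))}$. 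The $q_v$ corrections contribute a factor at most $\exp(O(|Q|/k))=\exp(O(n/(k^2\ln^{4/5}k)))$; since $s\ge 0.01\,n/k$ by (\ref{eqProp_spanned1}), this factor is bounded by $k^{O_k(\ln^{-4}k)\,s}$ and is absorbed into the exponent. The $n^{O(1)}$ prefactor from Fact~\ref{Fact_g} is similarly negligible relative to $k^{-2s}=\exp(-\Omega(n\ln k/k))$.

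The main technical obstacle is keeping the bookkeeping tight: I must show that the two sources of slack — the perturbation of $\mu'_{ij}$ and $\rho'_i$ caused by removing $Q$, and the reduction from $d$ to $d-q_v$ exponents — both fit inside the $O_k(\ln^{-4}k)$ term in the exponent, which is tight since the stated conclusion $k^{-(2+O_k(\ln^{-4}k))s}$ leaves only a factor $\exp(\pm O(s\ln^{-3}k))$ of headroom. Because $1/(k\ln^{4/5}k)\le\ln^{-3}k$ for large $k$, both slack sources are comfortably within budget; the delicate part is simply carrying the Stirling-based estimate of the configuration-count ratio through cleanly so that no hidden $\log$-factor blows the bound.
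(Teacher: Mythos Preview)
Your overall scaffolding is the same as the paper's: condition on $Q$, strip out the $2|Q|$ frozen clones, pass to the independent colour model via Fact~\ref{Fact_g} with perturbed parameters $\mu',\rho'$, and bound the product of vacancy probabilities. The treatment of the $q_v$-corrections and of the $\mu\to\mu'$ shift is fine and indeed a bit slicker than the paper's (which truncates to $S_{ij}'=\{v:q_v\le k^{7/8}\}$ instead of summing $q_v$ globally).

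There is, however, a real gap at the point where you assert ``By (\ref{eqrhoNice}) and (\ref{eqmuNice}) we have $\mu_{ij}/\rho_i=k^{-1}(1+O_k(\ln^{-4}k))$.'' Conditions (\ref{eqrhoNice}) and (\ref{eqmuNice}) are $\ell^2$-bounds with tolerance $\ln^{-1/3}k$; they yield only $\mu_{ij}/\rho_i=k^{-1}(1+O_k(\ln^{-1/3}k))$ pointwise, not $O_k(\ln^{-4}k)$. With that weaker control, each vacancy probability is only $k^{-2(1+O_k(\ln^{-1/3}k))}$, so the product is $k^{-(2+O_k(\ln^{-1/3}k))s}$. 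Since $s=\Theta_k(n/k)$, the slack in the exponent is then $O_k(s\ln^{2/3}k)=O_k(n\ln^{2/3}k/k)$, which dominates the $\Omega_k(n\ln^{1/9}k/k)$ gain you need in the subsequent Corollary~\ref{Cor_ESQ}; the whole chain collapses.

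The paper repairs exactly this point: it passes to the subset $\cI'\subset\cI$ of pairs with the \emph{pointwise} bounds $|\mu_{ij}-k^{-1}(k-1)^{-1}|\le 2/(k(k-1)\ln^4k)$ and $|\rho_i'-k^{-1}|\le 2/(k\ln^4k)$, using the $\ell^2$-constraints to show $|\cI\setminus\cI'|\le\ln^{12}k$. Because each $|S_{ij}|\le n/k^{2.9}$ by (\ref{eqProp_spanned1}), the vertices in ``bad'' pairs number at most $n/k^{2.8}$, which together with the $q_v$-truncation gives $s'\ge s-n/k^{15/8}$; this loss is negligible against $s\ge 0.01\,n/k$ and fits inside the $O_k(\ln^{-4}k)$ budget. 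You need to insert this good-pair restriction (or an equivalent averaging argument) before your product bound will deliver the stated exponent.
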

\begin{proof}
Let $W\subset V\times\brk d$ be the set of all clones that do not occur in any of the edges in $Q$.
Moreover, let $q_{ij}$ be the number of $V_i\times\brk d$-$V_j\times\brk d$ edges in $Q$ and set
	$\mu_{ij}'=\mu_{ij}-\frac{q_{ij}}{dn}.$
In addition, let $\rho_i'=\sum_{j\in\brk k}\mu_{ij}'$.
Furthermore, let $\vec g':W\ra\brk k$ be a random map defined as follows.
	\begin{quote}
	For each pair $(v,l)\in W$ with $v\in V_i$ and any $j\in\brk k\setminus\cbc i$ let $g'(v,l)=j$ with probability $\mu_{ij}'/\rho_i'$,
		independently of all others.
	\end{quote}
Then in analogy to Fact~\ref{Fact_g}, we have
	\begin{equation}\label{eqLemma_ESQ1}
	\pr\brk{\vec\Gamma_{\sigma,\mu}^*\in\cA}\leq n^{O(1)}\pr\brk{\vec g'\in\cA}\qquad\mbox{for any event $\cA$.}
	\end{equation}

Since (\ref{eqProp_spanned3}) provides that $|Q|/n\sim k^{-1}\ln^{-4/5}k$, we see that
	\begin{equation}\label{eqLemma_ESQ2}
	\norm{\rho-\rho'}_1\leq\norm{\mu-\mu'}_1\leq O_k(k^{-2}\ln^{-9/5}k). 
	\end{equation}
Now, let $\cI'$ be the set of all $(i,j)\in\cI$ such that
	$$|\mu_{ij}-k^{-1}(k-1)^{-1}|\leq\frac2{k(k-1)\ln^4k}\quad\mbox{and}\quad|\rho_i'-k^{-1}|\leq\frac2{k\ln^4k}.$$
Then~(\ref{eqLemma_ESQ2}) implies together with our assumption on $\rho,\mu$ that 
	\begin{equation}\label{eqLemma_ESQ3}
	|\cI\setminus\cI'|\leq\ln^{12}k.
	\end{equation}
Furthermore, for $(i,j)\in\cI'$ we let
	$$S_{ij}'=\cbc{v\in S_{ij}:|(\cbc v\times\brk d)\cap W|\geq d-k^{7/8}}.$$
In other words, $S_{ij}'$ contains all $v\in S_{ij}$ that occur in no more than $k^{7/8}$ edges in $Q$.

The bound~(\ref{eqLemma_ESQ1}) implies together with the construction of $\vec g'$ that
	\begin{eqnarray}\nonumber
	\pr\brk{\cE(\vec S,Q)|Q\subset\vec\Gamma_{\sigma,\mu}}&\leq&n^{O(1)}\cdot\pr\brk{\forall (i,j)\in\cI,v\in S_{ij}:\mbox{$v$ is $j$-vacant in $\vec g'$}}\\
		&\leq&n^{O(1)}\cdot\pr\brk{\forall (i,j)\in\cI',v\in S_{ij}':\mbox{$v$ is $j$-vacant in $\vec g'$}}\nonumber\\
		&=&n^{O(1)}\prod_{(i,j)\in\cI'}\prod_{v\in S_{ij}'}\pr\brk{\mbox{$v$ is $j$-vacant in $\vec g'$}}.
			\label{eqLemma_ESQ4}
	\end{eqnarray}	
Further, because for any $v\in S_{ij}'$ the values $(g(v,l))_{l:(v,l)\in W}$ are independent, we have
	\begin{eqnarray}
	\pr\brk{\mbox{$v$ is $j$-vacant in $\vec g'$}}&=&(1-\mu_{ij}'/\rho_i')^{|(\cbc v\times d)\cap W|}\leq
						(1-\mu_{ij}'/\rho_i')^{d-k^{7/8}}\qquad\mbox{[as $v\in S_{ij}'$]}\nonumber\\
				&\leq&(1-k^{-1}(1+O_k(\ln^{-4}k)))^{d-k^{7/8}}\qquad\qquad\mbox{[because $(i,j)\in\cI'$]}\nonumber\\
				&\leq&k^{-2+O_k(\ln^{-4}k)}.			\label{eqLemma_ESQ5}
	\end{eqnarray}
To complete the proof, let $s'=\sum_{(i,j)\in\cI'}|S_{ij}'|$.
Because $|Q|/n\sim k^{-1}\ln^{-4/5}k$ by~(\ref{eqProp_spanned3}), we have
	$$\sum_{(i,j)\in\cI'}|S_{ij}\setminus S_{ij}'|\leq \frac12k^{-15/8}n.$$
Furthermore, as $|S_{ij}|\leq k^{-2.9}n$ for all $(i,j)\in\cI$, we have
	$$\sum_{(i,j)\in\cI\setminus\cI'}|S_{ij}|\leq|\cI\setminus\cI'|k^{-2.9}n\leq k^{-2.8}n\qquad\qquad\mbox{[due to~(\ref{eqLemma_ESQ3})]}.$$
Combining these two bounds, we see that
	$s'\geq s-k^{-15/8}n.$
Thus, (\ref{eqLemma_ESQ4}) and~(\ref{eqLemma_ESQ5}) yield
	\begin{eqnarray*}
	\pr\brk{\cE(\vec S,Q)|Q\subset\vec\Gamma_{\sigma,\mu}}&\leq&k^{-(2+O_k(\ln^{-4}k))s'}\leq k^{-(2+O_k(\ln^{-4}k))s},
	\end{eqnarray*}
as desired.
\qed\end{proof}

\begin{corollary}\label{Cor_ESQ}
Suppose that $\vec s$ and $\vec S$ satisfy~(\ref{eqProp_spanned1}) and (\ref{eqProp_spanned2}).
Let $s=\sum_{(i,j)\in\cI}s_{ij}$.
Then
	$$\pr\brk{\cE(\vec S)}\leq\exp\brk{-2s\ln k-\Omega_k(\ln^{1/9}k/k)n}.$$
\end{corollary}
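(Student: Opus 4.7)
The plan is to union-bound over all edge sets $Q$ satisfying~(\ref{eqProp_spanned3}) and to combine \Lem s~\ref{Lemma_Q} and~\ref{Lemma_ESQ}. Writing $N_Q$ for the number of such $Q$'s (of size $|Q|=\lceil n/(k\ln^{4/5}k)\rceil$),
$$\pr\brk{\cE(\vec S)}\leq\sum_Q\pr\brk{\cE(\vec S,Q)}\leq N_Q\cdot\bc{\frac5{dn}}^{|Q|}\cdot k^{-(2+O_k(\ln^{-4}k))s}.$$

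The crux is to estimate $N_Q$ sharply. A valid edge of ``type $j$'' joins a clone of some $v\in S_{ij}$ to a clone of some $v'\in S_{i'j}$ with $i\neq i'$; setting $t_j=\sum_{i:(i,j)\in\cI}s_{ij}$, the total number of valid edges is therefore at most $M\leq(d^2/2)\sum_{j}t_j^2$. The hypothesis $s_{ij}\leq n/k^{2.9}$ from~(\ref{eqProp_spanned1}) is crucial here: it forces $t_j\leq n/k^{1.9}$, whence $\sum_j t_j^2\leq s\cdot\max_j t_j\leq sn/k^{1.9}$ and consequently $M\leq d^2 sn/(2k^{1.9})$. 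Substituting $d\leq 2k\ln k$, $s\leq 100n/k$ and the value of $|Q|$ into $\binom{M}{|Q|}\leq(\eul M/|Q|)^{|Q|}$ and multiplying by $(5/(dn))^{|Q|}$ yields, after routine algebra,
$$N_Q\cdot\bc{\frac5{dn}}^{|Q|}\leq\bc{500\,\eul\,k^{-0.9}\ln^{9/5}k}^{|Q|}\leq k^{-0.8\,|Q|}=\exp\brk{-\Omega_k(\ln^{1/5}k/k)\cdot n}$$
for all sufficiently large $k$ (since $\ln^{9/5}k\leq k^{0.1}/(500\eul)$ eventually).

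Finally, because $s\leq 100n/k$ the vacancy factor satisfies $k^{-(2+O_k(\ln^{-4}k))s}=\exp(-2s\ln k+O(n/(k\ln^3k)))$, and as $\ln^{1/5}k$ dominates both $\ln^{-3}k$ and $\ln^{1/9}k$, combining the two displays gives the claimed
$$\pr\brk{\cE(\vec S)}\leq\exp\brk{-2s\ln k-\Omega_k(\ln^{1/9}k/k)n}.$$

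The main obstacle I anticipate is realising that the upper bound $s_{ij}\leq n/k^{2.9}$ must be fed into the count of $N_Q$. A naive bound $N_Q\leq\binom{(ds)^2/2}{|Q|}$ ignores this constraint and produces a stray \emph{positive} factor of order $\exp(O(n\ln\ln k/(k\ln^{4/5}k)))$, too large to be absorbed. Using $t_j\leq n/k^{1.9}$ to preclude concentration of the $s_{ij}$'s on few pairs shrinks the valid-edge pool by a factor $k^{0.9}$ and thus converts the combinatorial cost into a genuine saving of order $\ln^{1/5}k/k$, comfortably more than needed.
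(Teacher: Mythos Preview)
Your proof is correct and follows essentially the same route as the paper: union-bound over $Q$, apply \Lem s~\ref{Lemma_Q} and~\ref{Lemma_ESQ}, and show that the combinatorial cost $N_Q\cdot(5/(dn))^{|Q|}$ is $\exp(-\Omega_k(\ln^{1/5}k/k)n)$ by exploiting the constraint $t_j\leq n/k^{1.9}$ that comes from $s_{ij}\leq n/k^{2.9}$. The only cosmetic difference is that the paper organises the count of $Q$ by colour, writing $\cH\leq\exp(o(n))\binom{\binom{dn/k^{1.9}}{2}}{\eta/k}^k$ via an entropy-maximisation step, whereas you bound the total pool of valid edges by $M\leq(d^2/2)\sum_j t_j^2\leq d^2sn/(2k^{1.9})$ and take a single $\binom{M}{|Q|}$; the arithmetic and the final exponent agree.
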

\begin{proof}
Given $\vec s$ and $\vec S$, let $\cH=\cH(\vec s,\vec S)$ be the number of set $Q$ that satisfy~(\ref{eqProp_spanned3}).
Any such set $Q$ decomposes into sets $Q_j$ of edges joining two clones in $\bigcup_{i:(i,j)\in\cI}S_{ij}$.
Since $|S_{ij}|\leq k^{-2.9}n$ for all $i,j$, we have $|\bigcup_{i:(i,j)\in\cI}S_{ij}|\leq k^{-1.9}n$ for all $j$.
Let $\eta=|Q|=\lceil nk^{-1}\ln^{-4/5}k\rceil$.
Because the uniform distribution maximizes the entropy, we get
	\begin{eqnarray}\label{eqCor_ESQ1}
	\cH&\leq& \exp(o(n))\cdot\bink{\bink{dn/k^{1.9}}2}{\eta/k}^k
		=\exp\brk{(1+o_k(1))n\cdot\eta\ln\frac{d^2}{k^{2.8}\eta}}.
	\end{eqnarray}
Hence, \Lem s~\ref{Lemma_Q} and~\ref{Lemma_ESQ} and the union bound yield
	\begin{eqnarray}\nonumber
	\pr\brk{\cE(\vec S)}&\leq&\sum_Q\pr\brk{\cE(\vec S,Q)}=\sum_Q\pr\brk{\cE(\vec S,Q)|Q\subset\vec\Gamma_{\sigma,\mu}}\cdot\pr\brk{Q\subset\vec\Gamma_{\sigma,\mu}}\\
		&\leq&\exp\brk{-2s(\ln k+O_k(\ln^{-4}k))}\cdot
			\sum_Q\pr\brk{Q\subset\vec\Gamma_{\sigma,\mu}}\nonumber\\
		&\leq&\exp\brk{-2s(\ln k+O_k(\ln^{-4}k))}\cdot\cH\cdot\bcfr{5}{dn}^\eta\nonumber\\
		&\leq&\exp\brk{-2s\ln k+n\bc{O_k(k^{-1})+\eta \ln\frac{d}{k^{2.8}\eta}}}\qquad\mbox{[due to~(\ref{eqCor_ESQ1})]}.
			\label{eqCor_ESQ2}
	\end{eqnarray}
Finally, our assumptions on $d$ and $\eta$ ensure that $d/\bc{k^{2.8}\eta}\leq k^{-0.7}$.
Consequently, $$\eta\ln\frac d{k^{2.8}\eta}\leq-\Omega_k(\ln k/k),$$ and thus the assertion follows from~(\ref{eqCor_ESQ2}).
\qed\end{proof}

\begin{corollary}\label{Cor_Es}
Suppose that $\vec s$ satisfies~(\ref{eqProp_spanned1}).
Then
	$\pr\brk{\cE(\vec s)}\leq\exp\brk{-\Omega_k(\ln^{1/9}k/k)n}.$
\end{corollary}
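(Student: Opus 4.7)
\medskip\noindent\emph{Proof proposal for \Cor~\ref{Cor_Es}.}
The plan is to union-bound \Cor~\ref{Cor_ESQ} over all families $\vec S$ satisfying~(\ref{eqProp_spanned2}) for the given $\vec s$, together with the choice of $\cI\subset\brk k^2$ (of which there are at most $2^{k^2}=O_k(1)$). Since for each $(i,j)\in\cI$ the set $S_{ij}$ must be a subset of $V_i$ of size exactly $s_{ij}$, the total number of valid families $\vec S$ is bounded by
\begin{equation*}
\cN_{\vec s}=\prod_{(i,j)\in\cI}\bink{\rho_in}{s_{ij}}\leq\prod_{(i,j)\in\cI}\bcfr{\eul n}{s_{ij}}^{s_{ij}}
=\exp\brk{\sum_{(i,j)\in\cI}s_{ij}\ln\bcfr{\eul n}{s_{ij}}}.
\end{equation*}

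The critical step is to control the sum in the exponent. Setting $s=\sum_{(i,j)\in\cI}s_{ij}$, the function $x\mapsto x\ln(\eul n/x)$ is concave on $(0,n]$, so Jensen's inequality yields
\begin{equation*}
\sum_{(i,j)\in\cI}s_{ij}\ln\bcfr{\eul n}{s_{ij}}\leq |\cI|\cdot\frac{s}{|\cI|}\ln\bcfr{\eul n|\cI|}{s}=s\ln\bcfr{\eul n|\cI|}{s}.
\end{equation*}
Since $|\cI|\leq k^2$ and, by~(\ref{eqProp_spanned1}), $s\geq 0.01\,n/k$, we get $\eul n|\cI|/s\leq 100\eul k^3$, whence
$\sum_{(i,j)\in\cI}s_{ij}\ln(\eul n/s_{ij})\leq s(3\ln k+O_k(1))$. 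This gives $\cN_{\vec s}\leq\exp[s(3\ln k+O_k(1))]$.

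Combining this bound with \Cor~\ref{Cor_ESQ} via the union bound yields
\begin{equation*}
\pr\brk{\cE(\vec s)}\leq 2^{k^2}\cdot\cN_{\vec s}\cdot\max_{\vec S}\pr\brk{\cE(\vec S)}
\leq\exp\brk{s\bc{3\ln k+O_k(1)}-2s\ln k-\Omega_k(\ln^{1/9}k/k)\,n}.
\end{equation*}
Here one subtlety arises: the counting factor $s\cdot 3\ln k$ is a bit larger than the $-2s\ln k$ from \Cor~\ref{Cor_ESQ}, leaving a net contribution of $s\ln k+O_k(s)$. Since $s\leq 100\,n/k$ by~(\ref{eqProp_spanned1}), this is $O_k(n\ln k/k)$, which would \emph{not} be dominated by $\Omega_k(\ln^{1/9}k/k)n$. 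Hence the main obstacle is to sharpen the counting so that the leading $s\ln k$ terms cancel exactly. The fix I expect to use is to replace $\binom{\eul n}{s_{ij}}^{s_{ij}}$ by $\binom{\rho_in}{s_{ij}}\leq(2\eul/(ks_{ij}/n))^{s_{ij}}=(2\eul n/(ks_{ij}))^{s_{ij}}$, exploiting the fact that $\rho_in\leq 2n/k$ by~(\ref{eqrhoNice}). This introduces an extra factor $k^{-s_{ij}}$ inside each term, and after applying Jensen's exactly as above the bound on $\cN_{\vec s}$ becomes $\exp[s(2\ln k+O_k(1))]$.

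With this sharpened count the leading $\pm 2s\ln k$ terms cancel and we obtain
\begin{equation*}
\pr\brk{\cE(\vec s)}\leq\exp\brk{O_k(s)-\Omega_k(\ln^{1/9}k/k)\,n}\leq\exp\brk{O_k(n/k)-\Omega_k(\ln^{1/9}k/k)\,n},
\end{equation*}
using $s\leq 100\,n/k$. Because $\ln^{1/9}k\ra\infty$ as $k\ra\infty$, for sufficiently large $k$ the term $\Omega_k(\ln^{1/9}k/k)n$ strictly dominates $O_k(n/k)$, yielding the desired bound $\pr[\cE(\vec s)]\leq\exp[-\Omega_k(\ln^{1/9}k/k)n]$. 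The only real obstacle is the bookkeeping just described; everything else is a straightforward union bound over the $(O_k(1))$-many choices of $\cI$.
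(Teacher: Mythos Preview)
Your proposal is correct and follows essentially the paper's approach: a union bound over $\vec S$ whose counting cost $\exp[2s\ln k+O_k(n/k)]$ exactly cancels the $-2s\ln k$ from \Cor~\ref{Cor_ESQ}, with the residual $O_k(n/k)$ absorbed by the $\Omega_k(\ln^{1/9}k/k)n$ term. The paper counts more directly via $\cH\leq\binom{n}{s}k^s$ (choose the $s$ vertices, then assign each a target color $j$) rather than $\prod_{(i,j)}\binom{\rho_in}{s_{ij}}$ plus Jensen; also note that $\cI$ is already determined by $\vec s$, so your $2^{k^2}$ factor is superfluous (and in any case contributes $k^2\ln 2=o(n)$ in the exponent, not $O_k(1)$).
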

\begin{proof}
For a given $\vec s$ let $\cH=\cH(\vec s)$ be the number of $\vec S$ satisfying~(\ref{eqProp_spanned2}).
Let $s=\sum_{(i,j)\in\cI}s_{ij}$. 
Because the uniform distribution maximizes the entropy, we have
	\begin{eqnarray}\label{eqCor_Es1}
	\cH&\leq&\bink ns k^s\leq\exp\brk{s\bc{1+\ln\frac{kn}{s}}}=\exp\brk{2s\ln k+O_k(k^{-1})n};
	\end{eqnarray}
the last inequality follows because~(\ref{eqProp_spanned1}) provides that $s=\Theta_k(k^{-1})n$.
The assertion follows from~(\ref{eqCor_Es1}), \Cor~\ref{Cor_ESQ} and the union bound.
\qed\end{proof}

Finally, as there is only a polynomial number $n^{O(1)}$ of vectors $\vec s$ that satisfy~(\ref{eqProp_spanned1}),
\Cor~\ref{Cor_Es} implies~(\ref{eqProp_spanned4}), whence the proof of \Prop~\ref{Prop_spanned} is complete.

\end{document}